\newcommand{\la}{\lambda}
\def\md{\mathrm{md}}
\DeclareMathOperator{\GL}{\mathrm{GL}}
\DeclareMathOperator{\Hom}{\mathrm{Hom}}
\def\Ker{\mathrm{Ker}\,}
\def\C{\ensuremath{\mathbbm{C}}}
\def\Q{\mathbbm{Q}}
\def\Z{\mathbbm{Z}}
\def\N{\mathbbm{N}}
\def\R{\mathbbm{R}}
\def\F{\mathbbm{F}}
\def\bar{\overline}
\def\eps{\epsilon}
\def\onto{\twoheadrightarrow}
\def\into{\hookrightarrow}
\def\ii{\mathrm{i}}
\def\lcm{\mathrm{lcm}}
\newtheorem{theo}{Theorem}[section]
\newtheorem{prop}[theo]{Proposition}
\newtheorem{defi}[theo]{Definition}
\newtheorem{lemma}[theo]{Lemma}
\newtheorem{remark}[theo]{Remark}
\newtheorem{cor}[theo]{Corollary}
\newcommand{\Ax}[0]{\mathbf{A}}
\newcommand{\Bx}[0]{\mathbf{B}}
\newcommand{\Art}[0]{\mathrm{A}}
\newcommand{\B}[0]{\mathrm{B}}
\newcommand{\DA}[0]{\partial}
\newcommand{\DB}[0]{\overline{\partial}}
\providecommand{\Br}{{\mathrm{Br}}}
\providecommand{\B}{{\mathrm{B}}}
\newcommand{\pmu}[0]{{\pm 1}}
\newcommand{\Cal}[1]{\mathcal{#1}}
\newcommand{\ph}[0]{\varphi}
\newcommand{\too}[0]{\longrightarrow}
\newcommand{\sst}[0]{\subset}
\newcommand{\qbin}[2]{ \left[ \! \! \! \begin{array}{c} #1 \\ #2 \end{array} \! \! \! \right] }
\newenvironment{dummyenvironment}[0]{}{}
\title{{\bf Homology computations for complex braid groups}}
\author{Filippo Callegaro  \& Ivan Marin}
\date{November 18, 2010}
\begin{document}

\maketitle

\bigskip
\begin{center}
Scuola Normale Superiore \\
Piazza dei Cavalieri, 7 \\
56126 Pisa \\
Italy \\
--- \\
Institut de Math\'ematiques de Jussieu \\
Universit\'e Paris 7 \\
175 rue du Chevaleret \\
75013 Paris \\
France
\end{center}
\bigskip

\bigskip

%
%
%
%
%
%
%
%

\bigskip

\noindent {\bf Abstract.} 
Complex braid groups are the natural generalizations of braid
groups associated to arbitrary (finite) complex reflection groups. We
investigate several methods for computing the homology of
these groups. In particular, we get the Poincar\'e polynomial
with coefficients in a finite field for one large series of such groups,
and compute the second integral cohomology group for all of them. 
As a consequence we get non-isomorphism results for these groups.

\medskip

\noindent {\bf MSC 2010 : 20J06, 20F36, 20F55}
 

\def\RR{\mathcal{R}}
\def\SS{\mathcal{S}}
\def\HH{\mathcal{H}}
\def\BB{\mathcal{B}}
\def\XX{\mathrm{X}}


\tableofcontents


\section{Introduction}

\subsection{Presentation}
The aim of this paper is to provide homological
results and tools for the generalized braid groups associated
to complex (pseudo-)reflection groups. Recall that
a complex reflection group $W$ is a finite subgroup of some
$\GL_r(\C)$ generated by (pseudo-)reflections, namely
finite-order endomorphisms of $\GL_r(\C)$ which leave
invariant some hyperplane in $\C^r$. The collection $\mathcal{A}$
of the hyperplanes associated to the reflections of $W$ is a
central hyperplane arrangement in $\C^r$. We let $X = \C^r \setminus
\bigcup \mathcal{A}$ denote the corresponding hyperplane complement.
The generalised braid group $B = \pi_1(X/W)$ is an extension
of $W$  by $P = \pi_1(X)$. When $W$ is a finite Coxeter
group, $B$ is an Artin group of finite Coxeter type.

Every $W$ can be decomposed as a direct product of
so-called irreducible groups (meaning that their natural
linear action is irreducible), and $B$ decomposes
accordingly. For homological purposes, by K\"unneth formula we can
thus assume that $W$ is irreducible.

The irreducible complex reflection groups have
been classified in 1954 by Shephard and Todd (\cite{shep-todd}): there is an infinite series
$G(de,e,r)$ with three integer parameters, and 34 exceptions,
labelled $G_4,\dots,G_{37}$. Their braided counterparts
however are far less understood. It is for instance an open problem
to decide the lack of injectivity of $W \mapsto B$. Indeed,
two reflection groups $W$ can provide the same $B$ (up to isomorphism),
to the extent that all possible $B$ arise from the 2-reflection
groups, that is complex reflection groups $W$ with reflections of
order 2. 

Recall that $X$ and $X/W$ are $K(\pi,1)$-spaces by work of \cite{fa_ne, briesk, deligne, BANNAI, naka, OT, BESSISKPI1}. From this,
by general arguments, one can however prove that both
the rank $r$ of $W$ and the number $|\mathcal{A}/W|$ of $W$-orbits in $\mathcal{A}$
is detected by $B$ :
\begin{prop} 
The homological dimension of $B$ is equal to the rank of $W$. \\
$H_1(B,\Z)$ is a free module of dimension $|\mathcal{A}/W|$.
\end{prop}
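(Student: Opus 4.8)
The plan is to work throughout with the identifications $H_*(P;-)=H_*(X;-)$ and $H_*(B;-)=H_*(X/W;-)$ coming from the $K(\pi,1)$ property, and to reduce at once to the case of $W$ irreducible: the spaces $X$, $X/W$ and the groups $P$, $B$ all split as products over the irreducible factors of $W$, and both the rank and the number $|\mathcal{A}/W|$ are additive under such products, so the general case follows from the irreducible one by the K\"unneth formula. For $W$ irreducible the reflection arrangement $\mathcal{A}$ is essential, i.e.\ of rank $r$. I will also use the Chevalley--Shephard--Todd isomorphism $\C^r/W\cong\C^r$, under which $X/W$ is the complement of the discriminant hypersurface $\mathcal{D}\subset\C^r$; in particular $X/W$ is a smooth affine variety of complex dimension $r$.

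For the statement on homological dimension, the upper bound $\mathrm{hd}(B)\le r$ will follow from the fact that a smooth affine variety of complex dimension $r$, being a Stein manifold, has the homotopy type of a CW-complex of real dimension $\le r$ (Andreotti--Frankel); applied to the $K(B,1)$ space $X/W$ this gives $\mathrm{cd}(B)\le r$, a fortiori $\mathrm{hd}(B)\le r$. For the matching lower bound I would use that $P$ is a subgroup of $B$, so that $\mathrm{hd}(P)\le\mathrm{hd}(B)$, together with the computation of the top cohomology of the arrangement complement: since $\mathcal{A}$ is essential of rank $r$ one has $H^r(X;\Z)\neq 0$ --- its rank is the product of the coexponents of $W$ by the Orlik--Solomon formula for reflection arrangements, see \cite{OT} --- and since $H_*(X;\Z)$ is free abelian this forces $H_r(P;\Z)=H_r(X;\Z)\neq 0$, whence $\mathrm{hd}(P)\ge r$. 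Together these give $r\le\mathrm{hd}(B)\le\mathrm{cd}(B)\le r$. (Alternatively the lower bound is immediate from $H_r(B;\Z[W])\cong H_r(P;\Z)\neq 0$ by Shapiro's lemma.)

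For $H_1(B,\Z)=H_1(X/W;\Z)$, I would first count the irreducible components of $\mathcal{D}$. The quotient map $q\colon\C^r\to\C^r/W$ is finite, hence closed and sending irreducible sets to irreducible sets, so the components of $\mathcal{D}=q\big(\bigcup\mathcal{A}\big)$ are precisely the hypersurfaces $q(H)$ with $H\in\mathcal{A}$, and $q(H)=q(H')$ iff $H'\in W\!\cdot\!H$; thus $\mathcal{D}$ has exactly $|\mathcal{A}/W|$ components. The rank of $H_1(X/W;\Z)$ is then $|\mathcal{A}/W|$: this can be seen from the transfer isomorphism $H_1(X/W;\Q)\cong H_1(X;\Q)^W\cong(\Q^{\mathcal{A}})^W$, using that $W$ acts freely on $X$ and permutes the natural generators of $H_1(X)$ along its action on $\mathcal{A}$. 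The delicate point --- and the one I expect to be the main obstacle --- is ruling out torsion in $H_1(B,\Z)$; for this I would invoke the classical fact that for any hypersurface $\mathcal{D}\subset\C^n$ the linking-number homomorphism from $H_1(\C^n\setminus\mathcal{D};\Z)$ to the free abelian group on the irreducible components of $\mathcal{D}$ is an \emph{isomorphism} --- equivalently, that $B$ is generated by braided reflections and that, upon abelianising a presentation of $B$, every relation just identifies generators lying over the same $W$-orbit of hyperplanes. This yields $H_1(B,\Z)\cong\Z^{|\mathcal{A}/W|}$.
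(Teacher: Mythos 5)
Your proof is correct and follows essentially the same route as the paper: the upper bound on the homological dimension comes from $X/W$ being an affine variety of complex dimension $r$, hence of the homotopy type of a CW-complex of dimension $\le r$, and the lower bound from the nonvanishing of $H^r(X)$ via the coexponents of $W$ together with $P < B$. The only difference concerns the statement on $H_1(B,\Z)$, which the paper simply quotes from \cite{BMR}; your argument --- identifying $X/W$ with the complement of the discriminant hypersurface in $\C^r$ via Chevalley--Shephard--Todd, counting its irreducible components as $W$-orbits of hyperplanes, and invoking the classical fact that $H_1$ of a hypersurface complement in affine space is free abelian on the components --- is the standard proof of that fact and is correct.
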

\begin{proof}
It is known that $X/W$ is an affine variety of (complex) dimension $r$, it is homotopically
equivalent to a finite CW-complex of dimension $r$. Moreover,
the $r$-th cohomology with trivial coefficients
of $P= \Ker(B \onto W)$ is nonzero. Indeed,
the Poincar\'e polynomial of $X$ is $(1 + c_1 t)\dots (1+ c_r)t$
(see \cite{OT} cor. 6.62) 
where the $c_i$ are positive numbers,
called the co-exponents of $W$. In particular, we have
$H^r(P,\Q) = H^r(X,\Q) \neq 0$, and since $P < B$ which implies that
$B$ has homological dimension at least $r$, hence exactly $r$,
which proves the first part. The second part is proved in \cite{BMR}.
\end{proof}

As opposed to the case of Artin groups of finite Coxeter type,
for which there are uniform `simplicial' theories and homological
methods, it seems that different methods have to be used in
order to deal with these complex braid groups in general. Due
to some of the coincidences mentioned above, the groups $B$
provided by the 3-parameters series $G(de,e,r)$ actually arise
from two a priori disjoint series with 2 parameters
$G(2e,e,r)$ and $G(e,e,r)$ of 2-reflection groups. The
corresponding braid groups $\B(2e,e,r) = \B(de,e,r)$ for $d > 1$ and
$\B(e,e,r)$ seem to belong to distinct worlds. The first ones
can be better understood as subgroups of the usual braid groups,
or semidirect products of $\Z$ with an Artin group of affine
type, whereas the second ones might be better understood
as the group of fractions of suitable monoids with similar
(Garside) properties than the usual braid group; it should be
noted for instance that the groups $G(e,e,r)$ are generated by $r$
reflections, hence belong to the class of well-generated groups,
for which there is a uniform generalization of
the Garside approach (see \cite{BESSISKPI1}). 
Moreover, using a
specific Garside monoid recently introduced
by Corran and Picantin in \cite{corpic} 
for dealing with the groups
$\B(e,e,r)$, our work on parabolic subgroups suggests that the filtrations classically
used in the homology computations for usual braid groups
might well be extended to this more general setting.

Before proceeding to the exposition of our main results, we recall
the results obtained earlier by G. Lehrer on the \emph{rational} homology of $B$
for the general series.

\begin{theo}[\cite{lehr04}] \label{t:lehr04} 
The Poincar\'e polynomial for the cohomology $H^*(\B(e,e,r);\Q)$ is
$$
P(\B(e,e,r),t)= \left\{
\begin{array}{cl}
1+t & \mbox{ if either } e \mbox{ or } r \mbox{ is odd,} \\
1 + t+ t^{r-1} + t^r & \mbox{ otherwise.}
\end{array}
\right. 
$$

The Poincar\'e polynomial for the cohomology $H^*(\B(2e,e,r);\Q)$ is
$$
P(\B(2e,e,r),t)= \left\{
\begin{array}{cl}
(1+t)(1+t+t^2+ \cdots + t^{r-1}) & \mbox{ if either } e \mbox{ or } r \mbox{ is odd,} \\
(1+t)(1+t+t^2+ \cdots + t^{r-1}) + (t^{r-1} +t^r) & \mbox{ otherwise.}
\end{array}
\right. 
$$\qed
\end{theo}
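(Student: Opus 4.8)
The plan is to reduce the statement to a $W$-invariant computation for a reflection-arrangement complement, and then to an arithmetic (point-counting) computation. Since $X/W$ is a $K(\pi,1)$ and $W$ is finite, the transfer gives $H^*(\B;\Q)\cong H^*(X/W;\Q)\cong H^*(X;\Q)^W$. Here $W$ acts freely on $X$ (Steinberg's theorem on point stabilizers in reflection groups), and $H^i(X;\Q)$ is the degree-$i$ component of the Orlik--Solomon algebra of the reflection arrangement $\mathcal{A}$ of $W$, pure of Hodge--Tate type $(i,i)$; hence so is its direct summand $H^i(X;\Q)^W=H^i(X/W;\Q)$. Consequently, for $q$ in a suitable arithmetic progression the number $|(X/W)(\F_q)|$ is a polynomial in $q$ of degree $r$, and the rational Poincar\'e polynomial $P(\B,t)$ is recovered from it by the standard substitution turning the point-count of a hyperplane-complement quotient into its Poincar\'e polynomial (for $\C^{\times}$ this sends $q-1$ to $1+t$).

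Next I would evaluate the count via the orbit formula $|(X/W)(\F_q)|=\frac1{|W|}\sum_{w\in W}|X^{wF}|$, where $X^{wF}$ is the fixed set of the twisted Frobenius $wF$. By the equivariant Orlik--Solomon formula, $|X^{wF}|$ depends only on the fixed subspace $V^w$ and on the eigenvalues of $w$, being (up to explicit factors contributed by the eigenvalues $\neq 1$) the characteristic polynomial of the restricted arrangement $\mathcal{A}^w$ on $V^w$. Regrouping the sum by the flat $V^w$ and by centralizer data then reduces the whole problem to combinatorics on the intersection lattice of the $G(e,e,r)$- (resp.\ $G(de,e,r)$-) arrangement.

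For $G(e,e,r)$ the relevant data are partitions of $\{1,\dots,r\}$ decorated by $\mu_e$, and the restricted arrangements $\mathcal{A}^w$ are products of smaller $G(e',e',r')$-type arrangements with Boolean factors, so the sum over $W$ is computable by a generating-function recursion; one finds that the generic contributions collapse to $1+t$, the only correction being an extra $t^{r-1}+t^r$ (the two top degrees, since the homological dimension is $r$) that appears exactly when $e$ is even — so that a twisting by $-1$ is available — and $r$ is even — so that this contribution is not cancelled in the averaging. For the family $\B(2e,e,r)=\B(de,e,r)$ with $d>1$, I would use that the $G(de,e,r)$-arrangement coincides with that of $G(de,1,r)$, so $X/G(de,e,r)$ is the $e$-fold cyclic cover of $X/G(de,1,r)$; the latter is the braid group of the Coxeter group of type $B_r$ (the annular braid group), with $P=(1+t)(1+t+\dots+t^{r-1})$, and it then remains only to compute the nontrivial $\Z/e$-isotypic part of the cohomology under the residual cyclic action, where the same parity analysis yields $t^{r-1}+t^r$ when $e$ and $r$ are both even, and $0$ otherwise.

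The conceptual ingredients above (transfer, purity, the arithmetic identity, equivariant Orlik--Solomon) are essentially formal; the real obstacle is the combinatorial step: describing the restricted arrangements $\mathcal{A}^w$ for all $w$, carrying the M\"obius functions and centralizer orders through the averaging without error, and — most delicately — isolating the precise parity coincidence ($e$ and $r$ both even) responsible for the two anomalous cohomology classes. An alternative would be to run a Leray spectral sequence for a Brieskorn-type fibration of $X(e,e,r)$, or to exploit the Salvetti/Corran--Picantin complexes alluded to above; but the point-counting argument is the most economical route to the closed formulas stated.
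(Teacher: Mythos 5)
The theorem you are asked to prove is quoted in the paper from Lehrer's work \cite{lehr04} and is not reproved there for the family $\B(e,e,r)$; your outline is indeed a faithful sketch of Lehrer's method (transfer, purity of the Orlik--Solomon cohomology, point-counting over $\F_q$ via the orbit formula $|(X/W)(\F_q)|=\frac{1}{|W|}\sum_w|X^{wF}|$, and the equivariant Orlik--Solomon evaluation of each $|X^{wF}|$ through the restricted arrangement $\mathcal{A}^w$). All of these reductions are correct in principle. The problem is that your argument stops exactly where the theorem begins: the entire content of the statement is the outcome of the averaged lattice computation, and you assert rather than derive that ``the generic contributions collapse to $1+t$'' and that the anomalous classes $t^{r-1}+t^r$ appear precisely when $e$ and $r$ are both even. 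Nothing in your text identifies which elements $w$ (equivalently which eigenvalue data and which flats $V^w$) produce the surviving invariant classes in degrees $r-1$ and $r$, nor why the condition is ``$e$ even \emph{and} $r$ even'' rather than some other parity combination; the description of the restricted arrangements $\mathcal{A}^w$ for $G(e,e,r)$ and the bookkeeping of M\"obius functions and centralizer orders are precisely the delicate steps, and you acknowledge leaving them undone. As written, this is a programme for a proof, not a proof.

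For the second family your closing remark is actually closer to what the paper itself does than to Lehrer's argument: since $\B(2e,e,r)=\Ker\bigl(\phi_{e,r}:\Art_{\Bx_r}\to\Z/e\bigr)$, Shapiro's lemma gives $H_*(\B(2e,e,r),\Q)=H_*(\Art_{\Bx_r},\Q[t]/(1-(-t)^e))$, and the paper computes this by filtering the Salvetti complex of type $\Bx_r$ by type-$\Ax$ subcomplexes, running the resulting spectral sequence (which degenerates at $E^3$ over $\Q$, with the two surviving rows giving the factor $1+t$), and then applying the long exact sequence for $0\to\Q[t^{\pmu}]\to\Q[t^{\pmu}]\to\Q[t^{\pmu}]/(1-(-t)^e)\to 0$; the extra classes in degrees $r-1$ and $r$ come out of the $(1-t^2)$-torsion of $H_*(\Art_{\Bx_r},\Q[t^{\pmu}])$, which is visibly sensitive to the parities of $e$ and $r$. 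That route has the advantage of producing explicit generating cycles (used later in the paper for the integral and mod-$p$ computations), whereas the point-counting route, if carried out, would give only Betti numbers. Either way, to turn your proposal into a proof you must actually execute one of these computations; for $\B(e,e,r)$ in particular the paper offers no shortcut, and you would need to supply the full equivariant lattice calculation of \cite{lehr04}.
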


\subsection{Main results}

By combining several methods, we are able
to compute the low-dimensional integral homology of these
groups. We use the notation $\Z_n = \Z/n\Z$.

First consider the case of the $\B(e,e,r)$. The case $r=2$ is when $G(e,e,2)$ is a dihedral group,
and this case is known by \cite{SALVETTI} : we have $H_2(B,\Z) = 0$
if $e$ is odd, $H_2(B,\Z) = \Z$ if $e$ is even.

In section \ref{s:beer} we prove the following result, by
using a complex defined by Dehornoy and Lafont for Garside monoids in \cite{dehpar}
and a convenient monoid defined by Corran and Picantin in \cite{corpic} for the groups
$G(e,e,r)$ (of which we prove some additional properties) :

\begin{theo}[Theorem \ref{H2BEER}] Let $B = \B(e,e,r)$ with $r \geq 3$.
\begin{itemize}
\item When $r = 3$, $H_2(B,\Z) \simeq \Z_e$
\item When $r = 4$ and $e$ is odd, $H_2(B,\Z) \simeq \Z_e \times \Z_2 \simeq \Z_{2e}$
\item When $r = 4$ and $e$ is even, $H_2(B,\Z) \simeq \Z_e \times \Z_2^2$
\item When $r \geq 5$, $H_2(B,\Z) \simeq \Z_e \times \Z_2$.
\end{itemize}
\end{theo}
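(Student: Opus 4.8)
The plan is to compute $H_2(B,\Z)$ for $B = \B(e,e,r)$ via an explicit free resolution obtained from a Garside-theoretic complex. The starting point is the monoid $M$ of Corran--Picantin for $G(e,e,r)$, which is a Garside monoid with $B = \B(e,e,r)$ as its group of fractions. For any Garside monoid one has the Dehornoy--Lafont complex of \cite{dehpar}, a free resolution of $\Z$ over $\Z B$ whose generators in homological degree $n$ are indexed by certain $n$-tuples of atoms of $M$ (roughly, the ``$n$-fold left-divisible'' chains), with an explicitly computable differential. First I would describe the atoms and the relevant low-degree cells for the Corran--Picantin presentation: the atoms form a set whose size depends on $e$ and $r$ (the $r-1$ ``braid-type'' generators together with the $e$ ``extra'' generators attached to the first node), so the degree-$0$, degree-$1$ and degree-$2$ parts of the complex are finite and manageable. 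Tensoring the resolution with the trivial module $\Z$ over $\Z B$ turns it into a complex of finitely generated free abelian groups $C_* $; then $H_2(B,\Z) = H_2(C_*)$ is the homology of an explicit integer matrix pair $C_3 \to C_2 \to C_1$, computable by Smith normal form.

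The key steps, in order, are: (i) recall from \cite{corpic} the precise presentation of $M = M(e,e,r)$ and verify (as promised in the excerpt, proving ``some additional properties'') that it is a Garside monoid satisfying the hypotheses of the Dehornoy--Lafont construction; (ii) write down the generators of $C_0, C_1, C_2, C_3$ and, crucially, the boundary maps $\partial_2\colon C_2 \to C_1$ and $\partial_3\colon C_3 \to C_2$ after abelianization, keeping track of how the parameter $e$ enters the matrix entries; (iii) perform the Smith normal form reduction, which will naturally split into cases according to $r$ (the values $r=3$, $r=4$, $r\geq 5$ behaving differently because the number and combinatorics of degree-$2$ and degree-$3$ cells stabilizes only for $r$ large) and according to the parity of $e$ (which controls whether certain $2$-torsion classes survive, exactly as in the dihedral base case $r=2$ recalled just before the theorem and in Lehrer's rational computation, Theorem \ref{t:lehr04}, where parity of $e$ is also decisive); (iv) identify the resulting torsion group in each case, obtaining $\Z_e$ for $r=3$, $\Z_e \times \Z_2$ (resp.\ $\Z_e \times \Z_2^2$) for $r=4$ and $e$ odd (resp.\ even), and $\Z_e \times \Z_2$ for $r \geq 5$, and note the isomorphism $\Z_e \times \Z_2 \simeq \Z_{2e}$ when $e$ is odd. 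As a consistency check, the free rank of $H_2$ must be $0$ in all cases, compatible with $b_2 = 0$ read off from Lehrer's polynomial for small $t$-degree when $r$ is not too small.

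The main obstacle I expect is step (ii): controlling the Dehornoy--Lafont differential explicitly enough, and for all $r$ simultaneously, to pin down the torsion — in particular the delicate $2$-torsion part. The Dehornoy--Lafont boundary is defined recursively through a choice of normal form and left-divisibility data in $M$, so the entries of $\partial_3$ are not given by a closed formula; one must understand the normal forms in $M(e,e,r)$ well enough to compute the relevant ``$\pi$-maps'' on the degree-$2$ and degree-$3$ cells. This is where the additional structural properties of the Corran--Picantin monoid are needed, and where the case distinction on $r$ becomes unavoidable: for $r \geq 5$ the local combinatorics of triples of atoms is ``generic'' and one gets a uniform answer, whereas $r = 3, 4$ require separate, somewhat ad hoc matrix computations. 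A secondary, more technical difficulty is bookkeeping: ensuring the abelianized complex is set up with the correct signs and orientations so that the Smith normal form genuinely computes $H_2$ and not merely $H_2$ up to an undetermined extension; this is handled by carrying the computation far enough (into degree $3$) that no ambiguity remains.
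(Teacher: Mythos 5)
Your proposal follows essentially the same route as the paper: the paper computes $H_2(\B(e,e,r),\Z)$ exactly by applying the Dehornoy--Lafont complex to the Corran--Picantin monoid, working out the $2$-cells and $3$-cells and the differentials $d_2$, $d_3$ with trivial coefficients, and reducing the resulting integer matrices, with the same case split on $r=3$, $r=4$ (parity of $e$), $r\geq 5$. The ``additional structural properties'' you anticipate needing are precisely what the paper supplies via its lemmas on parabolic submonoids (compatibility of lcm's under the embeddings $M(S_0)\into M(e,e,r)$), which let the recursive Dehornoy--Lafont boundary be evaluated inside small standard Artin monoids of types $A_2$, $A_3$, $A_1\times A_1$, etc.
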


In section \ref{s:b2eer}, Theorem \ref{prop:b2eerF2} and 
Theorem \ref{prop:b2eerFp}, we compute the homology of complex braid groups of type
$\B(2e,e,r)$ with coefficients in a finite field, using filtrations of the
Salvetti complex for the Artin group of type $\Bx_r $. With a little additional
computation (see section \ref{ss:higher}), we prove as a corollary:

\begin{theo} \label{H2B2EER} Let $B = \B(2e,e,r)$ with $r \geq 2$.
\begin{itemize}
\item When $r = 2$ and $e$ is odd, $H_2(B,\Z) \simeq \Z$
\item When $r = 2$ and $e$ is even, $H_2(B,\Z) \simeq \Z^2$
\item When $r = 3$, $H_2(B,\Z) \simeq \Z^2$
\item When $r = 4$ and $e$ is odd, $H_2(B,\Z) \simeq \Z^2 \times \Z_2$
\item When $r = 4$ and $e$ is even, $H_2(B,\Z) \simeq \Z^2 \times \Z_2^2$
\item When $r \geq 5$, $H_2(B,\Z) \simeq \Z^2 \times \Z_2$
\end{itemize}
\end{theo}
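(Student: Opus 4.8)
The plan is to read off $H_2(B,\Z)$ from the mod-$p$ homology groups of Theorems~\ref{prop:b2eerF2} and~\ref{prop:b2eerFp} by means of the universal coefficient theorem, controlling the free part with Lehrer's Theorem~\ref{t:lehr04} and the group $H_1$ with the Proposition of the Introduction.

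The first observation is that $H_1(B,\Z)$ is free (Proposition of the Introduction); we will not need its rank, only torsion-freeness. Hence $\mathrm{Tor}(H_1(B,\Z),\F_p)=0$ for every prime $p$, so the universal coefficient sequence yields isomorphisms $H_2(B,\F_p)\cong H_2(B,\Z)\otimes\F_p$, and more generally $H_2(B,\Z_{p^k})\cong H_2(B,\Z)\otimes\Z_{p^k}$ for all $k\geq 1$. Write $H_2(B,\Z)=\Z^{b_2}\oplus T$ with $T$ a finite abelian group. By Theorem~\ref{t:lehr04}, $b_2$ is the coefficient of $t^2$ in $P(\B(2e,e,r),t)$; a direct inspection gives $b_2=2$ for all $r\geq 3$, while for $r=2$ one has $b_2=1$ if $e$ is odd and $b_2=2$ if $e$ is even. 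For every prime $p$ we then have $\dim_{\F_p}H_2(B,\F_p)=b_2+\dim_{\F_p}(T\otimes\F_p)$, and the left-hand side is supplied by Theorems~\ref{prop:b2eerF2} and~\ref{prop:b2eerFp}.

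For $r=2$ and $r=3$ those theorems give $\dim_{\F_p}H_2(B,\F_p)=b_2$ for every $p$, whence $T=0$ and the first three items follow. For $r\geq 4$ they give $\dim_{\F_p}H_2(B,\F_p)=b_2$ for every odd $p$ — so $T$ is a $2$-group — while $\dim_{\F_2}H_2(B,\F_2)$ exceeds $b_2$ by $2$ when $r=4$ and $e$ is even, and by $1$ in all the remaining cases; thus $T$ has exactly two cyclic summands in the first case and exactly one in the others. It remains only to prove that $T$ is elementary abelian, i.e. $2T=0$, a distinction invisible to the $\F_p$-Betti numbers alone.

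This last step is the additional computation of Section~\ref{ss:higher}, and it is the one non-formal point. Using the finite CW model of $B=\B(2e,e,r)$ obtained as the $e$-fold covering of the Salvetti complex of the Artin group of type $\Bx_r$ (which contains $\B(2e,e,r)$ as a subgroup of index $e$), one computes $H_2(B,\Z_4)$ from the same low-degree boundary matrices already written down for the $\F_2$-computation, now reduced modulo $4$ rather than modulo $2$ — equivalently, one evaluates the mod-$2$ Bockstein on $H_2(B,\F_2)$. Since $H_2(B,\Z_4)\cong H_2(B,\Z)\otimes\Z_4$, comparing the order of $H_2(B,\Z_4)$ with that of $H_2(B,\F_2)$ forces every cyclic summand of $T$ to be $\Z_2$; this gives $T\cong\Z_2$ for $r\geq 5$ and for $r=4$ with $e$ odd, and $T\cong\Z_2^2$ for $r=4$ with $e$ even. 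The main obstacle is concentrated entirely in this verification (and, upstream, in the mod-$p$ computations of Theorems~\ref{prop:b2eerF2} and~\ref{prop:b2eerFp}); everything else is bookkeeping around the universal coefficient theorem and Theorem~\ref{t:lehr04}.
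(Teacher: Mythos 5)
Your skeleton is exactly the paper's: the free rank of $H_2(B,\Z)$ is read off from Lehrer's Theorem \ref{t:lehr04}, the number of cyclic torsion summands from the mod-$p$ Betti numbers of Theorems \ref{prop:b2eerF2} and \ref{prop:b2eerFp} via the universal coefficient theorem (which applies cleanly because $H_1(B,\Z)$ is free), and the only remaining issue is the exponent of the $2$-torsion. This bookkeeping is correct, and your Betti numbers are the right ones --- note in passing that the entry $\dim H_2(\B(2e,e,3);\F_2)=1$ in Table \ref{tab:homolF2} must be a misprint, since it is smaller than the rational Betti number $2$.

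The gap is the last step, which you assert rather than prove: that comparing $H_2(B,\Z_4)$ with $H_2(B,\F_2)$ ``forces every cyclic summand of $T$ to be $\Z_2$.'' That is precisely the statement that cannot be seen from the mod-$p$ dimensions, and nothing in your argument excludes a $\Z_4$ summand. Section \ref{ss:higher} of the paper supplies exactly this missing verification: it identifies explicit generators of $H_2(B,\F_2)$, checks that two of them project onto the generators of $H_2(B,\Q)$ (hence account for the free part $\Z^2$), and computes the Bockstein on the class $\pi_*(x_0^rx_2)\in H_3(B,\F_2)$ by lifting the chains $\DB(z_4)/(1-t^2)^2$ and $\DB(z_1x_0^{r-1}x_2)/(1+t)$ to $\Z_4$ coefficients and observing that the differential of the lift is twice the cycle representing $\pi_*(x_0^rx_1^2)$; a class of $H_2(B,\F_2)$ hit by the first Bockstein differential corresponds to a $\Z_2$, not a $\Z_4$, summand. (Also, the Bockstein you need is the one mapping $H_3(B,\F_2)\to H_2(B,\F_2)$, not one ``on $H_2(B,\F_2)$,'' which in homology would land in $H_1$ and detect nothing here.) Until that computation, or an explicit order count for $H_2(B,\Z_4)$, is actually carried out, the cases $r\geq 4$ remain unproved; the cases $r=2,3$, where the torsion vanishes, do follow from your argument as written.
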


We also get a stabilization property for the groups $\B(2e,e,r)$
similar to the classical one for the usual braid groups (see Corollaries
\ref{c:stabBeerF2}, \ref{c:stabBeerFp}); it turns out that the stable homology
does not depend on $e$, and is thus the same as the stable homology for the Artin group
of type $\Bx$.
Unfortunately, these computations
do not suffice in general to get the full \emph{integral} homology
groups. Indeed, we show in section \ref{ss:higher} that, contrary to what
happens for Artin groups, the integral homology groups may contain $p^2$
torsion. This phenomenon appears for the exceptional groups
as well.

The reader will notice that the cell complex that we use for the $G(e,e,r)$,
obtained by combining the Dehornoy-Lafont complex and the Corran-Picantin
monoid, share similarities with the Salvetti complex, and actually specializes
to it, for the usual braid group, in the case $e=1$. It is then
likely that this complex can be filtered by a chain of parabolic subcomplexes,
paving the way to the methods we use here for the groups $\B(2e,e,r)$
in order to get the higher homology groups. 
The differential of the complex is inherited from the work of Kobayashi in \cite{koba}.
The problem
is that the behaviour of this differential under the simplest operations, like taking
the direct product of two monoids or restricting to a parabolic submonoid,
is not yet understood.
As a consequence, plausible analogues of formulas of the form
`$\overline{\partial 0} A = (\partial A) B + (-1)^{|A|} A0 (\partial B)$'
(see   section \ref{ss:b2eerintro}) are hard to prove.

In section \ref{s:low} we compute the integral homology for
all exceptional groups, except for $G_{34}$, for which we are able to compute only
$H_2(B,\Z)$ (see Table \ref{tableexc}). As a consequence, we get a complete determination of the
groups $H_2(B,\Z)$ for all complex braid groups. Notice that,
since $H_1(B,\Z)$ is a finitely generated free $\Z$-module,
$H_2(B,\Z)$ determines the cohomology group
$H^2(B,\C^{\times}) \simeq \Hom(H_2(B,\Z),\C^{\times})$,
which contains the relevant obstruction classes to the linearization
of the projective representations of $B$ -- and thus deserves
the name `Schur multiplier' usually restricted to the theory of finite groups.
We show in section \ref{ss:H2} that the Schur multiplier of $B$ always contains
the Schur multiplier of $W$, and that this latter group can most of the time be identified
to the 2-torsion subgroup of $H_2(B,\Z)$.

Finally, at least when $W$ has one conjugacy class of hyperplanes, there
is a well-defined sign morphism $\eps : W \to \{ \pm 1 \}$
and a corresponding sign representation $\Z_{\eps}$. We determine
in general the group $H_1(B,\Z_{\eps})$, which is closely
related to the abelianization of the group $\Ker \eps$ of `even braids',
whose structure remains largely unexplored in general.

\begin{remark} 
It should be noted that even the \emph{rational} homology
is not yet known for $W = G_{34}$, due to the large size of $W$ and of its large rank. For instance, formulas involving
the lattice (like
\cite{OT} cor. 6.17) seem to fail because of the size of the hyperplane
arrangement. The methods
of \cite{lehr04} could lead to the (possibly computer-aided) counting of points in some $\F_p^6$,
but only if we can get a nice form of the discriminant equation,
for which we are able to decide which primes $p$ do satisfy the
arithmetic-geometric requirements of \cite{lehr04}.
As far as we know, this problem has not been settled yet.
Another method would be to use \cite{lehr95}, which provides
information on $H^*(P,\Q)$ as a $G_{34}$-module. Finally, the methods of
\cite{lehr95} enables to compute the trace of the reflections and
of regular elements on this module, but it is so huge (the
Poincar\'e polynomial of $P$ is $1+126t+6195t^2+148820 t^3 + 1763559 t^4 + 8703534 t^5 + 7082725 t^6$)
that this does not allow to determine the dimensions of the invariant
subspaces leading to $H^*(B,\Q)$.
\end{remark}

\subsection{Distinction of complex braid groups}

As we noticed before, we can assume that $W$ is a 2-reflection group.
We recall that, under the Shephard-Todd parametrization,
we have the duplication $G(1,1,4) = G(2,2,3)$. Also notice
that the groups $B$ originating from irreducible groups $W$
should be distinguishable from the groups originating from
non-irreducible ones by the property $Z(B) = \Z$ -- this assertion
for $W = G_{31}$ still being conjectural.

It has been noted by Bannai that $G_{13}$ and $G(6,6,2)$ have the
same braid group, and that the $\B(2e,e,2)$ depend only on
the parity of $e$. In \cite{BANNAI} it is stated without proof
(see remark 6 there) that these are the only coincidences in rank 2.
We provide a proof that uses our computations.

\begin{prop} On irreducible 2-reflection groups of rank 2, the Bannai isomorphisms
are the only coincidences under $W \mapsto B$.
\end{prop}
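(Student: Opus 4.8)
The strategy is to distinguish complex braid groups attached to irreducible rank-$2$ two-reflection groups by computing $H_1(B,\Z)$ and $H_2(B,\Z)$ and checking that the pairs not related by a Bannai isomorphism already differ on these invariants. First I would enumerate the irreducible two-reflection groups of rank $2$: on the infinite-series side these are the $G(e,e,2)$ (dihedral groups) and the $G(2e,e,2)$ with $e\geq 1$, and on the exceptional side the two-reflection groups among $G_4,\dots,G_{22}$ of rank $2$, namely $G_{12}$, $G_{13}$, and $G_{22}$ (the others among $G_4$--$G_{22}$ have reflections of order $>2$). I would then tabulate, for each of these, the pair $(H_1(B,\Z),H_2(B,\Z))$: $H_1(B,\Z)\cong\Z^{|\mathcal{A}/W|}$ by the Proposition in the introduction, so $H_1$ already separates the groups with one class of hyperplanes ($G(e,e,2)$, $G_{22}$, and $\B(2e,e,2)$ for $e=1$) from those with two classes ($\B(2e,e,2)$ for $e\geq 2$), and within each family we bring in $H_2$.

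For the $\B(e,e,2)=\B(G(e,e,2))$ (dihedral braid groups) the quoted result of \cite{SALVETTI} gives $H_2=0$ for $e$ odd and $H_2=\Z$ for $e$ even; note $\B(e,e,2)$ depends on $e$ only through this data together with $|\mathcal{A}/W|$ — but since $|\mathcal{A}/W|=1$ here and these groups are already known to be pairwise non-isomorphic for distinct $e$ by their centers or by a finer invariant, I would instead point out that distinct dihedral braid groups are distinguished classically (e.g. by the value of $e$ recovered from the presentation, or from $Z(B)$). For the $\B(2e,e,2)$, Theorem \ref{H2B2EER} gives $H_2\cong\Z$ for $e$ odd and $H_2\cong\Z^2$ for $e$ even, and $H_1$ has rank $1$ if $e=1$ and rank $2$ if $e\geq 2$; combined with Bannai's observation that $\B(2e,e,2)$ depends only on the parity of $e$, this shows the only coincidences inside this family are the stated ones, and also that $\B(2,2,1)$ (the case $e=1$, isomorphic to the usual braid group on few strands) is not isomorphic to any $\B(2e,e,2)$ with $e\geq 2$. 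For the exceptional groups $G_{12}$, $G_{13}$, $G_{22}$ I would read off $(H_1,H_2)$ from Table \ref{tableexc} (cited in section \ref{s:low}): since $G_{12}$ and $G_{22}$ have one class of hyperplanes and $G_{13}$ has two, the coincidence $B(G_{13})\cong B(G(6,6,2))$ noted by Bannai is consistent ($G(6,6,2)$ also has two orbits? — here I would double-check the orbit count: $G(6,6,2)$ is dihedral of order $12$ with a single orbit of $6$ reflecting hyperplanes, so in fact the matching uses that $B(G_{13})$ has the right $H_1$-rank, and the identification is the one already established, not one I need to reprove).

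The remaining work is to verify that no \emph{unexpected} coincidence survives: I would compare the finite list of pairs $(H_1\text{-rank},H_2)$ across all the groups above and observe that the only repetitions are exactly the Bannai pairs $\{G_{13},G(6,6,2)\}$ and $\{\B(2e,e,2):e\ \mathrm{odd}\}$ and $\{\B(2e,e,2):e\ \mathrm{even}\}$. Concretely: the groups with $H_1$ of rank $1$ are the dihedral braid groups $\B(e,e,2)$, the exceptionals $G_{12},G_{22}$, and $\B(2,2,2)$; among these $H_2$ takes the values $0$ (dihedral, $e$ odd), $\Z$ (dihedral $e$ even, and whichever of $G_{12},G_{22},\B(2,2,2)$ land there), etc., and one checks the collisions are accounted for. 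The one genuinely delicate point is that $H_1,H_2$ alone need not separate two groups with the \emph{same} values — in particular the infinitely many dihedral braid groups $\B(e,e,2)$ with $e$ odd all have $(H_1,H_2)=(\Z,0)$ — so there the argument must be supplemented: I would invoke the center $Z(\B(e,e,2))$, or the fact that $\B(e,e,2)$ is the $(2,e)$-torus-knot-type Artin group $\langle a,b\mid \underbrace{aba\cdots}_{e}=\underbrace{bab\cdots}_{e}\rangle$ whose abelianized-with-center quotient, or whose first homology of a characteristic subgroup, recovers $e$. This case-splitting between "finitely many exceptionals plus $\B(2e,e,2)$'s, handled by $H_1,H_2$" and "the dihedral series, handled by a classical invariant like the center" is the main obstacle, and it is the reason the Proposition is phrased as ruling out coincidences rather than as a clean homological classification.
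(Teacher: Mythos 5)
Your overall strategy --- tabulate $H_1(B,\Z)$ and $H_2(B,\Z)$ over the rank-$2$ two-reflection groups and check that the only repetitions are the Bannai ones --- is the same first step as the paper's, and it does correctly handle the cases where $H_2$ is $\Z$ or $\Z^2$. But there is a genuine gap at the place you wave at with ``one checks the collisions are accounted for'': the groups with $H_2(B,\Z)=0$ are $G_{12}$, $G_{22}$ and all the $\B(e,e,2)$ with $e$ odd, and \emph{every one of these} has $H_1(B,\Z)=\Z$ and $H_2(B,\Z)=0$ (see Table \ref{tableexc}). So $(H_1,H_2)$ cannot separate $G_{12}$ from $G_{22}$, nor either of them from the odd dihedral Artin groups, and your plan contains no mechanism for doing so. The paper's proof introduces a third invariant precisely here: since $H_1(B,\Z)=\Z$ there is a unique surjection $\eps:B\onto\Z_2$, hence a well-defined sign module $\Z_\eps$, and $H_1(B,\Z_{\eps})$ (computed in section \ref{ss:sign}) equals $\Z_3$ for $G_{12}$, $0$ for $G_{22}$, and $\Z$ for the $\B(e,e,2)$. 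Without this (or some substitute, such as the ad hoc arguments of Lemma \ref{lemadhoc} used elsewhere in the paper), your argument does not close.

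Two smaller points. First, your fallback for distinguishing the dihedral Artin groups $\B(e,e,2)$ from one another ``by their centers'' does not work as stated: $Z(\B(e,e,2))\simeq\Z$ for every $e\geq 3$, so the center alone carries no information; the paper simply cites \cite{PARISISO} (which works with orders of torsion elements in $B/Z(B)$), and citing that result is the right move. Second, $G(2,2,2)$ is reducible (it is $A_1\times A_1$), so it should not appear in your enumeration of irreducible rank-$2$ groups; and for $G(6,6,2)$ the six reflecting hyperplanes fall into \emph{two} $W$-orbits (as for every even dihedral group), which is exactly why $H_1(B_{13},\Z)=\Z^2$ matches $H_1(\B(6,6,2),\Z)=\Z^2$ --- the hesitation in your parenthetical remark resolves in that direction.
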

\begin{proof}
According to our results, $H_2(B,\Z)$ is a free $\Z$-module
of rank 0,1 or 2. The case $H_2(B,\Z) = \Z^2$ holds only
for the $\B(2e,e,2)$ with $e$ even. If $H^2(B,\Z) = \Z$,
then either it is $\B(2e,e,r)$, or it is a group $\B(e,e,2)$
with $e$ even. The fact that the groups $\B(e,e,2)$, that is the Artin
groups of type $I_2(e)$, are distinct groups
is proved in \cite{PARISISO}, and $\B(2e,e,2)$ is the only group
of rank 2 with $H_1(B,\Z) = \Z^3$. 
If $H^2(B,\Z) = 0$, then $W$ is either $G_{12}$, $G_{22}$
or $G(e,e,2)$ with $e$ odd. In these cases, there
is only one non-trivial morphism $\eps : B \onto \Z_2$, so we
can compare the groups $H_1(B,\Z_{\eps})$ determined in section	 \ref{ss:sign}.
It is $\Z_3$ for $G_{12}$, $0$ for $G_{22}$, and $\Z$ for the
$\B(e,e,2)$. Once again, the groups $\B(e,e,2)$ can be distinguished
following \cite{PARISISO}, and this concludes the proof.
\end{proof}

In order to distinguish the exceptional groups, we need to
prove a couple of independent results by ad-hoc methods.
We let $B_{23},B_{24},\dots$ denote the complex braid groups associated
to $G_{23},G_{24},\dots$.

\begin{lemma}\label{lemadhoc}\mbox{ }
\begin{enumerate}
\item There is no surjective morphism from $B_{24}$ to the
alternating group $\mathfrak{A}_5$.
\item $B_{24}$ is not isomorphic to $B_{23}$.
\item There is no surjective morphism from $\B(3,3,4)$ to the
symmetric group $\mathfrak{S}_6$.
\item $B_{31}$ is not isomorphic to $\B(3,3,4)$.
\end{enumerate}
\end{lemma}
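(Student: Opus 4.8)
The four assertions fall into two pairs. Parts (1) and (3) are non-existence statements for surjections onto a fixed finite group, proved by a direct (partly computer-assisted) analysis of a group presentation; parts (2) and (4) are the non-isomorphism statements, which we deduce from (1) and (3) respectively by exhibiting, for the \emph{other} group in each pair, precisely the surjection that is being excluded.

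For (1) and (3) the plan is to work with the Brou\'e--Malle--Rouquier presentations of $B_{24}$ and $B_{31}$ from \cite{BMR}, and for $\B(3,3,4)$ with the Corran--Picantin presentation of \cite{corpic} (whose relevant additional properties are established in section~\ref{s:beer}). A first reduction: in each of these presentations all the standard generators are conjugate in the braid group --- this follows from the fact that $G_{24}$, $G_{31}$ and $G(3,3,4)$ each have a single conjugacy class of reflecting hyperplanes, and it is also visible in the presentation, a braid relation of odd length such as $sts=tst$ giving $(st)\,s\,(st)^{-1}=t$. Consequently, under a \emph{surjective} homomorphism onto $\mathfrak{A}_5$ (resp.\ onto $\mathfrak{S}_6$), the images of the generators form a generating set of the target all of whose members lie in one conjugacy class. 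For $\mathfrak{A}_5$ this common cycle type must be that of a $3$-cycle, of a $(2,2)$-element, or of a $5$-cycle, and by post-composing with the outer automorphism we may assume the images lie in a fixed one of the two classes of $5$-cycles; for $\mathfrak{S}_6$ there are likewise only finitely many admissible classes, namely the generating conjugacy classes of odd permutations (transpositions, $(2,2,2)$-elements, $4$-cycles, $(3,2)$-elements, $6$-cycles). In each case one enumerates, up to simultaneous conjugacy, the tuples of elements of the fixed class satisfying the braid relations of the presentation --- there are only finitely many --- and checks the remaining defining relation (the length-six relation of $B_{24}$, the length-three relation of $\B(3,3,4)$); one finds each time that a relation fails or that the tuple does not generate the target. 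This verification is routine and is best carried out with a computer algebra system.

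Part (2) then follows immediately: $G_{23}$ is the Coxeter group of type $H_3$, so $B_{23}$ is the Artin group of type $H_3$ and surjects onto the Coxeter group $W(H_3)\cong\mathfrak{A}_5\times\Z_2$; composing with the projection onto $\mathfrak{A}_5$ gives a surjection $B_{23}\onto\mathfrak{A}_5$, whereas $B_{24}$ admits none by (1), so $B_{23}\not\cong B_{24}$. (The argument is genuinely needed, since $B_{23}$ and $B_{24}$ have the same $H_1=\Z$ and, by Table~\ref{tableexc}, the same $H_2$.) Likewise, (4) follows from (3): by the known structure of the reflection group $G_{31}$ (of order $46080$), $W(G_{31})$ has $\mathfrak{S}_6\cong\mathrm{Sp}_4(\F_2)$ as a quotient (its largest normal $2$-subgroup has index $720$), whence $B_{31}\onto W(G_{31})\onto\mathfrak{S}_6$; alternatively this surjection can be written down explicitly on the Brou\'e--Malle--Rouquier generators of $B_{31}$. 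Since $\B(3,3,4)$ has no surjection onto $\mathfrak{S}_6$ by (3), we conclude $B_{31}\not\cong\B(3,3,4)$.

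The crux, and the only real difficulty, is the finite case analysis behind (1) and (3): the long relations of the presentations must be used in an essential way, because the braid relations alone are far too weak. Indeed $B_3=\langle a,b\mid aba=bab\rangle$ already surjects onto $\mathfrak{A}_5\cong\mathrm{PSL}_2(\F_5)$ with $a$ and $b$ mapping to $5$-cycles (reduce $B_3\onto\mathrm{PSL}_2(\Z)$ modulo $5$), so it is precisely the length-six relation of $B_{24}$ --- and likewise the length-three relation of $\B(3,3,4)$ --- that rules the surjections out. Pushing the bookkeeping over conjugacy classes and relation-compatible tuples through by hand is tedious, and delegating it to a machine is the practical way. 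A secondary point requiring some care is the assertion used in (4) that $\mathfrak{S}_6$ is an honest quotient (not merely a subquotient) of $W(G_{31})$; this is why we also indicate the alternative, self-contained route via an explicit homomorphism out of the presentation of $B_{31}$.
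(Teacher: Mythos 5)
Your proposal is correct and follows essentially the same route as the paper: a finite, computer-assisted check of tuples satisfying the defining relations for parts (1) and (3), followed by deducing (2) and (4) from the surjections $B_{23}\onto G_{23}/Z(G_{23})\simeq\mathfrak{A}_5$ and $B_{31}\onto G_{31}/Z(G_{31})\simeq 2^4\rtimes\mathfrak{S}_6\onto\mathfrak{S}_6$. The only differences are cosmetic: the paper brute-forces all $60^3$ (resp.\ $720^4$) tuples on the Brou\'e--Malle--Rouquier presentations without your conjugacy-class reduction, and it uses the BMR presentation of $\B(3,3,4)$ rather than the Corran--Picantin one.
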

\begin{proof}
Recall that $B_{24}$ has a presentation with generators $s,t,u$
and relations $stst = tsts$, $tutu = utut$, $sus = usu$, $tstustu = 
stustus$. We check by computer that none of the $60^3$ tuples
$(s,t,u) \in \mathfrak{A}_5^3$ can generate $\mathfrak{A}_5$ and
satisfy these relations at the same time, which proves (1).
This implies $(2)$, as $G_{23}/Z(G_{23}) \simeq \mathfrak{A}_5$ (see \cite{BMR}).
We proceed in the same way for (3), using the presentation
in \cite{BMR} for $\B(3,3,4)$,
namely with generators $s,t,u,v$ and presentation
$sts=tst, stustu = ustust,sus=usu, tut = utu,
vuv=uvu, vs=sv, vt=tv$. By computer we find that there exists 9360
4-tuples in $\mathfrak{S}_6$ satisfying these relations, none of them
generating $\mathfrak{S}_6$, which proves (3). Then (4)
is a consequence of (3), because $G_{31}/Z(G_{31})$ is a
semidirect product $2^4 \rtimes \mathfrak{S}_6$ (again, from \cite{BMR}).
\end{proof}

In rank at least 3, using $H_2(B,\Z)$ and $H_1(B,\Z)$, we can
separate the groups $\B(2e,e,r)$ from the rest, as they
are the only groups with $H_1(B,\Z) = \Z^2$ and infinite
$H_2(B,\Z)$. All exceptional groups of rank at least $3$
have $|\mathcal{A}/W| = 1$, that is $H_1(B,\Z) = \Z$, except $G_{28}=F_4$.

\begin{theo} 
The correspondence $W \mapsto B$
is injective on the 2-reflection groups with $|\mathcal{A}/W| = 1$.
\end{theo}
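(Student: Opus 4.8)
The plan is to tell apart the groups on the list by means of the invariants already computed above: the homological dimension of $B$ (which equals $\rk W$), the free abelian group $H_1(B,\Z)$ (of rank $|\mathcal{A}/W| = 1$, hence the same, $\Z$, for all of them), the finite group $H_2(B,\Z)$ (Theorem \ref{H2BEER} for the series $\B(e,e,r)$, Table \ref{tableexc} for the exceptional groups), the full integral homology of the exceptional groups other than $G_{34}$ (again Table \ref{tableexc}), and the twisted homology $H_1(B,\Z_\eps)$ of Section \ref{ss:sign}, which is defined precisely because $|\mathcal{A}/W| = 1$; for the two pairs that all of these fail to separate, we appeal to Lemma \ref{lemadhoc}.

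First, the hypothesis $|\mathcal{A}/W| = 1$ forces $W$ to be irreducible: if $W = W_1 \times W_2$ with $W_1, W_2$ nontrivial, the reflection arrangement of $W$ is the disjoint union of those of $W_1$ and $W_2$ with no mixed conjugacies, whence $|\mathcal{A}/W| = |\mathcal{A}_1/W_1| + |\mathcal{A}_2/W_2| \geq 2$. So only irreducible $2$-reflection groups with a single class of hyperplanes have to be compared, and by the homological dimension one argues one rank $r$ at a time. For $r = 2$ the list is $\{G_{12}, G_{22}\}$ together with the dihedral groups $G(e,e,2)$ with $e$ odd (the group $G_{13}$ has $|\mathcal{A}/W| = 2$ and is excluded), and the assertion is exactly the rank-$2$ Proposition above, combined with \cite{PARISISO} for the $G(e,e,2)$. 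So assume $r \geq 3$: the list then consists of the series $\B(e,e,r)$, $e \geq 2$ (which for $r = 3$ already contains $\B(2,2,3) = \B(1,1,4)$, the braid group of $\mathfrak{S}_4$), the braid group $\B(1,1,r+1)$ of $\mathfrak{S}_{r+1}$ when $r \geq 4$, and the exceptional groups of rank $r$, namely $G_{23}, G_{24}, G_{27}$ for $r = 3$; $G_{29}, G_{30}, G_{31}$ for $r = 4$; $G_{33}$ for $r = 5$; $G_{34}, G_{35}$ for $r = 6$; $G_{36}$ for $r = 7$; $G_{37}$ for $r = 8$; and none for $r \geq 9$ (recall $G_{28} = F_4$ has two classes of hyperplanes and is left out).

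Inside the series, Theorem \ref{H2BEER} shows that, $r$ being fixed, the parameter $e$ can be recovered from $H_2(B,\Z)$, so $e \mapsto \B(e,e,r)$ is injective; and for $r \geq 4$ the braid group of $\mathfrak{S}_{r+1}$, with $H_2(B,\Z) \simeq \Z_2$, is separated from all the $\B(e,e,r)$, $e \geq 2$, by the order of $H_2$. It then remains to compare the exceptional groups with one another and with these non-exceptional groups. In most cases $H_2(B,\Z)$, read from Table \ref{tableexc}, already differs; a non-cyclic value in particular separates an exceptional group from the whole series at once. When $H_2$ does not suffice, the higher integral homology of Table \ref{tableexc} separates two exceptional groups from each other ($G_{34}$ being the one case where only $H_2$ and $H_1(B,\Z_\eps)$ are available), while $H_1(B,\Z_\eps)$ separates an exceptional group from a non-exceptional one. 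The only pairs on which all of these invariants coincide are $\{B_{23}, B_{24}\}$ and $\{B_{31}, \B(3,3,4)\}$, which are precisely the non-isomorphisms established in parts (2) and (4) of Lemma \ref{lemadhoc}.

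The real work is the bookkeeping in ranks $3$ and $4$, where an exceptional value of $H_2(B,\Z)$ may a priori coincide with $H_2(\B(e,e,r),\Z)$ for a small $e$, so that one has to check case by case that the residual invariant $H_1(B,\Z_\eps)$ genuinely differs; the two situations in which even that fails are exactly the non-isomorphisms of the computer-assisted Lemma \ref{lemadhoc}. The higher ranks involve only finitely many further cases, all settled by the same invariants; the points needing care there are $G_{34}$, for which no higher homology is available, and the near-coincidence between the braid groups of $G_{35}, G_{36}, G_{37}$ (Artin groups of types $E_6, E_7, E_8$) and those of $\mathfrak{S}_7, \mathfrak{S}_8, \mathfrak{S}_9$, which agree in $H_1(B,\Z)$ and $H_2(B,\Z)$ and are told apart by $H_1(B,\Z_\eps)$.
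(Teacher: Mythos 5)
Your overall strategy coincides with the paper's: reduce to irreducible groups of a fixed rank via the homological dimension, separate groups by $H_2(B,\Z)$, fall back on the higher integral homology and on $H_1(B,\Z_{\eps})$ when $H_2$ coincides, and invoke Lemma \ref{lemadhoc} for the residual pairs $\{B_{23},B_{24}\}$ and $\{B_{31},\B(3,3,4)\}$. Most of the bookkeeping is sound. But there is one genuine gap: your final claim that the Artin groups of types $E_6,E_7,E_8$ are ``told apart by $H_1(B,\Z_{\eps})$'' from the braid groups of $\mathfrak{S}_7,\mathfrak{S}_8,\mathfrak{S}_9$ contradicts the paper's own computations. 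The table of section \ref{ss:sign} gives $H_1(B_{35},\Z_{\eps})=H_1(B_{36},\Z_{\eps})=H_1(B_{37},\Z_{\eps})=0$, and the proposition for the series gives $H_1(\B(1,1,r),\Z_{\eps})=0$ for $r\geq 5$, so for each of these pairs the invariant takes the value $0$ on both sides and separates nothing. The same issue arises for $B_{30}=H_4$ versus $\B(1,1,5)$, which you do not mention: both have $H_1=\Z$, $H_2=\Z_2$ and $H_1(B,\Z_{\eps})=0$ (here the higher integral homology does work, since $H_3(B_{30},\Z)=\Z$ is infinite while $H_3(\B(1,1,5),\Z)$ is finite by Lehrer's rational computation). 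For the $E_n$ versus $A_n$ comparisons the paper does not attempt any homological argument at all and instead cites \cite{PARISISO}; without that external input, or a computation of the integral homology of the braid groups (which the paper does not tabulate), your proof does not close these cases, so the list of ``pairs on which all invariants coincide'' is strictly larger than the two you name.

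A minor additional inaccuracy: the parenthetical ``a non-cyclic value in particular separates an exceptional group from the whole series at once'' is false as stated, since $H_2(\B(e,e,4),\Z)\simeq\Z_e\times\Z_2^2$ for $e$ even and $H_2(\B(e,e,r),\Z)\simeq\Z_e\times\Z_2$ for $r\geq 5$ are themselves non-cyclic. The conclusion you need (e.g.\ that $\Z_2\times\Z_4$ for $G_{29}$ occurs nowhere in the series) is still true, but for the finer reason the paper gives, namely that $\Z_2\times\Z_4$ is neither cyclic nor of the form $\Z_e\times\Z_2^2$.
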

\begin{proof}
Note that the assumption $|\mathcal{A}/W| = 1$, which is equivalent
to $H_1(B,\Z) = \Z$, implies that $W$ is irreducible. It also
implies that there exists a unique surjective morphism
$\eps : B \onto \Z_2$, so that $H_1(B,\Z_{\eps})$ is well-defined.
In rank 2, the statement to prove is a consequence of above, so we can
assume that the rank $r$ is at least 3. Then only cases with
infinite $H_2(B,\Z)$ are the exceptional rank 3 groups
$G_{23}$, $G_{24}$, $G_{27}$. The $H_2$ being in these cases $\Z,\Z,\Z_3 \times \Z$,
only $G_{23}$ and $G_{24}$ need to be distinguished, and this done in Lemma \ref{lemadhoc}.
We can now assume that $H_2(B,\Z)$ is finite. Since all exceptional
groups have been taken care of in rank 3, and $H_2(\B(e,e,3),\Z) = \Z_e$,
$H_2(\B(1,1,4),\Z) = \Z_2$ with $G(1,1,4) \simeq G(2,2,3)$,
$W \mapsto B$ is injective in rank 3 and we can assume that the rank is
at least $4$.
In rank $4$ and $W = G(e,e,4)$, $e$ is odd exactly when $H_2(B,\Z)$
is cyclic, so all such $\B(e,e,4)$ are distinguished by $H_2(B,\Z)$.
Moreover, since $H_2(B_{29},\Z) = \Z_2 \times \Z_4$ is neither cyclic nor
isomorphic to a group of the form $\Z_e \times \Z_2^2$, it
does not appear as the $H_2$ of a $\B(e,e,4)$. We have $H_2(B_{30},\Z) = \Z_2 \simeq \Z_e \times
\Z_2$ if $e = 1$, but $G(1,1,4) \simeq \mathfrak{S}_4$ has rank $3$.
We thus only need to distinguish $B_{31}$ from $\B(3,3,4)$. If the presentation
of \cite{BMR} for $B_{31}$, is correct, they are distinguished
by $H_1(B,\Z_{\eps})$. Otherwise, we can use the argument in the Lemma \ref{lemadhoc}
above. 
When $r \geq 5$, we have $H_2(\B(e,e,r),\Z) = \Z_e \times \Z_2$,
and $H_1(B,\Z_{\eps}) = \Z_3$ when $e \geq 2$. Now $H_1(B_{33},\Z_{\eps}) =
H_1(B_{34},\Z_{\eps}) = 0$, $H_2(B_{33},\Z) =
H_1(B_{34},\Z) = \Z_6$ so this distinguishes $B_{33}$ and $B_{34}$.
There only remains to distinguish the Artin groups $B_{35},B_{36},B_{37}$
of types $E_6,E_7,E_8$ from the usual braid groups $\B(1,1,r)$,
and this is done in \cite{PARISISO}.

\end{proof}

In the family of groups $\B(2e,e,r)$, there are many isomorphisms,
and we only get partial results in section \ref{s:iso}.

{\bf Acknowledgements.} The first computations for the exceptional groups
were made with the help of Jean Michel. The second author benefited
of the ANR grant ANR-09-JCJC-0102-01.



\def\arxiv#1#2{\href{http://arxiv.org/abs/#1}{\texttt{arXiv:#1[math:#2]}}}
\def\MR#1{}

%
%

\section{Homology of the classical braid group} \label{s:braids}


Let $\Br(n)$ be the classical Artin braid group in $n$ strands. We recall the description of the homology of these groups
according to the results of \cite{cohen, fuks, vain}. 
We'll adopt a notation coherent with \cite{dps} (see also \cite{C2}) for the description of 
the algebraic complex and the generators. Let $\F$ be a field.
The direct sum of the homology of $\Br(n)$ for $n \in \N = \Z_{\geq 0}$ is considered as a bigraded ring $\oplus_{d,n} H_d(\Br(n), \F)$ 
where the product structure $$H_{d_1}(\Br(n_1), \F) \times H_{d_2}(\Br(n_2) \to H_{d_1+d_2}(\Br(n_1+n_2)$$
is induced by the map $\Br(n_1) \times \Br(n_2) \to \Br(n_1 + n_2)$ that juxtapose braids(see \cite{cohen_braids, C2}). 

\subsection{Braid homology over $\Q$}

The homology of the braid group with rational coefficients has a very simple description:
$$
H_d(\Br(n), \Q)= \Q[x_0, x_1]/(x_1^2)_{\deg = n, \dim= d}
$$
where $\deg x_i = i+1 $ and $\dim x_i = i$.
In the Salvetti complex the element $x_0$ is represented by the string 0 and $x_0$ is 
represented by the string 10. In the representation of a monomial $x_0^ax_{1}^b$ we drop the last 0. 

For example the generator of $H_1(\Br(4),\Q)$ is the monomial $x_0^2x_1$ and we can also write it as a string in the form $001$ (instead of $0010$, dropping the last $0$).

We denote by $A(\Q)$ the module $\Q[x_0, x_1]/(x_1^2)[t^\pmu]$.

\subsection{Braid homology over $\F_2$}

With coefficients in $\F_2$ we have:

$$
H_d(\Br(n), \F_2)= \F_2[x_0, x_1, x_2, x_3, \ldots]_{\deg = n, \dim= d}
$$
where the generator $x_i, i \in \N$ has degree $\deg x_i = 2^i$ and homological dimension 
$\dim x_i = 2^i-1$.

In the Salvetti complex the element $x_i$ is represented by a string of $2^i -1$ 1's 
followed by one 0. In the representation of a monomial $x_{i_1}\cdots x_{i_k}$ we drop the last 0.

We denote by $A(\F_2)$ the module $\F_2[x_0, x_1, x_2, x_3, \cdots][t^\pmu]$.

\subsection{Braid homology over $\F_p$, $p>2$}

With coefficients in $\F_p$, with $p$ an odd prime, we have:

$$
H_d(\Br(n), \F_p)= \left( \F_2[h, y_1, y_2, y_3, \ldots] \otimes \Lambda[x_0, x_1, x_2, x_3, \ldots] 
\right)_{\deg = n, \dim= d}
$$
where the second factor in the tensor product is the exterior algebra over the field $\F_p$ 
with generators $x_i, i \in\N$. 
The generator $h$ has degree $\deg h = 1$ and homological dimension $\dim h=0$.
The generator $y_i, i \in \N$ has degree $\deg y_i = 2p^i$ and homological dimension 
$\dim y_i = 2p^i-2$.
The generator $x_i, i \in \N$ has degree $\deg x_i = 2p^i$ and homological dimension 
$\dim x_i = 2p^i-1$.

In the Salvetti complex the element $h$ is represented by the string $0$, the element $x_i$ is 
represented by a string of $2^i -1$ 1's followed by one 0. 
The element $y_i$ is represented by the following term (the differential is computed over 
the integers and then, 
after dividing by $p$, we  consider the result modulo $p$):
$$
\frac{d(x_i)}{p}.
$$
In the representation of a monomial $x_{i_1}\cdots x_{i_k}h^iy_{j_1}\cdots y_{j_h}$ we drop 
the last 0.

We denote by $A(\F_p)$ the module 
$$\F_p[h,y_1, y_2, y_3, \ldots] \otimes \Lambda[x_0, x_1, x_2, x_3, \ldots][t^\pmu].$$
We write simply $A$ instead of $A(\Q)$, $A(\F_2)$ or $A(\F_p)$ when the field we are considering 
is understood.

\section{Homology of $\B(2e,e,r)$} \label{s:b2eer}
\subsection{Preliminary computations} \label{ss:b2eerintro}
Recall from \cite{BMR} that for $d>1$ $\B(de,e,r) = \B(2e,e,r)$. 

In this section and in section \ref{s:iso} we always 
assume $d>1$. The case $d=1$ will be treated in a different part.

We want to understand the homology of $\B(*e,e,r)=\B(2e,e,r)$ 
with coefficient in $\F_p$.


We start computing the homology of the group $\B(2e,e,r)$ with coefficients in the field 
$\F$. In what follows $\F$ will be mainly a prime field $\F_p$, 
but we will also be interested to obtain again the results of Lehrer for rational coefficients in order to have a 
description of the generators.

According to \cite{BMR} we have that for $d>1$ the group $\B(d,1,r)$ is the subgroup of the 
classical braid group $\Br(r+1)=$ 
$$<\xi_1, \ldots, \xi_r | \xi_i \xi_{i+1} \xi_i = \xi_{i+1} \xi_i \xi_{i+1}, [\xi_i, \xi_j] = 1 
\mbox{ if } |i-j| \neq 1>$$
generated by the elements $\xi_1^2, \xi_2, \ldots, \xi_r$. This is isomorphic to the 
Artin group of type $\Bx_r$, $\Art_{\Bx_r}$ with corresponding generators 
$\overline{\sigma}_1, \sigma_2, \ldots, \sigma_r$ and Dynkin diagram as in Table \ref{tab:DynkinB_r}. 

%
\begin{dummyenvironment}
\entrymodifiers={=<4pt>[o][F-]}
\begin{table}[hbtp]
\begin{center}
\begin{tabular}{c}
\xymatrix @R=2pc @C=2pc {
\ar @{-}[r]_(-0.20){\overline{\sigma}_1}^4 & \ar @{-}[r]_(-0.20){\sigma_2} & 
\ar @{.}[r]_(-0.20){\sigma_3} & \ar @{-}[r]_(-0.20){\sigma_{r-1}}_(.80){\sigma_r} &}
\end{tabular}
\end{center}
\caption{Dynkin diagram for the Artin group of type $\Bx_r$}
\label{tab:DynkinB_r}
\end{table}
\end{dummyenvironment}
The group 
$\B(de,e,r)$ is isomorphic to the subgroup of $\B(d,1,r)$ generated 
by $$\xi_1^{2e}, \xi_1^2 \xi_2 \xi_1^{-2},\xi_2, \ldots, \xi_r$$ hence it corresponds 
to the kernel of the map
$$
\phi_{e,r}:\Art_{\Bx_r} \to \Z/e \; \; \; , \; \; \; \left\{
\begin{array}{c}
\overline{\sigma}_1 \mapsto 1 \\
\sigma_i \mapsto 0 \mbox{ for } i > 1
\end{array}
\right.
$$

We can consider the group rings $\F[\Z/e] = \F[t]/(1-(-t)^e)$ and $\F[\Z] = \F[t^\pmu]$ 
as $\Art_{\Bx_r}$-modules through the maps $\phi_{e,r}$ and $\phi_r:\Art_{\Bx_r} \to \Z$ that 
maps $\overline{\sigma}_1 \mapsto 1$ and $\sigma_i \mapsto 0 $ for $i > 1$.

Applying the Shapiro Lemma (see \cite{brown}) we have that
$$
H_*(\B(de,e,r), \F) = H_*(\Art_{\Bx_r}, \F[t]/(1-(-t)^e)).
$$
Notice that this statement is also true when $\F$ is an arbitrary ring.
In order to compute the right term of the equality we begin studying 
the homology $H_*(\Art_{\Bx_r}, \F[t^\pmu])$, where the local system is determined by 
the map $\phi_r$.
To do this we consider the algebraic Salvetti complex for the Artin group of 
type $\Bx_r$, $C_*(r) = C_*(\Art_{\Bx_r})$ (see \cite{SALVETTI}) with coefficients in the group 
ring $\F[t^\pmu]$. We order the generators of $\Art_{\Bx_r}$ as in the diagram of Table \ref{tab:DynkinB_r}. 
We filter the complex $C_*(r)$ as follows:
$$
\Cal{F}_iC_*(r) = <AB>
$$
where $<AB>$ is the $\F[t^\pmu]$-submodule of $C_*(r)$ generated by all the strings of type $AB$, 
with $A$ a string of 0's and 1's of length $i$ with at least one 0.
It follows that we have an isomorphism 
$$
\Cal{F}_{i+1}C_*(r)/\Cal{F}_iC_*(r) \stackrel{\iota}{\simeq} C_*(\Art_{\Ax_{r-i-1}})[i,i+1]
$$
between the quotient of two consecutive filtrated terms and the Salvetti complex for the Artin 
group of type $\Ax_{r-i-1}$, that is the braid group on $r-i$ strands $\Br(r-i)$. The first index 
in square brackets means
a dimension shifting by $i$ and the second index means a degree shifting by $i+1$. 
The complex $\Cal{F}_{i+1}C_*(r)/\Cal{F}_iC_*(r)$ is generated by strings of the form $1^i0B$. 
Moreover the string $1^i0B$ corresponds, through the isomorphism $\iota$, to the string $B$ 
in the complex $C_*(\Art_{\Ax_{r-i-1}})[i,i+1]$.

We consider the direct sum 
$$C_*= \bigoplus_{r=0}^\infty C_*(r) $$
and we study the first quadrant spectral sequence $\{E^k_{i,j}, d^k\}_k$ induced by 
the filtration $\Cal{F}$ on the complex $C_*$. The complex $C_*$ is bigraded 
with $$|S| = \dim S = \mbox{ the number of 1's of the string } S$$ and 
$$\deg S = \mbox{ the length of the string }S.$$

The first observation is that we get a first quadrant spectral sequence and in the $E^0$ term 
we have:

$$
E^0_{i,j} = \Cal{F}_iC_{i+j}/\Cal{F}_{i-1}C_{i+j} = \bigoplus_{r=0}^\infty C_{j}(\Art_{\Ax_{r-i-1}})
$$

We can now study the first differential of the spectral sequence, that is $d^0$. Because of the 
chosen filtration, on each columns of
the spectral sequence the differential $d^0$ corresponds to the boundary map of the complex 
$C_*(\Art_{\Ax_{r-i-1}})$ with trivial local system.
It follows that
\begin{prop} \label{prop:ss}
The $E^1$ term of the first quadrant spectral sequence defined above 
is given as follows ($i,j \geq 0$):
$$
E^1_{i,j} = \bigoplus_{r=0}^\infty H_j(\Art_{\Ax_{r-i-1}}, \F[t^\pmu]) =  
\bigoplus_{r=0}^\infty H_j(\Br (r-i), \F[t^\pmu])=
$$
$$
=\bigoplus_{r=0}^\infty H_j(\Br (r-i), \F)\otimes \F [t^\pmu]
$$
since the $t$-local system is trivial on $\Br(r-i)$.

\end{prop}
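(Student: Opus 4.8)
The plan is to read off the $E^1$-page as the homology of each column of the $E^0$-page with respect to the vertical differential $d^0$ described above. Recall the standard mechanism for the spectral sequence of a filtered chain complex: $E^1_{i,j}$ is the homology at position $j$ of the column complex $\bigl(E^0_{i,\bullet},d^0\bigr)$, where $d^0$ is the map induced on the associated graded by the differential $\partial$ of $C_*$. Since $\partial$ preserves each summand $C_*(r)$ and the filtration $\Cal{F}$ is the direct sum of the filtrations of the $C_*(r)$, the differential $d^0$ does not mix different values of $r$. By the isomorphism $\iota$ recorded just before the statement, each associated graded piece $\Cal{F}_{i+1}C_*(r)/\Cal{F}_iC_*(r)$ is, up to the indicated degree shifts, the algebraic Salvetti complex of $\Art_{\Ax_{r-i-1}}$ with coefficients in $\F[t^\pmu]$; summing over $r$ yields the description $E^0_{i,j}=\bigoplus_{r\ge 0}C_j(\Art_{\Ax_{r-i-1}})$ already noted.

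The heart of the matter is to check that $d^0$ is, on each column, the \emph{ordinary} Salvetti boundary of $\Art_{\Ax_{r-i-1}}$ equipped with the \emph{trivial} $\F[t^\pmu]$-local system (not a twisted one). I would verify this directly on a basis string $1^i0B$ of $C_*(r)$: among the terms of $\partial(1^i0B)$, those whose leading block of length $i$ still contains a $0$ are exactly the ones obtained by leaving $1^i0$ fixed and applying the type-$\Ax_{r-i-1}$ Salvetti differential to the sub-string $B$, whereas every term in which a $0$ migrates into or through the leading block $1^i$ strictly decreases the filtration degree and therefore vanishes in $E^0$. Since the local system on $\Art_{\Bx_r}$ is induced by the map $\phi_r$ with $\overline\sigma_1\mapsto 1\in\Z$ and $\sigma_i\mapsto 0$, any nontrivial power $t^{\pm 1}$ produced by $\partial$ is attached to a term of this second, filtration-lowering kind; hence on the $i$-th column the surviving action of $\F[t^\pmu]$ is trivial and $d^0$ coincides with $\partial_{C_*(\Art_{\Ax_{r-i-1}})}\otimes_\F\F[t^\pmu]$. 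Turning this into a complete argument requires the explicit formula for the $\Bx_r$ Salvetti differential, and this is the only real computation in the proof.

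Granting the previous step, the conclusion is formal. Since $\F[t^\pmu]$ is free, hence flat, over $\F$, passing to homology in the column complex commutes with $-\otimes_\F\F[t^\pmu]$, and the algebraic Salvetti complex computes group homology, so
$$
E^1_{i,j}=\bigoplus_{r\ge 0}H_j\bigl(\Art_{\Ax_{r-i-1}},\F[t^\pmu]\bigr)=\bigoplus_{r\ge 0}H_j(\Br(r-i),\F)\otimes_\F\F[t^\pmu],
$$
where we used $\Art_{\Ax_{r-i-1}}\cong\Br(r-i)$ and that after passing to the associated graded the $t$-local system on $\Br(r-i)$ has become trivial. This is exactly the asserted form of the $E^1$-page. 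The main obstacle, as noted, lies in the second paragraph: one must control the $\Bx_r$ Salvetti differential precisely enough to separate its $t$-carrying terms from its filtration-preserving ones.
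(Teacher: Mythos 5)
Your proposal is correct and follows essentially the same route as the paper, which simply asserts that, because of the chosen filtration, $d^0$ on each column is the Salvetti boundary of $C_*(\Art_{\Ax_{r-i-1}})$ with trivial local system and then reads off $E^1$. Your verification on basis strings $1^i0B$ via the Leibniz-type formula $\DB A0B = (\DB A)0B + (-1)^{|A|}A0\DA B$ (showing the $t$-carrying terms all lower the filtration) is exactly the computation the paper leaves implicit.
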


\emph{Notation:} We denote by $\Br(0)$ and $\Br(1)$ the trivial group with one element, 
while $\Br(i)$ is empty for $i < 0$. Hence
$ H_*(\Br (1), \F) =  H_*(\Br (0), \F) = \F$ and both modules are concentrated 
in dimension $0$, while $H_j(\Br (i), \F)$ is the trivial summand for $i<0$. 

\begin{remark}
Proposition \ref{prop:ss} actually gives us an infinite family of spectral sequences. Using the 
previous argument we can define, for every index $r \in \N$, a spectral sequence 
$\{E_{i,j}^k(r), d^k(r)\}_k$ with first term
$$
E^1_{i,j}(r) = H_j(\Br (r-i), \F)\otimes \F [t^\pmu]
$$
which converges to the homology group $H_*(\Art_{\Bx_r}, \F[t^\pmu])$.
\end{remark}
 
Notice that each column of the $E^1$ term of the spectral sequence of Proposition \ref{prop:ss} 
is isomorphic to the bigraded module $A(\F)$ defined in section \ref{s:braids}.
The correspondence between an element $x \in A(\F)$ and an element in the $i$-th column of the 
spectral sequence is the following: if $x$ is a monomial, that corresponds to a string of $0$'s 
and $1$'s, we lift it to the same string preceded by a sequence $$\overbrace{1\cdots1}^{i}0.$$ 
For a generic element $x$ we extend the correspondence by linearity. We denote the lifted element 
by $z_i x$.

Our interest now is to study the higher differentials of the spectral sequence. Since they are 
induced by the boundary map of the complex $C_*(r)$, we give a description of this complex 
according to \cite{SALVETTI} and \cite{CMS2}.

We recall the definition of the following $q$-analog and $q,t$-analog polynomials:
$$ 
[0]_q :=1, \hspace{5mm} [m]_q := 1 + q + \cdots + q^{m-1} = \frac{q^m-1}{q-1} \mbox{ for }m\geq 1,
$$
$$
[m]_q! := \prod_{i=1}^m [m]_q,
$$
$$
\qbin{m}{i}_{q}: = \frac{[m]_q!}{[i]_q!
[m-i]_q!},
$$
$$
[2m]_{q,t} := [m]_q (1+tq^{m-1}),
$$
$$
[2m]_{q,t}!! := \prod_{i=1}^m [2i]_{q,t}\ =\ [ m ]_q!
\prod_{i=0}^{m-1} (1+tq^i),
$$
$$
\qbin{m}{i}_{q,t}': = \frac{[2m]_{q,t}!!}{[2i]_{q,t}!! [m-i]_q!} \ =
\qbin{m}{i}_{q}\prod_{j=i}^{m-1}(1+tq^j).
$$

In our computations, since we consider a local system that maps the generator associated to 
the first node of the Dynkin diagram $\Bx_r$ to a non-trivial action (i.e. $(-t)$-multiplication) 
and the other generators to a trivial action, we will specialize our polynomials to $q= -1$ 
(see \cite{SALVETTI}). 

%
%

By an easy computation with cyclotomic polynomials, combined with some result that appears in  
\cite{C2} we can easily prove the following Lemma, that will be useful in further computations. 
We will write $[*]_{-1}$ for the $q$-analog $[*]$ evaluated at $q = -1$.
\begin{lemma} \label{lem:qanalog}
For $q= -1$, the polynomial $\qbin{m}{i}_q$ evaluated over the integers is zero if and only if 
$m$ is even and $i$ is odd. 

Evaluated modulo $2$, it is non-zero if and only if, when we write $i$ and $m$ as sums of 
powers of $2$ without repetitions, all the terms in the sum for $i$ appears in the sum for $m$. 
Let $h$ be the number of integers $k$ such that there is a $1$ in the binary decomposition of $i$ or $m-i$ at the $k$-th position, but not in the binary decomposition for $m$. Then $2^h$ is the highest 
power of $2$ that divides the integer $\qbin{m}{i}_{-1}$.

Evaluated modulo a prime $p$, with $p>2$, the expression $\qbin{m}{i}_{-1}$ is non-zero if and only 
if when we write $i$ and $m-i$ as sums of terms of the form 
$$
i = l_0 + \sum_{k=1}^s l_k 2p^{k-1}
$$
$$
m-i = l'_0 + \sum_{k=1}^s l'_k 2p^{k-1}
$$
with $0\leq l_0, l_0' \leq 1$, $0 \leq l_k, l_k' < p$ for $k=1, \ldots, s$, we have 
$l_0 + l_0' < 2$ and $l_k + l_k' < p$ for all $k=1, \ldots, s$. Moreover, if 
$$
m = l''_0 + \sum_{k=1}^s l''_k 2p^{k-1}
$$
with $0\leq l_0'' \leq 1$, $0 \leq l_k'' < p$ for $k=1, \ldots, s$, then the integer $h$ defined as
$$
h:= \sharp\{k \in \N \mid l_h'' < l_h + l_h'\}
$$
is the greatest exponent such that $p^h$ divides $\qbin{m}{i}_{-1}$. 
\end{lemma}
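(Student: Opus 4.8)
The plan is to reduce everything to Kummer's theorem on binomial coefficients, after carefully interpreting what happens at $q=-1$. First I would recall that $\qbin{m}{i}_{-1}$ has a clean combinatorial meaning: writing $m = 2a+\varepsilon$ and $i = 2b+\delta$ with $\varepsilon,\delta\in\{0,1\}$, one has the classical identity (which follows by inspecting the cyclotomic factorization of $[n]_{-1}$, or directly by the $q$-Lucas theorem at a primitive $2$nd root of unity) that $\qbin{m}{i}_{-1} = \binom{a}{b}$ if $\varepsilon=1$ or $\delta=0$, and $\qbin{m}{i}_{-1}=0$ if $\varepsilon=0$ and $\delta=1$. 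This single reduction formula yields the first assertion immediately: the integer $\qbin{m}{i}_{-1}$ vanishes precisely when $m$ is even and $i$ is odd.

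Granting that reduction, the $p$-adic valuation statements become statements about $v_p\!\left(\binom{a}{b}\right)$ where $a=\lfloor m/2\rfloor$, $b=\lfloor i/2\rfloor$, and I would invoke Kummer's theorem: $v_p\!\left(\binom{a}{b}\right)$ equals the number of carries when adding $b$ and $a-b$ in base $p$. For $p=2$, writing $m$ and $i$ in binary, the nonvanishing criterion ``every $1$ in the binary expansion of $i$ also occurs in that of $m$'' is the classical Lucas statement for $\binom{a}{b}$ translated back through $a=\lfloor m/2\rfloor$ (the parity bits $\varepsilon,\delta$ contribute the case distinction but never obstruct nonvanishing once $\delta\le\varepsilon$ or $\varepsilon=1$ — and when $\varepsilon=0,\delta=1$ we are in the vanishing case, consistent with the criterion since then the lowest bit of $i$ is not a bit of $m$). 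The exponent $h$ counting positions where a $1$ appears in $i$ or $m-i$ but not in $m$ is exactly the carry count of Kummer's theorem applied to $b+(a-b)=a$, once one checks that a carry at bit $k$ of the addition $b+(a-b)$ corresponds to bit $k+1$ of the original numbers; I would spell this bijection out. The odd-prime case is the analogous computation with the ``mixed-radix'' expansion $l_0 + \sum_k l_k\, 2p^{k-1}$, which is nothing but: extract the parity bit $l_0$, then write the remaining $\lfloor\cdot/2\rfloor$ in base $p$ with digits $l_1,l_2,\dots$. The conditions $l_0+l_0'<2$ and $l_k+l_k'<p$ are then precisely ``$\delta\le\varepsilon$ (so we are not in the vanishing case) and no carry in the base-$p$ addition $b+(a-b)=a$'', i.e. Lucas for $\binom{a}{b}$ mod $p$; and $h = \#\{k : l_k'' < l_k+l_k'\}$ counts exactly the base-$p$ carries, giving $v_p$ via Kummer.

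The key steps in order: (1) establish the reduction $\qbin{m}{i}_{-1} = \binom{\lfloor m/2\rfloor}{\lfloor i/2\rfloor}$ (with the vanishing exception), citing the cyclotomic/$q$-Lucas argument indicated in the text as ``an easy computation with cyclotomic polynomials, combined with some result that appears in \cite{C2}''; (2) read off the integral vanishing criterion; (3) set up the translation between the mixed-radix digit vectors $(l_0,l_1,\dots)$ of $i$ (resp. $m-i$, $m$) and the ordinary digit vectors of $b=\lfloor i/2\rfloor$ (resp. $a-b$, $a$) in base $p$; (4) apply Lucas' theorem to get the $\bmod\ p$ nonvanishing criterion, and Kummer's theorem to identify $h$ with the carry count $v_p\!\left(\binom{a}{b}\right)$; (5) handle $p=2$ as the special case, where the parity bit and the first base-$2$ digit merge, so the whole binary expansion of $i$ is used directly.

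I expect the main obstacle to be step (1) — pinning down the precise reduction formula at $q=-1$ including the boundary behaviour of the parity bits — and, secondarily, the bookkeeping in step (3)–(4): one must be careful that an addition carry \emph{into} position $k$ in the base-$p$ arithmetic of $a=b+(a-b)$ corresponds to the index $k$ appearing in the set defining $h$ (phrased in terms of $l_k,l_k',l_k''$), and that the lowest (parity) position is treated consistently with the case split ``$m$ even, $i$ odd'' $\Rightarrow$ vanishing. Once the dictionary between the mixed-radix expansions and honest base-$p$ expansions is fixed, the rest is a direct transcription of Lucas and Kummer and involves no genuinely new computation.
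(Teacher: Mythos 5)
Your proof is correct in substance but follows a genuinely different route from the paper's. The paper's (sketched) argument stays entirely inside the $q$-world: it uses the fact that the multiplicity of the cyclotomic polynomial $\ph_d(q)$ in $\qbin{m}{i}_q$ is $\lfloor m/d\rfloor-\lfloor i/d\rfloor-\lfloor (m-i)/d\rfloor\in\{0,1\}$, and then evaluates at $q=-1$, where only $\ph_2$ (which vanishes, giving the first assertion) and the $\ph_{2p^j}$ (which equal $p$, giving the valuation) matter; the mixed-radix expansions $l_0+\sum_k l_k\,2p^{k-1}$ appear precisely because the relevant moduli are $2$ and $2p^{k-1}$, and $h$ counts the indices $d=2p^j$ at which the floor-sum equals $1$, i.e.\ the carries. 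You instead collapse $\qbin{m}{i}_{-1}$ to the ordinary binomial $\binom{\lfloor m/2\rfloor}{\lfloor i/2\rfloor}$ via $q$-Lucas at a primitive square root of unity and then quote Kummer and Lucas. Both work; the paper's route is more uniform (it treats $p=2$ and $p$ odd identically and explains the shape of the digit expansions directly), while yours outsources the arithmetic to classical theorems at the cost of the dictionary in your steps (3)--(4), which you correctly identify as the main bookkeeping burden; note in particular that $\lfloor(m-i)/2\rfloor=\lfloor m/2\rfloor-\lfloor i/2\rfloor$ only outside the vanishing case, which you must invoke before applying Kummer. One caveat worth recording: what your argument (and the paper's sketch) actually proves is that $h$ is the number of \emph{carries} in the addition $i+(m-i)=m$ in the relevant mixed radix, equivalently $h=\#\{k: l''_k<l_k+l'_k\}$; the verbal description of $h$ given in the $p=2$ part of the statement (``a $1$ in $i$ or $m-i$ at position $k$ but not in $m$'') undercounts, since a propagated carry can occur at a position where $m$ also has a $1$ (e.g.\ $m=12$, $i=6$, where $\qbin{12}{6}_{-1}=\binom{6}{3}=20$ has $2$-valuation $2$ but that description yields $1$). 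Your Kummer-based computation gives the correct carry count, in agreement with the paper's own proof sketch and with the odd-prime formulation of $h$.
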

\begin{proof}
Let us sketch the idea of the proof. The main point is to study the divisibility of the 
polynomial $\qbin{m}{i}_{q}$ by the cyclotomic polynomials $\ph_j(q)$. Moreover we need to 
recall that $\ph_j(-1) \neq 0$ if $j \neq 2$ and  for any prime $p$, $\ph_{2p^j} (-1) = p$ 
for $j>0$, and in all the other cases $\ph_j(-1)=1$. The number $h$ is the number of digits 
that we \emph{carry over} in the sum between $i$ and $m-i$ written respectively in base $2$ and 
in the base associated to an odd prime $p$, corresponding to the sums of the last part of the 
statement. The integer $h$ actually counts the number of times a factors of the form 
$\ph_{2p^j}$, $p>0$ divides the $q$-analog $\qbin{m}{i}_{q}$.
\end{proof}

%
%
%
%

Finally we present the boundary maps for the complex $C_*(\Art_{\Ax_r})$ and $C_*(r)$.
We write $\partial$ for the boundary map in the complex $C_*(\Art_{\Ax_r})$ and 
$\overline{\partial}$ for the boundary in the complex $C_*(r)$.
Recall that the complex $C_*(\Art_{\Ax_r})$ over a module $M$ is the direct sum 
$$
\bigoplus_{|x| = r} M.x
$$
of one copy of $M$ for each string $x$, made of $0$'s and $1$'s, of length $r$. Notice that 
these strings are in 1 to 1 correspondence with 
the parts of a set of $r$ elements (in particular with the set of the nodes of the Dynkin 
diagram of type $\Ax_r$). A $1$ in the $j$-th position of the string means that the $j$-th 
element belongs to the subset, while a 0 means that it doesn't belong to the subset. The complex
is graduated as follows: the dimension of a non-zero element $m \in M.x$ is given by the number 
of $1$'s in the string $x$, that is the cardinality of the corresponding subset.

The complex $C_*(r) = C_*(\Art_{\Bx_r})$ has the same description, as a graded module, as the 
complex $C_*(\Art_{\Ax_r})$ and they differs for the boundary.
Since in the Dynkin diagram of type $\Bx_r$ the first node is \emph{special}, we change slightly 
our notation for the string representing the generators of $C_*(r)$ using $\overline{0}$ or 
$\overline{1}$ in the first position, according to whether or not the first element belongs to 
the subset of the nodes.

We consider the nodes of the Dynkin diagram of type $\Ax_r$ ordered as in the Table 
\ref{tab:DynkinA_r}. 

%
\begin{dummyenvironment}
\entrymodifiers={=<4pt>[o][F-]}
\begin{table}[hbtp]
\begin{center}
\begin{tabular}{c}
\xymatrix @R=2pc @C=2pc {
\ar @{-}[r]_(-0.20){1} & \ar @{-}[r]_(-0.20){2} & \ar @{.}[r]_(-0.20){3} & 
\ar @{-}[r]_(-0.20){{r-1}}_(.80){r} &}
\end{tabular}
\end{center}
\caption{Dynkin diagram for the Artin group of type $\Ax_r$}
\label{tab:DynkinA_r}
\end{table}
\end{dummyenvironment}

Let $x$ be the string
$$
\overbrace{1\cdots 1}^{i_1}0\overbrace{1\cdots 1}^{i_2}0 \cdots 0\overbrace{1\cdots 1}^{i_k}
$$
we write it in a more compact notation as
$$
x= 1^{i_1}01^{i_2}0\cdots01^{i_k}.
$$
The boundary of $x$ in the complex $C_*(\Art_{\Ax_r})$ is given by the following sum:
$$
\partial x = \sum_{j=1}^k \sum_{h=0}^{i_j-1} (-1)^{i_1 + \ldots + i_{j-1} + h}\qbin{i_j+1}{h+1}_{-1}
  1^{i_1}0\cdots01^{i_{j-1}}01^{h}01^{i_j-h-1}01^{i_{j+1}}0\cdots01^{i_k}.
$$

In a simpler way (see \cite{dps, dpss}) we can say that the boundary is null on the string made 
of all $0$'s, moreover:
$$
\partial 1^l = \sum_{h=0}^l-1 (-1)^h \qbin{l+1}{h+1}_{-1} 1^{h}01^{l-h-1}
$$
and if $A$ and $B$ are two strings
$$
\partial A0B = (\partial A)0B + (-1)^{|A|} A0 \partial B.
$$

In the complex $C_*(r)$ the boundary $\DB x$ is given as follows: 
$$
\DB \overline{0}A = \overline{0}\DA A,
$$
$$
\DB \; \overline{1}1^{l-1} = 
\qbin{l}{0}_{-1}' \overline{0}1^{l-1} + \sum_{h=1}^{l-1} (-1)^{h}\qbin{l}{h}_{-1}' 
\overline{1}1^{h-1}01^{l-h-1}
$$
and
$$
\DB A0B = (\DB A)0B + (-1)^{|A|}A0\DA B.
$$

We can use the given description of the algebraic complex to compute explicitly the differential, 
$d^1$ in the spectral sequence. This is a first tutorial step in the computation of the whole 
spectral sequence of Proposition \ref{prop:ss}.
Recall that $d^1$ is an homomorphism with bidegree $(-1, 0)$ and maps
$$
d^1_{i,j}(r): E^1_{i,j}(r) \to E^1_{i-1,j}(r).
$$
A representative of a generator of $E^1_{i,j}(r)$ is of the form $\overline{1}1^{i-1}0x = z_ix$ 
where $x$ is a representative of an homology class in $H_j(\Br (r-i), \F)$. Since $x$ is already 
a cycle, we need to consider only the part of the boundary $\DB 1^i0x$ starting with $1^{i-1}0$, 
that is 
$$
d^1_{i,j}(r): z_ix \mapsto [i]_{-1} (1+t(-1)^{i-1}) z_{i-1}0x.
$$
Since the coefficient $[i]_{-1} $ is $0$ for even $i$'s and $1$ otherwise, we get:
$$
d^1_{i,j}(r)z_ix = \left\{
\begin{array}{lc}
 0 & \mbox{ if $i$ is even} \\
 (1+t)z_{i-1}0x & \mbox{ if $i$ is odd.}\\
\end{array}
\right.
$$
When we work on the prime field $\F_p$, with $p=2$ we write also $z_{i-1}x_0x$ for $z_{i-1}0x$ 
and when $p>2$ we write $z_{i-1}hx$ for $z_{i-1}0x$.
Each odd column inject in the even column on its left. The $E^2$ term of the spectral sequence 
easily follows from the description of the differential $d^1$.  We can briefly state this as:
\begin{prop}\label{prop:E^2}
In the $E^2$ term of the first quadrant spectral sequence of Proposition \ref{prop:ss} each 
column in even position is isomorphic to the quotient ring $A(\F_2)/((1+t)x_0)$ 
(resp. $A(\F_p)/((1+t)h)$ or $A(\Q)((1+t)x_0)$) for $\F=\F_2$ (resp. $\F = \F_p$, $p>2$ or $\F = \Q$). 
All the columns in odd position are zero.
\end{prop}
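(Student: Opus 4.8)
The plan is to identify the $E^2$ term with the homology of the complex of columns $\cdots \to E^1_{i+1,j} \xrightarrow{d^1} E^1_{i,j} \xrightarrow{d^1} E^1_{i-1,j} \to \cdots$, using the explicit formula for $d^1$ established just above the statement. First I would record that, under the identification of each column $E^1_{i,*}$ with the bigraded module $A(\F)$ via the correspondence $x \mapsto z_i x$, prepending a $0$ to a monomial string corresponds, by the dictionary of Section~\ref{s:braids}, to multiplication by $x_0$ when $\F = \F_2$ or $\F = \Q$ and by $h$ when $\F = \F_p$ with $p>2$. The formula $d^1_{i,j}(r)\, z_i x = (1+t)\, z_{i-1} 0 x$ for $i$ odd (and $d^1 = 0$ for $i$ even), together with the $\F[t^\pmu]$-linearity of $d^1$, then says precisely that $d^1$ vanishes on even columns and that the map from an odd column $i$ to the even column $i-1$ on its left is multiplication by $(1+t)x_0$ (resp. $(1+t)h$) on $A(\F)$.

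The key step is to check that this multiplication map is injective on $A(\F)$. This reduces to two observations: $x_0$ (resp. $h$) is a non-zero-divisor in $A(\F)$, and $(1+t)$ is a non-zero-divisor in a Laurent polynomial ring over a domain. For the first, in $A(\F_2) = \F_2[x_0,x_1,\dots][t^\pmu]$ and in $A(\F_p) = \F_p[h,y_1,\dots]\otimes \Lambda[x_0,x_1,\dots][t^\pmu]$ the relevant element is a polynomial generator, so multiplication by it is visibly injective; in $A(\Q) = \Q[x_0,x_1]/(x_1^2)[t^\pmu]$ one notes that $x_0(a+bx_1) = x_0 a + x_0 b\, x_1$ vanishes only when $a=b=0$. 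Hence $d^1$ is injective on every odd column.

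From this the conclusion is immediate. On an odd column $i$, the cycles $\Ker(d^1\colon E^1_{i,*}\to E^1_{i-1,*})$ vanish, so $E^2_{i,*}=0$. On an even column $i$, $d^1$ is zero out of the column, so the cycles are all of $E^1_{i,*}\cong A(\F)$, while the boundaries form the image of $d^1$ from the odd column $i+1$, namely $(1+t)x_0\,A(\F)$ (resp. $(1+t)h\,A(\F)$); therefore $E^2_{i,*}\cong A(\F_2)/((1+t)x_0)$ (resp. $A(\F_p)/((1+t)h)$ or $A(\Q)/((1+t)x_0)$) as bigraded modules, the grading shift being the one recorded by the $z_i$-correspondence. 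This argument covers the boundary column $i=0$ as well, since the column $i=1$ is non-zero. I do not anticipate a genuine obstacle here: the only points requiring care are the translation of ``prepending a $0$'' into the multiplicative language of $A(\F)$ and the (elementary) non-zero-divisor check, and both are straightforward given the descriptions of the Salvetti complex and of $A(\F)$ recalled earlier.
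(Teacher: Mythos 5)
Your proof is correct and follows essentially the same route as the paper, which simply computes $d^1$ (zero on even columns, multiplication by $(1+t)x_0$, resp.\ $(1+t)h$, from each odd column into the even column on its left) and observes that each odd column injects into its neighbour; you have merely made explicit the non-zero-divisor check and the $0 \leftrightarrow x_0$ (resp.\ $h$) dictionary that the paper leaves implicit.
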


For a more advanced study of the spectral sequence and of its other terms, we need to split 
our analysis, considering separately the case $\F = \Q$ and the cases $\F = \F_p$, with $p=2$ and $p>2$.
\subsection{Homology of $\B(2e,e,r)$ with rational coefficients}
We start continuing the study of the spectral sequence of Proposition \ref{prop:E^2}. 
We only need to compute the differential of the $E^2$-term of the spectral sequence, since the
spectral sequence is concentrated in the first two rows, hence all 
the other differentials are zero and the spectral sequence collapses at $E^3$.

The differential $$
d^2_{i,j}(r): E^1_{i,j}(r) \to E^1_{i-2,+1}(r)
$$
acts as follows: 
$$
d^2_{i,j}(r): z_ix \mapsto \qbin{i}{2}_{-1} (1-t^2) z_{i-2}x_1x.
$$
The coefficient $\qbin{i}{2}_{-1}$ is always nonzero, hence we can define the
quotient 
$$
A_0(\Q) = A_0 = A(\Q)/((1+t)x_0, (1-t^2)x_1)
$$
in the $E^\infty$ term we have:

\begin{table}[htbp]
\begin{tabular}{|m{2em}|m{2em}|m{2em}|m{2em}|m{2em}|m{2em}|m{2em}}
& & & & & & \\
 \begin{center} $A_0$ \end{center} &\begin{center} $0$ \end{center} & \begin{center} $A_0$ 
\end{center} 
& \begin{center} $0$ \end{center} & \begin{center} $A_0$ \end{center} & \begin{center} $0$
\end{center} & \begin{center} $\cdots$ \end{center} \\
 & & & & & & \\
 \hline
\end{tabular}
\end{table}

The terms of the form $z_{2i}x_0^j$ lift, in $H_*(\Art_{\Bx_r}, \Q[t^\pmu])$ to the cycle
$$ 
\omega_{2i,j,0}= \frac{\DB(z_{2i+1}x_0^{j-1})}{(1+t)}
$$
while the terms of the form $z_{2i}x_0^jx_1$, for $j > 1$ lift to
$$
\omega_{2i,j,1}=\frac{\DB(z_{2i+1}x_0^{j-1}x_1)}{(1+t)}
$$
and, for $j=0$ to
$$
\omega_{2i,0,1}=\frac{\DB(z_{2i+2})}{(1-t^2)}
$$

We can then compute the homology $H_*(\B(2e,e,r), \Q)= H_*(\Art_{\Bx_r}, \Q[t]/(1-(-t)^e))$ by means
of the homology long exact sequence associated to the short exact sequence
\begin{equation} \label{eq:short_rat}
0 \to \Q[t^\pmu] \stackrel{(1-(-t)^e)}{\too} \Q[t^\pmu] 
\stackrel{\pi}{\too} \Q[t^\pmu]/(1-(-t)^e) \to 0
\end{equation}

We consider the following cycles in the complex for $H_*(\Art_{\Bx_r}, \Q[t]/(1-(-t)^e))$:
$$
\overline{\omega}_{2i,j,0}= \frac{(1-(-t)^e)z_{2i+1}x_0^{j-1}}{(1+t)},
$$
$$
\overline{\omega}_{2i,j,1}=\frac{(1-(-t)^e)z_{2i+1}x_0^{j-1}x_1}{(1+t)}
$$
and
$$
\overline{\omega}_{2i,0,1}=\frac{(1-(-t)^e)z_{2i+2}}{(1-t^2)}.
$$

Let $\delta$ be the differential of the long exact sequence of homology associated to the short 
exact sequence of Equation (\ref{eq:short_rat}), it is clear that
$$
\delta(\overline{\omega}_{2i,j,k}) = {\omega}_{2i,j,k}.
$$
Moreover we have that the cycles $\overline{\omega}_{2i,j,k}$ have $(1+t)$-torsion if $(j,k) \neq (0,1)$, 
$(1-t^2)$-torsion otherwise. This proves that the cycles $\overline{\omega}$ and $\pi_*(\omega)$ are the generators of the homology
$H_*(\B(2e,e,r), \Q)$ confirming the Poincar\'e polynomial already given by Lehrer (\cite{lehr04}).

\subsection{$H_*(\Art_{\Bx_r}, \F_2[t^\pmu])$}

We can now compute the differential in the term $E^2$ of the spectral sequence. 
The boundary map tells us that the differential
$$
d^2_{i,j}(r): E^1_{i,j}(r) \to E^1_{i-2,+1}(r)
$$
acts as follows: 
$$
d^2_{i,j}(r): z_ix \mapsto \qbin{i}{2}_{-1} (1+t)^2 z_{i-2}x_1x.
$$
The coefficient $\qbin{i}{2}_{-1}$, that we consider only for even values of $i$, 
is zero if $4 \mid i$, otherwise it is non-zero and the kernel of the differential 
is generated by the element $x_0$. Hence the picture of the spectral sequence, for $E^3 = E^4$ 
(note that the $d^3$ differential must be zero) is as follows: if $i$ is a multiple of $4$, 
then the $i$-th columns is isomorphic to the quotient 
$A(\F_2)/((1+t)x_0, (1+t)^2x_1)$ and 
if $i$ is even, but $4 \nmid i$, then the $i$-th columns is isomorphic to the submodule quotient 
$x_0 A(\F_2)/((1+t)x_0) \simeq A(\F_2)/((1+t))$ (this is an isomorphism, but not a 
bi-graded-isomorphism); all the other columns are zero.

In order to give a description of the general behaviour of the spectral sequence we need 
the following definitions.

For $a \in \N$ we define the following ideals of $A = A(\F_2)$ (also for these definitions we'll 
drop the notation referring to the prime $p=2$ when it is understood):
$$
J_a(\F_2)=J_a=((1+t)x_0, (1-t^2)x_1, \ldots, (1-t^2)^{2^{a-1}}x_{a}).
$$
We define also the quotients:
$$
A_a(\F_2) = A_a= A(\F_2)/J_a(\F_2)
$$
and the ideals of $A_a$:
$$I_a(\F_2)= I_a = (x_0, x_1, \ldots, x_a) \subset A_a(\F_2).$$
Moreover we define $$J_\infty(\F_2)= J_\infty = \cup_{a=0}^\infty J_a(\F_2)$$ and 
$$A_\infty(\F_2) = A_\infty = A/J_\infty.$$
With this notation the page $E^3=E^4$ of the spectral sequence looks as follows:
\begin{table}[htbp]
\begin{tabular}{|m{2em}|m{2em}|m{2em}|m{2em}|m{2em}|m{2em}|m{2em}}
& & & & & & \\
 \begin{center} $A_1$ \end{center} &\begin{center} $0$ \end{center} & \begin{center} $I_0$ 
\end{center} 
& \begin{center} $0$ \end{center} & \begin{center} $A_1$ \end{center} & \begin{center} $0$
\end{center} & \begin{center} $\cdots$ \end{center} \\
 & & & & & & \\
 \hline
\end{tabular}
\end{table}

This result gives a description of the general behaviour of the spectral sequence:
\begin{theo} \label{t:ss}
The $k$-th term of the spectral sequence described in Proposition \ref{prop:ss} computing 
the homology $H_*(\Art_{\Bx_r}, \F_2[t^\pmu])$ is as follows:
\begin{itemize}
\item if $k = 2^a$ the $i$-th column is isomorphic to:
\begin{itemize}
\item $0$ if $i$ is odd;
\item $I_h$ if $2^{h+1} \mid i$ and $2^{h+2} \nmid i$, with $h+1<a$;
\item $A_{a-1}$ if $2^a \mid i$.
\end{itemize}
The differential $d^{2^a}$ is as follows: if $2^{a} \mid i $ and $2^{a+1}\nmid i$ we have the map
$$
d^k_{i,j}: z_ix \mapsto \qbin{i}{2^a}_{-1} (1+t)^{2^a} z_{i-2^a}x_ax
$$
where the $q$-analog coefficient is invertible; all the other differentials are trivial.
\item if $2^a < k < 2^{a+1}$ $E^k = E^{2^{a+1}}$ and the differential $d^k$ is trivial.
\end{itemize}
In $E^\infty$ term of the spectral sequence the $i$-th column is isomorphic to:
\begin{itemize}
\item $0$ if $i$ is odd;
\item $I_h$ if $2^{h+1} \mid i$ and $2^{h+2} \nmid i$;
\item $A_{\infty}$ if $i = 0$.
\end{itemize}
The homology $H_*(\Art_{\Bx_r}, \F_2[t^\pmu])$ is isomorphic to the graduate module associated 
to the $E^\infty$ term.
\end{theo}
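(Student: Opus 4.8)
The plan is to run the spectral sequence of Proposition~\ref{prop:ss} page by page, using induction on $a$ to identify the $E^{2^a}$ page and the differential $d^{2^a}$, and then to argue that the sequence stabilizes. The base case $a=0$ (the page $E^1$) is Proposition~\ref{prop:ss}, and the transition to $E^2$, together with the description of $d^1$ and $d^2$, has already been carried out in Propositions~\ref{prop:E^2} and in the discussion of the $\F_2$-differential $d^2_{i,j}(r)\colon z_ix\mapsto \qbin{i}{2}_{-1}(1+t)^2 z_{i-2}x_1x$ just above. So the induction is already primed through $a=1$ ($E^2=E^3=E^4$), and what remains is the inductive step.

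First I would assume that at the page $E^{2^a}$ the $i$-th column is $0$ for $i$ odd, is $I_h$ when $2^{h+1}\mid i$ but $2^{h+2}\nmid i$ with $h+1<a$, and is $A_{a-1}$ when $2^a\mid i$; and that the only nonzero differential is $d^{2^a}$ on the columns with $2^a\mid i$, $2^{a+1}\nmid i$. The computation of $d^{2^a}$ from the explicit boundary formula $\DB A0B = (\DB A)0B + (-1)^{|A|}A0\DA B$ is the crux: one must show that the class $z_ix$, which survives to $E^{2^a}$, is hit by a chain $z_{i-1}x_0^{?}\cdots$ whose boundary, after the appropriate divisions dictated by the lower differentials, contributes the term $\qbin{i}{2^a}_{-1}(1+t)^{2^a}z_{i-2^a}x_ax$. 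Here Lemma~\ref{lem:qanalog} is exactly what is needed: for $q=-1$ the coefficient $\qbin{i}{2^a}_{-1}$ is a $2$-adic unit precisely when $2^a\mid i$ and $2^{a+1}\nmid i$ (the binary-digit criterion), so the differential is an isomorphism from the $x_a$-graded part of an $A_{a-1}$-column onto the matching part of the adjacent column; taking homology kills one power of $x_a$ with the relation $(1-t^2)^{2^{a-1}}x_a$ ($=(1+t)^{2^a}x_a$ over $\F_2$ up to a unit), upgrading $A_{a-1}$ to $A_a$ on the columns divisible by $2^{a+1}$ and converting the $A_{a-1}$-columns divisible only by $2^a$ into the ideal $I_{a-1}$. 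That the intermediate pages $E^k$ for $2^a<k<2^{a+1}$ carry no differential follows because the surviving classes are concentrated in the $z_{i}\cdot(\text{monomial})$ positions whose bidegrees differ by $(2^b,\cdot)$ only for $b$ a power-of-two step, so $d^k$ has nowhere to land; this is the same vanishing phenomenon already invoked for $d^3=0$.

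Passing to $E^\infty$: for each fixed column index $i$ the sequence of groups $A_0\supseteq A_1\supseteq\cdots$ (or, in a column divisible by exactly $2^{h+1}$, the stabilization to $I_h$) is eventually constant in each fixed bidegree, since $J_a$ and $J_{a+1}$ agree in degrees below $2^a$; hence $E^{2^a}_{i,j}=E^\infty_{i,j}$ once $2^a$ exceeds the relevant degree, giving the column descriptions $0$ (odd $i$), $I_h$ ($2^{h+1}\,\|\,i$), and $A_\infty$ ($i=0$). Finally, convergence of the spectral sequence to $H_*(\Art_{\Bx_r},\F_2[t^\pmu])$ is standard since the filtration $\Cal{F}$ is bounded in each total degree (the strings have bounded length in each homological degree), so $H_*$ is the associated graded of $E^\infty$. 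I expect the genuine obstacle to be the inductive verification of the formula for $d^{2^a}$: one has to track how the divisions by $(1+t)$ and by the $(1-t^2)^{2^{b-1}}$ accumulated at earlier pages interact with the $q$-binomial coefficients appearing in $\DB$, and confirm via Lemma~\ref{lem:qanalog} that exactly the stated power of $(1+t)$ and an invertible $q$-analog survive — the bookkeeping of $2$-adic valuations of the nested $q,t$-binomials $\qbin{m}{i}'_{q,t}$ is where the argument is most delicate.
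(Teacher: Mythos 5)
Your overall strategy coincides with the paper's: induction on $a$ starting from the already-computed $E^4$ page, reading off $d^{2^a}$ from the leading filtration term of the boundary $\DB(z_ix)$, invoking Lemma \ref{lem:qanalog} for the invertibility of $\qbin{i}{2^a}_{-1}$, and then computing the kernel and cokernel of multiplication by $(1+t)^{2^a}x_a$ on $A_{a-1}$. However, there is a genuine gap where you dispose of all the remaining differentials. Your claim that $d^k=0$ for $2^a<k<2^{a+1}$ because ``$d^k$ has nowhere to land'' only works when $k$ is odd (then $d^k$ sends an even column to an odd, hence zero, column --- this is exactly the $d^3=0$ phenomenon, and it does not generalize). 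For even $k$ strictly between consecutive powers of $2$, say $k=6$, both the source and the target are nonzero even columns, so a pure position/bidegree argument is insufficient; and nothing in your sketch rules out higher differentials into or out of the ideal columns $I_h$ once they have appeared.

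The paper closes this gap by an ingredient your proposal lacks: for every monomial generator of an ideal column $I_h$ it exhibits an explicit lift to a \emph{global cycle} of $C_*(r)$, namely $\alpha_{i,0}=\DB(z_{i+1}x_{s_1}\cdots x_{s_n})/(1+t)$ and $\alpha_{i,s}=\DB(z_{i+2^s}x_{s_1}\cdots x_{s_n})/(1-t^2)^{2^{s-1}}$ for $s>0$, so that every subsequent differential on these classes vanishes identically. For the columns carrying $A_{a-1}$ the representative is $z_ix$ itself, and the observation that the leading filtration term of $\DB(z_ix)$ is $\qbin{i}{2^a}'_{-1}\,z_{i-2^a}x_a x$, sitting exactly $2^a$ columns to the left, simultaneously forces $d^k(z_ix)=0$ for all $k<2^a$ and identifies $d^{2^a}$. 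Note also that the divisions by $(1+t)$ and $(1-t^2)^{2^{s-1}}$ occur in building the cycle representatives for the ideal columns, not in evaluating $d^{2^a}$ on the $A_{a-1}$ columns, where no division is needed; your sketch misplaces this bookkeeping. To repair the argument you should carry these explicit cycle lifts along as part of the induction hypothesis.
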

\begin{proof}
We prove the first part of the statement by induction on $a$. 
The second part of the Theorem will follow from the first part.

We already have a description of the term $E^4$, so we can use $a=2$ as a starting point for the induction.

In order to prove the inductive step, it is useful to give a more precise statement with an explicit description of the generator of the
generators in the $E^k$ term of the spectral sequence. 

Let $2^{a-1} < k \leq 2^a$ and let $I_h$ be an ideal in the $i$-th column (hence $2^{h+1} \mid i$ and 
$2^{h+2} \nmid i, h+1 < a$). The generators $x_0, \ldots, x_h$ of the ideal $I_h$ are the images of the elements $z_ix_0, \cdots, z_ix_h$ of the $E^0$
term of the spectral sequence. A generic monomial of the ideal $I_h$ is in the form $m= x_s x_{s_1} \cdots x_{s_n}$ with $0 \leq s \leq h$, 
$s \leq s_1 \leq \cdots \leq s_n$. The monomial $m$ is the image of the element $z_i x_s x_{s_1} \cdots x_{s_n}$ in the $E^0$ term of the spectral
sequence. Its lifting in the $E^k$ term of the spectral sequence is given by 
$$
\alpha_{i, 0} = \frac{\DB(z_{i+1}x_{s_1} \cdots x_{s_n})}{(1+t)}
$$
for $s=0$ and
$$
\alpha_{i, s} = \frac{\DB(z_{i+2^s}x_{s_1} \cdots x_{s_n})}{(1-t^2)^{2^{s-1}}}
$$
for $s > 0$. In particular these terms lift to cycles, hence all the further differentials in the spectral sequence map them to zero.

The differential $\DB (z_{2^{l}(2m+1)} x_{s_1} \cdots x_{s_n})$ is given by a sum of the form
$$
\qbin{2^l(2m+1)}{2^l}'_{-1}z_{2^{l+1}m} x_{l} x_{s_1} \cdots x_{s_n} + \ldots
$$
where the remaining terms start with factors $z_r$ with $r < 2^{l+1}m$, hence they belong to an higher degree of the filtration with respect to the first term written above.

We note that the coefficient $\qbin{2^l(2m+1)}{2^l}'_{-1}$ is nonzero. In particular $$\qbin{2^l(2m+1)}{2^l}'_{-1} = \qbin{2^l(2m+1)}{2^l}_{-1} (1-t^2)^{l-1} $$ and the coefficient  $\qbin{2^l(2m+1)}{2^l}_{-1}$ is invertible, as proved in Lemma \ref{lem:qanalog}.

Now let $A_{a-1}$ be the module in the column $i$ with $2^a \mid i$. A monomial $w$ in $A_{a-1}$ is in the form $w= x_{s_1} \cdots x_{s_n}$
with $s_1 \leq \cdots \leq s_n$ (of course it can be $n=0$, that is $m= 1$). The monomial $m$ is the image of the element $z_i x_{s_1} \cdots x_{s_n}$ in the $E^0$ term of the spectral sequence.
For what we have just observed, $z_i x_{s_1} \cdots x_{s_n}$ will survive in the spectral sequence until page $E^{2^a}$. If $2^{a+1} \mid i$ then the differential $d^{2^a}w$ will be zero. Otherwise, if $i = 2^a(2m+1)$, then 
$$
d^{2^a}w = d^{2^a} z_{2^a(2m+1)} x_{s_1} \cdots x_{s_n} = \qbin{2^a(2m+1)}{2^a}'_{-1}z_{2^{a+1}m} x_{a} x_{s_1} \cdots x_{s_n}
$$
that is, up to invertible factors:
$$
d^{2^a}w = d^{2^a} z_{2^a(2m+1)} x_{s_1} \cdots x_{s_n} = (1-t^2)^{a-1} z_{2^{a+1}m} x_{a} x_{s_1} \cdots x_{s_n}.
$$

This means that the differential $d^{2^a}$ is as described in the statement of the Theorem:
$$
d^k_{i,j}: z_ix \mapsto \qbin{i}{2^a}_{-1} (1+t)^{2^a} z_{i-2^a}x_ax.
$$
The kernel of $d^k_{i,j}:A_{a-1} \to A_{a-1} $ is the ideal of $A_{a-1}$ generated by those monomials that are killed by the multiplication by $(1+t)^{2^a} x_a$ that is the ideal $(x_0, \cdots, x_a) = I_a$. 

The cokernel of $d^k_{i,j}:A_{a-1} \to A_{a-1} $ is the quotient of the ring $A_{a-1}$ by the ideal generated by $(1+t)^{2^a} x_a$, that is $A_a$.

\end{proof}

\begin{remark} \label{rem:generators2}
The proof of Theorem \ref{t:ss} gives us a precise description of the generators of 
the $E^\infty$ term:
\begin{itemize}
\item the module $I_h$ in the $[2^{h+1}(2m +1)]$-th columns is generated by the terms 
$$
\alpha_{2^{h+1}(2m +1), 0} = \frac{\DB(z_{2^{h+1}(2m+1)+1})}{(1+t)},
$$
$$\alpha_{2^{h+1}(2m +1), i} = \frac{\DB(z_{2^{h+1}(2m+1)+2^i})}{(1-t^2)^{2^{i-1}}} $$ 
for $i = 1, \ldots, h$, corresponding to the generators $x_0,x_1, \ldots,x_h$ of the ideal $I_h$; 
the generator corresponding to the monomial $x_ix_{i_1}\cdots x_{i_k}$ ($i_j \geq i$ for all $j$) is
$$
\alpha_{2^{h+1}(2m +1), 0} x_{i_1}\cdots x_{i_k}= 
\frac{\DB(z_{2^{h+1}(2m+1)+1}x_{i_1}\cdots x_{i_k})}{(1+t)}
$$
for $i=0$ and
$$\alpha_{2^{h+1}(2m +1), i} x_{i_1}\cdots x_{i_k}= 
\frac{\DB(z_{2^{h+1}(2m+1)+2^i}x_{i_1}\cdots x_{i_k})}{(1-t^2)^{2^{i-1}}} $$
for $i >0$.
\item the module $A_{\infty}$ in the $0$-th column is generated by $z_0$.
\end{itemize}
These generators actually are cycles in the algebraic complex $C_*(r)$ and naturally lift to 
generators of the homology $H_*(\Art_{\Bx_r}, \F_2[t^\pmu])$ which inherit the structure of 
$A(\F_2)[t^\pmu]$-module.
\end{remark}

\emph{Note:} when we use the notation 
$$
\frac{\DB x}{a(t)}
$$
we mean that we consider the boundary of the element $x$ computed in the complex 
$C_*(r) = C_*(\Art_{\Bx_r})$ with coefficients in the ring
of Laurent polynomials over the integers $\Z[t^\pmu]$, then we divide exactly by the polynomial 
$a(t)$ and finally we consider the quotient as a class in the coefficients we are using 
(for example, $\F_2[t^\pmu]$ in the case of Remark \ref{rem:generators2}).

\subsection{Homology of $\B(2e,e,r)$, $p=2$}

The result of Theorem \ref{t:ss} together with the description of the generators of the modules 
in the spectral sequence
allow us to compute the homology $H_*(\B(2e,e,r), \F_2)= H_*(\Art_{\Bx_r}, \F_2[t]/(1+(t)^e))$. 
We only need to study the homology long exact sequence associated to the short exact sequence
\begin{equation} \label{eq:short}
0 \to \F_2[t^\pmu] \stackrel{(1+t^e)}{\too} \F_2[t^\pmu] 
\stackrel{\pi}{\too} \F_2[t^\pmu]/(1+t^e) \to 0
\end{equation}
on the coefficients. We can state the following result:

\begin{prop}\label{prop:splitting}
We have a decomposition of the $\F_2[t^\pmu]$-module 
$$H_i(\Art_{\Bx_r}, \F_2[t]/(1+(t)^e)) = h_i(r,e) \oplus h'_i(r,e)$$ such that 
the homology long exact sequence associated to the short exact sequence given in Equation 
(\ref{eq:short}) splits:
\begin{equation} \label{eq:splitted}
0 \to h'_{i+1}(r,e) \stackrel{\delta}{\to} H_i(\Art_{\Bx_r}, \F_2[t^\pmu]) 
\stackrel{(1+t^e)}{\too} H_i(\Art_{\Bx_r}, \F_2[t^\pmu]) \stackrel{\pi_*}{\to} h_i(r,e) \to 0.
\end{equation}
\end{prop}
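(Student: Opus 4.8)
The plan is to exhibit an explicit $\F_2[t^\pmu]$-linear splitting of the connecting map $\delta$ in the long exact sequence attached to \eqref{eq:short}, using the concrete cycle-level description of the generators of $H_*(\Art_{\Bx_r},\F_2[t^\pmu])$ supplied by Theorem \ref{t:ss} and Remark \ref{rem:generators2}. Recall that, as an $A(\F_2)[t^\pmu]$-module, $H_*(\Art_{\Bx_r},\F_2[t^\pmu])$ is the associated graded of the $E^\infty$-page: it is generated by the class $z_0$ (giving the summand $A_\infty$ in the $0$-th column) together with the cycles $\alpha_{2^{h+1}(2m+1),i}$ for $0\le i\le h$ (giving the summands $I_h$ in the columns indexed by $2^{h+1}(2m+1)$), and each such generator carries a precise torsion: $(1+t)$-torsion for the $i=0$ generators and $(1-t^2)^{2^{i-1}}$-torsion for the $i>0$ ones. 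The key point is that the polynomial $1+t^e$ acting on \eqref{eq:short} interacts with these torsions in a predictable way, since over $\F_2$ we have $1+t^e = (1+t)^{2^v}\cdot(\text{something coprime to }1+t)$ where $2^v\|e$.

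The main steps, in order, are as follows. First I would split $H_i(\Art_{\Bx_r},\F_2[t^\pmu])$ itself as a direct sum of cyclic $\F_2[t^\pmu]$-modules, one for each generator described above, using the $A(\F_2)[t^\pmu]$-module structure: the summand attached to a generator $g$ with torsion polynomial $\tau(t)$ is isomorphic to $\F_2[t^\pmu]/(\tau(t))$, together with the free summand $A_\infty$-part coming from $z_0$ (which is free over $\F_2[t^\pmu]$). Second, for each such cyclic summand $\F_2[t^\pmu]/(\tau)$ I would compute the kernel and cokernel of multiplication by $1+t^e$: since $\F_2[t^\pmu]$ is a PID, $\Ker$ and $\coker$ of $(1+t^e)$ on $\F_2[t^\pmu]/(\tau)$ are both isomorphic to $\F_2[t^\pmu]/(\gcd(\tau,1+t^e))$. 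This identifies, summand by summand, which part of $H_i$ survives to $h_i(r,e)$ (the cokernel) and which part of $H_{i+1}$ maps isomorphically onto the kernel of $(1+t^e)$ in $H_i$ via $\delta$ (and is thus called $h'_{i+1}(r,e)$). Third, I would \emph{define} $h_i(r,e)$ to be the direct sum over generators of the cokernel pieces and $h'_i(r,e)$ the direct sum of the kernel pieces (with the degree shift built in from $\delta$ being degree $+1$), and observe that on the nose the long exact sequence decomposes as a direct sum of the four-term exact sequences $0\to \F_2[t^\pmu]/(\gcd)\to \F_2[t^\pmu]/(\tau)\xrightarrow{1+t^e}\F_2[t^\pmu]/(\tau)\to \F_2[t^\pmu]/(\gcd)\to 0$, one per generator, plus the analogous sequence for the free part (where $\delta=0$ and $(1+t^e)$ is injective). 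Summing these gives exactly \eqref{eq:splitted}, and the splitting is manifest because each short four-term sequence is built from the cyclic module decomposition we chose at the start.

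The main obstacle is the first step: showing that the $E^\infty$-page really assembles into a genuine \emph{direct sum} decomposition of $H_*(\Art_{\Bx_r},\F_2[t^\pmu])$ as $\F_2[t^\pmu]$-modules, rather than merely a filtration with these subquotients. In principle there could be nontrivial extensions between the associated-graded pieces. Here the explicit cycle representatives of Remark \ref{rem:generators2} are essential: because each generator is written as $\DB(\text{something})$ divided by an explicit power of $(1+t)$ or $(1-t^2)$, one can read off its exact annihilator in $\F_2[t^\pmu]$ and check that the $A(\F_2)[t^\pmu]$-submodule it generates meets the submodules generated by the other listed generators only in zero — so the filtration splits. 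Once that splitting is in hand, everything downstream is the routine computation of $\gcd$'s in $\F_2[t^\pmu]$ described above, and the statement of the Proposition follows; I would record the precise form of $h_i$ and $h'_i$ in the subsequent Theorems \ref{prop:b2eerF2} rather than here.
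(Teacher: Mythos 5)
Your proposal is correct and follows essentially the same route as the paper: the paper likewise reduces to the explicit cyclic generators of Remark \ref{rem:generators2} with their $(1+t)$-power torsions, uses $1+t^e\equiv(1+t)^{2^a}\bmod 2$ to compute the kernel and cokernel of $(1+t^e)$ on each piece $\F_2[t^\pmu]/(1+t)^{2^i}$ (both $\simeq\F_2[t^\pmu]/(1+t^{2^{\min(i,a)}})$), and splits $\delta$ by exhibiting explicit preimages $\widetilde{x}=(1+t)^{2^a-2^{\min(i,a)}}z_{c+2^i}x_{i_1}\cdots x_{i_k}$ with the correct annihilator. Your flagged obstacle (direct sum versus mere filtration from $E^\infty$) is resolved exactly as you suggest, via the global cycle representatives whose annihilators are read off directly.
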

\begin{proof}
In order to prove this splitting, for each generator $x$ of the module $H_i(\Art_{\Bx_r}, \F_2[t])$ 
we provide an element $\widetilde{x} \in H_{i+1}(\Art_{\Bx_r}, \F_2[t]/(1+(t)^e))$ that maps to $x$ 
and we prove that $\widetilde{x}$ has the right torsion, with respect to the ring $\F_2[t^\pmu]$, 
in order to generate a submodule of $H_i(\Art_{\Bx_r}, \F_2[t]/(1+(t)^e))$ isomorphic to 
$$
\Ker(\F_2[t^\pmu]x \stackrel{(1+t^e)}{\too} \F_2[t^\pmu]x).
$$
Let $2^a$ be the greatest power of $2$ that divides $e$. We observe that the following equivalence 
holds, modulo $2$: $$1+t^e \simeq (1+t^{2^a}) \simeq (1+t)^{2^a} \mod{2}.$$
It turns out that the kernel and the cokernel of the map 
$$\F_2[t^\pmu]/(1+t)^{2^i} \stackrel{(1+t^e)}{\too} \F_2[t^\pmu]/(1+t)^{2^i}$$ are 
both isomorphic to the quotient $$\F_2[t^\pmu]/(1+t^{2^{\min (i,a)}}).$$
We are going to prove that every direct summand of the $\F_2[t^\pmu]$-module 
$H_i(\Art_{\Bx_r}, \F_2[t])$ 
of the form $\F_2[t^\pmu]/(1+t)^{2^i}$ gives rise to two copies of the module 
$\F_2[t^\pmu]/(1+t^{2^{\min (i,a)}})$, one in the same dimension, one in $1$ dimension higher. 
In particular the generator $\alpha_{c, i}$, where $c=2^{h+1}(2m +1)$, $i\leq h$, 
determines the two generators
$$
\widetilde{\alpha}_{c, i}= (1+t)^{2^a-2^{\min(i,a)}} z_{c+2^i} 
$$
and
$$
\pi_* \left(\frac{1}{(1+t)^{2^{\min(i,a)}}} \delta (\widetilde{\alpha}_{c, i})\right) = \pi_* (\alpha_{c, i}) = 
\frac{\DB(z_{c+2^i})}{(1+t)^{2^{i}}}
$$

Given a generic monomial $x=\alpha_{c, i} x_{i_1} \cdots x_{i_k}$ (again 
$c= (2m+1)2^{h+1}$, $i\leq h$) its projection is given by the cycle
$$
\pi_*(\alpha_{c, i}x_{i_1} \cdots x_{i_k}) = 
\frac{\DB(z_{c+2^i}x_{i_1} \cdots x_{i_k})}{(1+t)^{2^{i}}}.
$$
We remark that, given two elements $x=\alpha_{c, i} x_j x_{i_1} \cdots x_{i_k}$ and 
$x'=\alpha_{c, j} x_i x_{i_1} \cdots x_{i_k}$, since they correspond to the lifting 
of the same element in 
the spectral sequence, they represent the same homology class in $H_*(\Art_{\Bx_r}, \F_2[t^\pmu])$ 
(and, henceforth, their projection $\pi(x)$ and $\pi(x')$ are homologous). 
Hence we can suppose that the monomial $x=\alpha_{c, i} x_j x_{i_1} \cdots x_{i_k}$ 
is written in the form such that $i \leq i_1 \leq \cdots i_k$. 
We define the element $\widetilde{x}$ as
$$
\widetilde{x} = (1+t)^{2^a-2^{\min(i,a)}} z_{c+2^i} x_{i_1} \cdots x_{i_k}.
$$
With this definitions it is straightforward to check that $\delta (\widetilde{x})$ generates 
the submodule $$\Ker(\F_2[t^\pmu]x \stackrel{(1+t^e)}{\too} \F_2[t^\pmu]x)$$ and that 
$(1+t)^{2^{\min(i,a)}} \widetilde x=0$. 
Hence we have proved the splitting in Equation (\ref{eq:splitted}). 
The proof gives also a description of the 
generators of the homology $H_i(\Art_{\Bx_r}, \F_2[t]/(1+t^e))$ as a $\F_2[t^\pmu]$-module.
\end{proof}

As a consequence we can give a description of the homology of $\B(2e,e,r)$. 
Let us define for an integer $n$ the value $h_p(n)$ such that $p^{h_p(n)} \mid n$ and
$p^{h_p(n)+1} \nmid n$. For a bigraded module $M$, with degree $\deg$ and dimension $\dim$, 
we use the notation $M[n,m]$ for the module $M'$ isomorphic to $M$, but with bi-graduation shifted
such that $\deg' = \deg + n$, $\dim' = \dim + m$.
Finally, let $M\{n\} = M \oplus M[0,1] \oplus \cdots M[0,n-1]$

We can state the result as follows:

\begin{theo} \label{prop:b2eerF2}
The sum of homology groups $$\bigoplus_{r \geq 0} H_*(\B(2e,e,r), \F_2) = \bigoplus_{r \geq 0} 
H_*(\Art_{\Bx_r}, \F_2[t]/(1+t^e)) $$ is given by the sum
$$ A_\infty [1,0] \otimes \F_2[t]/(1+t^e)\{2\} \oplus
\bigoplus_{n=1}^\infty I_{h_2(n)}[2n+1  , 2n ] 
\otimes \F_2[t]/(1+t^e)\{2\}.
$$  \qed
\end{theo}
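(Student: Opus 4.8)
The strategy is to combine the structure of $E^\infty$ for the spectral sequence computing $H_*(\Art_{\Bx_r}, \F_2[t^\pmu])$ (Theorem \ref{t:ss} and Remark \ref{rem:generators2}) with the splitting of the long exact sequence in Proposition \ref{prop:splitting}, and then to sum over $r$. Concretely, the first step is to record what Theorem \ref{t:ss} gives for $H_*(\Art_{\Bx_r}, \F_2[t^\pmu])$ once we take the direct sum over all $r$: since each column of $E^\infty$ is isomorphic to $A_\infty$ (in the $0$-th column) or to $I_{h_2(n)}$ (in the $n$-th column, $n \geq 1$), and the $i$-th column sits in filtration degree $i$, hence carries a dimension shift $i$ and a degree shift $i+1$ coming from the identification $\Cal{F}_{i+1}C_*(r)/\Cal{F}_iC_*(r) \simeq C_*(\Art_{\Ax_{r-i-1}})[i,i+1]$. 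Thus $\bigoplus_r H_*(\Art_{\Bx_r}, \F_2[t^\pmu])$ is, as a bigraded $\F_2[t^\pmu]$-module, the associated graded of $A_\infty[1,0] \oplus \bigoplus_{n \geq 1} I_{h_2(n)}[2n+1, 2n]$; one must note that there are no extension problems because the filtration of $C_*(r)$ is finite and the modules involved are free over $\F_2[t^\pmu]$ in each bidegree, so the graded module is the module.

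**Second step: pass to $\F_2[t]/(1+t^e)$ via the splitting.** Proposition \ref{prop:splitting} tells us that, for each $r$, the long exact sequence attached to $0 \to \F_2[t^\pmu] \xrightarrow{1+t^e} \F_2[t^\pmu] \to \F_2[t^\pmu]/(1+t^e) \to 0$ splits into short exact sequences $0 \to h'_{i+1}(r,e) \to H_i(\Art_{\Bx_r},\F_2[t^\pmu]) \xrightarrow{1+t^e} H_i(\Art_{\Bx_r},\F_2[t^\pmu]) \xrightarrow{\pi_*} h_i(r,e) \to 0$, and that each summand $\F_2[t^\pmu]/(1+t)^{2^i}$ of $H_i(\Art_{\Bx_r},\F_2[t^\pmu])$ contributes a copy of $\F_2[t^\pmu]/(1+t^{2^{\min(i,a)}})$ to $h_i$ (same dimension) and a copy to $h'_{i+1}$ (one dimension higher), where $2^a \| e$. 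The key observation to get the clean statement is that the generators $x_0, x_1, \ldots$ of $A_\infty$ and of $I_h$ carry torsion exactly $(1+t)x_0 = 0$, $(1-t^2)^{2^{k-1}} x_k = 0$ for $k \geq 1$ in $A_\infty$ (cf. the definition of $J_\infty$), i.e. $x_k$ generates a summand $\cong \F_2[t^\pmu]/(1+t)^{2^k}$ over $\F_2[t^\pmu]$ (using $1-t^2 \equiv (1+t)^2 \bmod 2$); so applying $-\otimes_{\F_2[t^\pmu]} \F_2[t^\pmu]/(1+t^e)$ in the derived sense, by the torsion computation in the proof of Proposition \ref{prop:splitting}, turns $A_\infty$ into $A_\infty \otimes \F_2[t]/(1+t^e)$ in the original dimension together with an isomorphic copy of the same module in dimension one higher — this is exactly the ``$\{2\}$'' operation $M\{2\} = M \oplus M[0,1]$ — and likewise turns $I_{h_2(n)}$ into $I_{h_2(n)} \otimes \F_2[t]/(1+t^e)\{2\}$. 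Reassembling, and keeping track of the bigradings $[1,0]$ for the $0$-th column and $[2n+1, 2n]$ for the $n$-th column (for $n \geq 1$ the filtration index is $n$, dimension shift $n$ does not match $2n$ — so one must be careful here: the columns surviving in $E^\infty$ are only the even ones, $i = 2n$, with $I_{h_2(i/2)}$-type content, giving dimension shift $2n$ and degree shift $2n+1$), we land on the asserted formula.

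**Main obstacle.** The delicate point, and the one I expect to need the most care, is the bookkeeping of the two gradings together with the reindexing: the surviving columns in $E^\infty$ are the even ones $i = 2n$, and one must check that the ``$n$-th'' summand $I_h$ of the statement, sitting in bidegree shift $[2n+1, 2n]$, really matches the $i = 2n$ column of the spectral sequence, with $h = h_2(n)$ corresponding to the $2$-adic valuation condition ``$2^{h+1} \mid i$ and $2^{h+2} \nmid i$'' rewritten in terms of $n = i/2$, namely $2^h \mid n$, $2^{h+1} \nmid n$, i.e. $h = h_2(n)$. One must also verify that tensoring with $\F_2[t]/(1+t^e)$ commutes with the direct sum over $r$ and does not create extension problems between the $A_\infty$-part and the $I_h$-parts (it does not, since they live in different degrees), and that the ``$\{2\}$'' doubling is uniform across all generators — which is exactly the content of the proof of Proposition \ref{prop:splitting}, where each generator $\alpha_{c,i} x_{i_1}\cdots x_{i_k}$ produces precisely $\widetilde{x}$ (dimension $+1$) and $\pi_*(\alpha_{c,i}x_{i_1}\cdots x_{i_k})$ (same dimension). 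Once these indexing verifications are done, the formula follows by assembling the per-$r$ results and invoking Shapiro's Lemma $H_*(\B(2e,e,r),\F_2) = H_*(\Art_{\Bx_r}, \F_2[t]/(1+t^e))$ already recorded in the text.
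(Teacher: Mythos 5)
Your argument is correct and is exactly the derivation the paper intends: the theorem carries no separate proof precisely because it is the assembly of Theorem \ref{t:ss} (together with Remark \ref{rem:generators2}, which — rather than any freeness in each bidegree — is what rules out extension problems, since the $E^\infty$ generators lift to global cycles), Proposition \ref{prop:splitting}, Shapiro's lemma, and the reindexing $i=2n$, $h=h_2(n)$ that you carry out. The only caveat, inherited from the statement itself rather than introduced by you, is that the ``$\{2\}$''-doubling is not literally uniform on the unique free summand $\F_2[t^\pmu]\cdot z_0$ of $A_\infty$ (where $\Ker(1+t^e)=0$), which only affects the degenerate degrees $r\le 1$.
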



We can write explicitly the Poincar\'e polynomial of the homology $H_*(\B(2e,e,r), \F_2)$. If we call $P_2(\B(2e,e,r),u) = \sum_{i=0}^\infty \dim_{\F_2}H_i(\B(2e,e,r),\F_2) u^i$ such a polynomial, it is more convenient to consider the series in two variables
$$
P_2(\B(2e,e,*),u,v) = \sum_{r=0}^\infty P_2(\B(2e,e,r),u) v^r.
$$

The Poincar\'e series for the bigraded ring $A_\infty/(1+t)^e$ is given by
$$
P_{A_\infty(\F_2),e}(u,v) = e + \sum_{i=0}^\infty \left( 2^{\min(h_2(e),i)} u^{2^i-1}v^{2^i} \prod_{j\geq i} \frac{1}{1-u^{2^j-1}v^{2^j}} \right)
$$
and the Poincar\'e series of the ideal $I_{a} \otimes \F_2[t]/(1+t^e)$ 
is given by
$$
P_{I_{a}(\F_2),e}(u,v) =\sum_{i=0}^a \left( 2^{\min(h_2(e),i)} u^{2^i-1}v^{2^i} \prod_{j\geq i} \frac{1}{1-u^{2^j-1}v^{2^j}} \right).
$$

Hence we obtain the following result:
\begin{cor} The Poincar\'e polynomial of the homology of the groups $\B(2e,e,r)$ with $\F_2$ coefficients is given by:
$$
P_2(\B(2e,e,*),u,v) = v P_{A_\infty(\F_2),e}(u,v) (1+u) + 
\sum_{n=1}^\infty v^{2n+1}u^{2n}P_{I_{h_2(n)}(\F_2),e}(u,v) (1+u).
$$ \qed
\end{cor}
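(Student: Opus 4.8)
The plan is to deduce the Corollary as a purely formal consequence of Theorem \ref{prop:b2eerF2}, by applying the Poincar\'e-series functor to the stated decomposition of $\bigoplus_{r\ge 0}H_*(\B(2e,e,r),\F_2)$. First I would record the elementary dictionary for bigraded $\F_2$-vector spaces, with $\deg$ (the number of strands $r$) tracked by $v$ and the homological dimension tracked by $u$: the Poincar\'e series is additive over direct sums, $P_{M[n,m]}(u,v)=u^m v^n\,P_M(u,v)$, and hence $P_{M\{k\}}(u,v)=(1+u+\cdots+u^{k-1})\,P_M(u,v)$. In particular the operation $\{2\}$ appearing in Theorem \ref{prop:b2eerF2} contributes the factor $(1+u)$, while the shifts $[1,0]$ and $[2n+1,2n]$ contribute $v$ and $v^{2n+1}u^{2n}$ respectively.

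The only point requiring a (short) argument is the identification of the two auxiliary series, namely that $P_{A_\infty(\F_2)\otimes\F_2[t]/(1+t^e)}=P_{A_\infty(\F_2),e}$ and $P_{I_a(\F_2)\otimes\F_2[t]/(1+t^e)}=P_{I_a(\F_2),e}$. For this I would observe that, by Remark \ref{rem:generators2} together with the definitions of $J_\infty$ and $J_a$, the modules $A_\infty$ and $I_a$ split, compatibly with the bigrading, as direct sums of cyclic $\F_2[t^\pmu]$-modules indexed by monomials in $x_0,x_1,\dots$: the empty monomial contributes one free summand $\F_2[t^\pmu]$ in bidegree $(0,0)$ (only for $A_\infty$), while a monomial $x_i x_{i_1}\cdots x_{i_n}$ written with $i\le i_1\le\cdots\le i_n$ (and with $i\le a$ in the case of $I_a$) contributes a summand $\cong\F_2[t^\pmu]/((1+t)^{2^{i}})$ placed in the bidegree of that monomial. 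Base change $-\otimes_{\F_2[t^\pmu]}\F_2[t^\pmu]/(1+t^e)$ then replaces the free summand by $\F_2[t^\pmu]/(1+t^e)$, of $\F_2$-dimension $e$, and -- using that $1+t^e=(1+t)^{2^{h_2(e)}}\cdot(\text{a unit of the local ring }\F_2[t^\pmu]_{(1+t)})$, which applies precisely because every other summand is $(1+t)$-power torsion -- replaces $\F_2[t^\pmu]/((1+t)^{2^i})$ by $\F_2[t^\pmu]/((1+t)^{2^{\min(i,h_2(e))}})$, of $\F_2$-dimension $2^{\min(i,h_2(e))}$. Grouping the surviving contributions according to the index $i$ of the smallest variable of each monomial -- so that the leading $x_i$ contributes $u^{2^i-1}v^{2^i}$ and the remaining factors, being an arbitrary monomial in $x_i,x_{i+1},\dots$, contribute $\prod_{j\ge i}(1-u^{2^j-1}v^{2^j})^{-1}$ -- reproduces verbatim the displayed formulas for $P_{A_\infty(\F_2),e}$ and $P_{I_a(\F_2),e}$.

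It then remains to assemble: feeding the decomposition of Theorem \ref{prop:b2eerF2} through the dictionary above, the summand $\bigl(A_\infty[1,0]\otimes\F_2[t]/(1+t^e)\bigr)\{2\}$ contributes $v\,P_{A_\infty(\F_2),e}(u,v)(1+u)$, and for each $n\ge 1$ the summand $\bigl(I_{h_2(n)}[2n+1,2n]\otimes\F_2[t]/(1+t^e)\bigr)\{2\}$ contributes $v^{2n+1}u^{2n}\,P_{I_{h_2(n)}(\F_2),e}(u,v)(1+u)$. Since $P_2(\B(2e,e,*),u,v)=\sum_{r}P_2(\B(2e,e,r),u)\,v^r$ is by definition the Poincar\'e series of $\bigoplus_{r}H_*(\B(2e,e,r),\F_2)$, summing these contributions yields the claimed identity. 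Formal convergence of the infinite sum and products is automatic, as only finitely many terms contribute in each fixed bidegree.

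There is no deep obstacle; the work is bookkeeping. The step I expect to be most delicate is keeping track of the $\F_2[t^\pmu]$-module structure -- equivalently the torsion exponent $2^{i}$ attached to the bigraded summand generated by a monomial whose smallest variable is $x_i$ -- through the base change $-\otimes_{\F_2[t^\pmu]}\F_2[t^\pmu]/(1+t^e)$, and checking that the decomposition into cyclic summands is genuinely compatible with the $(\dim,\deg)$-grading. All of this is already implicit in the proof of Theorem \ref{t:ss} and in Remark \ref{rem:generators2}, so in practice it amounts to organizing a combinatorial generating-function computation rather than proving anything new; one must only be careful that the factorization $1+t^e=(1+t)^{2^{h_2(e)}}\cdot(\text{unit})$ holds only after localizing at $(1+t)$, which is why this substitution is legitimate exactly on the $(1+t)$-power-torsion summands.
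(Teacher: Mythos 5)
Your proposal is correct and coincides with the paper's (implicit) argument: the corollary carries only a \qed\ precisely because it is the formal generating-function translation of Theorem \ref{prop:b2eerF2}, with the shifts $[n,m]$ and the operation $\{2\}$ contributing $u^m v^n$ and $(1+u)$, and with the displayed series $P_{A_\infty(\F_2),e}$ and $P_{I_a(\F_2),e}$ obtained, exactly as you do, by splitting $A_\infty$ and $I_a$ into cyclic $\F_2[t^{\pm 1}]$-summands indexed by monomials (with torsion exponent $2^i$ governed by the smallest variable $x_i$) and base-changing along $\F_2[t^{\pm 1}]\to\F_2[t^{\pm 1}]/(1+t^e)$ via the factorization $1+t^e=(1+t)^{2^{h_2(e)}}\cdot(\mathrm{unit})$ locally at $(1+t)$. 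The one caveat --- that the free summand of $A_\infty$ has vanishing $\mathrm{Tor}_1$ and hence, unlike the $(1+t)$-power-torsion summands, contributes only a cokernel and should not really acquire the factor $(1+u)$ coming from $\{2\}$ --- is inherited from the statement of Theorem \ref{prop:b2eerF2} itself and is not a defect introduced by your derivation.
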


As an example of these computations we give in Table \ref{tab:homolF2} and Table \ref{tab:homolF2bis} the first homology groups of $\B(2e,e,r)$ with 
coefficients in the field $\F_2$ and the stable part up to homological dimension  $5$.

\begin{table}[htb]
\begin{center}
\begin{tabular}{|c|c|c|c|c|c|c|c|c|c|c|}
\hline
$r$&\multicolumn{2}{c|}{$2$}&$3$&\multicolumn{3}{c|}{$4$}&$5$&\multicolumn{2}{c|}{$6$}&$ 7$\\
\hline
$e \mod (m)$ & $0(2)$ & $1(2)$ & any & $0(4)$& $2(4)$ & $1(2)$& any & $0(2)$ & $1(2)$ & any \\
\hline
$\dim H_0$& $1$  &$1 $&$1 $&$1 $&$1 $&$1 $&$1 $&$1 $&$1 $ & $1$\\
\hline
$\dim H_1$ & $3$ & $2$                 & $2$ & $2$ & $2$ & $2$ & $2$ & $2$ & $2$ & $2$ \\
\hline
$\dim H_2$ & $2$& $1$   & $1$ & $4$ & $4$ & $3$ & $3$ & $3$ & $3$ & $3$ \\
\hline
$\dim H_3$ &\multicolumn{2}{c|}{$0$}  & $0$ & $7$ & $5$ & $3$ & $3$ & $6$ & $5$ & $5$ \\
\hline
$\dim H_4$ &\multicolumn{2}{c|}{$0$}  & $0$ & $4$ & $2$ & $1$ & $2$ & $6$ & $4$ & $3$ \\
\hline
$\dim H_5$ &\multicolumn{2}{c|}{$0$} & $0$ & \multicolumn{3}{c|}{$0$}  & $1$ & $5$ & $3$ & $4$ \\
\hline
$\dim H_6$ &\multicolumn{2}{c|}{$0$} & $0$ & \multicolumn{3}{c|}{$0$}  & $0$ & $2$ & $1$ & $3$ \\
\hline
$\dim H_7$ &\multicolumn{2}{c|}{$0$} & $0$ & \multicolumn{3}{c|}{$0$}  & $0$ & \multicolumn{2}{c|}{$0$}  & $1$ \\
\hline
\end{tabular}
\end{center}
\caption{$\dim H_*(\B(2e,e,r); \F_2)$, $r<8$} 
\label{tab:homolF2}
\end{table}

\begin{table}[htb]
\begin{center}
\begin{tabular}{|c|c|c|c|c|c|}
\hline
$r$&\multicolumn{4}{c|}{$8$}&$\geq 9$\\
\hline
$e \mod (m)$ &$0(8)$& $4(8)$ & $2(4)$& $1(2)$ & any\\
\hline
$\dim H_0$&$1$ &$1$ &$1$ &$1$ &$1$\\
\hline
$\dim H_1$&$2$ &$2$ &$2$ &$2$ &$2$\\
\hline
$\dim H_2$&$3$ &$3$ &$3$ &$3$ &$3$\\
\hline
$\dim H_3$&$5$ &$5$ &$5$ &$5$ &$5$\\
\hline
$\dim H_4$&$6$ &$6$ &$6$ &$5$ &$5$\\
\hline
$\dim H_5$&$8$ &$8$ &$8$ & $6$ &$6$\\
\hline
$\dim H_6$&$11$ &$11$ &$9$ & $6$&$ $\\
\hline
$\dim H_7$&$15$ &$11$ &$7$ & $4$&$ $\\
\hline
$\dim H_8$&$8$ &$4$ &$2$ & $1$&$ $\\
\hline
\end{tabular}
\end{center}
\caption{$\dim H_*(\B(2e,e,r); \F_2)$, $r=8$ and stable part up to $H_5$} 
\label{tab:homolF2bis}
\end{table}


\subsection{$H_*(\Art_{\Bx_r}, \F_p[t^\pmu])$}

As for the case $p=2$ we start computing the differential in the $E^2$ term of the spectral sequence. 
Again, the starting point is the result in Proposition \ref{prop:E^2}.
The differential $$
d^2_{i,j}(r): E^1_{i,j}(r) \to E^1_{i-2,+1}(r)
$$
acts as follows: 
$$
d^2_{i,j}(r): z_ix \mapsto \qbin{i}{2}_{-1} (1-t^2) z_{i-2}x_1x.
$$
The coefficient $\qbin{i}{2}_{-1}$ is zero in $p \mid i$. Recall in fact that we are considering only 
even columns, hence even values of $i$. So we have that for $p \mid i$, 
$\ph_{2p}(q) \mid \qbin{i}{2}_q$ and when we evaluate the polynomial for $q=-1$ we get $\ph_{2p}(-1) =p$.

Hence we can give the picture of the $E^3$ page of the spectral sequence:
all the odd columns are zero, if $i$ is a even multiple of $p$, then the $i$-th column is isomorphic 
to the quotient $A(\F_p)/((1+t)h,(1-t^2)x_0)$, if $i+2$ is a multiple of $p$, then the $i$-th column
is isomorphic to the submodule of $A(\F_p)/((1+t)h)$ generated by $h$ and $x_0$. If $i, i+2$ are 
not multiples of $p$, then the $i$-th column is isomorphic to the  submodule of  
$A(\F_p)/((1+t)h,(1-t^2)x_0)$ generated by $h$ and $x_0$.

As in the case of the prime $p=2$, we need to define some ideals of $A=A(\F_p)$:
$$
J_{2a+1}(\F_p) = J_{2a+1} = ((1+t)h, (1-t^2)x_0, (1-t^2)^{p-1}y_1, (1-t^2)^p x_1, \ldots, (1-t^2)^{p^a}x_a)
$$
and
$$
J_{2a}(\F_p) = J_{2a} = ((1+t)h, (1-t^2)x_0, (1-t^2)^{p-1}y_1, (1-t^2)^p x_1, \ldots, 
(1-t^2)^{(p-1)p^{a-1}}y_a).
$$

We define the quotients: 
$$
A_a(\F_p) = A_a = A(\F_p)/J_a(\F_p)
$$
and the ideals:
$$
I_{2a+1}(\F_p) = I_{2a+1} = (h, x_0, y_1, x_1, \ldots, y_a, x_a) \sst A_{2a+1},
$$
$$
I_{2a}(\F_p) = I_{2a} = (h, x_0, y_1, x_1, \ldots, y_a, x_a) \sst A_{2a}
$$
and
$$
K_{2a}(\F_p) = K_{2a} = (h, x_0, y_1, x_1, \ldots, x_{a-1}, y_a) \sst A_{2a}.
$$

Finally, as for $p=2$, we define

$$J_\infty(\F_p)= J_\infty = \cup_{a=0}^\infty J_a(\F_p)$$ and 
$$A_\infty(\F_p) = A_\infty = A/J_\infty.$$

With this notation the page $E^3$ of the spectral sequence looks as in Table \ref{tab:E3_p_odd}
(on the bottom we denote the number of the columns).
\begin{table}[htbp]
\begin{tabular}{|m{2em}|m{2em}|m{2em}|m{2em}|m{2em}|m{2em}|m{2em}|m{2em}|m{2em}|m{2em}}
& & & & & & & & & \\
 \begin{center} $A_1$ \end{center} &\begin{center} $0$ \end{center} & \begin{center} $I_1$ 
\end{center} 
& \begin{center} $\cdots$ \end{center} & \begin{center} $I_1 $ \end{center} 
& \begin{center} $0$ \end{center} &\begin{center} $I_0$ \end{center} 
& \begin{center} $0$ \end{center} & \begin{center} $A_1$ \end{center} 
& \begin{center} $\cdots$ \end{center} \\
${}_0$ & ${}_1$ & ${}_2$ & ${}_{\cdots}$ & ${}_{2p-4}$& ${}_{2p-3}$ &  ${}_{2p-2}$ 
& ${}_{2p-1}$  & ${}_{2p}$ & \\
 \hline 
\end{tabular}
\caption{The $E^3$ page of the spectral sequence for $p$ odd}
\label{tab:E3_p_odd}
\end{table}

It turns out that in the following terms the differential until $2p-2$ is zero, so $E^3 = E^{2p-2}$.

Here we have the general description of the spectral sequence in the analogous 
of Theorem \ref{t:ss} for odd primes:

\begin{theo} \label{t:ssp}
The $k$-th term of the spectral sequence described in Proposition \ref{prop:ss} computing 
the homology $H_*(\Art_{\Bx_r}, \F_p[t^\pmu])$ is as follows:
\begin{itemize}
\item if $k = 2p^a$ the $i$-th column is isomorphic to:
\begin{itemize}
\item $0$ if $i$ is odd;
\item $I_{2h+1}$ if $2p^{h} \mid i$ and $2p^{h+1} \nmid i$, $2p^{h+1} \nmid i+2p^{h}$ with $h < a$;
\item $K_{2h}$ if $2p^{h+1} \mid i+ 2p^{h}$ with $h<a$;
\item $A_{2a}$ if $2p^a \mid i$.
\end{itemize}
The differential $d^{2p^a}$ is as follows: if $2p^{a} \mid i $ and $2p^{a+1}\nmid i$ we have the map
$$
d^k_{i,j}: z_ix \mapsto \qbin{i}{2p^a}_{-1} (1-t^2)^{p^a} z_{i-2p^a}x_ax
$$
where the $q$-analog coefficient is invertible; all the other differentials are trivial.
\item if $k = 2p^a(p-1)$ the $i$-th column is isomorphic to:
\begin{itemize}
\item $0$ if $i$ is odd;
\item $I_{2h+1}$ if $2p^{h} \mid i$ and $2p^{h+1} \nmid i$, $2p^{h+1} \nmid i+2p^{h}$ with $h \leq a$;
\item $I_{2h}$ if $2p^{a+1} \mid i+ 2p^{a}$
\item $K_{2h}$ if $2p^{h+1} \mid i+ 2p^{h}$ with $h<a$;
\item $A_{2a+1}$ if $2p^{a+1} \mid i$.
\end{itemize}
The differential $d^{2p^a(p-1)}$ is as follows: if $2p^{a+1}\mid i+2p^a$ we have the map
$$
d^k_{i,j}: z_i x_a x \mapsto (1-t^2)^{p^a(p-1)} z_{i-2p^a(p-1)}y_{a+1}x
$$
and all the other differentials are trivial.
\item if $2p^a < k < 2p^a(p-1)$ $E^k = E^{2p^a(p-1)}$ and the differential $d^k$ is trivial.
\item if $2p^a(p-1) < k < 2p^{a+1}$ $E^k = E^{2p^{a+1}}$ and the differential $d^k$ is trivial.
\end{itemize}
In $E^\infty$ term of the spectral sequence the $i$-th column is isomorphic to:
\begin{itemize}
\item $0$ if $i$ is odd;
\item $I_{2h+1}$ if $2p^{h} \mid i$ and $2p^{h+1} \nmid i$, $2p^{h+1} \nmid i+2p^{h}$;
\item $K_{2h}$ if $2p^{h+1} \mid i+2p^h$;
\item $A_{\infty}$ if $i = 0$.
\end{itemize}
The homology $H_*(\Art_{\Bx_r}, \F_p[t^\pmu])$ is isomorphic to the graduate module associated 
to the $E^\infty$ term.
\end{theo}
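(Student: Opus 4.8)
The plan is to run the same argument as in the proof of Theorem \ref{t:ss}, the essential new feature being that for each $a$ there are now \emph{two} non-trivial pages instead of one: the page $2p^{a}$, whose differential introduces the class $x_{a}$ together with its $(1-t^{2})^{p^{a}}$--torsion, and the page $2p^{a}(p-1)$, whose differential introduces $y_{a+1}$ together with its $(1-t^{2})^{(p-1)p^{a}}$--torsion. I would argue by induction on $a$. The base case is the $E^{3}=E^{2p-2}$ page described in Table \ref{tab:E3_p_odd}: it follows from Proposition \ref{prop:E^2} and the explicit form of $d^{2}$, using Lemma \ref{lem:qanalog} to see that $\qbin{i}{2}_{-1}$ is a unit modulo $p$ unless $2p\mid i$, the divisibility being governed by the cyclotomic factor $\varphi_{2p}$ with $\varphi_{2p}(-1)=p$, together with the observation that the intervening differentials $d^{3},\dots,d^{2p-3}$ vanish.

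For the inductive step I would first sharpen the statement, exactly as in Remark \ref{rem:generators2}, by writing down explicit lifts of the surviving generators: a monomial $z_{i}\,h^{\varepsilon}y_{j_{1}}\cdots y_{j_{s}}x_{t_{1}}\cdots x_{t_{n}}$ of the $E^{0}$--term lifts, on the relevant page, to a cycle of the form $\DB(z_{i'}h^{\varepsilon}y_{j_{1}}\cdots x_{t_{1}}\cdots)$ divided by the appropriate power of $(1-t^{2})$ — or of $(1+t)$ when the leading variable is $h$ — the exponent being dictated by the homological degree of the leading variable, just like the elements $\alpha_{i,s}$ of the $\F_{2}$--argument. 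The computational core is the leading-term expansion of the boundary $\DB$ in $C_{*}(r)$: for $i=2p^{a}(2m+1)$ one has
$$
\DB\bigl(z_{i}\,y_{j_{1}}\cdots x_{t_{1}}\cdots\bigr)=\qbin{i}{2p^{a}}'_{-1}\,z_{i-2p^{a}}\,x_{a}\,y_{j_{1}}\cdots x_{t_{1}}\cdots+(\text{terms of strictly higher filtration}),
$$
where $\qbin{i}{2p^{a}}'_{-1}=\qbin{i}{2p^{a}}_{-1}(1-t^{2})^{p^{a}}$ and $\qbin{i}{2p^{a}}_{-1}$ is a unit modulo $p$ by Lemma \ref{lem:qanalog}; and the analogous expansion producing $y_{a+1}$ from $z_{i}x_{a}\cdots$ when $2p^{a+1}\mid i+2p^{a}$, with coefficient a unit times $(1-t^{2})^{(p-1)p^{a}}$. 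From these one reads off that the page-$2p^{a}$ differential is, up to units, multiplication by $(1-t^{2})^{p^{a}}x_{a}$ — whose kernel on the unstable column $A_{2a}$ is the ideal recorded in the statement ($I_{2a+1}$ or $K_{2a}$ according to which divisibility condition $i$ satisfies) and whose cokernel is $A_{2a+1}$ — and that the page-$2p^{a}(p-1)$ differential is, up to units, multiplication by $(1-t^{2})^{(p-1)p^{a}}y_{a+1}$ acting on the $x_{a}$--component, with kernel $K_{2a}$ and cokernel $A_{2(a+1)}$, which restarts the induction. The vanishing of all intermediate differentials is, as in Theorem \ref{t:ss}, because the surviving classes are represented by genuine cycles of $C_{*}(r)$.

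The main obstacle is the bookkeeping, now doubled relative to the $\F_{2}$ case: one must verify that the displayed leading term really is strictly lowest in the filtration $\Cal{F}$, so that it — and not one of the omitted terms — detects the differential and no cancellation occurs, and one must follow the exact exponents of $(1-t^{2})$, since over $\F_{p}$ it is precisely these exponents, $p^{a}$ on the $x$--pages and $(p-1)p^{a}$ on the $y$--pages, that separate the successive pages and encode the torsion of $x_{a}$ and $y_{a}$. What has no counterpart for $p=2$ is that the two differentials at a given level are intertwined: the class $x_{a}$ manufactured on the page $2p^{a}$ is the source of the differential on the page $2p^{a}(p-1)$, so the precise shape of the ideals $I_{2h+1}$, $K_{2h}$, $I_{2h}$ on the intervening pages must be carried along intact — checking this is where most of the work lies. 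Once the generator-level statement is established, the description of $E^{\infty}$, and hence the identification of $H_{*}(\Art_{\Bx_{r}},\F_{p}[t^{\pmu}])$ with the associated graded module, follows by letting $a\to\infty$, exactly as for Theorem \ref{t:ss}.
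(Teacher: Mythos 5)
Your overall strategy is the same as the paper's: induction on $a$ starting from the $E^3=E^{2p-2}$ page of Table \ref{tab:E3_p_odd}, a strengthened inductive statement giving explicit representatives of the surviving classes, a leading-term expansion of $\DB$ in the filtration $\Cal{F}$ with Lemma \ref{lem:qanalog} controlling which $q$-binomials are units, and identification of the kernels and cokernels of the two differentials at each level. (A small slip: the kernel of $d^{2p^a}$ on an unstable column $A_{2a}$ is $I_{2a}$, not ``$I_{2a+1}$ or $K_{2a}$''; $I_{2a+1}$ is the homology when there is also an incoming differential, and $K_{2a}$ only appears one page later as the kernel of $d^{2p^a(p-1)}$.)

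The genuine gap is in your recipe for the lifts. You propose that every surviving monomial lifts to ``$\DB(z_{i'}h^{\varepsilon}y_{j_1}\cdots x_{t_1}\cdots)$ divided by the appropriate power of $(1-t^2)$ (or of $(1+t)$)''. That works for monomials whose smallest factor is $h$ or $x_s$ in a column where the relevant binomial is a unit, but it fails in exactly the two places that are new for odd $p$. First, a monomial whose smallest factor is $y_s$ is not such a quotient: the paper must first form $\DB[p](z_{i+2p^s}m')$ (the part of the \emph{integral} boundary with coefficients divisible by $p$), divide by $p(1-t^2)^{p^{s-1}}$ to get $\gamma[p](z_{i+2p^s}m')$, and only then take $\DB$ and divide by $(1-t^2)^{(p-1)p^{s-1}}$. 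Second, in the columns with $2p^{a+1}\mid i+2p^a$ (the $I_{2a}$ columns), the coefficient $\qbin{i+2p^a}{2p^a}_{-1}$ is \emph{not} a unit mod $p$ — it is divisible by $p^{d'}$ for $2p^{a+d'}\,\|\,i+2p^a$ — so the lift of the $x_a$-classes there must be $\DB[p^{d'}](z_{i+2p^a})m'/\bigl(p^{d'}(1-t^2)^{p^a}\bigr)$, and it is the boundary of \emph{this} element that produces the term $(1-t^2)^{p^a(p-1)}z_{i-2p^a(p-1)}y_{a+1}m'$. Without this $p$-divisibility bookkeeping in the integral complex, the differential $d^{2p^a(p-1)}$ — the step that creates $y_{a+1}$ and restarts your induction — cannot be computed, so this is precisely the ``analogous expansion'' you defer and it does not follow from the $x$-page analysis by analogy alone.
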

\begin{proof}
As for the case of $p=2$, we prove the first part of the statement by induction on $a$ and the second 
part of the Theorem will follow from the first part.

We start with the description of $E^3$ that we gave in Table \ref{tab:E3_p_odd}. 
In order to work by induction, we'll give an explicit description of the generator of the
generators in the $E^k$ term of the spectral sequence. 


Let $2p^a < k$ and let $I_{2h+1}$ be an ideal in the $i$-th column. Hence $2p^h \mid i$ and $2p^{h+1} \nmid i$, 
$2p^{h+1} \nmid i + 2p^h$. The generators $h, x_0, y_1$, $x_1, \ldots, y_h$, $x_h$ are the images of the elements
$z_ih, z_ix_0, z_iy_1$, $z_ix_1, \ldots, z_iy_h$, $z_ix_h$ in the $E^0$ term. We consider these generators
of the ideal $I_{2h+1}$ ordered
as we wrote them, so $h$ will be the smallest generator and $x_h$ will be the biggest.
Let $m$ be a generic monomial in the ideal $I_{2h+1}$. We write its factors ordered from the smallest 
to the biggest. It is the image of the element $z_im$ in the $E^0$ 
term of the spectral sequence. Its lifting in the $E^k$ term is given as follows:
$$\frac{\DB(z_{i+1}m')}{(1+t)}$$ 
if the smallest factor of $m$ is $h$ and $m = hm'$,
$$\frac{\DB(z_{i+2p^s}m')}{(1-t^2)^{p^s}}$$ 
if the smallest factor of $m$ is $x_s$ and $m = x_sm'$. Note that we must have $s \leq h$ and hence 
$2p^{h+1} \nmid i + 2p^s$. This implies that the coefficient of $z_i x_s m'$ in $\DB(z_{i+2p^s}m')$ is non-zero.

If the smallest factor of $m$ is $y_s$ and let $m = y_sm'$ we need to define the following element.
Let $\DB[p](z_{i+2p^s}m')$ be the sum of all the terms that appears in $\DB(z_{i+2p^s}m')$ with a coefficients that is
divisible by $p$ (when we consider the boundary with integer coefficients). 
Notice that, with respect to the filtration $\Cal{F}$ of the complex,
 the highest term that doesn't appear in the sum is $(1-t^2)^{p^s}z_{i}x_sm'$. We define
$$
\gamma[p](z_{i+2p^s}m') = \frac{\DB[p](z_{i+2p^s})m'}{p(1-t^2)^{p^{s-1}}}
$$
and we have that the lifting of $z_im$ is given by
$$\frac{\DB(\gamma[p](z_{i+2p^s}m'))}{(1-t^2)^{(p-1)p^{s-1}}}.$$ 
In fact it is clear that the difference $$\frac{\gamma[p](z_{i+2p^s}m')}{(1-t^2)^{(p-1)p^{s-1}}} - z_iy_sm'$$ 
projects to a boundary in the quotient $\Cal{F}_{i+1}C_*(r)/\Cal{F}_iC_*(r)$. Notice that the quotient 
$$\frac{\gamma[p](z_{i+2p^s}m')}{(1-t^2)^{(p-1)p^{s-1}}}$$ is not defined in a $\F_p[t^\pmu]$-module
but still we can define it if we consider only the summands of $\gamma[p](z_{i+2p^s}m')$ that are 
not in $\Cal{F}_iC_*(r)$.  Hence we can use the first term of the
difference, instead of the second, to lift the class of $z_im$ to a representative in $E^k$. 

Since all the liftings that we have defined are global cycle, all the following differentials in the spectral sequence
map these terms to zero.


We now give a description of the generators of the ideal $I_{2h}$ appearing in the $i$-th column of the $E^k$ 
term of the spectral sequence. We must have $2p^{a+1} \mid i + 2p^a$. 
As before, the ideal $I_{2h}$ is generated by the terms $h, x_0, y_1$, $x_1, \ldots, y_h$, $x_h$ that are 
the images of the elements $z_ih, z_ix_0, z_iy_1$, $z_ix_1, \ldots, z_iy_h$, $z_ix_h$ in the $E^0$ term.

Given a monomial $m$ with smallest factor different from $x_h$, it is easy to verify that
the lifting is the same as in the previous description for the ideal $I_{2h+1}$.

Let now $m$ have smallest term $x_h$, with $m = x_h m'$. Let $d = h + d'$ be such that 
$2p^d \mid i + 2p^h$ and $2p^{d+1}\nmid i + 2p^h$. One can verify that $(1-t^2)^{p^h}z_ix_hm'$ is
the first non-zero element in $$
\frac{\DB[p^{d'}](z_{i+2p^h})m'}{p^{d'}}$$
since all the previous summands in $\DB[p^{d'}](z_{i+2p^h})m'$ 
(when we consider the boundary with integer coefficients) have a coefficients divisible by $p^{d'+1}$.
The first term missing in the sum $\DB[p^{d'}](z_{i+2p^h})m'$ is the monomial 
$(1-t^2)^{p^{h+1}} z_{i-(p-1)p^h} x_{h+1} m'$. It follows that we can take as a lifting of $x_hm'$ in 
$E^{2p^h(p-1)}$ the element
$$\frac{\DB[p^{d'}](z_{i+2p^h})m'}{p^{d'}(1-t^2)^{p^h}}$$
and hence its differential with respect to the map $d^{2p^h(p-1)}$ is 
$$\DB\frac{\DB[p^{d'}](z_{i+2p^h})m'}{p^{d'}(1-t^2)^{p^h}}$$
that is homologous, for what we have seen before, to the element
$$
(1-t^2)^{p^{h}(p-1)} z_{i-(p-1)p^h} y_{h+1} m'
$$
as stated in the Theorem.

Hence the differential $d^{2p^h(p-1)}$ maps $x_hm' \mapsto (1-t^2)^{p^{h}(p-1)} y_{h+1} m'$
and is zero for all the others elements. Clearly the kernel is given by the ideal $K_{2h}$.


For what concerns the ideal $K_{2h}$, generated by $h, x_0, y_1$, $x_1, \ldots, y_h$, the description of the
generators given before can be repeated and all the generators lift to global cycles as soon as $k > 2p^h(p-1)$.


Finally we consider the modules $A_{2h}$ and $A_{2h+1})$ that appear in the column $i$ of the spectral sequence.
Each monomial $m$ of $A_{2h}$ or $A_{2h+1})$ corresponds to a monomial $z_i m$ in the term $E^0$ of the
spectral sequence. If $2p^a \mid i$ then the monomial $z_i m$ will survive until the term $E^{2p^a}$
of the spectral sequence, since all the first summands of the differential $\DB z_i m$ are zero.
The first summand that can be non-zero is 
$$
d^k_{i,j}: z_ix \mapsto \qbin{i}{2p^a}_{-1} (1-t^2)^{p^a} z_{i-2p^a}x_ax
$$
that is actually non-zero if and only if $2p^{a} \mid i $ and $2p^{a+1}\nmid i$.
The kernel of the map $d^{2p^a}: A_{2a} \to A_{2a}$ is the ideal $I_{2a}$ and the quotient
of the kernel by the image of $d^{2p^a}$, when the image is non-zero, is the module $A_{2a+1}$.
Hence all the other differentials are forced to be zero and the behaviour of the spectral sequence is 
as described in the Theorem.
%
\end{proof}

\begin{remark}
From the proof we can read the description of the generators of 
the $E^\infty$ term:
\begin{itemize}
\item the module $I_{2h+1}$ in the $(2p^{h}n)$-th column, with $n \neq 0, -1 \mod p$ is generated
by the terms
$$
\beta_{(2p^{h}n),h}= \frac{\DB(z_{(2p^{h}n)+1})}{(1+t)}
$$
$$
\beta_{(2p^{h}n),x_i}=\frac{\DB(z_{(2p^{h}n)+2p^i})}{(1-t^2)^{p^i}}
$$
$$
\beta_{(2p^{h}n),y_i}=\frac{\DB(\gamma_{(2p^{h}n),y_i})}{(1-t^2)^{(p-1)p^{i-1}}}
$$
where we set
$$
\gamma_{(2p^{h}n),y_i} = \frac{\DB(z_{2p^{h}n+2p^i})-\sum_{(2p^{h}n),y_i}(p)}{p(1-t^2)^{p^{i-1}}}.
$$
and $\sum_{(2p^{h}n),y_i}(p)$ is the sum of the terms in $\DB(z_{2p^{h}n+2p^i})$ that have 
coefficients not divisible by $p$. Notice that the first of this terms is $(1-t^2)^{p^i}z_{2p^{h}n}x_i$.
\item the module $K_{2h}$ in the $2p^{h}(pn-1)$-th column, is generated by the terms
$$
\beta_{2p^{h}(pn-1),h}= \frac{\DB(z_{2p^{h}(pn-1)+1})}{(1+t)}
$$
$$
\beta_{2p^{h}(pn-1),x_i}=\frac{\DB(z_{2p^{h}(pn-1)+2p^i})}{(1-t^2)^{p^i}}
$$
and
$$
\beta_{2p^{h}(pn-1),y_i}=\frac{\DB(\gamma_{2p^{h}(pn-1),y_i})}{(1-t^2)^{(p-1)p^{i-1}}}
$$
with, for $i<h$
$$
\gamma_{2p^{h}(pn-1),y_i} = \frac{\DB(z_{2p^{h}(pn-1)+2p^i})-\sum_{2p^{h}(pn-1),y_i}(p)}{p(1-t^2)^{p^{i-1}}}.
$$
while, for $i=h$ we set
$$
\gamma_{2p^{h}(pn-1),y_h} = \frac{\DB(z_{2p^{h+1}n})-\sum_{2p^{h}(pn-1),y_h}(p^2)}{p^2(1-t^2)^{p^{h-1}}}
$$
where $\sum_{2p^{h}(pn-1),y_h}(p^2)$ is the sum of the terms in $\DB(z_{2p^{h+1}n})$ that have 
coefficients not divisible by $p^2$. Notice that the first of this terms is 
$p(1-t^2)^{p^h}z_{2p^{h}(pn-1)}x_h$.
\item the module $A_{\infty}$ in the $0$-th column is generated by $z_0$.
\end{itemize}

\end{remark}

\subsection{Homology of $\B(2e,e,r)$, $p > 2$}


Now we compute $H_*(\B(2e,e,r), \F_p)= H_*(\Art_{\Bx_r}, \F_p[t]/(1-(-t)^e))$ by means of the 
 homology long exact sequence associated to the short exact sequence
\begin{equation} \label{eq:shortp}
0 \to \F_p[t^\pmu] \stackrel{(1-(-t)^e)}{\too} \F_p[t^\pmu] 
\stackrel{\pi}{\too} \F_p[t^\pmu]/(1-(-t)^e) \to 0.
\end{equation}

As in the previous section, we have a splitting result:

\begin{prop}\label{prop:splittingp}
We have a decomposition of the $\F_p[t^\pmu]$-module 
$$H_i(\Art_{\Bx_r}, \F_p[t]/(1-(-t)^e)) = h_i(r,e) \oplus h'_i(r,e)$$ such that 
the homology long exact sequence associated to the short exact sequence given in Equation 
(\ref{eq:short}) splits:
\begin{equation} \label{eq:splittedp}
0 \to h'_{i+1}(r,e) \stackrel{\delta}{\to} H_i(\Art_{\Bx_r}, \F_p[t^\pmu]) 
\stackrel{(1+t^e)}{\too} H_i(\Art_{\Bx_r}, \F_p[t^\pmu]) \stackrel{\pi_*}{\to} h_i(r,e) \to 0.
\end{equation}
\end{prop}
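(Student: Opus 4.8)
The strategy is to imitate verbatim the argument of Proposition~\ref{prop:splitting} (the $p=2$ case), replacing everywhere the prime $2$ by the odd prime $p$ and the polynomial $1+t^e$ by $1-(-t)^e$. The long exact sequence in homology attached to \eqref{eq:shortp} has the shape
$$
\cdots \to H_{i+1}(\Art_{\Bx_r}, \F_p[t^\pmu]/(1-(-t)^e)) \stackrel{\delta}{\to} H_i(\Art_{\Bx_r}, \F_p[t^\pmu]) \stackrel{(1-(-t)^e)}{\too} H_i(\Art_{\Bx_r}, \F_p[t^\pmu]) \stackrel{\pi_*}{\to} H_i(\Art_{\Bx_r}, \F_p[t^\pmu]/(1-(-t)^e)) \to \cdots
$$
so that to prove the splitting it suffices, exactly as before, to exhibit for each generator $x$ of the $\F_p[t^\pmu]$-module $H_i(\Art_{\Bx_r}, \F_p[t^\pmu])$ an explicit lift $\widetilde{x} \in H_{i+1}(\Art_{\Bx_r}, \F_p[t^\pmu]/(1-(-t)^e))$ with $\pi_*(\delta(\text{something}))$ hitting $x$ and with the correct $\F_p[t^\pmu]$-annihilator, so that the connecting map $\delta$ identifies a complement of $\mathrm{im}\,\pi_*$ with $\Ker\big(\F_p[t^\pmu]x \stackrel{(1-(-t)^e)}{\too} \F_p[t^\pmu]x\big)$ summand by summand.

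First I would record the arithmetic fact replacing the congruence $1+t^e \equiv (1+t)^{2^a}$ used for $p=2$: write $e = p^a e'$ with $p \nmid e'$; then modulo $p$ one has $1-(-t)^e = 1-(-t)^{p^a e'} = \big(1-(-t)^{e'}\big)^{p^a}$, and since $p\nmid e'$ the polynomial $1-(-t)^{e'}$ is separable over $\F_p$ and coprime to $(1-t^2) = (1-t)(1+t)$ except for the factors $(1+t)$ (if $2\mid e'$... ) — more precisely, what matters is that the $(1+t)$-adic and $(1-t^2)$-adic valuations behave as in the $p=2$ case with $2^a$ replaced by $p^a$, and that multiplication by $1-(-t)^e$ on each cyclic module $\F_p[t^\pmu]/(1-t^2)^{N}$ (respectively $\F_p[t^\pmu]/(1+t)^N$) has kernel and cokernel both isomorphic to $\F_p[t^\pmu]/(1-t^2)^{\min(N,\,p^a \cdot\text{(valuation)})}$ — essentially the computation $\mathrm{val}_{(1-t^2)}\big(1-(-t)^e\big) = p^a \cdot \mathrm{val}_{(1-t^2)}\big(1-(-t)^{e'}\big)$, and similarly at $(1+t)$. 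Then, using the explicit description of the generators $\beta_{*,h}, \beta_{*,x_i}, \beta_{*,y_i}$ of the $E^\infty$ term from the Remark following Theorem~\ref{t:ssp}, together with the fact (again from Theorem~\ref{t:ssp}) that the ideals $J_{2a+1}(\F_p), J_{2a}(\F_p)$ record exactly the $(1-t^2)$- and $(1+t)$-torsion orders of these generators, I would define $\widetilde{x}$ for each generator by multiplying $z_{c+1}$ (resp. $z_{c+2p^i}$, resp. the $\gamma$-term) by the appropriate power of $(1+t)$ or $(1-t^2)$ needed to absorb the discrepancy between the torsion order over $\F_p[t^\pmu]/(1-(-t)^e)$ and the original torsion order — the exact analogue of the factor $(1+t)^{2^a - 2^{\min(i,a)}}$ appearing in the $p=2$ proof. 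One then checks, as in that proof, that $\delta(\widetilde{x})$ generates the kernel submodule and that $\widetilde{x}$ itself has annihilator $(1-t^2)^{\min(\cdot)}$ (resp. $(1+t)^{\min(\cdot)}$), which gives the two copies — one in dimension $i$, one in dimension $i+1$ — and hence the splitting \eqref{eq:splittedp}. The extra subtlety special to odd $p$ is the $y_i$-generators, whose liftings involve the divided boundary $\gamma[p]$; here I would verify that multiplication by $1-(-t)^e$ interacts with the $p$-divisibility in the same way, using that the relevant $q$-analog coefficients $\qbin{\cdot}{\cdot}_{-1}$ carry a single factor $p$ (by Lemma~\ref{lem:qanalog}) which is untouched by the $(1-t^2)$-bookkeeping.

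The main obstacle I anticipate is purely bookkeeping rather than conceptual: keeping straight the two distinct "torsion scales" — the $(1+t)$-torsion of the $h$-generators versus the $(1-t^2)$-torsion of the $x_i$- and $y_i$-generators — and checking that for the polynomial $1-(-t)^e$ (as opposed to the cleaner $1+t^e$) the valuations $\mathrm{val}_{(1+t)}(1-(-t)^e)$ and $\mathrm{val}_{(1-t^2)}(1-(-t)^e)$ come out to the expected $p^a$ (respectively $p^a$ times something), uniformly in the parity of $e$ and of $e'$. If $e$ is odd then $1-(-t)^e = 1+t^e$ and $(1+t) \mid (1+t^e)$ to first order only, while if $e$ is even $1-(-t)^e = 1-t^e$ and $(1-t^2) \mid (1-t^e)$; these two regimes must be handled so that the single formula of Proposition~\ref{prop:splittingp} covers both. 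Once this arithmetic lemma is nailed down, the rest of the proof is a line-by-line transcription of the proof of Proposition~\ref{prop:splitting}, and I would write it as such, pointing to the explicit generator descriptions in the Remark after Theorem~\ref{t:ssp} and to the annihilator data encoded in the ideals $J_a(\F_p)$.
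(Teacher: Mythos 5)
Your proposal follows essentially the same route as the paper's proof: the paper likewise exploits the coprimality of $1+t$ and $1-t$ to split the $(1-t^2)$-power torsion, records the $(1\pm t)$-adic behaviour of $1-(-t)^e$ according to the parity of $e$ and the power of $p$ dividing $e$, and then defines, case by case (including the $y_i$-generators via the divided boundaries $\gamma[p]$), explicit lifts $\widetilde{x}$ whose image under $\delta$ generates the corresponding kernel summand and whose annihilator matches it. Your bookkeeping of the valuations of $1-(-t)^e$ at $1+t$ and $1-t^2$ is, if anything, stated more carefully than the paper's own gcd identities, so the plan is sound.
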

\begin{proof}
We start observing that, since $1+t$ and $1-t$ are co-prime in $\F_p[t^\pmu]$ we can spit any module
of the form $\F_p[t^\pmu]/(1-t^2)^l$ as a direct sum
$$
\F_p[t^\pmu]/(1+t)^l \oplus \F_p[t^\pmu]/(1-t)^l
$$
of two modules, generated respectively by $(1-t)^l$ and $(1+t)^l$,

Moreover we consider the following properties for $1-(-t)^e$ in $\F_p[t^\pmu]$:
$$
(1-t, 1-(-t)^e)=\left\{
\begin{array}{lc}
 (1-t) & \mbox{ if $e$ is even} \\
 (1) & \mbox{ if $e$ is odd.}\\
\end{array}
\right.
$$
and
$$
(1+t, 1-(-t)^e)=\left\{
\begin{array}{lc}
 (1+t) & \mbox{ if $e$ is even or if $p\nmid e$} \\
 (1+t)^{p^i} & \mbox{ if $e$ is odd and $p^i \mid e$, but $p^{i+1}\nmid e$.}\\
\end{array}
\right.
$$
The second equality follows from the fact that the polynomial $1+t$ divides the cyclotomic polynomial
$\ph_{2p^i}$ with order exactly $\phi(p^i) = (p-1)p^{i-1}$ (in $\F_p[t^\pmu]$) and is co-prime with all 
the other cyclotomic polynomials.

Now, let us fix the value of $e$.
When we study the exact sequence of Equation (\ref{eq:splittedp}) we need to consider only the highest power of
$p$ that divides $e$ and whether $e$ is even or odd. 

Let us consider the monomial 
$x = z_{2m} h^r y_{i_1}^{s_{i_1}} \cdots y_{i_n}^{s_{i_n}} x_{k_1} \cdots x_{k_m}$ in 
$ H_i(\Art_{\Bx_r}, \F_p[t^\pmu])$. We suppose that we have the indexes ordered such that 
$i_1 < \cdots < i_n$ and $k_1 < \cdots < k_m$. We want to define a lifting $\widetilde{x}$ of $x$
in $H_i(\Art_{\Bx_r}, \F_p[t]/(1-(-t)^e))$. To do this we have to consider different cases:

First consider the case of $e$ even. If $r > 0$ we define
$$
\widetilde{x}= \frac{1-(-t)^e}{1+t}z_{2m+1} h^{r-1} y_{i_1}^{s_{i_1}} 
\cdots y_{i_n}^{s_{i_n}} x_{k_1} \cdots x_{k_m},
$$
if $r=0$ and $k_1< i_1$ then
$$
\widetilde{x}= \frac{1-(-t)^e}{1-t^2}z_{2m+2p^{i_0}} y_{i_1}^{s_{i_1}} 
\cdots y_{i_n}^{s_{i_n}} x_{k_2} \cdots x_{k_m}
$$
and if $i_1 \leq k_1$ then 
$$
\widetilde{x}= \frac{1-(-t)^e}{1-t^2}\gamma_{2m,y_{i_1}} y_{i_1}^{s_{i_1}-1} y_{i_2}^{s_{i_2}} 
\cdots y_{i_n}^{s_{i_n}} x_{k_2} \cdots x_{k_m}.
$$

Now we suppose $e$ odd and let $j$ be an integer be such that $p^j \mid e$ and $p^{j+1} \nmid e$.
If $r > 0$ then set again:
$$
\widetilde{x}= \frac{1-(-t)^e}{1+t}z_{2m+1} h^{r-1} y_{i_1}^{s_{i_1}} 
\cdots y_{i_n}^{s_{i_n}} x_{k_1} \cdots x_{k_m},
$$
if $r=0$ and $k_1< i_1$ then
$$
\widetilde{x}= \frac{1-(-t)^e}{(1+t)^{p^{\min (j,i_0)}}}z_{2m+2p^{i_1}} y_{i_1}^{s_{i_1}} 
\cdots y_{i_n}^{s_{i_n}} x_{k_2} \cdots x_{k_m}
$$
and if $i_1 \leq k_1$ then 
$$
\widetilde{x}= \frac{1-(-t)^e}{(1+t)^{\min(p^j,(p-1)p^{i_1-1})}}\gamma_{2m,y_{i_1}} y_{i_1}^{s_{i_1}-1} y_{i_2}^{s_{i_2}} 
\cdots y_{i_n}^{s_{i_n}} x_{k_2} \cdots x_{k_m}.
$$

It is clear from the definitions that $\delta (\widetilde{x}) = x$ and 
that the $\F_p[t^\pmu]$-module generated by $\widetilde{x}$ is isomorphic to the submodule 
$$\Ker(\F_p[t^\pmu]x \stackrel{(1-(-t)^e)}{\too} \F_p[t^\pmu]x)$$ of $H_i(\Art_{\Bx_r}, \F_p[t^\pmu])$.
Hence the same argument of Proposition \ref{prop:splitting} holds: the map 
$$
\delta: H_i(\Art_{\Bx_r}, \F_p[t]/(1-(-t)^e)) \to \Ker\left(
H_i(\Art_{\Bx_r}, \F_p[t^\pmu]) \stackrel{(1-(-t)^e)}{\too} H_i(\Art_{\Bx_r}, \F_p[t^\pmu])\right)
$$
has a section and the statement of the proposition holds, with $h_*(r,e)$ generated by the elements
of type $\pi_*(x)$ and $h'_*(r,e)$ generated by the elements of type $\widetilde{x}$.
\end{proof}

We can state the result for the $\F_p$-homology of $\B(2e,e,r)$ as follows:
\begin{theo} \label{prop:b2eerFp}
Let $p$ be an odd prime.
$$H_*(\Art_{\Bx_*}, \F_p[t]/(1-(-t)^e)) = $$
$$
= \left(
\begin{array}{c}
A_\infty [1,0]  \oplus  \\
\bigoplus_{n \geq 1}K_{2h_p(n)} [2(pn-p^{h_p(n)})+1, 2(pn-p^{h_p(n)}] \\
\bigoplus_{\hspace{-1,5cm}
\begin{array}{c}
{}_{n \geq 1,}\\{}_{h_p(n) = h_p(n+p^{h_p(n)})}
\end{array}
} \hspace{-1,5cm}I_{2h_p(n)+1} [2n+1, 2n]
\end{array}
\right)
\otimes \F_p[t]/(1-(-t)^e)\{2\} . 
$$\qed
\end{theo}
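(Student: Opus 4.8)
The plan is to combine the two results already established in this section. On one side, Theorem \ref{t:ssp} together with the Remark following it describes $\bigoplus_r H_*(\Art_{\Bx_r},\F_p[t^\pmu])$ completely as a bigraded $\F_p[t^\pmu]$-module: it is the direct sum of one copy of $A_\infty$ in column $0$, of copies of $I_{2h+1}$ in the columns $i$ for which $2p^h\mid i$, $2p^{h+1}\nmid i$ and $2p^{h+1}\nmid i+2p^h$, and of copies of $K_{2h}$ in the columns $i$ for which $2p^{h+1}\mid i+2p^h$; moreover the generators $\beta$ of the Remark lift to genuine cycles (so there is no extension ambiguity, exactly as in the case $p=2$) and carry the $\F_p[t^\pmu]$-torsion $1+t$ on the class called $h$, $(1-t^2)^{p^i}$ on $x_i$, and $(1-t^2)^{(p-1)p^{i-1}}$ on $y_i$. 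On the other side, Proposition \ref{prop:splittingp} reduces the computation of $H_*(\B(2e,e,r),\F_p)=H_*(\Art_{\Bx_r},\F_p[t]/(1-(-t)^e))$ to the determination, for each cyclic direct summand $C$ of $H_*(\Art_{\Bx_r},\F_p[t^\pmu])$, of the kernel and of the cokernel of multiplication by $1-(-t)^e$ on $C$: the cokernel contributes to $h_*(r,e)$ in the same homological degree, while the kernel contributes to $h'_{*+1}(r,e)$, i.e. one homological degree higher.

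The kernel/cokernel step is what produces the factor $\{2\}$. Over the principal ideal domain $\F_p[t^\pmu]$, for a cyclic torsion module $C=\F_p[t^\pmu]/(g)$ one has $\coker\!\big(\,\cdot(1-(-t)^e)\colon C\to C\,\big)=\F_p[t^\pmu]/\big(g,\,1-(-t)^e\big)=C\otimes_{\F_p[t^\pmu]}\F_p[t]/(1-(-t)^e)$ and $\Ker\!\big(\,\cdot(1-(-t)^e)\colon C\to C\,\big)=\F_p[t^\pmu]/\big(\gcd(g,1-(-t)^e)\big)$, which is the very same module; so the kernel contributes a second copy of $C\otimes_{\F_p[t^\pmu]}\F_p[t]/(1-(-t)^e)$, placed one homological degree up by $\delta$. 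Applying this to the generators of the Remark, and using the divisibility relations of $1-t$ and of $1+t$ inside $1-(-t)^e$ recorded in the proof of Proposition \ref{prop:splittingp} (which separate the cases $e$ even, $e$ odd with $p\nmid e$, and $e$ odd with $p^j\mid e$ but $p^{j+1}\nmid e$), one identifies this common value with the base change along $\F_p[t^\pmu]\to\F_p[t]/(1-(-t)^e)$ of the summand; summing over all generators of a fixed module $M\in\{A_\infty,\,I_{2h+1},\,K_{2h}\}$ then shows that the total contribution of its column is $M\otimes_{\F_p[t^\pmu]}\F_p[t]/(1-(-t)^e)\{2\}$, with the $\F_p[t]/(1-(-t)^e)$-module structure inherited from the $A(\F_p)[t^\pmu]$-module structure on $H_*(\Art_{\Bx_r},\F_p[t^\pmu])$.

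It remains to re-index the columns of Theorem \ref{t:ssp} into the form used in the statement. For an $I$-column I would set $i=2n$: then $h=h_p(n)$, and the condition $2p^{h+1}\nmid i+2p^h$ becomes the equality $h_p(n)=h_p(n+p^{h_p(n)})$, which produces the summand $I_{2h_p(n)+1}$. For a $K$-column I would set $i=2(pn-p^{h_p(n)})$: then $i+2p^{h_p(n)}=2p^{h_p(n)+1}\cdot(n/p^{h_p(n)})$ witnesses $2p^{h+1}\mid i+2p^h$ with $h=h_p(n)$, which produces the summand $K_{2h_p(n)}$. Finally a representative $z_i$ raises the homological dimension by $i$ and the degree by $i+1$ (it prepends a string of $i$ symbols $1$ followed by one $0$), which gives the bidegree shifts $[1,0]$ for $A_\infty$, $[2n+1,2n]$ for $I_{2h_p(n)+1}$ and $[2(pn-p^{h_p(n)})+1,\,2(pn-p^{h_p(n)})]$ for $K_{2h_p(n)}$; combining this with the previous paragraph yields the announced formula.

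I expect the main obstacle to be the bookkeeping of these last two steps. One must verify that the re-indexing by the single integer $n$ is a bijection between the admissible $n$ and the even columns of Theorem \ref{t:ssp} carrying an $I$- or a $K$-module — this rests on the nested $p$-adic valuation conditions and is exactly where an off-by-one in the range of $n$, or a confusion between $h_p(n)$ and the $p$-adic valuation of the column index, is easiest to commit — and that the torsion exponents $1$, $p^i$, $(p-1)p^{i-1}$ of the $\beta$-generators match the $\gcd$ computation so that the kernel copy, shifted one dimension up by $\delta$, is overlaid on the cokernel copies without any cyclic summand being lost or duplicated. Once these identifications are in place, every module in sight is a finite-dimensional $\F_p$-vector space equipped with the $\F_p[t]/(1-(-t)^e)$-action read off directly from the $\beta$-generators, so no further homological input is required.
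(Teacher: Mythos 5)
Your overall strategy is the one the paper intends (the theorem is stated without proof precisely because it is meant to follow from Theorem \ref{t:ssp}, the Remark after it, and Proposition \ref{prop:splittingp}), and the central mechanism is right: over the principal ideal domain $\F_p[t^\pmu]$, a cyclic torsion summand $\F_p[t^\pmu]/(g)$ has kernel and cokernel of multiplication by $1-(-t)^e$ both isomorphic to $\F_p[t^\pmu]/(\gcd(g,1-(-t)^e))$, and the splitting of Proposition \ref{prop:splittingp} places one copy in the same dimension and one copy one dimension higher; that is the factor $\{2\}$. (A minor caveat you share with the paper's own statement: the rank-one free summand of $A_\infty$ generated by the empty monomial has trivial kernel, so $\{2\}$ overcounts it by one copy, but this only affects the degenerate boundary of the bigrading.)

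The genuine gap is in the step you single out as the main thing to verify and then do not verify: the re-indexing of the $K$-columns is not a bijection. You check that every $n\geq 1$ produces a legitimate $K$-column, but not the converse, and the converse fails. By Theorem \ref{t:ssp} and the Remark, a copy of $K_{2h}$ sits in \emph{every} column $i=2p^h(pk-1)$ with $k\geq 1$, whereas the image of $n\mapsto 2(pn-p^{h_p(n)})=2p^{h_p(n)}\bigl(p\cdot(n/p^{h_p(n)})-1\bigr)$ consists only of those columns with $p\nmid k$. Concretely, for $p=3$ the column $i=16=2(p^2-1)$ (the case $h=0$, $k=3$) carries $K_0$ at $E^\infty$: one traces that it is $I_0$ at $E^3$, is a source of $d^{2(p-1)}=d^4$ because $6\mid 18$, and is never a target of any later differential. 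Yet $3n-3^{h_3(n)}=8$ has no solution ($h_3(n)=0$ forces $n=3$, which has $h_3(3)=1$, while $h_3(n)\geq 1$ makes $8+3^{h_3(n)}$ prime to $3$), and the column is also excluded from the $I$-sum since $h_3(8+3^0)=h_3(9)=2\neq 0=h_3(8)$. So the right-hand side of the formula you are deriving assigns nothing to a column where your own cited inputs put a nonzero module: the two ingredients contradict the target statement, and the proof cannot close as written. The $K$-part must be indexed by all pairs $(h,k)$ with $h\geq 0$, $k\geq 1$ as in the Remark (columns $2p^h(pk-1)$), not by a single integer $n$ through $2(pn-p^{h_p(n)})$; as it stands this is a defect of the published formula that your ``routine bookkeeping'' step would have been obliged to detect rather than confirm.
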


We can now give the Poincar\'e polynomial of the homology $H_*(\B(2e,e,r), \F_p)$. 
If we call $P_p(\B(2e,e,r),u) = \sum_{i=0}^\infty \dim_{\F_p}H_i(\B(2e,e,r),\F_p) u^i$ such a polynomial, 
we can consider the series in two variables
$$
P_p(\B(2e,e,*),u,v) = \sum_{r=0}^\infty P_p(\B(2e,e,r),u) v^r.
$$

The Poincar\'e series for the bigraded ring $A_\infty/(1-(-t)^e)$ is given, for $e$ odd, by
$$
P_{A_\infty(\F_p),e}(u,v) = 
%
%
\frac{1}{1-v} \prod_{i \geq 1}\frac{1}{1-u^{2p^i-2}v^{2p^i}} \prod_{j\geq0} (1 + u^{2p^{j}-1}v^{2p^j}) 
$$
end for $e$ even by
$$
P_{A_\infty(\F_p),e}(u,v) = e + 
%
%
\prod\!{}_0^p + \sum_{0}^\infty\!{}_1^p(e) + \sum_{0}^\infty\!{}_2^p(e)
$$
where we define the following terms:
$$
\prod\!{}_0^p = v \frac{1}{1-v} \prod_{i \geq 1}\frac{1}{1-u^{2p^i-2}v^{2p^i}} \prod_{j\geq0} (1 + u^{2p^{j}-1}v^{2p^j}),$$
$$
\sum_{k_1}^{k_2}\!{}_1^p(e) = \sum_{r = k_1}^{k_2} 2p^{\min(h_p(e),r)} u^{2p^r-1}v^{2p^r} \prod_{i \geq r+1}\frac{1}{1-u^{2p^i-2}v^{2p^i}} \prod_{j\geq r} (1 + u^{2p^{j}-1}v^{2p^j})
$$
and
$$
\sum_{k_1}^{k_2}\!{}_2^p(e) = \sum_{r = k_1}^{k_2} 2{\min(p^{h_p(e)},(p-1)p^{r-1})}u^{2p^r-2}v^{2p^r} \prod_{i \geq r}\frac{1}{1-u^{2p^i-2}v^{2p^i}} \prod_{j\geq r} (1 + u^{2p^{j}-1}v^{2p^j}).
$$

The Poincar\'e series of the ideal $I_{2a+1} \otimes \F_p[t]/(1+t^e)$ 
is given, for $e$ odd, by
$$
P_{I_{2a+1}(\F_p),e}(u,v) = \prod{}_0^p + \sum_0^a{}_1^p(1) + \sum_1^a{}_2^p(1)
$$
and, for $e$ even, by
$$
P_{I_{2a+1}(\F_p),e}(u,v) = \prod{}_0^p + \sum_0^a{}_1^p(e) + \sum_1^a{}_2^p(e).
$$
The Poincar\'e series of the ideal $K_{2a} \otimes \F_p[t]/(1+t^e)$ 
is given, for $e$ odd, by
$$
P_{K_{2a}(\F_p),e}(u,v) = \prod{}_0^p + \sum_0^{a-1}{}_1^p(1) + \sum_1^a{}_2^p(1)
$$
and, for $e$ even, by
$$
P_{K_{2a}(\F_p),e}(u,v) = \prod{}_0^p + \sum_0^{a-1}{}_1^p(e) + \sum_1^a{}_2^p(e).
$$

Hence we obtain:
\begin{cor}Let $p$ be an odd prime. The Poincar\'e polynomial of the homology of the groups $\B(2e,e,r)$ with $\F_p$ coefficients is given by:
$$
P_p(\B(2e,e,*),u,v) = $$ $$ (1+v) \left( v P_{A_\infty(\F_p),e}(u,v) + \sum_{n\geq 1} v^{2(pn-p^{h_p(n)})+1} u^{2(pn-p^{h_p(n)}}P_{K_{2h_p(n)}(\F_p),e}(u,v) + \right. $$ 
$$ \left. + \hspace{-1cm} \sum_{
\begin{array}{c}
{}_{n \geq 1,}\\{}_{h_p(n) = h_p(n+p^{h_p(n)})} 
\end{array}}  \hspace{-1cm} v^{2n+1} u^{2n}P_{I_{2h_p(n)+1}(\F_p),e}(u,v)  \right).
$$ \qed
\end{cor}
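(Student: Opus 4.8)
\emph{Proof plan.} This corollary is the generating-function shadow of the $\F_p[t^\pmu]$-module decomposition in Theorem~\ref{prop:b2eerFp}, so the plan is to (i) recall that decomposition, (ii) pass to bigraded Poincar\'e series by additivity over direct sums and the bookkeeping of the shifts, and (iii) substitute the three building-block series recorded immediately above the statement. By Shapiro's lemma $\bigoplus_{r\ge0}H_*(\B(2e,e,r),\F_p)=H_*(\Art_{\Bx_*},\F_p[t]/(1-(-t)^e))$, and Theorem~\ref{prop:b2eerFp} writes the right-hand side as a direct sum of $A_\infty[1,0]\otimes\F_p[t]/(1-(-t)^e)\{2\}$, of the modules $K_{2h_p(n)}[2(pn-p^{h_p(n)})+1,2(pn-p^{h_p(n)})]\otimes\F_p[t]/(1-(-t)^e)\{2\}$ over $n\ge1$, and of the modules $I_{2h_p(n)+1}[2n+1,2n]\otimes\F_p[t]/(1-(-t)^e)\{2\}$ over those $n\ge1$ with $h_p(n)=h_p(n+p^{h_p(n)})$. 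Since the bivariate Poincar\'e series is additive over $\oplus$, a shift $[a,b]$ multiplies it by $v^a u^b$, and the doubling $\{2\}$ contributes the overall prefactor $(1+v)$ appearing in the statement, reading off the shift-monomials $v$, $v^{2(pn-p^{h_p(n)})+1}u^{2(pn-p^{h_p(n)})}$ and $v^{2n+1}u^{2n}$ reproduces the shape of the claimed formula; this part is purely formal.

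The real content is the verification of the building-block series $P_{A_\infty(\F_p),e}$, $P_{K_{2a}(\F_p),e}$, $P_{I_{2a+1}(\F_p),e}$, i.e.\ the bigraded Poincar\'e series of $A_\infty$, $K_{2a}$, $I_{2a+1}$ reduced modulo $1-(-t)^e$. For this I would work on the monomial model $A(\F_p)=\F_p[h,y_1,y_2,\dots]\otimes\Lambda[x_0,x_1,\dots]\otimes\F_p[t^\pmu]$, with $\dim h=0$, $\deg h=1$, $\dim y_i=2p^i-2$, $\deg y_i=2p^i$, $\dim x_i=2p^i-1$, $\deg x_i=2p^i$. Passing to $A_\infty$ (respectively $A_{2a}$, $A_{2a+1}$ and the ideals $I$, $K$) imposes the torsion relations $(1+t)h=(1-t^2)x_0=(1-t^2)^{p-1}y_1=(1-t^2)^{p}x_1=\cdots=0$, so each basis monomial spans a cyclic $\F_p[t^\pmu]$-module whose annihilator is a power of $1+t$ or of $1-t$ times a unit, determined by the smallest generator it involves --- exactly the description of generators in Theorem~\ref{prop:b2eerFp}. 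Reducing a free cyclic summand in bidegree $(a,b)$ modulo $1-(-t)^e$ yields an $e$-dimensional space, contributing $e\,u^av^b$; reducing a $\F_p[t^\pmu]/(f)$-summand in bidegree $(a,b)$ yields $\deg\gcd(f,1-(-t)^e)$ dimensions, contributing $\deg\gcd(f,1-(-t)^e)\,u^av^b$ to the cokernel and the same amount, shifted by one in $\dim$, to the kernel of multiplication by $1-(-t)^e$ (this is where the $\{2\}$-doubling originates, exactly as in Proposition~\ref{prop:splittingp}). The arithmetic input is that $\deg\gcd((1+t)^k,1-(-t)^e)=\min(k,p^{h_p(e)})$ for $e$ odd and $\deg\gcd((1-t^2)^k,1-(-t)^e)=2\min(k,p^{h_p(e)})$ for $e$ even, both read off from the factorizations of $1-(-t)^e$ over $\F_p$ computed in the proof of Proposition~\ref{prop:splittingp}; this forces the separate formulas for $e$ odd and $e$ even and produces the coefficients $2p^{\min(h_p(e),r)}$ (torsion order $(1-t^2)^{p^r}$ of $x_r$) and $2\min(p^{h_p(e)},(p-1)p^{r-1})$ (torsion order $(1-t^2)^{(p-1)p^{r-1}}$ of $y_r$). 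Summing the contributions over the monomial basis, the polynomial generators $y_i$ give geometric-series factors $\prod_i\bigl(1-u^{2p^i-2}v^{2p^i}\bigr)^{-1}$ and the exterior generators $x_i$ give factors $\prod_j\bigl(1+u^{2p^j-1}v^{2p^j}\bigr)$, assembling into the auxiliary series $\prod\!{}_0^p$, $\sum{}_1^p$, $\sum{}_2^p$ and hence into the three building-block Poincar\'e series.

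Putting (i)--(iii) together gives the corollary. I expect the main obstacle to be combinatorial rather than conceptual: one must track the bigrading and the exact torsion order of an \emph{arbitrary} monomial in $h,y_i,x_i$ modulo $J_\infty$ (respectively $J_{2a}$, $J_{2a+1}$), match each shift $[1,0]$, $[2(pn-p^{h_p(n)})+1,2(pn-p^{h_p(n)})]$, $[2n+1,2n]$ to the correct monomial in $u$ and $v$, keep the cases $e$ odd versus $e$ even and the $p$-adic valuation $h_p(e)$ straight throughout, and recognize the resulting double sums as the closed-form infinite products above; the real danger is an off-by-one slip among the exponents $2p^i-1$, $2p^i-2$, $2p^i$ and the torsion exponents $p^i$, $(p-1)p^{i-1}$.
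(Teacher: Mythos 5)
Your proposal is correct and is exactly the paper's (implicit) argument: the corollary carries no separate proof, being presented as the direct translation of the direct-sum decomposition of Theorem \ref{prop:b2eerFp} into bigraded Poincar\'e series, with the three building-block series obtained from the monomial bases of $A_\infty$, $K_{2a}$, $I_{2a+1}$ and the torsion/gcd computations of Proposition \ref{prop:splittingp}, just as you describe. One caveat: since $M\{2\}=M\oplus M[0,1]$ shifts the homological \emph{dimension} (the $u$-variable), the doubling factor a careful bookkeeping yields is $(1+u)$ (as in the $p=2$ corollary), not the $(1+v)$ you attribute to it by reading it off the displayed statement --- this looks like a typo in the paper rather than a flaw in your method, and your own closing warning about off-by-one slips in the exponents is exactly the point at which the stated formula needs to be rechecked.
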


As an example of this computations we give in Table \ref{tab:homolF3}  the first homology groups of $\B(2e,e,r)$ with 
coefficients in the field $\F_3$ and the stable part up to homological dimension  $5$. For a prime $p \geq 5$ there's no $p$-torsion in the integral homology $H_*(\B(2e,e,r),\Z)$ for $r \leq 8$. Actually, for an odd prime, the first $p$-torsion in $H_*(\B(2e,e,r),\Z)$ appears for $r=2p$, as it comes from the classes associated to the generators $x_1, y_1$ in $H_*(\B(2e,e,r),\F_p)$.


\begin{table}[htb]
\begin{center}
\begin{tabular}{|c|c|c|c|c|c|c|c|c|c|c|c|c|c|}
\hline
$r$&\multicolumn{2}{c|}{$2$}&$3$& \multicolumn{2}{c|}{$4$} &$5$&\multicolumn{3}{c|}{$6$}&$7$& \multicolumn{2}{c|}{$8$} & $\geq 9$\\
\hline
$e \! \mod \! (m)$ & \!$0(2)$\!&\!$1(2)$\!&\!any\!&\!$0(2)$\!&\!$1(2)$\!&\!any\!&\!$0(6)$\!&\!$2,4(6)$\!&\!$1(2)$\!&\!any\!&\!$0(2)$\!&\!$1(2)$\!&\!any\!\\
\hline
$\dim H_0$& $1$  &$1 $&$1 $&$1 $&$1 $&$1 $&$1 $&$1 $&$1 $ & $1$&$1 $&$1 $ & $1$\\
\hline
$\dim H_1$ & $3$ & $2$                 & $2$ & $2$ & $2$ & $2$ & $2$ & $2$ & $2$ & $2$ & $2$ & $2$ & $2$\\
\hline
$\dim H_2$ & $2$& $1$   & $2$ & $2$ & $2$ & $2$ & $2$ & $2$ & $2$ & $2$& $2$ & $2$ & $2$ \\
\hline
$\dim H_3$ &\multicolumn{2}{c|}{$0$}  & $1$ & $3$ & $2$ & $2$ & $2$ & $2$ & $2$ & $2$ & $2$ & $2$ & $2$\\
\hline
$\dim H_4$ &\multicolumn{2}{c|}{$0$}  & $0$ & $2$ & $1$ & $2$ & $6$ & $4$ & $3$ & $3$ & $3$ & $3$ & $3$ \\
\hline
$\dim H_5$ &\multicolumn{2}{c|}{$0$} & $0$ & \multicolumn{2}{c|}{$0$} & $1$ & $11$ & $7$ & $4$ & $4$ &$6$ & $5$ &$5$ \\
\hline
$\dim H_6$ &\multicolumn{2}{c|}{$0$} & $0$ & \multicolumn{2}{c|}{$0$} & $0$ & $6$ & $4$ & $2$ & $2$ & $7$ & $5$ & \\
\hline
$\dim H_7$ &\multicolumn{2}{c|}{$0$} & $0$ & \multicolumn{2}{c|}{$0$} & $0$ & \multicolumn{3}{c|}{$0$}  & $1$ &$5$ & $3$ &\\
\hline
\end{tabular}
\end{center}
\caption{$\dim H_*(\B(2e,e,r); \F_3)$, $r<8$} 
\label{tab:homolF3}
\end{table}

\subsection{Stabilization}

There is a natural inclusion $j_r: \B(2e,e,r) \into \B(2e,e,r+1)$. The map $j_r$ is induced 
by the embedding of diagrams. Moreover it is induced by the analogous natural stabilization map for the Artin groups of type $\Bx_r$ as we have the commuting diagram


\begin{dummyenvironment}
\begin{center}
\begin{tabular}{c}
\xymatrix @R=2pc @C=2pc {
\B(2e,e,r) \ar@{^{(}->}[r]^{j_r} \ar@{^{(}->}[d]& \B(2e,e,r+1) \ar@{^{(}->}[d] \\
\Art_{\Bx_r} \ar@{^{(}->}[r] \ar[d]^{\phi_{e,r}} & \Art_{\Bx_{r+1}} \ar[d]^{\phi_{e,r+1}} \\
\Z/e \ar[r]^{\simeq}& \Z/e.
}
\end{tabular}
\end{center}
\end{dummyenvironment}

Hence the direct limit $\B(2e,e,\infty) :=\displaystyle \lim_{\longrightarrow}{}_r \B(2e,e,r)$ is a natural union of groups. The inclusion map $j_r$ correspond to the inclusion map for the algebraic complexes, hence we can compute the homology of the group $\B(2e,e,\infty)$ using the direct limit of the complexes for $\B(2e,e,r)$. It is easy to describe, as a corollary of Theorem \ref{prop:b2eerF2} and \ref{prop:b2eerFp}, the homology of the group $\B(2e,e,\infty)$. It turns out that the stable homology does not depend on the parameter $e$.

Let we define the graded modules
$$
sA_\infty(\F_2) = \F_2[t]/(1+t)[\overline{x}_1, \overline{x}_2, \overline{x}_3, \ldots]
$$
with $\dim \overline{x}_i = 2^i-1$ and for an odd prime $p$
$$
sA_\infty(\F_p) = \F_p[t]/(1+t)[\overline{y}_1, \overline{y}_2, \overline{y}_3, \ldots] \otimes \Lambda[\overline{x}_0, \overline{x}_1, \overline{x}_2, \ldots]
$$
with $\dim \overline{x}_i = 2p^i-1$, $\dim \overline{y}_i = 2p^i-2$.

The description of the stable homology is the following:
\begin{cor}\label{c:stabBeerF2}
The homology $H_*(\B(2e,e,\infty), \F_2)$ of the group $\B(2e,e,\infty)$ is isomorphic to the direct limit 
$\displaystyle \lim_{\longrightarrow}{}_r H_*(\B(2e,e,r),\F_2)$ and is given by the tensor product
$$
sA_\infty(\F_2) \otimes \F_2[w_1]
$$
where $w_1$ is an element of dimension $1$. 
Moreover the projection to the direct limit
$$H_i((\B(2e,e,r),\F_2) \to H_i(\B(2e,e,\infty), \F_2)$$ is an isomorphism for $r > 2 i$. \qed
\end{cor}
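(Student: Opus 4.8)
The plan is to deduce the statement from Theorem~\ref{prop:b2eerF2} by passing to the direct limit, in the same way one handles the usual braid groups.

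\textbf{Reduction to a colimit.} First I would note that the stabilization $j_r$ is realised, on the algebraic side, by the trailing‑zero inclusion $C_*(r)\hookrightarrow C_*(r+1)$, $S\mapsto S0$: this is compatible with the local systems because the commuting diagram above shows that the new generator $\sigma_{r+1}$ is killed by $\phi_{e,r+1}$, and it is a chain map because appending a $0$ at the far right commutes with the boundary $\DB$ — immediate from $\DB A0B=(\DB A)0B+(-1)^{|A|}A0\DA B$ with $B$ the empty string, on which $\DA$ vanishes. Since directed colimits are exact, homology commutes with them, so $H_*(\B(2e,e,\infty),\F_2)=\lim_{\longrightarrow}{}_r H_*(\B(2e,e,r),\F_2)$. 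Moreover the trailing‑zero operation on Salvetti strings is exactly multiplication by the class $x_0$ of the string $0$ (with $\deg x_0=1$, $\dim x_0=0$), so under the identification of Theorem~\ref{prop:b2eerF2} the transition maps are the degreewise restrictions of
\[
\times\,x_0\colon\ \bigoplus_r H_*(\B(2e,e,r),\F_2)\ \longrightarrow\ \bigoplus_r H_*(\B(2e,e,r),\F_2),
\]
a map of $A(\F_2)$‑modules raising $\deg$ by $1$. Thus the corollary amounts to computing the colimit of the bigraded module of Theorem~\ref{prop:b2eerF2} along multiplication by $x_0$, i.e. to inverting $x_0$.

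\textbf{Inverting $x_0$.} The key point is the single relation $(1+t)x_0=0$, the first generator of every ideal $J_a$ (Theorem~\ref{t:ss}): once $x_0$ is made invertible we get $1+t=0$, hence $t=1$, and then all the remaining generators $(1-t^2)^{2^{i-1}}x_i$ of $J_a$ vanish identically. Hence $A_\infty[x_0^{-1}]=A_a[x_0^{-1}]=\F_2[x_0^{\pmu},x_1,x_2,\ldots]$, and also $I_a[x_0^{-1}]=A_a[x_0^{-1}]$ because $x_0\in I_a$ becomes a unit; moreover $\F_2[t^\pmu]/(1+t^e)\otimes_{\F_2[t^\pmu]}\F_2[t^\pmu]/(1+t)=\F_2$ since $(1+t)\mid(1+t^e)$ in characteristic $2$, so the coefficient factor $\F_2[t^\pmu]/(1+t^e)$ disappears in the limit — in particular the answer is independent of $e$. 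In a fixed homological degree $i$, the colimit of $\F_2[x_0^{\pmu},x_1,x_2,\ldots]$ along $\times x_0$ is canonically $\F_2[x_1,x_2,\ldots]_{\dim=i}$ (for $r$ large the $x_0$‑exponent is forced).

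\textbf{Reassembling.} Writing the $A_\infty$‑summand as the case $n=0$, the $n$‑th summand of Theorem~\ref{prop:b2eerF2} is shifted by $2n$ in homological degree and then doubled by $\{2\}=(-)\oplus(-)[0,1]$; so in homological degree $i$ the colimit is
\[
\bigoplus_{n\ge0}\Bigl(\F_2[x_1,x_2,\ldots]_{\dim=i-2n}\ \oplus\ \F_2[x_1,x_2,\ldots]_{\dim=i-2n-1}\Bigr)\ =\ \bigoplus_{d\le i}\F_2[x_1,x_2,\ldots]_{\dim=d},
\]
which is exactly the homological‑degree‑$i$ part of $\F_2[x_1,x_2,\ldots]\otimes\F_2[w_1]$ with $\dim w_1=1$. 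Identifying the stable image of $x_i$ ($i\ge1$) with the generator $\overline x_i$ of $sA_\infty(\F_2)$, of dimension $2^i-1$, and observing that $w_1$ records the telescoping of the $\{2\}$'s together with the family $\{I_{h_2(n)}\}_{n\ge1}$ as $n$ grows, this is $sA_\infty(\F_2)\otimes\F_2[w_1]$, which proves the first assertion. For the range, a degree count in the same description suffices: a generator of homological degree $i$ that is not a multiple of $x_0$ is carried by a monomial $\mu$ in $x_1,x_2,\ldots$ (possibly times a single $x_j$ from an $I$‑summand), and such a $\mu$ satisfies $\deg\mu\le2\dim\mu$, with equality only for $\mu$ a power of $x_1$; inserting the shifts $[1,0]$ and $[2n+1,2n]$ shows that every such generator has already appeared by degree $r=2i+1$. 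Hence $\times x_0\colon H_i(\B(2e,e,r),\F_2)\to H_i(\B(2e,e,r+1),\F_2)$ is onto for $r\ge2i+1$, and its kernel — the submodule annihilated by $x_0$, namely $(1+t)\cdot(-)$ — is supported in the same degree range, so $H_i(\B(2e,e,r),\F_2)\to H_i(\B(2e,e,\infty),\F_2)$ is an isomorphism for $r>2i$. (This range can also simply be read off the Poincar\'e series computed just above.)

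\textbf{Expected main obstacle.} Two points need care. First, that the $A(\F_2)$‑module structure underlying Theorem~\ref{prop:b2eerF2} is really implemented by the inclusions $j_r$ — it is, because both are the same trailing‑zero operation on Salvetti strings and that operation commutes with $\DB$. Second, the bookkeeping by which, after inverting $x_0$, the $\{2\}$‑doublings and the infinite family $I_{h_2(n)}$ telescope into the single new polynomial generator $w_1$; conceptually this is driven entirely by the relation $(1+t)x_0=0$, which lets the localization at $x_0$ annihilate all the $t$‑torsion at once.
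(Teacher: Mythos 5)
Your proposal is correct and follows the same route the paper intends: the paper offers no written argument for this corollary beyond observing that $j_r$ is realised by the inclusion of the algebraic complexes and that the answer can be read off Theorem \ref{prop:b2eerF2} by passing to the direct limit, which is exactly what you carry out, with the useful explicit identifications of the transition maps as multiplication by $x_0$ and of the resulting localization as the operation that sets $t=1$ and hence kills all the torsion (whence the independence of $e$). The one step deserving a second look is the injectivity half of the range $r>2i$ — the kernel of $\cdot\,x_0$ is carried by the $(1+t)$-torsion of the monomials prime to $x_0$, which for $e$ even sit at the very top of your degree window, so the bound hinges on the degree bookkeeping — but your count there agrees with the paper's own claim.
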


\begin{cor}\label{c:stabBeerFp}
The homology $H_*(\B(2e,e,\infty), \F_p)$ of the group $\B(2e,e,\infty)$ is isomorphic to the direct limit 
$\displaystyle \lim_{\longrightarrow}{}_r H_*(\B(2e,e,r),\F_p)$ and is given by the tensor product
$$
sA_\infty(\F_p) \otimes \F_p[w_1]
$$
where $w_1$ is an element of dimension $1$. 
Moreover the projection to the direct limit
$$H_i((\B(2e,e,r),\F_p) \to H_i(\B(2e,e,\infty), \F_p)$$ is an isomorphism for $r > (i-1)\frac{p}{p-1} + 2$. \qed
\end{cor}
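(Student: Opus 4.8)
The plan is to pass to the direct limit in Theorem~\ref{prop:b2eerFp}. The identification $H_*(\B(2e,e,\infty),\F_p)\simeq\lim_{\longrightarrow}{}_r H_*(\B(2e,e,r),\F_p)$ is formal: $\B(2e,e,\infty)$ is the increasing union of the $\B(2e,e,r)$ along the $j_r$, which are injective because they are restrictions of the parabolic inclusions $\Art_{\Bx_r}\into\Art_{\Bx_{r+1}}$, and group homology commutes with filtered colimits; concretely, the algebraic complexes computing $H_*(\Art_{\Bx_r},\F_p[t]/(1-(-t)^e))$ form a directed system along the string-inclusion maps, whose colimit complex computes the homology of $\B(2e,e,\infty)$.

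The heart of the matter is to describe the map induced by $j_r$ on the bigraded module of Theorem~\ref{prop:b2eerFp}. The inclusion $\Art_{\Bx_r}\into\Art_{\Bx_{r+1}}$ sends a basis string of $C_*(r)$ to the same string followed by a $0$; under the module structure on homology coming from braid juxtaposition (cf.\ Section~\ref{s:braids} and Remark~\ref{rem:generators2}) this is multiplication by the class $h$ (the string $0$), which raises the degree by $1$ and preserves homological dimension. Hence, in each fixed homological dimension, $H_*(\B(2e,e,\infty),\F_p)$ is the colimit of multiplication by $h$ on the module of Theorem~\ref{prop:b2eerFp}. In $A_\infty(\F_p)$, and likewise in each $K_{2h_p(n)}$ and $I_{2h_p(n)+1}$, the relation $(1+t)h=0$ holds, so on the $h$-tower above any monomial in the $y_i$ and $x_i$ multiplication by $h$ is, after the first step, an isomorphism onto a copy of $\F_p[t^\pmu]/(1+t)\simeq\F_p$ (we use here that $1+t$ is coprime to $1-t$, so that tensoring with $\F_p[t]/(1-(-t)^e)$ leaves this torsion quotient unchanged). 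Therefore the colimit of each of these summands is $\F_p[\overline{y}_1,\overline{y}_2,\ldots]\otimes\Lambda[\overline{x}_0,\overline{x}_1,\ldots]=sA_\infty(\F_p)$. One then checks the shifts: the summand $A_\infty[1,0]$ together with the infinitely many $K$- and $I$-summands, carrying the homological-dimension shifts $2n$ and $2(pn-p^{h_p(n)})$ and each doubled by the $\{2\}$, contribute exactly one copy of $sA_\infty(\F_p)$ in every non-negative homological dimension; this says precisely that $H_*(\B(2e,e,\infty),\F_p)\simeq sA_\infty(\F_p)\otimes\F_p[w_1]$ with $\dim w_1=1$. The case $p=2$ is the same computation, using Theorem~\ref{prop:b2eerF2} and multiplication by $x_0$ in place of $h$, and yields Corollary~\ref{c:stabBeerF2}.

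For the stability range we must make the colimit effective. After the splitting of Proposition~\ref{prop:splittingp}, a homology class of dimension $i$ is a combination of monomials in the generators $\overline{y}_j$ (dimension $2p^j-2$, degree $2p^j$), $\overline{x}_j$ (dimension $2p^j-1$, degree $2p^j$) and the degree-$1$ class $h$, and for each such monomial the $h$-tower over it has entered the isomorphism range once $r$ exceeds its degree. Since $\overline{y}_1$ minimizes the ratio dimension/degree, the dimension-$i$ monomial of largest degree is, up to a bounded correction, $\overline{y}_1^k\overline{x}_0$ with $k=\lfloor(i-1)/(2p-2)\rfloor$, of degree $2pk+2\leq(i-1)\frac{p}{p-1}+2$; once $r$ passes that value the stabilization map in dimension $i$ is bijective, which is the claimed bound. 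The analogous estimate for $p=2$, where the generator degrees are $2^j$ and the torsion exponents $2^j$ so that the worst case is $\overline{x}_1^i$ of degree $2i$, gives $r>2i$.

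The delicate point is the bookkeeping of the second paragraph: one must verify that, across the infinitely many $K_{2h_p(n)}$- and $I_{2h_p(n)+1}$-summands, the homological shifts together with the $\{2\}$-duplications assemble into precisely one copy of $sA_\infty(\F_p)$ in each homological degree and no more; a secondary subtlety is to pin down the exact constant $\frac{p}{p-1}$ in the stability range rather than only a linear bound.
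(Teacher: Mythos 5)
Your proposal is correct and follows the route the paper intends (the paper itself states this corollary with no written proof beyond the observation that the inclusions $j_r$ correspond to inclusions of the algebraic complexes, so that the limit can be read off from Theorem \ref{prop:b2eerFp}): you identify the stabilization map with multiplication by $h$, observe that each summand's $h$-tower stabilizes to $sA_\infty(\F_p)$, and extract the range from the maximal degree of a dimension-$i$ monomial not divisible by $h$. The two points you flag as delicate do check out — the $E^\infty$ description in Theorem \ref{t:ssp} places exactly one summand in each even column, which is what makes the shifts assemble into the factor $\F_p[w_1]$, and the worst monomial $\overline{y}_1^k\overline{x}_0$ does give exactly the constant $\frac{p}{p-1}$ — so your argument is in fact more complete than what the paper records.
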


We can then write the Poincar\'e polynomial for the stable homology.

We define
$$
P_{sA_\infty(\F_2)} (u):= \prod_{j \geq 1} \frac{1}{1-u^{2^j-1}} 
$$
and
$$
P_{sA_\infty(\F_p)} (u):= 
\prod_{i \geq 1} \frac{1}{1-u^{2p^i-2}} \prod{j\geq 0} (1+u^{2p^j-1}).
$$

\begin{cor} \label{c:stabPoinc}
For any prime $p$ the Poincar\'e polynomial for $H_*(\B(2e,e,\infty), \F_p)$ is $$P_{sA_\infty(\F_p)} (u)\frac{1}{1-u}. $$\qed
\end{cor}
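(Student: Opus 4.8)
The plan is to obtain the statement as a formal consequence of Corollaries \ref{c:stabBeerF2} and \ref{c:stabBeerFp}, which already identify $H_*(\B(2e,e,\infty),\F_p)$ with the graded tensor product $sA_\infty(\F_p)\otimes\F_p[w_1]$, where $w_1$ has dimension $1$. First I would check that this module is finite-dimensional over $\F_p$ in each fixed homological dimension (only finitely many monomials in the prescribed generators have bounded dimension), so that its Poincar\'e series is well defined; then the Poincar\'e series of the tensor product is the product of the two factors' Poincar\'e series. The factor $\F_p[w_1]$ is a polynomial ring on one degree-one generator, contributing $\sum_{k\ge 0}u^k=\frac{1}{1-u}$, so everything reduces to verifying that the quantity called $P_{sA_\infty(\F_p)}(u)$ just before the statement is indeed the Poincar\'e series of $sA_\infty(\F_p)$.

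For that verification I would first record that $\F_p[t]/(1+t)\cong\F_p$ sits in homological dimension $0$: the variable $t$ comes from the coefficient module $\F_p[t]/(1-(-t)^e)$ and carries no $\dim$-grading, so it does not shift homological dimension. Hence for $p=2$ the module $sA_\infty(\F_2)$ is just the polynomial algebra $\F_2[\bar x_1,\bar x_2,\dots]$ on generators with $\dim\bar x_i=2^i-1$; a monomial $\prod_i\bar x_i^{a_i}$ has dimension $\sum_i a_i(2^i-1)$, whence
$$
P_{sA_\infty(\F_2)}(u)=\prod_{i\ge 1}\sum_{a\ge 0}u^{a(2^i-1)}=\prod_{j\ge 1}\frac{1}{1-u^{2^j-1}},
$$
which is exactly the definition given. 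For odd $p$, $sA_\infty(\F_p)=\F_p[\bar y_1,\bar y_2,\dots]\otimes\Lambda[\bar x_0,\bar x_1,\dots]$ with $\dim\bar y_i=2p^i-2$ and $\dim\bar x_i=2p^i-1$; the polynomial factor contributes $\prod_{i\ge 1}(1-u^{2p^i-2})^{-1}$ and the exterior factor contributes $\prod_{j\ge 0}(1+u^{2p^j-1})$, whose product is precisely the stated $P_{sA_\infty(\F_p)}(u)$.

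Multiplying the two Poincar\'e series then yields $P_{sA_\infty(\F_p)}(u)\cdot\frac{1}{1-u}$, which is the claim. I do not expect a genuine obstacle here: all the homological input is already carried by Corollaries \ref{c:stabBeerF2} and \ref{c:stabBeerFp} (hence ultimately by Theorems \ref{prop:b2eerF2} and \ref{prop:b2eerFp}), and the remaining work is the routine expansion of a product generating function. The only subtlety worth a sentence is the finite-dimensionality of the graded pieces, which legitimizes treating these infinite products as formal power series and using multiplicativity of the Poincar\'e series under the tensor product.
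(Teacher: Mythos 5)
Your proposal is correct and is essentially the argument the paper intends: the corollary is stated with no separate proof precisely because it follows from Corollaries \ref{c:stabBeerF2} and \ref{c:stabBeerFp} together with the generating-function definitions of $P_{sA_\infty(\F_2)}(u)$ and $P_{sA_\infty(\F_p)}(u)$ given immediately beforehand. Your extra remarks on the finite-dimensionality of the graded pieces and on $\F_p[t]/(1+t)\cong\F_p$ carrying no homological dimension are sensible housekeeping that the paper leaves implicit.
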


\subsection{Some computations for torsion in integral homology} \label{ss:higher}

We are not able to compute the whole integral homology of the groups $\B(2e,e,r)$, but we provide a simple Bockstein computation in order to complete the proof of Theorem \ref{H2B2EER}.

According to the notation in the proof of Proposition \ref{prop:splitting}, the module
$H_2(\B(2e,e,r+4),\F_2)$ is generated by the cycles
$$
\pi_*(z_2x_0^{2+r}), \widetilde{x_0^{2+r}x_1}, \pi_*(x_0^rx_1^2).
$$
It is straightforward to check that the classes in $H_2(\B(2e,e,r+4),\Z)$ corresponding to $\pi_*(z_2x_0^{2+r})$ and $ \widetilde{x_0^{2+r}x_1}$ project to the generators of the rational homology $H_2(\B(2e,e,r+4),\Q)$, hence they generate torsion-free $\Z$-modules.

The generator $\pi_*(x_0^rx_1^2)$ is the image of an element $\rho \in H_2(\B(2e,e,r+4),\Z)$.
We claim that $\rho$ has $2$-torsion. In order to prove this we will use the Bockstein exact
sequence associated to the extension
$$
0 \to \Z_2 \stackrel{2}{\to} \Z_4 \to \Z_2 \to 0.
$$
In particular, we shows that the Bockstein differential $\beta_2$ maps
$\pi_*(x_0^r x_2) \mapsto \pi_*(x_0^2 x_1^2)$. It follows by standard argument that 
$\pi_*(x_0^2 x_1^2)$ generates a $\Z_2$-torsion class in $H_2(\B(2e,e,r+4),\Z_4)$ and hence, by the Universal Coefficients Theorem, in $H_2(\B(2e,e,r+4),\Z)$.

In order to compute the Bockstein $\beta_2(\pi_*(x_0^r x_2))$ recall that the class of
$\pi_*(x_0^r x_1^2)$ in $H_2(\B(2e,e,r+4),\F_2)$ is given by 
$$
\frac{\DB(z_2x_1)}{(1-t^2)}
$$
for $r=0$ and by
$$
\frac{\DB(z_1x_0^{r-1}x_1^2)}{(1+t)}
$$
for $r>0$ and that the class of
$\pi_*(x_0^r x_2)$ in $H_3(\B(2e,e,r+4),\F_2)$ is given by 
$$
\frac{\DB(z_4)}{(1-t^2)^2}
$$
for $r=0$
and by
$$
\frac{\DB(z_1x_0^{r-1}x_2)}{(1+t)}
$$
for $r >0$.
Their lifting to the complex with $\Z_4$ coefficients are given by
$$
\frac{\DB(z_4)-2z_2x_1}{(1-t^2)^2}
$$
and respectively
$$
\frac{\DB(z_1x_0^{r-1}x_2)-2z_1 x_0^{r-1}x_1^2}{(1+t)}.
$$
Now the claim about the Bockstein map follows since it is clear that the differential calculated for the chosen liftings give exactly the double of cycle $\pi_*(x_0^rx_1^2)$.

\vspace{5mm}

Our next purpose is to show that in general there can be $p^2$-torsion in the integral homology of $\B(2e,e,r)$. We will actually prove that there is a class of $4$-torsion in
$H_7(\B(16,8,8), \Z)$.

We consider the homology class $\widetilde{x}_3 \in H_8(\B(16,8,8), \F_2)$, that is represented by $\pi_*(z_8)$. It generates a $\F_2[t^\pmu]$-module isomorphic to $\F_2[t^\pmu]/(1-t^2)^4$. We want to compute the Bockstein $\beta_2$ of $\widetilde{x}_3$.

With the description given in section \ref{ss:b2eerintro} we can compute, with coefficients in $\Z[t^\pmu]$ 
$$
\DB z_8 = \DB (\overline{1}1^7) = \qbin{8}{2}_{-1} (1-t^2) \overline{1}1^501 + \qbin{8}{4}_{-1} (1-t^2)^2 \overline{1}1^301^3 +
\qbin{8}{6}_{-1} (1-t^2)^3 \overline{1}101^5 + \qbin{8}{8}_{-1}(1-t^2)^4 \overline{0}1^7.
$$
Then, considering this chain with coefficients in $\Z_4[t^\pmu]/(1-t^2)^4$ we get
$$
\DB z_8 = 2 (1-t^2)^2 z_4x_2 + 2t^4(1+t^4) x_3
$$
and dividing by two we get the following 
cycle in $H_7(\B(16,8,8), \F_2)$:
$$
\beta_2 (\widetilde{x}_3) = \widetilde{x_2^2} + t^4 (1-t^2)^2 \pi_*(x_3). 
$$
Notice that both $\widetilde{x_2^2}$ and $t^4(1-t^2)^2 \pi_*(x_3)$ generates a submodule of
$H_7(\B(16,8,8), \F_2)$ that is isomorphic to $\F_2[t^\pmu]/(1-t^2)^2$. It follows that 
 the kernel of the map 
$$\beta_2: H_8(\B(16,8,8), \F_2) \to H_7(\B(16,8,8), \F_2)
$$
is generated by the cycle $(1-t^2)^2 \widetilde{x}_3$ 
and hence $\ker {\beta_2}_{\mid H_8} \simeq \F_2[t^\pmu]/(1-t^2)^2$ is a $\F_2$ vector space of dimension $4$. Now recall that, according to Theorem \ref{t:lehr04}, we have that $\dim H_8(\B(16,8,8),\Q) = 2$.
The Bockstein spectral sequence implies then that there should be an element in $H_7(\B(16,8,8), \Z)$ that has at least $4$-torsion.

\section{Isomorphism and non-isomorphism results for $\B(2e,e,r)$} \label{s:iso}
We already recalled from \cite{BMR} that for $d > 1$ $\B(de,e,r) \simeq \B(2e,e,r)$.

In this section we want to study the groups of type $\B(de,e,r)$ from a more 
elementary point of view, 
in order to get some isomorphism and non-isomorphism result.

Let us start recalling, as in section \ref{s:b2eer}, the isomorphisms
$$
\B(d,1,r) = \Art_{\Bx_r}
$$
and 
$$
\B(de,e,r) = \Ker \phi_{e,r}
$$
where $\phi_{e,r}$ maps $\B(d,1,r) \to \Z/e$.
We can give a presentation for $ \Art_{\Bx_r}$ that is different from the one provided before 
(see \cite{kp02}).
We define $\tau = \overline{\sigma}_1 \sigma_2 \cdots \sigma_r$ and 
$\sigma_1 =\tau^{-1} \sigma_2 \tau$.
It is easy to check that 
$$
\tau \sigma_i \tau^{-1} = \sigma_{i+1}
$$
where the indexes are considered in $\Z/r$.

We have that the group $\Art_{\Bx_r}$ has a presentation with generators 
$\Cal{G} = \{ \tau, \sigma_i, i \in \Z/r \}$ and relations
$$
\Cal{R} = \{ \sigma_i \sigma_j = \sigma_j \sigma_i \mbox{ for } i \neq j \pmu, 
\sigma_i \sigma_{i+1} \sigma_i = \sigma_{i+1} \sigma_i \sigma_{i+1}, \tau \sigma_i \tau^{-1} = 
\sigma_{i+1} \}
$$

With this presentation the map $\phi_{e,r}$ maps $\tau \mapsto 1$, $\sigma_i \mapsto 0$ for all $i$.

We notice that the subgroup of $\Art_{\Bx_r}$ generated by the elements 
$\sigma_1, \ldots, \sigma_r$ is the Artin group of type 
$\widetilde{A}_{r-1}$, $\Art_{\widetilde{A}_{r-1}}$. 
Hence, if we write $\Z_{\tau^e}$ for the infinite cyclic group generated by $\tau^e$, 
where $\tau$ acts on $\Art_{\widetilde{A}_{r-1}}$
as before, we can write the following semidirect product decompositions:
$$
\B(d,1,r)= \Z_\tau \ltimes \Art_{\widetilde{A}_{r-1}}
$$
and
\begin{equation} \label{eq:Bdeer}
\B(de,e,r)= \Z_{\tau^e} \ltimes \Art_{\widetilde{A}_{r-1}}.
\end{equation}

According to \cite{kp02} and \cite{BMR} the center of $\B(de,e,r)$ is generated by 
$\beta(de,e,r) = (\tau^e)^{(r/r \wedge e)}$.
Hence it follows that in the quotient $\B(de,e,r)/Z(\B(de,e,r))$ there is an element, namely $(\tau^e)$, 
that has order at most $(r/r \wedge e)$ and is the image of a root of the generator of the center
of $\B(de,e,r)$. Now let us consider the map
$$
\lambda: \B(de,e,r) \to \Z/(r/r \wedge e)
$$
given by $(\tau^e) \mapsto 1$ and $\sigma_i \to 1$ for all $i$. This map passes to the quotient
$$
\overline{\lambda}: \B(de,e,r)/Z(\B(de,e,r)) \to \Z/(r/r \wedge e)
$$
and hence the order of $(\tau^e)$ in the quotient 
$\B(de,e,r)/Z(\B(de,e,r))$ is exactly $(r/r \wedge e)$.
The length function in $\B(de,e,r)$ tells us also that the generator of the center $\beta(de,e,r)$ can't 
have roots of order higher than $(r/r \wedge e)$. We have proved the following:
\begin{prop}
The groups $\B(de,e,r)$ and $\B(de',e',r)$ are not isomorphic if $$r \wedge e \neq r \wedge e'.$$ \qed
\end{prop}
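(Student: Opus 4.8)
The plan is to extract from the structural facts recalled above an isomorphism invariant of $\B(de,e,r)$ that equals the integer $r/(r\wedge e)$; the statement is then immediate. Concretely I would use the \emph{maximal order of a root of a central generator}. By \cite{kp02} and \cite{BMR} the group $Z(\B(de,e,r))$ is infinite cyclic, generated by $\beta:=\beta(de,e,r)=(\tau^e)^{r/(r\wedge e)}$, and it is a characteristic subgroup; hence any isomorphism $\varphi\colon\B(de,e,r)\to\B(de',e',r)$ restricts to an isomorphism $Z(\B(de,e,r))\xrightarrow{\ \sim\ }Z(\B(de',e',r))$ of infinite cyclic groups and so sends $\beta$ to a generator of $Z(\B(de',e',r))$. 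Set $\rho(G):=\sup\{k\geq1:\ g^k \text{ generates } Z(G)\text{ for some }g\in G\}$ for any group $G$ with $Z(G)\cong\Z$. Then $\rho(\B(de,e,r))=\rho(\B(de',e',r))$, since $\varphi$ carries an element $g$ with $g^k$ a generator of $Z(\B(de,e,r))$ to an element $\varphi(g)$ with $\varphi(g)^k=\varphi(g^k)$ a generator of $Z(\B(de',e',r))$.

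The second step is to show $\rho(\B(de,e,r))=r/(r\wedge e)$. The inequality $\geq$ is clear, since $\tau^e\in\B(de,e,r)$ and $(\tau^e)^{r/(r\wedge e)}=\beta$ generates $Z(\B(de,e,r))$. For $\leq$ I would use the semidirect product decomposition $\B(de,e,r)=\Z_{\tau^e}\ltimes\Art_{\widetilde{A}_{r-1}}$ of Equation (\ref{eq:Bdeer}): the projection onto the acting factor is a homomorphism $\pi\colon\B(de,e,r)\onto\Z$ with $\pi(\sigma_i)=0$ for all $i$ and $\pi(\tau^e)=1$, so $\pi(\beta)=r/(r\wedge e)$. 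Now suppose $g^k$ generates $Z(\B(de,e,r))$; replacing $g$ by $g^{-1}$ if necessary we may assume $g^k=\beta$, and then applying $\pi$ gives $k\cdot\pi(g)=r/(r\wedge e)$ in $\Z$. Since the right-hand side is a positive integer, $k$ divides it; in particular $k\leq r/(r\wedge e)$. Hence $\rho(\B(de,e,r))=r/(r\wedge e)$.

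Combining the two steps: if $r\wedge e\neq r\wedge e'$, then, both being divisors of $r$, we get $r/(r\wedge e)\neq r/(r\wedge e')$, hence $\rho(\B(de,e,r))\neq\rho(\B(de',e',r))$, so the two groups are not isomorphic.

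I do not expect a genuine obstacle: everything rests on the two inputs recalled above, namely that the center is infinite cyclic with the explicit generator $\beta$ and the semidirect product decomposition. The only point needing care is that the bound on root orders must come from a homomorphism to $\Z$ --- the projection onto the infinite cyclic factor $\Z_{\tau^e}$, on which $\beta$ takes the \emph{nonzero} value $r/(r\wedge e)$ --- rather than from the map $\lambda\colon\B(de,e,r)\to\Z/(r/(r\wedge e))$ considered just above, which only fixes the order of $\tau^e$ modulo the center and does not by itself bound the orders of roots of $\beta$.
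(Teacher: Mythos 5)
Your argument is correct and follows essentially the same route as the paper: both extract the integer $r/(r\wedge e)$ as the maximal order of a root of the central generator $\beta(de,e,r)=(\tau^e)^{r/(r\wedge e)}$, with the lower bound witnessed by $\tau^e$ and the upper bound coming from a homomorphism to $\Z$ (your projection onto the $\Z_{\tau^e}$ factor plays the role of the paper's length function). Your remark that the bound on root orders must come from a map to $\Z$ rather than from $\lambda$ is exactly the point the paper handles with its length-function sentence, so there is no gap.
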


From Equation (\ref{eq:Bdeer}) it is possible to deduce the following elementary result:
\begin{prop}
The group $\B(2e,e,r)$ is isomorphic to $\B(2e',e',r)$ if $e \simeq \pm e' \mod r$.
\end{prop}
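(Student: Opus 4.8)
The plan is to read off the isomorphism from the semidirect product description in Equation~(\ref{eq:Bdeer}). Set $G = \Art_{\widetilde{A}_{r-1}}$ and let $\phi \in \mathrm{Aut}(G)$ be the automorphism induced by conjugation by $\tau$, so that $\phi(\sigma_i) = \sigma_{i+1}$ with the indices taken in $\Z/r$. Equation~(\ref{eq:Bdeer}) then identifies $\B(2e,e,r)$ with the abstract semidirect product $\Z \ltimes_{\phi^e} G$, in which a chosen generator of the $\Z$-factor acts on $G$ through $\phi^e$ (note that this description does not involve $d$ at all). The key elementary observation is that $\phi^r = \mathrm{id}_G$, since the generators $\sigma_i$ are indexed by $\Z/r$; hence $\phi^e$ depends only on the class of $e$ modulo $r$. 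In particular, if $e \equiv e' \pmod r$ then $\phi^e = \phi^{e'}$, and under these identifications the groups $\B(2e,e,r)$ and $\B(2e',e',r)$ are literally equal.

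It then remains to handle the case $e \equiv -e' \pmod r$, in which $\phi^e = \phi^{-e'} = (\phi^{e'})^{-1}$. Here I would invoke the general fact that, for any group $G$ and any $\psi \in \mathrm{Aut}(G)$, there is a canonical isomorphism $\Z \ltimes_{\psi} G \cong \Z \ltimes_{\psi^{-1}} G$ obtained by inverting the generator of the $\Z$-factor while acting as the identity on $G$: writing $u$ and $v$ for the distinguished generators of the two $\Z$-factors, the assignment $u \mapsto v^{-1}$, $g \mapsto g$ for $g \in G$ carries the defining relation $u g u^{-1} = \psi^{-1}(g)$ of $\Z \ltimes_{\psi^{-1}} G$ to $v^{-1} g v = \psi^{-1}(g)$, which is a relation valid in $\Z \ltimes_{\psi} G$; the symmetric assignment provides a two-sided inverse. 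Taking $\psi = \phi^{e'}$ and combining with the first step gives $\B(2e,e,r) \cong \Z \ltimes_{(\phi^{e'})^{-1}} G \cong \Z \ltimes_{\phi^{e'}} G = \B(2e',e',r)$.

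There is no genuine obstacle here beyond bookkeeping, and the argument fits in the two short paragraphs above; this is of course consistent with the preceding non-isomorphism Proposition, since $e \equiv \pm e' \pmod r$ forces $r \wedge e = r \wedge e'$. The only points I would take care to verify explicitly are that the action appearing in Equation~(\ref{eq:Bdeer}) is genuinely the power $\phi^e$ of the rotation automorphism — immediate from the relation $\tau \sigma_i \tau^{-1} = \sigma_{i+1}$ — and that only $\phi^r = \mathrm{id}_G$, rather than the sharper statement that $\phi$ has order exactly $r$, is needed for the reduction modulo $r$.
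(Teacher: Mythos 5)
Your proof is correct and follows essentially the same route as the paper: both arguments read the isomorphism off the semidirect product decomposition $\B(2e,e,r) = \Z_{\tau^e} \ltimes \Art_{\widetilde{A}_{r-1}}$ and use that conjugation by $\tau$ rotates the $r$-gon of affine generators, so that the action of $\tau^e$ depends only on $e \bmod r$. The one point of divergence is the case $e \equiv -e' \pmod r$: the paper builds the isomorphism using the diagram automorphism $\varsigma(\sigma_i) = \sigma_{r+1-i}$ of $\Art_{\widetilde{A}_{r-1}}$, which conjugates the rotation to its inverse, whereas you invoke the general fact that $\Z \ltimes_{\psi} G \cong \Z \ltimes_{\psi^{-1}} G$ for any $\psi \in \mathrm{Aut}(G)$, realized by inverting the distinguished generator of the $\Z$-factor and fixing $G$ pointwise. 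Your verification of that fact is correct, and your version is slightly more economical in that it requires no automorphism of the affine Artin group at all; both mechanisms are valid, and your concluding consistency check against the preceding non-isomorphism proposition is a sensible sanity test.
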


\begin{proof}
 This is straightforward since the Dynkin diagram of ${\widetilde{A}_{r-1}}$ is an $r$-gon and we can suppose
without loss of generality that the vertices are numbered counterclockwise. Hence the element $\tau$ acts by conjugation 
rotating the $r$-gon by $\frac{2 \pi}{r}$ and the subgroup of $\mathrm{Inn}(\Art_{\widetilde{A}_{r-1}})$ generated by
$\tau$ is cyclic of order $r$. It follows that a conjugation by $\tau^e$ is equivalent to a conjugation by $\tau^{e'}$ 
if $e \simeq e' \mod r$. Moreover we can consider the automorphism $\varsigma$ of $\Art_{\widetilde{A}_{r-1}}$
given by $\varsigma(\sigma_i) = \sigma_{r+1-i}$. The map $\varsigma$ induces an isomorphism 
$$\overline{\varsigma}: \Z_{\tau^e} \ltimes \Art_{\widetilde{A}_{r-1}} \to \Z_{\tau^{-e}} \ltimes \Art_{\widetilde{A}_{r-1}}$$
given by $\overline{\varsigma}(n,w) = (-n, \varsigma(w))$.
\end{proof}












%




\section{Complexes from Garside theory}

We recall a few homological constructions from the
theory of Garside monoids and groups. Recall that
a Garside group $G$ is the group of fractions of a Garside monoid $M$,
where Garside means that $M$ satisfy several conditions for
which we refer to \cite{dehpar}. In particular, $M$ admits (left)
lcm's, and contains a special element, called the Garside element.
We denote $\mathcal{X}$ the set of atoms in $M$, assumed to be finite. The homology of $G$ coincides with the homology
of $M$. Garside theory provides two useful resolutions
of $\Z$ by free $\Z M$-modules.

The first one was defined in \cite{CMW}. Another one, with
more complicated differential but a smaller number of cells,
has been defined in \cite{DL}.

\subsection{The Dehornoy-Lafont complex}

Let $M$ be a Garside monoid with a finite set of atoms $\mathcal{X}$. We
choose an arbitrary linear order $<$ on $\mathcal{X}$. For $m \in M$,
denote $\md(m)$ denote the smaller element in $\mathcal{X}$
which divides $m$ on the right ($m = a\md(m)$ for some
$a \in M$). Recall that $\lcm(x,y)$ for $x,y \in M$
denotes the least common multiple on the left, that is $v = gx = h y$
implies $v = j \lcm(x,y)$ for some $j \in M$. If $A = (x,B)$
is a list of elements in $M$ we define inductively $\lcm(A) = \lcm(x, \lcm(B))$.

A $n$-cell is a $n$-tuple $[x_1,\dots,x_n]$ of elements
in $\mathcal{X}$ such that $x_1 < \dots < x_n$ and $x_i
= \md (\lcm(x_i,x_{i+1},\dots,x_n))$. Let $\mathcal{X}_n$
denote the set of all such $n$-cells. By convention $\mathcal{X}_0
= \{ [\emptyset] \}$. The set $C_n$ of $n$-chains is the free $\Z M$-module
with basis $\mathcal{X}_n$. A differential $\partial_n : C_n \to C_{n-1}$
is defined recursively through two auxiliary $\Z$-module homomorphisms
$s_n : C_n \to C_{n+1}$ and $r_n : C_n \to C_n$. Let
$[\alpha,A]$  be a $(n+1)$-cell,
with $\alpha \in \mathcal{X}$ and $A$ a $n$-cell. We let $\alpha_{/A}$
denote the unique element in $M$ such that
$(\alpha_{/A}) lcm(A) = lcm(\alpha,A)$. The defining equations for $\partial$
and $r$ are the following ones.
$$
\partial_{n+1}[\alpha,A] = \alpha_{/A} [A] - r_n(\alpha_{/A} [A]), \ \ r_{n+1} = s_n \circ
\partial_{n+1}, \ \ r_0(m[\emptyset]) = [\emptyset].
$$
In order to define $s_n$, we say that $x [A]$ for $x \in M$
and $A$ a $n$-cell is \emph{irreducible} if $x = 1$ and $A = \emptyset$,
or if $\alpha = \md(x \lcm(A))$ coincides with the first coefficient
in $A$. In that case, we let $s_n(x [A]) = 0$, and otherwise
$$
s_n(x[A]) = y[\alpha,A] + s_n(y r_n(\alpha_{/A} [A]))
$$
with $x = y \alpha_{/A}$.

\subsection{The Charney-Meyer-Wittlesey complex}

Let again $G$ denote the group of fractions of a Garside monoid $M$,
with Garside element $\Delta$. Let $\mathcal{D}$ denote
the set of simple elements in $M$, namely the (finite) set of proper
divisors of $\Delta$. We let $\mathcal{D}_n$ denote
the set of $n$-tuples $[\mu_1|\dots|\mu_n]$ such that each $\mu_i$
as well as the product $\mu_1 \dots \mu_n$ lie in $\mathcal{D}$.
The differential from the free $\Z M$-modules $\Z M \mathcal{D}_n$
to $\Z M \mathcal{D}_{n-1}$ is given by
$$
\partial_n [\mu_1|\dots|\mu_n] = \mu_1 [\mu_2|\dots|\mu_n]
+ \sum_{i=1}^{n-1} (-1)^i [\mu_1,\dots,\mu_i \mu_{i+1},\dots,\mu_n]
+ (-1)^n [\mu_1|\dots|\mu_{n-1}] 
$$
This complex in general has larger cells than the previous one.
Its main advantage for us is that the definition of the
differential is simpler, and does not involve many recursion
levels anymore.

\subsection{Application to the exceptional groups}

When $W$ is well-generated, meaning that it can be generated
by $n$ reflections, where $n$ denotes the rank of $W$, then $B$ is
the group fractions of (usually) several Garside monoids that
generalize the Birman-Ko-Lee monoid of the usual braid groups.
These monoids have been introduced by D. Bessis in \cite{BESSISKPI1} 
and call there dual braid monoids. They are determined by the choice
of a so-called Coxeter element $c$. Such an element is regular, meaning
that it admits only one eigenvalue different from 1 with the
corresponding eigenvector outside the reflection hyperplanes. A
Coxeter element is a regular element with eigenvalue
$\exp(2 \ii \pi/h)$, where $h$ denotes the (generalized)
Coxeter number for $W$, namely its highest degree as a reflection group.

The corresponding Garside monoid $M_c$ is then generated by some
set $R_c$ of braided reflections with relations
of the form $r r' = r' r''$ (see \cite{BESSISKPI1} for more details).
The above complexes for these monoids have been implemented
by Jean Michel and the second author, using the (development version of)
the CHEVIE package for GAP3. The chosen Coxeter element
are indicated in Table \ref{coxeterdualmonoids}, in terms
of the usual presentations of these groups (see \cite{BMR} for
an explanations of the diagrams).

\font\hugecirc = lcircle10 scaled 1950
\font\largecirc = lcircle10 scaled  \magstep 3
\font\mediumcirc = lcircle10 scaled \magstep 2
\font\smallcirc = lcircle10 
\def\BBcirc{$\hbox{\hugecirc\char"6E}$}
\def\Bcirc{$\hbox{\largecirc\char"6E}$}
\def\sBcirc{$\hbox{\mediumcirc\char"6E}$}
\def\sbcirc{$\hbox{\smallcirc\char"6E}$}
\def\ucirc{$\hbox{\mediumcirc\char"13}$}
\def\lcirc{$\hbox{\mediumcirc\char"12}$}
\def\ncnode#1#2{{\kern -0.4pt\mathop\bigcirc\limits_{#2}\kern-8.6pt{\scriptstyle#1}\kern 2.3pt}}
\def\dbar#1pt{{\rlap{\vrule width#1pt height2pt depth-1pt} 
                 \vrule width#1pt height4pt depth-3pt}}
\def\nnode#1{{\kern -0.6pt\mathop\bigcirc\limits_{#1}\kern -1pt}}
\def\bar#1pt{{\vrule width#1pt height3pt depth-2pt}}
\def\cnode#1{{\kern -0.4pt\bigcirc\kern-7pt{\scriptstyle#1}\kern 2.6pt}}
\def\node{{\kern -0.4pt\bigcirc\kern -1pt}}
\def\snode{{\kern -0.4pt{\scriptstyle\bigcirc}\kern -1pt}}
\def\tbar#1pt{{\rlap{\vrule width#1pt height1pt depth0pt}
           \rlap{\vrule width#1pt height3pt depth-2pt}
\vrule width#1pt height5pt depth-4pt}}
\def\overmark#1#2{\kern -1.5pt\mathop{#2}\limits^{#1}\kern -2pt}
\def\trianglerel#1#2#3{
 \nnode#1\bar14pt\kern-13pt\raise7.5pt\hbox{$\displaystyle \underleftarrow 6$}
  \kern 2pt\nnode#2
 \kern-29pt\raise9.5pt\hbox{$\diagup$}
 \kern -2pt
 \raise17.5pt\hbox{$\node$\rlap{\raise 2pt\hbox{$\kern 1pt\scriptstyle #3$}}}
 \kern -1pt\raise9.5pt\hbox{$\diagdown$}
 \kern 3pt
}
\def\vertbar#1#2{\rlap{$\nnode{#1}$}
                 \rlap{\kern4pt\vrule width1pt height17.3pt depth-7.3pt}
\raise19.4pt\hbox{$\node$\rlap{$\kern 1pt\scriptstyle#2$}}}
\def\stacktwo#1#2{\genfrac{}{}{0pt}{0}{#1}{#2}}
\def\stackthree#1#2#3{\genfrac{}{}{0pt}{0}{#1}{\genfrac{}{}{0pt}{0}{#2}{#3}}}

\begin{table}
$$
\begin{array}{c||c|c|c|c|c}
\mbox{Group} & G_{24} & G_{27} & G_{29} & G_{33} & G_{34} \\
\hline
\mbox{Diagram} &  \nnode s\rlap{\kern 2.5pt{\raise 6pt\hbox{$\triangle$}}}
 \dbar14pt\nnode t
 \kern-26.2pt\raise8.5pt\hbox{$\diagup$}
 \kern -3.1pt
 \raise16.7pt\hbox{$\node$\rlap{\raise 2pt\hbox{$\kern 1pt\scriptstyle u$}}}
 \kern -3pt\raise8.3pt\hbox{$\diagdown$}
 \kern -7.6pt\raise9pt\hbox{$\diagdown$}
 \kern 4pt &  \nnode s\rlap{\kern 2.5pt{\raise 6pt\hbox{$\triangle$}}}
 \dbar14pt\nnode t
 \rlap{\kern-6pt\raise11pt\hbox{$\scriptstyle 5$}}
 \kern-27pt\raise9pt\hbox{$\diagup$}
 \kern -2pt
 \raise16.5pt\hbox{$\node$\rlap{\raise 2pt\hbox{$\kern 1pt\scriptstyle u$}}}
 \kern -1pt\raise9pt\hbox{$\diagdown$}
 \kern 4pt &  \nnode s\bar10pt
 \nnode t\rlap{\kern 1pt\raise9pt\hbox{$\underleftarrow{}$}}
        \rlap{\kern 6.5pt\vrule width1pt height16pt depth-11pt
\kern 1pt \vrule width1pt height16pt depth-11pt}
\dbar14pt\nnode u
 \kern-27pt\raise9.5pt\hbox{$\diagup$}
 \kern -3pt\raise18.5pt
\hbox{$\node$\rlap{\raise 2pt\hbox{$\kern 1pt\scriptstyle v$}}}
 \kern -2pt\raise9.5pt\hbox{$\diagdown$} & \nnode s\bar10pt\trianglerel tvu\bar10pt\nnode w & 
\nnode s\bar10pt\trianglerel tvu\bar10pt\nnode w\bar10pt\nnode x \\
\hline
\mbox{Coxeter\ element} & stu & uts & stvu & wvtsu & xwuvts \\
\hline
\end{array}
$$

\caption{Coxeter elements for dual monoids.}
\label{coxeterdualmonoids}
\end{table}

Using the HAP package for GAP4 we then obtained the homologies
described in Table \ref{tableexc} (we recall in Table \ref{tableexcCox} the
ones obtained earlier by Salvetti for the Coxeter groups)
except for the groups $G_{12}, G_{13}, G_{22}, G_{31}$, which are not well-generated,
as well as the $H_3(B,\Z)$ of type $G_{33}$. When $W$ has type $G_{13}$,
the group $B$ is the same as when $W$ has Coxeter type $I_2(6)$,
and the result is known. For $G_{12}$ and $G_{22}$ one can
use Garside monoids introduced by M. Picantin in \cite{THESEPIC}.

A complex for $G_{31}$ can be obtained from the theory of Garside categories
by considering it as the the centralizer of some regular element in the Coxeter group
$E_8$. This viewpoint was used in \cite{BESSISKPI1} in order
to prove that the corresponding spaces $X$ and $X/W$ are $K(\pi,1)$.
More precisely, a simplicial complex (reminiscent from
the Charney-Meyer-Wittlesey complex) is constructed in \cite{BESSISKPI1},
which is homotopically equivalent to $X/W$. From this construction,
we got a complex from an implementation by Jean Michel in CHEVIE.

However, for $G_{31}$, $G_{33}$ and $G_{34}$, the complexes obtained
are too large to be dealt with completely through usual computers and software.
The one missing for $G_{31}$ and $G_{33}$ are the middle homology
$H_2(B,\Z)$ for $G_{31}$ and $H_3(B,\Z)$ for $G_{33}$.
For $G_{33}$
the Dehornoy-Lafont complex for $G_{33}$ is however computable
in reasonable time, and its small size enables to compute the whole homology
by standard methods. For $G_{31}$, for which there is so far no
construction analogous to the Dehornoy-Lafont complex, we used the
following method for computing $H_2(B,\Z)$. 

We first get $H_2(B,\Q) = 0$
by computing the second Betti number from the lattice. Indeed, recall from \cite{OT} (cor. 6.17, p.223) that the Betti
numbers of $X/W$ can be in principle computed from the lattice
of the arrangement. Precisely, the second Betti
number of $X/W$ is given by $\sum_{Z \in T_2} |\mathcal{H}_Z/W_Z| -1$
where $T_i$ is a system of representatives
modulo $W$ of codimension $i$ subspaces in the arrangement lattice ; for
$Z$ such a subspace, $\mathcal{H}_Z = \{ H \in \mathcal{A} \ | \ 
H \supset Z \}$, $W_Z = \{ w \in W \ | \ w(Z) = Z \}$.
More generally, the $i$-th Betti number is given by
$$ (-1)^i \sum_{Z \in T_i} \sum_{\sigma \in U_Z} (-1)^{d(\sigma)}
$$
where $U_Z$ is the set of classes modulo $W$ of the
set of simplices of the augmented Folkman complex of the lattice $\mathcal{A}_z$,
and $d(\sigma)$ denotes the dimension of a cell. The Folkman complex of a lattice
is defined (see \cite{OT}) as the complex of poset
obtained by removing the minimal and maximal elements of the
lattice ; when the maximal codimension of the lattice is 1, then
the Folkman complex is empty. The augmented Folkman complex
is defined by adding to the Folkman complex one $G$-invariant
simplex of dimension $-1$. In the case of $G_{31}$ the computation of
this formula is doable and we get 0 for the second Betti number.

We then reduce our original complex mod $p^r$, for $p^r$ small enough so that
we can encode each matrix entry inside one byte. Then we wrote a
C program to compute $H_2(B,\Z_4) = H_2(B,\Z_2) = \Z_2$,
$H_2(B,\Z_9) = H_2(B,\Z_3) = \Z_3$
and $H_2(B,\Z_5) = 0$ (the matrix of $d_3$ has size $11065 \times 15300$).
Since $G_{31}$ has order $2^{10} .3^2. 5$ and $H_*(P)$ is torsion-free,
for $p \not\in \{ 2, 3, 5 \}$ we have $H_2(B,\Z_p) = H_2(P,\Z_p)^W = (H_2(P,\Z)^W) \otimes \Z_p$.
But $0=H_2(B,\Q) = H_2(P,\Q)^W = H_2(P,\Z)^W \otimes \Q$
whence $H_2(P,\Z)^W = 0$ and $H_2(B,\Z_p) = 0$.
Now $H_1(B,\Z) = \Z$ is torsion-free, hence $H_2(B,\Z_n) \simeq
H_2(B,\Z) \otimes \Z_n$ for any $n$ by the universal coefficients
theorem. Since $H_2(B,\Z)$ is a $\Z$-module of finite type
this yields $H_2(B,\Z) = \Z_6$ and completes the computation for $G_{31}$.


\subsection{Embeddings between Artin-like monoids}
\label{sectembed}
We end this section by proving a few lemmas concerning
submonoids, which will be helpful in computing differentials
in concrete cases.

We consider Garside monoids with set of generators $S$
and endowed with a length function, namely a monoid morphism
$\ell : M \to \N = \Z_{\geq 0}$ such that $\ell(x) = 0 \Leftrightarrow
x = 1$ and $\ell(s) = 1$ for all $s \in S$. We consider the divisibility
relation on the left (that is $U | V$ means $\exists m \ V = U m$) and recall that such a monoid admit lcm's
(on the left).

Let $M,N$ be two such monoids, and $\varphi : M \to N$
a monoid morphism such that

\begin{enumerate}
\item $\forall s \in S \ | \ \varphi(s) \neq 1$
\item $\forall s,t \in S \ \lcm(\varphi(s),\varphi(t)) = \varphi(\lcm(s,t))$
\end{enumerate}

The following results on such morphisms are basically due to J. Crisp,
who proved them in \cite{crisp} in the case of finite-type Artin groups.

\begin{lemma} Let $U,V \in M$. If $\varphi(U) | \varphi(V)$ then $U | V$.
\end{lemma}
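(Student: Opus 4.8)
The plan is to argue by induction on the length $\ell(V)$, using the characterization of divisibility in a Garside monoid in terms of left-divisors and lcm's. The base case $\ell(V)=0$ is immediate: then $V=1$, so $\varphi(V)=1$, and since $\varphi(U)\mid 1$ forces $\varphi(U)=1$; by hypothesis (1) and multiplicativity of $\ell\circ\varphi$ (which is $\geq 1$ on each generator, hence $>0$ on any non-identity element), this gives $U=1$, so $U\mid V$ trivially. For the inductive step, assume the statement for all $V'$ with $\ell(V')<\ell(V)$, and suppose $\varphi(U)\mid\varphi(V)$ with $V\neq 1$.

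First I would dispose of the case $U=1$, which is trivial, and assume $U\neq 1$. Pick a generator $s\in S$ with $s\mid U$; I want to show $s\mid V$. Since $s\mid U$ and $\varphi(U)\mid\varphi(V)$, we get $\varphi(s)\mid\varphi(U)\mid\varphi(V)$. Now choose any generator $t$ with $t\mid V$. Then both $\varphi(s)$ and $\varphi(t)$ left-divide $\varphi(V)$, so $\operatorname{lcm}(\varphi(s),\varphi(t))\mid\varphi(V)$; by hypothesis (2) this says $\varphi(\operatorname{lcm}(s,t))\mid\varphi(V)$. The key point is to leverage this to conclude $s\mid V$. The cleanest route: since $M$ is Garside it has finite atom set and lcm's, and the set of generators left-dividing $V$ generates a sub-object whose lcm $\Delta_V=\operatorname{lcm}\{t\in S: t\mid V\}$ left-divides $V$; applying $\varphi$ and (2) repeatedly, $\varphi(\Delta_V)\mid\varphi(V)$. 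If $s\nmid V$, then $\operatorname{lcm}(s,\Delta_V)$ strictly left-divides neither... — more carefully, one shows $\varphi(s)\mid\varphi(V)$ together with the set of $t\mid V$ forces, via (2) and the fact that $\varphi$ reflects the "is a left-divisor of $V$" relation on generators, that $s$ itself must left-divide $V$. Granting $s\mid V$, write $U=sU'$, $V=sV'$; then $\varphi(s)\varphi(U')=\varphi(U)\mid\varphi(V)=\varphi(s)\varphi(V')$, and left-cancellativity of the Garside monoid $N$ gives $\varphi(U')\mid\varphi(V')$. Since $\ell(V')=\ell(V)-1<\ell(V)$, the induction hypothesis yields $U'\mid V'$, hence $U=sU'\mid sV'=V$, completing the step.

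The main obstacle is the middle step: extracting from $\varphi(s)\mid\varphi(V)$ (with $s\mid U$) the conclusion that $s\mid V$ in $M$. This is exactly the place where hypotheses (1) and (2) must be combined with the lcm-structure, and it is the heart of Crisp's original argument. I would handle it by choosing $s$ to be the \emph{first atom} relevant — more precisely, I would run the induction not just on $\ell(V)$ but set it up so that at each stage I only need to compare $\varphi(s)$ against $\operatorname{lcm}(\varphi(s),\varphi(t))=\varphi(\operatorname{lcm}(s,t))$ for a single $t\mid V$ of my choosing, and then use that $N$ is left-cancellative and that $\varphi$ preserves lcm's of generators to push the argument down. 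An alternative, perhaps cleaner, formulation: prove simultaneously by induction the slightly stronger statement that $\varphi(\operatorname{lcm}(U,V'))=\operatorname{lcm}(\varphi(U),\varphi(V'))$ for all $U,V'\in M$ — this "lcm-preservation on all of $M$" both implies the lemma (take $V'=V$: if $\varphi(U)\mid\varphi(V)$ then $\operatorname{lcm}(\varphi(U),\varphi(V))=\varphi(V)$, so $\varphi(\operatorname{lcm}(U,V))=\varphi(V)$, and since $\ell\circ\varphi$ is strictly monotone under proper left-division one deduces $\operatorname{lcm}(U,V)=V$, i.e. $U\mid V$) and is better suited to induction because it is symmetric and self-strengthening. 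I expect the write-up to follow this second formulation, with the generator-by-generator lcm comparison of (2) as the inductive engine.
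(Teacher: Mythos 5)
Your overall induction scheme (induct on $\ell(V)$, dispose of the cases $V=1$ and $U=1$, pick an atom $s$ with $s\mid U$, show $s\mid V$, then cancel $s$ and apply the inductive hypothesis to $U',V''$) is exactly the paper's, and your base case and your final cancellation step are correct. But the one step you explicitly leave open --- deducing $s\mid V$ from $\varphi(\lcm(s,t))\mid\varphi(V)$ --- is the entire content of the lemma, and neither of the two routes you sketch for it is actually carried out: the first (comparing $s$ against $\Delta_V=\lcm\{t\in S : t\mid V\}$) breaks off mid-sentence, and the second (proving $\varphi(\lcm(U,V'))=\lcm(\varphi(U),\varphi(V'))$ on all of $M$) is a strictly stronger statement whose inductive proof would run into the same difficulty you are trying to sidestep. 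As written, the proposal does not constitute a proof.

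The paper closes this gap with a short move you are missing: the inductive hypothesis is quantified over \emph{all} $U$, so it may be applied to a pair other than the original one. Concretely, pick $t\in S$ with $t\mid V$ and write $V=tV'$, so that $\ell(V')<\ell(V)$; write $\lcm(s,t)=tm$ for some $m\in M$. From $\varphi(\lcm(s,t))=\lcm(\varphi(s),\varphi(t))\mid\varphi(V)$ one gets $\varphi(t)\varphi(m)\mid\varphi(t)\varphi(V')$, hence $\varphi(m)\mid\varphi(V')$ by cancellativity, hence $m\mid V'$ by the inductive hypothesis applied to the pair $(m,V')$. Therefore $\lcm(s,t)=tm\mid tV'=V$, and in particular $s\mid V$. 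With this paragraph inserted at the place you flagged as ``the main obstacle,'' your argument becomes the paper's proof essentially verbatim.
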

\begin{proof} By induction on $\ell(V)$. Since $\forall s \in S \ \ell(\varphi(s)) \geq 1 = \ell(s)$,
we have $\ell(\varphi(U)) \geq \ell(U)$. Since $\varphi(U) | \varphi(V)$, we
have $\ell(\varphi(U)) \leq \ell(\varphi(V))$ hence $\ell(U) \leq \ell(\varphi(V))$.
Hence $\varphi(V) = 1$ implies $\ell(U) = 0$ and $U = 1$, which settles
the case $\ell(V) = 0$.

We thus assume $\ell(V) \geq 1$. The case $U=1$ being clear,
we can assume $U \neq 1$. Then there exists $s,t \in S$
with $s | U$ and $t|V$. It follows that $\varphi(t) | \varphi(V)$
and $\varphi(s) | \varphi(U) | \varphi(V)$, hence
$\lcm(\varphi(s),\varphi(t)) | \varphi(V)$.

Now $\lcm(s,t) = t m$ for some $m \in M$ and $V = t V'$
for some $V' \in M$, hence
$\varphi(t) \varphi(m) | \varphi(V) = \varphi(t) \varphi(V')$
and this implies $\varphi(m) | \varphi(V')$ by cancellability
in $M$. Since $\ell(V') < \ell(V)$, from the induction assumption
follows that $m | V'$ hence $tm | V$ that is $\lcm(s,t) | V$.
In particular we get $s | V$. Writing $V = s V''$ and $U = s U'$
for some $V'',U' \in M$,
the assumption $\varphi(U) | \varphi(V)$ implies
$\varphi(U') | \varphi(V'')$ by cancellability, and
then $U' | V''$ by the induction assumption. It follows
that $U|V$ which proves the claim.
\end{proof}

The lemma has the following consequence.

\begin{lemma} The morphism $\varphi : M \to N$ is injective. If $G_M$, $G_N$
denotes the group of fractions of $M,N$, then $\varphi$ can be extended
to $\widetilde{\varphi} : G_M \into G_N$.
\end{lemma}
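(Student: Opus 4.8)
The plan is to deduce both assertions from the previous Lemma, together with the classical fact (Ore's conditions) that a Garside monoid embeds in its group of fractions.

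First I would prove injectivity of $\varphi$ on $M$. Suppose $\varphi(U) = \varphi(V)$ for some $U, V \in M$; then $\varphi(U) \mid \varphi(V)$ and $\varphi(V) \mid \varphi(U)$, so the previous Lemma gives $U \mid V$ and $V \mid U$. Writing $V = Um$ and $U = Vn$ with $m, n \in M$ and cancelling $U$ (legitimate since $M$ is cancellative) yields $mn = 1$; as $\ell$ is a monoid morphism with $\ell^{-1}(0) = \{1\}$, we get $\ell(m) = \ell(n) = 0$, hence $m = n = 1$ and $U = V$.

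Next I would construct the extension. A Garside monoid satisfies Ore's conditions, so $M$ embeds into $G_M$, $N$ embeds into $G_N$, and every element of $G_M$ can be written as $a b^{-1}$ with $a, b \in M$. The composite of $\varphi$ with the canonical embedding $N \hookrightarrow G_N$ sends every element of $M$ to an invertible element of $G_N$, so by the universal property of the group of fractions it factors uniquely through $G_M$, producing a group homomorphism $\widetilde{\varphi} : G_M \to G_N$ with $\widetilde{\varphi}|_M = \varphi$, given concretely by $\widetilde{\varphi}(a b^{-1}) = \varphi(a)\varphi(b)^{-1}$.

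Finally I would verify injectivity of $\widetilde{\varphi}$: if $g = a b^{-1} \in G_M$ (with $a,b \in M$) satisfies $\widetilde{\varphi}(g) = 1$, then $\varphi(a) = \varphi(b)$ in $G_N$, hence already in $N$ since $N \hookrightarrow G_N$, hence $a = b$ by the injectivity of $\varphi$ just proved, so $g = 1$. The only mildly delicate point is the appeal to the embedding $M \hookrightarrow G_M$ and the fractional form $g = ab^{-1}$ --- that is exactly the Ore property of Garside monoids --- everything else being formal once the previous Lemma is in hand.
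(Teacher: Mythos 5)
Your proof is correct and follows essentially the same route as the paper: injectivity of $\varphi$ on $M$ via the divisibility lemma ($U\mid V$ and $V\mid U$, then the length function), the extension $\widetilde{\varphi}$ via the universal property of the group of fractions, and injectivity of $\widetilde{\varphi}$ by writing $g = ab^{-1}$ and reducing to injectivity on $M$. The extra details you supply (cancellativity, Ore's conditions) are consistent with what the paper leaves implicit.
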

\begin{proof}
Let $U,V \in M$ with $\varphi(U) = \varphi(V)$. By the lemma
we get $U|V$ and $V | U$. This implies $\ell(U) = \ell(V)$
hence $U = V$. Composing $\varphi : M \to N$
with the natural morphism $N \into G_N$ yields
a monoid morphism
$M \to G_N$. Since $G_N$ is a group this morphism factors through
the morphism $M \to G_M$ and this provides $\widetilde{\varphi} : G_M \to G_N$.
Let $g \in \Ker \widetilde{\varphi}$. Since $g \in G_M$ there exists
$a,b \in M$ with $g = ab^{-1}$ hence $\varphi(a) = \varphi(b)$,
$a=b$ and $g = 1$.
\end{proof}

We consider the following extra assumption on $\varphi$. We assume that,
for all $m \in M$ and $n \in N$, $n | m$ implies

We can now identify in this $M,N,G_M$
to subsets of $G_N$. We consider the following extra
assumption. We assume that, for all $m \in M, n \in N$,
if $n$ divides $m$ in $N$ then $n \in M$.

\begin{lemma} Under this assumption, $U,V$ in $M$
have the same lcm in $M$ and in $N$. Moreover,
$M = N \cap G_M$.
\end{lemma}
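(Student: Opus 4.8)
The plan is to exploit the two preceding lemmas of this subsection, which let us regard $M$, $N$ and $G_M$ as submonoids (resp.\ a subgroup) of $G_N$, together with the first of them, which says precisely that for $U,V\in M$ one has $U\mid V$ in $N$ if and only if $U\mid V$ in $M$. I would treat the two assertions in turn.

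For the statement on lcm's, set $L_M=\lcm_M(U,V)$ and $L_N=\lcm_N(U,V)$. As $L_M$ is a common left multiple of $U$ and $V$ in $M$, hence in $N$, we get $L_N\mid L_M$ in $N$; in particular $L_N$ left-divides the element $L_M\in M$, so the extra assumption forces $L_N\in M$. Now $U\mid L_N$ and $V\mid L_N$ in $N$ with all elements involved lying in $M$, so by the first lemma these divisibilities already hold in $M$; thus $L_N$ is a common multiple of $U$ and $V$ in $M$, whence $L_M\mid L_N$ in $M$, and a fortiori in $N$. Since the left-divisibility order is antisymmetric in a Garside monoid (cancellativity together with the length function rules out non-trivial units), $L_N\mid L_M$ and $L_M\mid L_N$ give $L_M=L_N$.

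For the equality $M=N\cap G_M$, only the inclusion $\supseteq$ needs an argument. Let $g\in N\cap G_M$. Because $M$ admits common multiples it satisfies the Ore condition, so we may write $g=ab^{-1}$ with $a,b\in M$; then $a=gb$ in $G_N$, and since $a$, $g$ and $b$ all lie in the submonoid $N$, this identity already holds inside $N$. Hence $g$ left-divides $a\in M$ in $N$, and the extra assumption yields $g\in M$.

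The only point requiring care is the bookkeeping of left versus right divisibility: one must record the lcm's as \emph{left} multiples so that antisymmetry of the left-divisibility order applies, and one must choose the decomposition $g=ab^{-1}$ (rather than $b^{-1}a$) so that $g$ appears as a \emph{left} divisor of an element of $M$. No idea beyond the earlier lemmas enters the argument.
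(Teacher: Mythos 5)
Your proof is correct and follows essentially the same route as the paper's: show $\lcm_N(U,V)\mid\lcm_M(U,V)$, use the divisibility hypothesis to place $\lcm_N(U,V)$ in $M$, invoke the first lemma to transfer the divisibilities $U,V\mid\lcm_N(U,V)$ back to $M$, and for the second claim write $g=ab^{-1}$ so that $g$ left-divides $a\in M$ in $N$. Your version is if anything slightly more careful than the paper's (which loosely says $\lcm_N(U,V)$ ``divides $U$'' where it means it divides $\lcm_M(U,V)$, and leaves the antisymmetry of left-divisibility implicit), but the argument is the same.
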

\begin{proof}
Since $\lcm_M(U,V)$ divides $U,V$ in $N$, we have 
that $\lcm_N(U,V))$ divides  $\lcm_M(U,V)$ in $N$.
Conversely, since $\lcm_N(U,V)$ divides $U$ in $N$ 
and $U \in M$, by the assumption we get $\lcm_N(U,V) \in M$.
From the lemma we thus get that $\lcm_N(U,V)$ divides
$U$ and $V$ in $M$ hence $\lcm_M(U,V)$ divides $\lcm_N(U,V)$
in $N$. It follows that $\lcm_M(U,V) = \lcm_N(U,V)$.

We have $M \subset N \cap G_M$. Let $n \in N \cap G_M$.
Since $n \in G_M$ there exists $a,b \in M$
with $n = ab^{-1}$, hence $nb = a \in M$. Hence
$n \in N$ divides $a \in M$ in $M$. By the assumption
we get $n \in M$ and the conclusion.
\end{proof}

\section{The groups $\B(e,e,r)$} \label{s:beer}

\subsection{The Corran-Picantin monoid}

We denote $\B(e,e,r)$ for $e \geq 1$ and $r \geq 2$ the braid group associated
to the complex reflection group $G(e,e,r)$.
The $\B(e,e,r)$ are the group of fractions
of a Garside monoid introduced by R. Corran
and M. Picantin (see \cite{corpic}). This monoid,
that we denote $M(e,e,r)$, has generators (atoms)
$t_0,t_1,\dots,t_{e-1}, s_3, s_4, \dots, s_r$ and relations
\begin{enumerate}\label{rels}
\item $t_{i+1} t_i = t_{j+1} t_j$, with the convention $t_e = t_0$,
\item $s_3 t_i s_3 = t_i s_3 t_i$
\item $s_k t_i  = t_i s_k$ for $k \geq 4$
\item $s_k s_{k+1} s_k = s_{k+1} s_k s_{k+1} $ for $k \geq 3$
\item $s_k s_l = s_l s_k$ when $| l-k | \geq 2$.
\end{enumerate}

\subsection{Link with the topological definition}

The connection between this monoid and the group $\B(e,e,r)$
defined as a fundamental group is quite indirect. In \cite{BMR}
a first presentation is obtained by combining embeddings into usual
braid groups, fibrations and coverings. The presentation used here
is deduced from this one in a purely algebraic matter,
by adding generators in order to get a Garside presentation.
Although it is folklore the description of all generators as
braided reflection does not appear in the literature (see
however \cite{BESSISCORRAN} for a statement without proof in a related context).

In order to provide this connection,
we need to recall the way these generators
are constructed. For clarity, we stick to the notations of \cite{BMR} ;
in this paper, the authors introduce 4 different spaces,
$\mathcal{M}(r+1) = \{ (z_0,\dots,z_r) \in \C^{r+1} \ | \ z_i \neq z_j \}$,
$\mathcal{M}^{\#}(m,r) = \{ (z_1,\dots,z_r) \in \C^r \ | \ z_i \neq 0,
z_i /z_j \not\in \mu_m \}$, $\mathcal{M}(e,r) = \{ (z_1,\dots,z_r) \in \C^r \
| \ z_i \not\in \mu_e z_j \}$, and $\mathcal{M}^{\#}(r) = \{ (z_1,\dots,z_r) \in \C^r \ | \ z_i \neq 0 \}$,
where $\mu_n$ denotes the set of $n$-th roots of 1 in $\C$.
We have a Galois covering $r : \mathcal{M}^{\#}(m,r)
\to \mathcal{M}^{\#}(r) = \mathcal{M}^{\#}(m,r) / (\mu_m)^r$,
a locally trivial fibration $p : \mathcal{M}(r+1) \to \mathcal{M}^{\#}(r)$
with fiber $\C$ given by $(z_0,\dots,z_r) \mapsto (z_0 - z_1,\dots,
z_0-z_r)$, and a natural action of $\mathfrak{S}_r$ on $\mathcal{M}(r+1)$
that leaves the $(r+1)$-st coordinate fixed. We choose
a fixed point $x \in \mathcal{M}(r+1)/\mathfrak{S}_r$,
and a lift $\widetilde{p(x)}$ of $p(x) \in \mathcal{M}^{\#}(r)/\mathfrak{S}_r$
in $\mathcal{M}^{\#}(d,r)/G(d,1,r) = (\mathcal{M}(d,r)/(\mu_m)^r)/\mathfrak{S}_r$.
We get an isomorphism $\psi : \pi_1(\mathcal{M}^{\#}(d,r)/G(d,1,r),
\widetilde{p(x)}) \to \pi_1(\mathcal{M}(r+1)/\mathfrak{S}_r,x)$
by composing the isomorphisms induced by $r$ and $p$.
$$
\xymatrix{
 \pi_1(\mathcal{M}^{\#}(d,r)/G(d,1,r), \widetilde{p(x)}) \ar[dr]_{r}^{\simeq} \ar[rr]^{\psi} & &
\ar[dl]^{p}_{\simeq} \pi_1(\mathcal{M}(r+1)/\mathfrak{S}_r, x) \\
 & \pi_1(\mathcal{M}^{\#}(r)/\mathfrak{S}_r, p(x)) & \\
}
$$
Since $\pi_1(\mathcal{M}^{\#}(d,r)/G(d,1,r)) = \B(d,1,r)$,
$\psi$ identifies the latter group with $\pi_1(\mathcal{M}(r+1)
/\mathfrak{S}_r)$. The generators of $\B(d,1,r)$ are then obtained
in \cite{BMR}
by taking the preimages under $\psi$ and
the covering of $\mathcal{M}(r+1)/\mathfrak{S}_r \to \mathcal{M}(r+1)/\mathfrak{S}_{r+1}$.
Note that this covering provides an injection between fundamental
groups, hence an embedding $\tilde{\psi} : \B(d,1,r) \into \Br(r+1)$,
where $\Br(r+1)$ denotes the usual braid group on $r+1$ strands.
We choose for base point in $\mathcal{M}(r+1)$ the point
$x = (0,x_1,\dots,x_r)$ with the $x_i \in \R$ and $x_{i+1} \ll x_i$,
and for generators of the usual braid group $\mathcal{M}(r+1)/\mathfrak{S}_{r+1}$
the elements $\xi_0, \xi_1,\dots,\xi_{r-1}$ as described below :
\begin{center}
\includegraphics{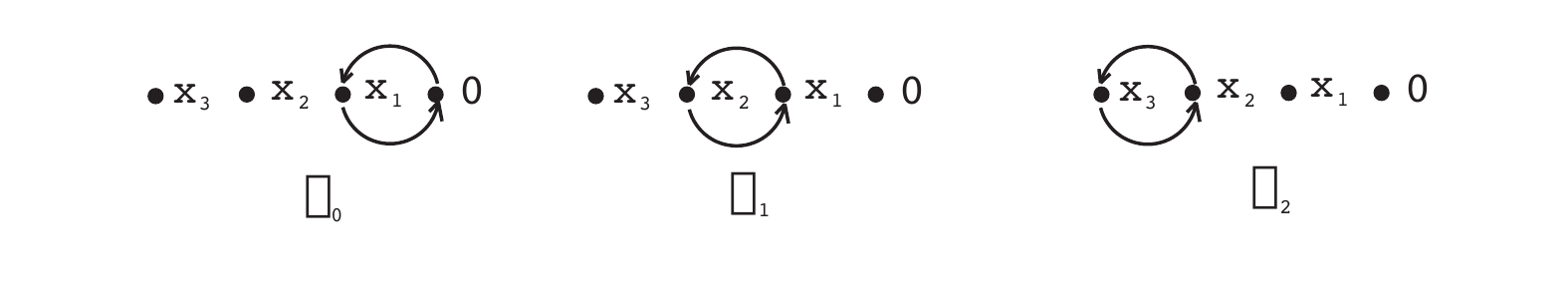}
\end{center}
Then (see \cite{BMR}), the group $\pi_1(\mathcal{M}(r+1)/\mathfrak{S}_r)$
is generated by $\xi_0^2,\xi_1,\dots,\xi_{r-1}$. The element
$\xi_0^2$ is the class in $\mathcal{M}(r+1)$ of the loop
$(\frac{x_1}{2}(1-e^{2 \ii \pi t}), \frac{x_1}{2}( e^{2 \ii \pi t} +1),x_2,
\dots,x_r )$. Taking its image by $p$ provides a loop
based at $(-x_1,-x_2,\dots,-x_r)$ described by $(-x_1 e^{2 \ii \pi t},
\frac{x_1}{2} (1 - e^{2 \ii \pi t}) -x_2,\dots,\frac{x_1}{2} (1 - e^{2 \ii \pi t}) - x_r)$.
Since $|x_i | \ll |x_{i+1}|$, this path is homotopic to
$(-x_1 e^{2 \ii \pi t}, -x_2, \dots, -x_r)$, both in $\mathcal{M}^{\#}(r)$
and in $\mathcal{M}^{\#}(r)/\mathfrak{S}_r$.
Letting $a_i = -x_i$, we have $0 < a_1 < a_2 < \dots < a_r$, and we choose
$y = \widetilde{p(x)}$ to be $y = (a_1^{\frac{1}{d}},\dots,
a_r^{\frac{1}{d}})$. The above loop thus lifts under $r$ to
the path $(a_1 e^{2 \ii \pi t/d},a_2,\dots,a_r)$ in $\mathcal{M}^{\#}(d,r)$.
By definition of $\psi$,
the class of this path $\sigma = \psi^{-1}(\xi_0^2)$.
Similarly, we can determine $\psi^{-1}(\xi_i)$ when $i \geq 1$ : the image
of $\xi_i$ under $p$ is a path in $\mathcal{M}^{\#}(r)$ homotopic to
\begin{center}
\resizebox{!}{3cm}{\includegraphics{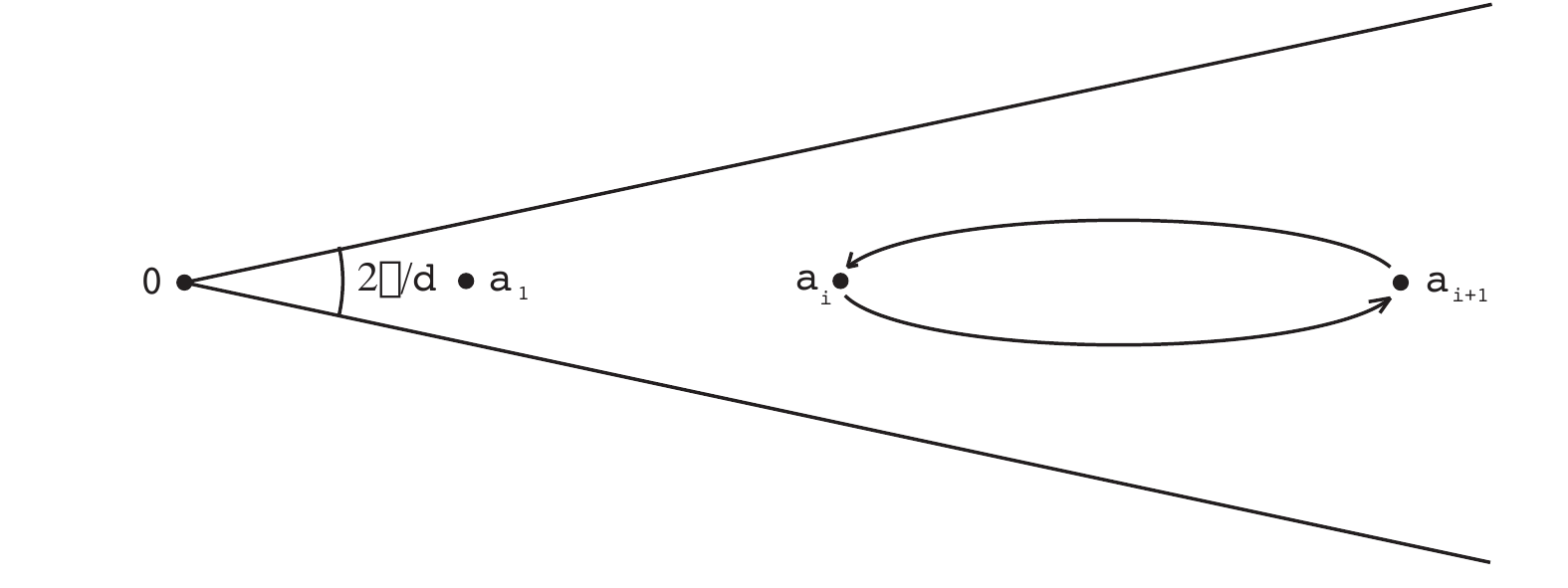}}
\end{center}
On the open cone described in the picture, the map $z \mapsto z^d$
is a positive homeomorphism, and this enables one to lift this path
to
\begin{center}
\resizebox{!}{2cm}{\includegraphics{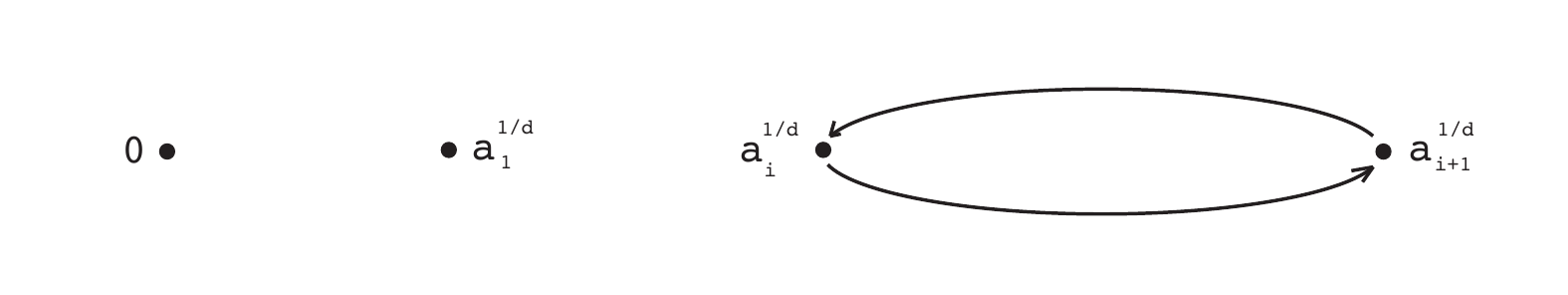}}
\end{center}
meaning that $\tau_i = \psi^{-1}(\xi_i)$ is the class of this path,
from $(a_1^{1/d}, \dots, a_i^{1/d}, a_{i+1}^{1/d}, \dots, a_r^{1/d})$
to $(a_1^{1/d}, \dots, a_{i+1}^{1/d}, a_{i}^{1/d}, \dots, a_r^{1/d})$.
We recall that $\B(de,e,r)$ is defined by $\pi_1( \mathcal{M}^{\#}(de,r)/G(de,e,r))$
when $d > 1$. If moreover $e=1$, then  $\B(d,1,r)$ is generated by $\sigma,
\tau_1,\dots,\tau_{r-1}$ ; in general, it is generated by $\sigma^e,
\tau_1,\dots,\tau_{r-1}$. Now, the morphism $\tilde{\psi} : \B(d,1,r) \into
\Br(r+1) = \pi_1(\mathcal{M}(r+1)/\mathfrak{S}_{r+1})$
commutes
with the natural morphisms
$$
\xymatrix{ \B(d,1,r) \ar[rr] \ar[d] & & \Br(r+1) \ar[d] \\
G(d,1,r) \ar[rr] \ar[dr] & & \mathfrak{S}_{r+1} \\ 
 & \mathfrak{S}_r \ar[ur] & } 
$$
Letting as in \cite{BMR} $\xi'_1 = \xi_0^2 \xi_1 \xi_0^{-2} \in \pi_1(
\mathcal{M}(r+1)/\mathfrak{S}_r, x)$, we have $\tau'_1 = \psi^{-1}(\xi'_1)
\in \pi_1(\mathcal{M}^{\#}(d,r) / G(d,1,r),y)$. As before we let $\zeta = \exp(2 \ii \pi/d)$
and $g_i \in G(d,1,r)$ being defined by $g_i.(z_1,\dots,z_r) = (z_1,
z_2,\dots,\zeta z_i, \dots, z_r)$. We let $b_i = a_i^{1/d}$.
$$
\xymatrix{
(b_1,\dots,b_r) \ar@/^1pc/[d]^{\xi_0^{-2}} \ar@/_5pc/@{-->}[dd]_{g_1^{-1} s_1}
\ar@/_10pc/@{-->}[ddd]_{g_1^{-1}s_1g_1}\ar@/^10pc/[ddd]_{\xi'_1}\\
(\zeta^{-1} b_1,b_2,\dots,b_r) \ar@/^1pc/[d]^{g_1^{-1}.\xi_1}\\
(\zeta^{-1} b_2, b_1,\dots,b_r) \ar@/^1pc/[d]^{g_1^{-1} s_1 . \xi_0^2}\\
(\zeta^{-1} b_2, \zeta b_1,\dots,b_r) }
$$
In order to generate $\B(e,e,r) = \pi_1(\mathcal{M}(e,r)/\mathfrak{S}_r, x)$,
and letting $e=d$,
we only need to take the image of $\tau'_1,\tau_1,\dots,\tau_{r-1}$
under $i^*$ where $i : \mathcal{M}^{\#}(e,r) \to \mathcal{M}(e,r)$
is the natural inclusion.
We will use the following definition.

\begin{defi}  Let $X$ be the complement of an hyperplane arrangement
$\mathcal{A}$ in $\C^l$, and $v,v' \in X$. A line segment from
$v$ to $v'$ is $t \mapsto (1-t) v + t v'$ for $t \in [0,1]$,
If this line segment crosses exactly one hyperplane of $\mathcal{A}$
at one point, a positive \emph{detour} from $v$ to $v'$ is a path of
the form $\gamma(t) = (1-t) v + t v' + \ii t (1-t) (v-v') \eps$
for $\eps > 0$ small enough so that it and the similar paths $\gamma'$
for $0 < \eps' < \eps$ do not cross any hyperplane in $\mathcal{A}$. All
such detours are clearly homotopic to each other. A negative detour
is defined similarly with $\ii$ replaced by $- \ii$.
\end{defi} 
Note that, for $v \in \mathcal{M}(e,r)$ and $s$ a reflection in $G(e,e,r)$,
if there exists a positive detour from the base point $\underline{b} = (b_1,\dots,b_r)$
 to $w.\underline{b}$, then it provides
a braided reflection around the hyperplane attached to $s$.

The elements $i^*(\tau_k)$ are now easy-to-describe
braided reflections, as the positive detours from $\underline{b}$
to their images by the corresponding reflections. In case $e = 2$,
the given monoid is then clearly the classical Artin monoid of type
$D_r$, so we can assume $e \geq 3$.
The paths corresponding to $\xi_0^{2}$
and to its translates are homotopic to a line segment in
$\mathcal{M}(e,r)$. The fact that $\tau'_1$ is a braided reflection
essentially amounts to the fact that
$i^*(g_0^{-1} .\tau_1)$ is a braided reflection in
$\pi_1(\mathcal{M}(e,r)/G(e,e,r), g_0^{-1} . y)$,
and this holds true because $\tau_1$ is a braided reflection in
$\mathcal{M}^{\#}(e,r)/G(e,e,r)$.

We consider the plane $P$ defined by the equations
$z_i = b_i$ for $i = 3,\dots,r$, and identify
it to $\C^2$ through $(z_1,z_2)$. We let
$P^0 = \C^2 \setminus \bigcup \{ z_2 = z_1 \eta \ | \ \eta \in \mu_e \}
= P \cap \mathcal{M}(e,r)$. 
Then $\tau_1,\tau'_1$ lie in the plane $P$, and
$\tau'_1$ is homotopic in $P^0$ to
\begin{center}
\resizebox{!}{3cm}{\includegraphics{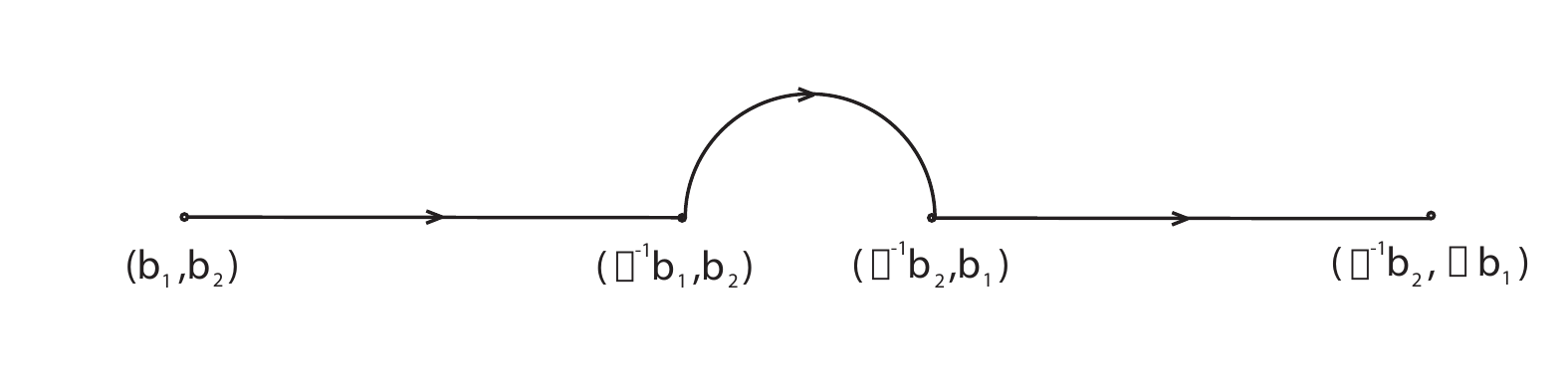}}
\end{center}
where the half-circle represents the positive detour from
$(\zeta^{-1} b_1,b_2)$ to $(\zeta^{-1} b_2,b_1)$. We let
now $t_0 = \tau_1$, $t_1 = \tau'_1$, 
$t_{i+1} = t_i^{-1} t_{i-1}t_i$ 
for $1 \leq i \leq e-2$.
A way to understand paths in $P \simeq \C^2$ is to use the
projection $\C^2 \to \mathbbm{P}^1(\C)$ given by $(z_1,z_2) \mapsto
z_2/z_1$. Note for example that two paths $\gamma_1,\gamma_2$
in $P$ with the same endpoints whose images are homotopic in $\mathbbm{P}^1(\C) \setminus
\mu_e$ are homotopic in $P^0$ as soon as, writing $\gamma_i(t)
=(x_i(t),y_i(t))$, the set
$x_1([0,1]) \cup x_2([0,1])$ is contained in some simply
connected subspace of $\C \setminus \{ 0 \}$.
We let $\alpha = b_2/b_1 \gg 1$. Then the positive detour $t_0$ is mapped
to a path from $\alpha$ to $\alpha^{-1}$ close to the line segment,
with image in the positive half-plane. The line segments of the
form $\gamma(t) = (z_1,z_2(t)$ are mapped to line segments,
and lines form $\gamma(t) = (z_1(t),z_2)$
are mapped to images of a line under $z \mapsto 1/z$,
which is the composite of the complex conjugation with
the geometric inversion  with respect to the unit circle ; they are thus mapped to a line
if the original line passes through 0, and otherwise
to a circle passing through the origin. The induced action of $G(e,e,r)$
is given by $s_1 : z \mapsto \frac{1}{z}$, $g_1 : z \mapsto \zeta^{-1} z$,
$g_2 : z \mapsto \zeta z$. The images of $t_1$ and $t_2$ are depicted
in figure \ref{figt1t2}. 
The images of $t_2$ and of the positive detour from $(b_1,b_2)$ to 
$(\zeta^{-2} b_2,\zeta^2 b_1)$ are then clearly homotopic (see figure \ref{figt2detour}),
and the first coordinate of both paths is easily checked to remain in
a simply connected region of $\C \setminus \{ 0 \}$. With
the same argument, using the relation $t_{i+1} = t_i^{-1} t_0 t_1$
and possibly using $(z_1,z_2) \mapsto z_1/z_2$ instead of $(z_1,z_2)
\mapsto z_2/z_1$, we get that each $t_i$ is
(homotopic to) the positive detour from $(b_1,b_2)$ to
$(\zeta^{-i}b_2,\zeta^i b_1)$. We thus got the following

\begin{prop} Let $\underline{b} = (b_1,\dots,b_r) \in \mathcal{M}(e,r)$
with $0 < b_1 \ll b_2 \ll \dots \ll b_r$. Then
$\B(e,e,r) = \pi_1( \mathcal{M}(e,r)/G(e,e,r),\underline{b})$
is generated by braided reflections $t_0,\dots,t_{e-1},s_3,\dots,s_r$
which are positive detours from $\underline{b}$ to their
images under the corresponding reflection. Under $\B(e,e,r) \onto
G(e,e,r)$, $t_i$ is mapped to $(z_1,z_2,\dots,z_r) \mapsto
(\zeta^{-i} z_2,\zeta^i z_1,\dots,z_r)$, and $t_0,s_3,\dots,
s_r$ are mapped to the successive transpositions of $\mathfrak{S}_r$
in that order. These generators provide a presentation
of $\B(e,e,r)$ with the relations (2)-(5) of page \pageref{rels}, and
with (1) replaced by $t_i t_{i+1} = t_j t_{j+1}$.
\end{prop}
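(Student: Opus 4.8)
The plan is to read the statement off the homotopy analysis just carried out and to obtain the presentation part by comparison with the Corran--Picantin monoid; I may assume $e \geq 3$, since $e \leq 2$ yields the classical Artin groups of types $A_{r-1}$ and $D_r$, where everything is known. First I would record the renaming $t_0 = i^*(\tau_1)$, $t_1 = i^*(\tau_1')$, $s_k = i^*(\tau_{k-1})$ for $3 \leq k \leq r$, together with the recursion $t_{i+1} = t_i^{-1} t_{i-1} t_i$ for $1 \leq i \leq e-2$, exactly as above. That $\B(e,e,r) = \pi_1(\mathcal{M}(e,r)/G(e,e,r),\underline{b})$ is generated by $i^*(\tau_1'), i^*(\tau_1), \dots, i^*(\tau_{r-1})$ was already deduced from the description of the generators of $\B(d,1,r)$ and the inclusion $i$, so $t_0,\dots,t_{e-1},s_3,\dots,s_r$ generate. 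The homotopy computations in the plane $P \cong \C^2$, using the projection $(z_1,z_2) \mapsto z_2/z_1$ and, when needed, $(z_1,z_2) \mapsto z_1/z_2$, show inductively that $t_i$ is homotopic to the positive detour from $\underline{b}$ to $(\zeta^{-i}b_2,\zeta^i b_1,b_3,\dots,b_r)$, and that $s_k$ is the positive detour exchanging $b_{k-1}$ and $b_k$; hence all of them are braided reflections, and reading off the endpoints of these detours immediately yields the asserted images in $G(e,e,r)$ --- $t_i$ to $(z_1,\dots,z_r) \mapsto (\zeta^{-i}z_2, \zeta^i z_1, z_3,\dots,z_r)$ and $t_0, s_3,\dots,s_r$ to the successive transpositions.

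Next I would check the relations. Relations (3) and (5) are commutations between detours supported on disjoint sets of coordinates ($t_i$ moves only $z_1,z_2$; $s_k$ moves only $z_{k-1},z_k$), so they are immediate. Relations (2) and (4) are the length-$3$ braid relations between two braided reflections whose reflecting hyperplanes intersect in a codimension-$2$ stratum whose pointwise stabiliser is of type $A_2$: for (4) the stratum is $\{z_{k-1}=z_k=z_{k+1}\}$, and for (2) it is $\{z_2 = \zeta^i z_1 = z_3\}$, where one checks directly that the product of the two order-$2$ reflections has order $3$; in both cases the braid relation follows from the standard local model of braided reflections around such a stratum. Finally, $t_i t_{i+1} = t_j t_{j+1}$ for consecutive indices is just the defining recursion rewritten as $t_{i-1}t_i = t_i t_{i+1}$, so the only genuine content of (1') is the ``wrap-around'' equality $t_{e-1}t_0 = t_0 t_1$; this is an instance of the dual (circular) presentation of the dihedral Artin group $\B(e,e,2)$, realised in the parabolic subgroup attached to $\{z_1=z_2=0\}$ through which all the hyperplanes $z_2 = \zeta^i z_1$ pass, and it follows from the identifications above once one observes that $t_{e-1}$ and $t_0 t_1 t_0^{-1}$ are represented by the same positive detour around $\{z_2 = \zeta z_1\}$ (same endpoints, with the first coordinates of the representatives confined to a simply connected region of $\C \setminus \{0\}$).

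It remains to see that (1') and (2)--(5) form a complete presentation of $\B(e,e,r)$ in these generators, and for this I would invoke the Corran--Picantin theorem recalled at the start of the section: $\B(e,e,r)$ is the group of fractions of the Garside monoid $M(e,e,r)$, presented by (2)--(5) and by $t_{i+1}t_i = t_{j+1}t_j$. Our generators $t_0,\dots,t_{e-1},s_3,\dots,s_r$ are precisely the atoms of that monoid --- the $t_0,t_1,s_k$ being the images under $i^*$ of the generators used in \cite{BMR}, and the $t_i$ for $i \geq 2$ the ones adjoined in \cite{corpic} through the same recursion --- and the substitution $t_i \mapsto t_{-i \bmod e}$ (identity on the $s_k$) turns $t_{i+1}t_i = t_{j+1}t_j$ into (1') while fixing (2)--(5); so the presentation of $\B(e,e,r)$ in these generators is exactly (1') and (2)--(5). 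I expect the only real difficulty to be the geometric bookkeeping above, namely the inductive identification of each $t_i$ with the asserted positive detour via the $\mathbbm{P}^1(\C)$-projection: once that single fact is secured it forces the endpoints (hence the images in $W$), the braided-reflection property, and the wrap-around relation (1'), after which the remaining relations and the comparison with \cite{corpic} are routine.
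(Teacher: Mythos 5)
Your proposal is correct and follows essentially the same route as the paper: the substance in both cases is the inductive identification, via the projection to $\mathbbm{P}^1(\C)$, of each $t_i$ with the positive detour from $\underline{b}$ to $(\zeta^{-i}b_2,\zeta^i b_1,b_3,\dots,b_r)$, combined with the transfer of the Corran--Picantin presentation through the reindexing $t_i \mapsto t_{-i}$. Your extra geometric verification of the relations is redundant once the generators are matched with the algebraic ones of \cite{BMR} and \cite{corpic} (the relations are then inherited), which is just as well because your wrap-around step --- that $t_0t_1t_0^{-1}$ is itself a positive detour (around $z_2=\zeta^{-1}z_1$, not $z_2=\zeta z_1$) --- is precisely the kind of assertion the paper shows can fail (cf.\ its comparison of $t_1t_0t_1^{-1}$ with the positive detour) and would need its own homotopy argument.
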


We notice that the slight change in the presentation is
meaningless in monoid-theoretic terms, as both monoids
are isomorphic under $t_i \mapsto t_{- i}$, but it is not
in topological terms, as $t_1 t_0 t_1^{-1}$ is \emph{not}
homotopic to a detour from $\underline{b}$ to its image (see figure
\ref{figt2plus}).

\begin{figure}
\begin{center}
\resizebox{!}{5cm}{\includegraphics{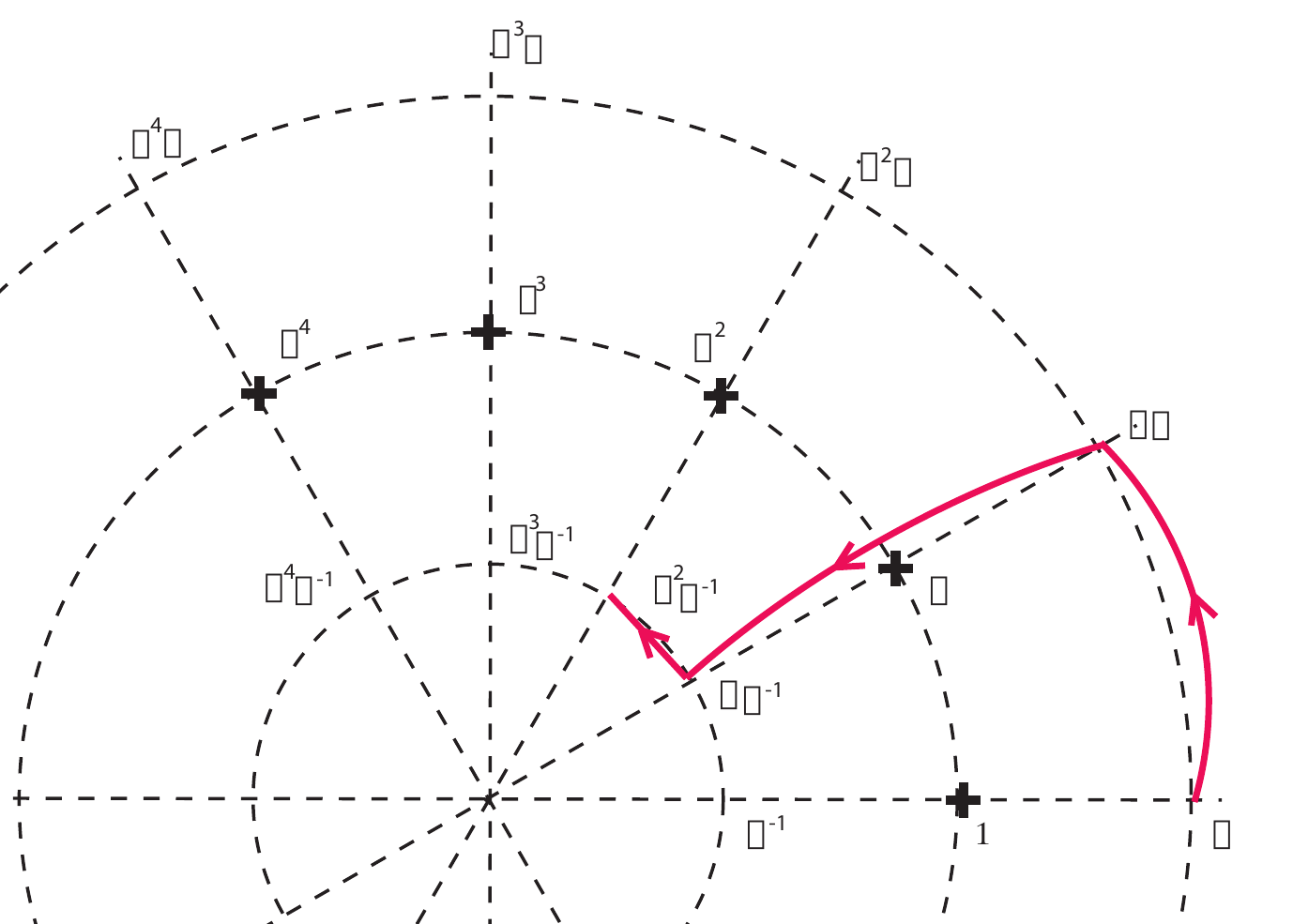}}
\resizebox{!}{5cm}{\includegraphics{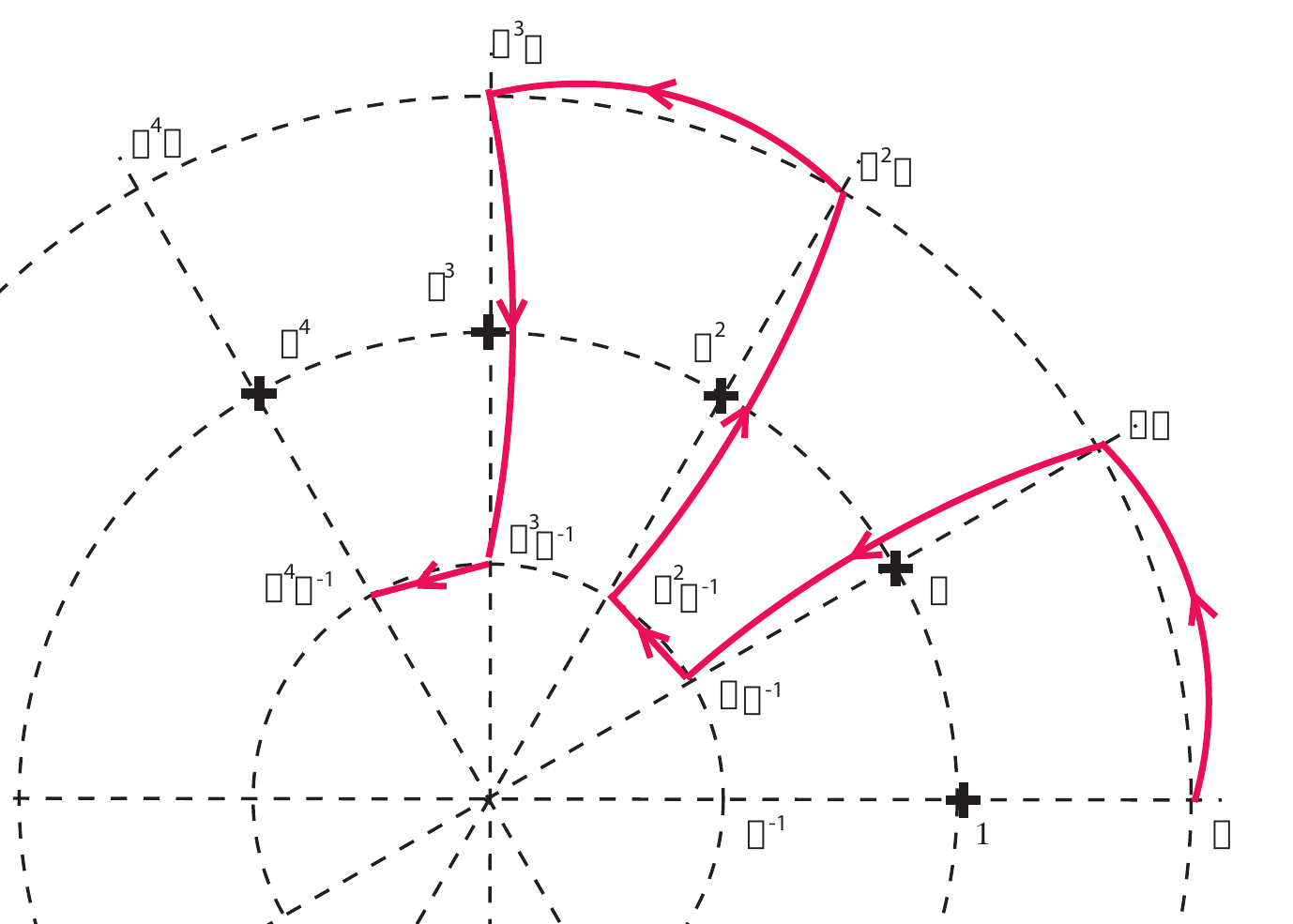}}
\caption{Images of $t_1$ and $t_2$ in $\mathbbm{P}_1(\C)$}
\label{figt1t2}
\end{center}
\end{figure}

\begin{figure}
\begin{center}
\resizebox{!}{8cm}{\includegraphics{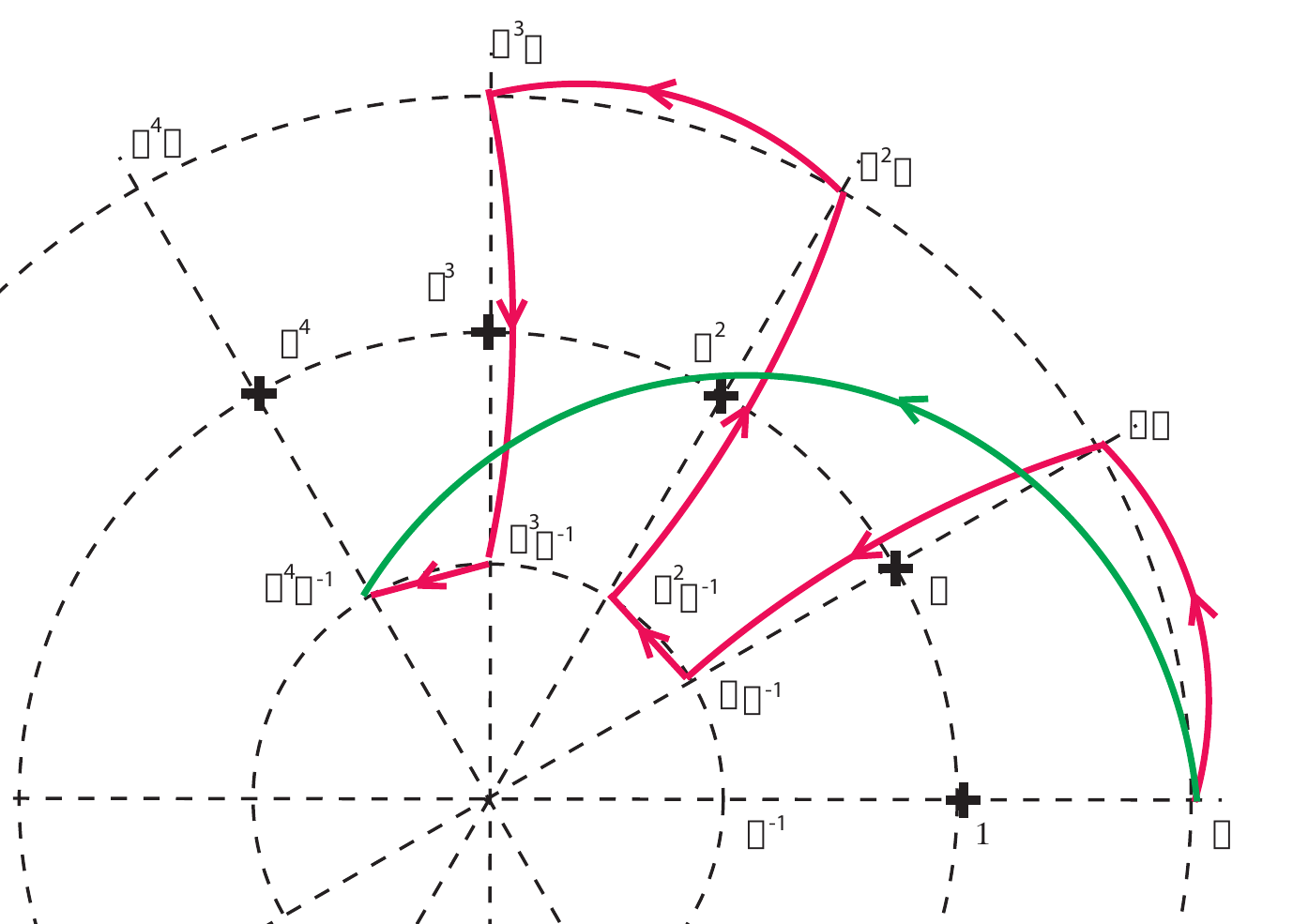}}
\caption{$t_2$ and the positive detour in $\mathbbm{P}_1(\C)$}
\label{figt2detour}
\end{center}
\end{figure}

\begin{figure}
\begin{center}
\resizebox{!}{5cm}{\includegraphics{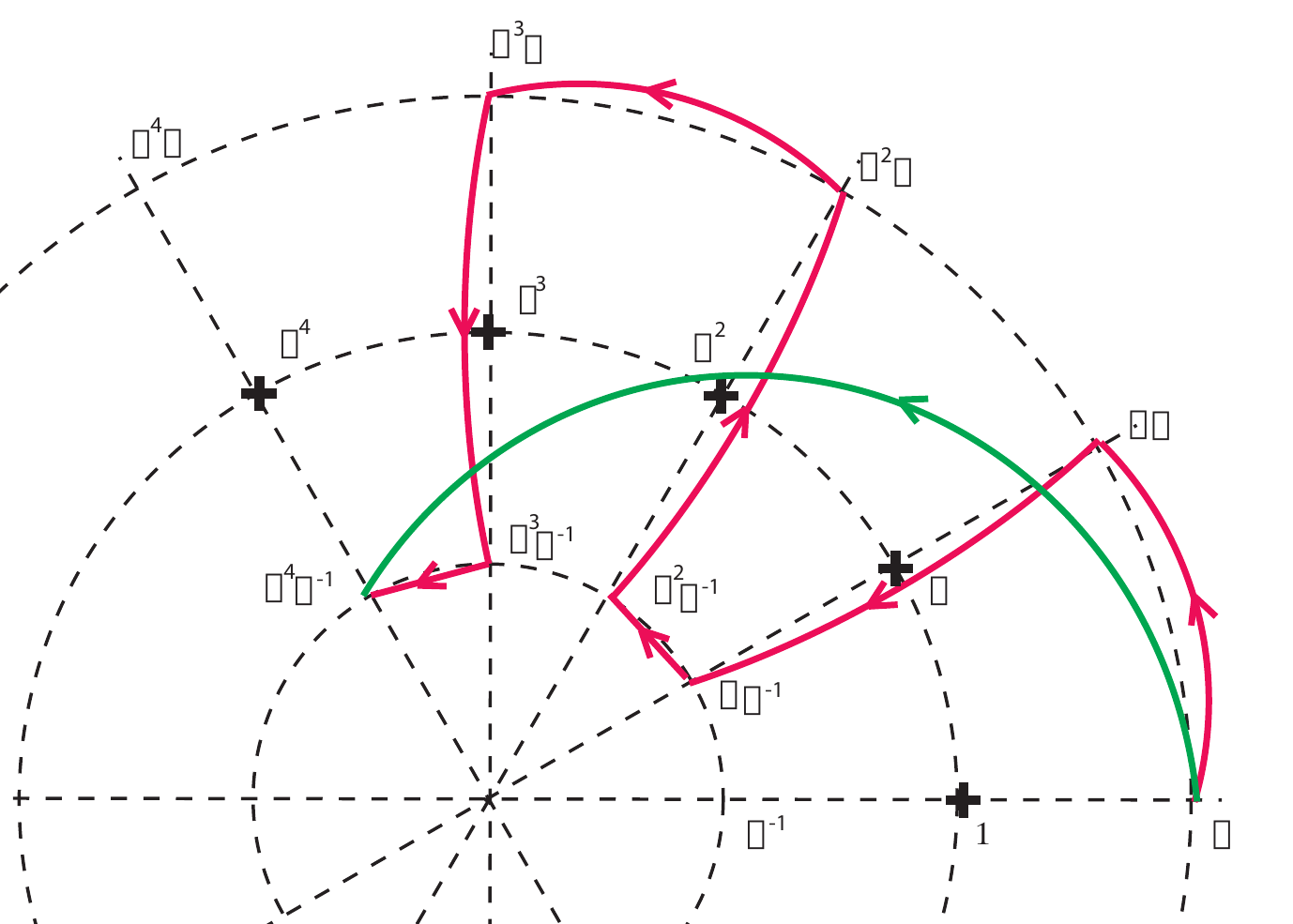}}
\end{center}
\caption{Comparison between $t_1 t_0 t_1^{-1}$ and the positive detour, in $\mathbbm{P}_1(\C)$}
\label{figt2plus}
\end{figure}

\begin{prop} 
Let $S_0 = \{ t_i,s_3,\dots,s_r \}$. The subgroup
of $\B(e,e,r) = \pi_1(\mathcal{M}(e,r)/G(e,e,r)$ generated by $S_0$
is a parabolic subgroup in the sense of \cite{BMR}, and can be
naturally identified with the braid group on $r$ strands
as the fundamental group of $\{ (z_1,\dots,z_r) \ | \ z_i \neq z_j, z_1+\dots+z_r = 0 \}/\mathfrak{S}_r$,
with base point $(-\zeta^{-i}(b_1+b_2+\dots + b_r),b_2,\dots,b_r)$,
in such a way that the elements of $S_0$ are identified with positive detours.
\end{prop}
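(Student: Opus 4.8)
The plan is to recognise $\langle S_0\rangle$ as the parabolic subgroup of $B=\B(e,e,r)$ attached to an explicit flat of the reflection arrangement, and then to check that the given generators go over to the standard braided reflections of the braid group on $r$ strands.

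First I would compute the images of $S_0$ under $B\onto G(e,e,r)$: by the preceding Proposition, $t_i$ maps to the reflection $z_1\mapsto\zeta^{-i}z_2,\ z_2\mapsto\zeta^i z_1$ (hyperplane $\{z_2=\zeta^i z_1\}$), and $s_3,\dots,s_r$ map to the successive transpositions $(2\,3),\dots,(r-1\,r)$ of the coordinates $z_2,\dots,z_r$. In the linear coordinates $w_1=\zeta^i z_1$, $w_j=z_j$ ($j\ge 2$) the subgroup $W':=\langle\overline{t_i},\overline{s_3},\dots,\overline{s_r}\rangle$ becomes the full symmetric group on $w_1,\dots,w_r$, a copy of $G(1,1,r)$. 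A hyperplane $\{z_a=\eta z_b\}$ ($\eta\in\mu_e$) of the arrangement of $G(e,e,r)$ contains the line $Z=\{\zeta^i z_1=z_2=\dots=z_r\}$ exactly when $2\le a<b\le r$ and $\eta=1$, or $a=1$ and $\eta=\zeta^{-i}$; these are precisely the $\binom r2$ reflecting hyperplanes of $W'$, so $W'=W_Z$ is the pointwise stabiliser of $Z$, hence a parabolic subgroup of $W$.

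Next I would invoke the theory of parabolic subgroups of complex braid groups (\cite{BMR}; here one uses the $K(\pi,1)$ property of $\mathcal{M}(e,r)/G(e,e,r)$): to $Z$ is attached, up to conjugacy, a parabolic subgroup $B_Z\le B$, image of a monomorphism $\mathrm{br}(W_Z)\into B$, realised as $\pi_1\big((V\cap\mathcal{M}(e,r))/W_Z\big)$ for a small $W_Z$-stable ball $V$ about a generic point of $Z$, the embedding being induced by $V\cap\mathcal{M}(e,r)\into\mathcal{M}(e,r)$ and a path to the base point. Since $W_Z\cong\mathfrak{S}_r$ acts on the normal space $Z^{\perp}$ as the symmetric group in its standard reflection representation, with local arrangement the braid arrangement, this yields $B_Z\cong\pi_1\big(\{(u_1,\dots,u_r)\mid u_a\ne u_b,\ u_1+\dots+u_r=0\}/\mathfrak{S}_r\big)$, i.e. the braid group on $r$ strands $\B(1,1,r)$. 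Undoing $w_1=\zeta^i z_1$, $w_j=z_j$ turns this into the model in the statement, and a short explicit computation identifies the image of $\underline{b}$ with the stated base point $(-\zeta^{-i}(b_1+\dots+b_r),b_2,\dots,b_r)$.

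Finally I would match the generators. The elements of $S_0$ are already produced, by the preceding Proposition, as positive detours from $\underline{b}$ to its images under $\overline{t_i},\overline{s_3},\dots,\overline{s_r}$, and in the $w$-coordinates these images are the adjacent transpositions of the $r$-point configuration $(\zeta^i b_1,b_2,\dots,b_r)$; thus $s_3,\dots,s_r$ are the standard half-twists among the real points $b_2\ll\dots\ll b_r$, while $t_i$ is the half-twist exchanging $w_1=\zeta^i z_1$ with $w_2=z_2$, which is exactly the two-strand analysis (the plane $P$ and the projection to $\mathbbm{P}^1(\C)\setminus\mu_e$) already carried out in the preceding Proposition. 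Dragging the configuration $(\zeta^i b_1,b_2,\dots,b_r)$ along a path that merges the $r$ points to a generic point of $Z$, inside a simply connected region of $\C\setminus\{0\}$ avoiding all $\mu_e$-ray collisions, identifies $\langle S_0\rangle$ with a conjugate of $B_Z$ compatibly with these generators, which proves the three assertions: parabolicity, the braid-group model with its base point, and the realisation of the $S_0$ as positive detours. The hard part will be this last step: building the merging path and checking that along it the $t_i$ and $s_k$ stay in the complement and retain their homotopy classes — which, as in the two-strand case, rests on a careful use of $0<b_1\ll b_2\ll\dots\ll b_r$ — together with verifying that the generator order is the one making the monoid relations restrict to the braid relations of the diagram $t_i-s_3-\dots-s_r$ of type $A_{r-1}$.
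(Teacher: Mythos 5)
Your overall plan coincides with the paper's: identify the image of $S_0$ in $G(e,e,r)$ as the pointwise stabilizer of the line $\{\zeta^i z_1=z_2=\dots=z_r\}$ (hence a parabolic conjugate to the natural $\mathfrak{S}_r$, which is why the paper immediately reduces to $i=0$), invoke the parabolic-subgroup machinery of \cite{BMR} to obtain an embedding of a braid group of $r$ strands into $\B(e,e,r)$ defined up to the choice of a connecting path between base points, and then check that the chosen generators are carried to positive detours. The first two of these steps are routine and you carry them out correctly.

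The genuine gap is in the third step, and you flag it yourself: everything hinges on exhibiting the homotopy that matches the positive detours at $\underline{b}$ with the standard generators at the new base point, and your proposal does not construct it. Moreover the route you choose for it --- merging all $r$ points of the configuration $(\zeta^i b_1,b_2,\dots,b_r)$ towards a generic point of the flat $Z$, i.e.\ working in the local-ball model of \cite{BMR} --- makes this verification strictly harder than necessary: for $i\neq 0$ the first point sits off the positive real axis, and a merging path must steer all $r$ points together while avoiding every hyperplane $z_a=\eta z_b$ with $\eta\in\mu_e$, and one must then still track what each detour becomes under this isotopy. The paper instead keeps the global slice $X_0=\{\sum z_j=0,\ z_a\neq z_b\}$ and connects $\underline{b}$ to $\underline{b}_0=(\alpha,b_2,\dots,b_r)$ with $\alpha=-(b_1+\dots+b_r)\ll 0$ by moving \emph{only the first coordinate along the real line}; the detour-matching then splits into two concrete, checkable homotopies: for $s_3,\dots,s_r$ one only has to homotope the first coordinate inside $(\R_{\leq b_1},b_1)$, and for $t_0$ one compares $\sigma_0$ with $\gamma'\sigma\gamma$ in the plane $P$, using that $b_1-b_2$ and $\alpha-b_2$ have the same sign to match real parts within $\R_{\leq b_2}^2$ after shrinking the detour parameter $\eps$. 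Until you either carry out your merging construction in comparable detail or switch to such a one-coordinate path, the final assertion of the proposition (that the elements of $S_0$ are identified with positive detours in the stated model, with the stated base point) is not proved.
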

\begin{proof}
The parabolic subgroup of $G(e,e,r)$ defined as the fixer
of $(\zeta^i, 1,1 , \dots, 1)$ is obviously conjugated to the one
fixing $(1,1,\dots,1)$, the latter being the natural
$\mathfrak{S}_r  \subset G(e,e,r)$. We thus need only consider
the case $i = 0$. Let $\alpha = -(b_1+\dots+b_r) \ll 0$, $\underline{b}_0
= (\alpha,b_2,\dots,b_r)$, $X = \mathcal{M}(e,r)$
and $X_0 = \{  (z_1,\dots,z_r) \ | \ z_i \neq z_j, z_1+\dots+z_r = 0 \}$.
By \cite{BMR} we get an embedding $\pi_1(X_0/\mathfrak{S}_r, 
\underline{b}_0) \into \pi_1(X/G(e,e,r),\underline{b})$, natural only
up to the choice of a path from $\underline{b}$ to $\underline{b}_0$ in
$Y$. The line segment $\gamma$
from $\underline{b}$ to $\underline{b}_0$ provides such a natural
choice.

We now need to prove that composing the positive detours
from $\underline{b}$ with this path provides the positive detours
from $\underline{b}_0$, up to homotopy in
$Y = \{  (z_1,\dots,z_r) \ | \ z_i \neq z_j \}$.
For $s_3,\dots,s_r$ this is true because
the first component of the first path can be homotoped to the second
one in $(\R_{\leq b_1}, b_1)$. For $t_0$
we let $\sigma_0$ and $\sigma$ denote the positive detours in $P^0$
from $(b_1,b_2)$ to $(b_2,b_1)$ and from $(\alpha,b_2)$ to
$(b_2,\alpha)$, respectively. Let $\gamma, \gamma'$
denote the line segments $(b_1,b_2) \to (\alpha,b_2)$
and $(b_2,\alpha) \to (b_2,b_1)$. We need to prove that
$\sigma_0$ is homotopic to $\gamma' \sigma \gamma$ in $\{ (z_1,z_2) \in \C^2 \ | \ 
z_1 \neq z_2 \}$,
the other coordinates $z_3,\dots,z_r$ being the same for both paths.
Since $b_1 - b_2$ and $\alpha -b_2$ have the same (negative) sign,
we can homotope $\gamma' \sigma \gamma$
to a path with the same real part (for both coordinates),
and with imaginary part the same as $\sigma$, up to possibly
diminishing the chosen factor $\eps$ in the definition of the
detours. Choosing then an homotopy in 
$\R_{\leq b_2}^2$ between the real parts of these two paths,
provides an homotopy between them in $Y$.
\end{proof}

\subsection{Parabolic submonoids}

We apply the results of section \ref{sectembed} on submonoids
to the monoid $N = M(e,e,r)$,
with generators $S = \{ t_0,\dots,t_{e-1}, s_3,\dots,s_r \}$.
Let $\mathcal{C} = \{ t_0 , \dots ,t_{e-1} \}$. For $S_0 \subset
S$, let $M(S_0)$ be the monoid generated by $S_0$ with the
defining relations of $M(e,e,r)$ which involve only elements of $S_0$.
We get a natural morphism $\varphi : M(S_0) \to M(S) = M(e,e,r)$. We ask
for the following extra assumption on $S_0$ :
$$
S_0 \cap \mathcal{C} \in \{ \emptyset, \mathcal{C}, \{ t_i \} \} \mbox{ for some } i \in \{0,1,\dots,e-1 \} .
$$
In other terms, $S_0$ contains none, all or exactly one of the $t_i$'s. Note that all
the corresponding monoids are known to be Garside and are endowed with a suitable length function.

This condition implies the extra condition on $\varphi$ in section \ref{sectembed},
namely that, if $n \in M(S)$ divides $\varphi(m)$ for some $m \in M(S_0)$,
then $n \in \varphi(M(S_0))$.
Indeed, if we have such $n,m$,
then $n \in \varphi(M(S_0))$ unless $n$ can be written as a word containing
some $x \in S \setminus S_0$.  But in that case $\varphi(m)$
can also be written as a word in $S$ containing $x$. Now note that the defining
relations involving such a $x$ cannot make it disappear, except when $x \in \mathcal{C}$.
By contradiction this settles the cases $S_0 \cap \mathcal{C} = \emptyset$ and $S_0
\supset \mathcal{C}$. In case $S_0 \cap \mathcal{C} = \{ t_{i} \}$, we can assume
$x = t_j$ for $j \neq i$, and would get
equality in $M(S)$ of two words on $S$, one involving $t_i$ and no other element
of $\mathcal{C}$, and the other involving $t_j$. But we check on the defining
relations that all relations involving $t_i$ either involve only $t_i$ and
no other elements of $\mathcal{C}$ in which case they preserve that property
and do not make the $t_i$'s disappear, or they involve several elements of $\mathcal{C}$
and cannot be applied to the first word. This leads to a contradiction,
which proves this property.

This condition also implies the property (2) for $\varphi$. For this we need to compute
the lcm's between two elements $x,y$ of $S$. We need to prove the following in $M(S_0)$,
for any $S_0 \subset S$ satisfying the above condition that contains $x$ and $y$.
\begin{itemize}
\item $\lcm(s_i,s_j) = s_i s_j = s_j s_i$ if $|j-i| \geq 2$
\item $\lcm(s_i,s_{i+1}) = s_i s_{i+1} s_i = s_{i+1} s_i s_{i+1}$
\item $\lcm(t_i,t_j) = t_1 t_0 = t_i t_{i-1} = t_jt_{j-1}$
\item $\lcm(t_i ,s_3) = t_i s_3 t_i = s_3 t_i s_3 $
\item $\lcm(t_i, s_j) = t_is_j = s_j t_i $ if $j \geq 4$.
\end{itemize}
The identities with length two are clear, as the lcm exist and cannot have length 1.
For the ones of length 3, namely $\{ x, y \} = \{ s_i,s_{i+1} \}$ and $\{ x, y \}  =  \{ t_i ,s_3 \}$, we
use that $\{ x , y \} \subset S_0 \subset S$ satisfies our condition. Since the lcm of $x,y$
in $M(S_0)$ should divide $xyx=yxy$, it should then come from $M(\{ x, y \})$, meaning that
is should be a word in $x$ and $y$, of length at most $2$. Thus only few possibilities
remain, all of them easily excluded.

Using the previous section, we thus get injective monoid morphisms $M(S_0) \to M(S) = M(e,e,r)$.
Let $B(S_0)$ the group of fractions of $M(S_0)$. It is proved in \cite{corpic} that
$B(S) = \B(e,e,r)$. We call the $B(S_0)$ the parabolic submonoids of $\B(e,e,r)$.
Crucial examples of such submonoids are described below.


\subsubsection{Second homology group}

We choose on the atoms the ordering $s_r < s_{r-1} < \dots< s_3 < t_0 < t_1 < \dots < t_{e-1}$.
By the above construction, the parabolic submonoid $M(e,e,r-1) = M(\{ s_{r-1},\dots,s_3,t_0,\dots,t_{e-1} \})$ is indeed a submonoid of
$M(e,e,r)$, 
and the lcm of a family of elements in $M(e,e,r-1)$ is also its lcm in $M(e,e,r)$. The same holds true
for the following submonoids :
\begin{itemize}
\item the ones generated by $s_3,t_i$, which is an Artin monoid of type $A_2$ ;
\item the ones generated by $s_k,t_i$, $k \geq 4$, which is an Artin monoid of type $A_1 \times A_1$ ;
\item the ones generated by $s_4,s_3,t_i$, which is an Artin monoid of type $A_3$ ;
\item the ones generated by $s_k,s_3,t_i$, $k \geq 5$ which is an Artin monoid of type $A_1 \times A_2$ ;
\item the ones generated by $s_k,s_l,t_i$, $k \geq l+2$, $l \geq 4$, which is an Artin monoid of type $A_1 \times A_1 \times A_1$ ;
\item the ones generated by $s_k,s_l,s_r$, which is an Artin monoid of type given by the obvious subdiagram
(of type $A_1 \times A_1 \times A_1$, $A_2 \times A_1$, $A_1 \times A_2$ or $A_3$).
\end{itemize}

We first compute the differentials of the top cell for the corresponding Artin monoids
(see Table \ref{typesA}), and then use this remark for computing the differentials of the
2-cells and 3-cells. We let $d_n = \partial_n \otimes_{\Z M} \Z :  C_n \otimes_{\Z M} \Z \to C_{n-1} \otimes_{\Z M} \Z$
denote the differential with trivial coefficients.

The 2-cells are the following : $[t_0,t_{i}]$ for $1<i < e$, $[s_3,t_i]$ , $[s_k,t_i]$ for $k \geq 4$
and $[s_k,s_l]$ for $k<l$.
From Table \ref{typesA} we get
$$
\begin{array}{lcll}
d_2 [t_0,t_i] &=& [t_i] + [t_{i+1}] - [t_0] - [t_1] \\
d_2 [s_3, t_i] &=& [t_i] - [s] \\
d_2 [s_k, t_i] &=& 0 \mbox{ if } k \geq 4 \\
d_2 [s_k,s_l] &=& 0 \mbox{ if } l> k+2 \geq 4 \\
d_2 [s_k,s_{k+1}] &=& [s_{k+1}] - [s_k] 
\end{array}
$$
We let $\delta_2 = t_1 t_0$ denote the Garside element
of $M(e,e,2)$ and we assume $e > 1$.
For the 3-cells, we also need to compute

$$
\begin{array}{lcl}
\partial_3 [s,t_0,t_j] &=& (s\delta_2 s - t_{j+2} t_{j+1} s +t_{j+2} s)[t_0,t_j]
-t_{j+2}st_{j+1}[s,t_j] \\ & & +(t_{j+2}-st_{j+2})[s,t_{j+1}] + (s-t_{j+2}s-1)[t_0,t_{j+1}]
+(st_2-t_2)[s,t_1] \\ & & + (t_2 s +1 -s)[t_0,t_1]
  +[s,t_{j+2}] +t_2 s t_1[s,t_0]
-[s,t_2]
\end{array}
$$
when $j \not\equiv -1 \mod e$, and 
$$
\begin{array}{lcl}
\partial_3 [s,t_0,t_{-1}] &=& (s\delta_2 s -t_1t_0s +t_1s)[t_0,t_{-1}]
 -t_{1}st_{0}[s,t_{-1}]  + (1-t_2+st_2)[s,t_1] \\ & & + (1+t_2 s - s)[t_0,t_1]
+(t_1 - st_1)[s,t_0]
 +t_2 s t_1[s,t_0]
-[s,t_2]
\end{array}
$$
This means $d_3 [s,t_0,t_{-1}] = [t_0,t_{-1}]
 -[s,t_{-1}]  + [s,t_1] + [t_0,t_1]
 +[s,t_0]
-[s,t_2]
$ and $d_3 [s,t_0,t_j] = [t_0,t_j]
-[s,t_j] -[t_0,t_{j+1}]
 + [t_0,t_1]
  +[s,t_{j+2}] +[s,t_0]
-[s,t_2]
$ for $j \not\equiv 1 \mod e$.


\begin{table}
$$
\begin{array}{|c|c|c|lcl|}
\hline
\mathrm{Type} & \mathrm{Atoms} & \mathrm{Relations} & & & \mathrm{Differential\ of\ top\ cell} \\
\hline
M(e,e,2) & t_0 < \dots < t_r & t_i t_{i+1} = t_j t_{j+1} & \partial_2 [t_0,t_i] & = & t_{i+1} [t_i] + [t_{i+1}] - t_1[t_0] - [t_1] \\
\hline
M(e,e,2) \times A_1 & s < t_0 < \dots < t_r & t_i t_{i+1} = t_j t_{j+1} & \partial_2 [s,t_0,t_i,] & = &(s-1)[t_0,t_i] - t_{i+1} [s,t_i]  \\
 & & t_i s = st_i &  & +  &  t_1 [s,t_0] - [s,t_{i+1}] + [s,t_1] \\
\hline
A_2 & s < t & sts = tst & \partial_2 [s,t] & =& (ts+1-s)[t] + (t-st-1)[s] \\
\hline
A_1 \times A_1 & s<u & su=us & \partial_2 [s,u] &=& (s-1)[u] - (u-1)[s] \\
\hline
A_3 & s<t<u & sts = tst & \partial_3 [s,t,u] &=& (u+stu-tu-1)[s,t] - [s,u] \\
 &  & su=us & & + & (su-u-s+1-tsu)t[s,u] \\
 &  & tut = utu & & + & (s-1-ts+uts)[t,u] \\
\hline
A_2 \times A_1 & s<t<u & tu = ut & \partial_3 [s,t,u] &= & (1-s+ts)[t,u] \\
 &  & su = us & & + & (t-1-st)[s,u] \\
 &  & sts = tst & & + &  (u-1)[s,t]\\
\hline
A_1 \times A_2 & s < t < u & st=ts & \partial_3 [s,t,u] &=& (1+tu-u)[s,t]\\ 
 & & su = us & & + &(t-1-ut)[s,u] \\
 & & tut=utu & & + & (s-1)[t,u] \\
\hline
A_1 \times A_1 \times A_1 & s<t<u & su = us & \partial_3[s,t,u] &=&  (1-t)[s,u] \\
& & st = ts & & + & (u-1) [s,t]\\
& & tu = ut & & + & (s-1)[t,u] \\
\hline
\end{array}
$$
\caption{Top cells for monoids of small type}\label{typesA}
\end{table}

We now compute the second homology group, starting with
$\Ker d_2$. Let
$$
v_i = [t_0,t_i] + [s,t_0] + [s,t_1] - [s,t_i] - [s,t_{i+1}] \in \Ker d_2
$$
for $1 \leq i \leq e-1$. Let $K_1$ denote the submodule of $\Ker d_2$
spanned by the $v_i$. It is easy to show that $K_1$ is free
on the $v_i$, and $K_1 = \Ker d_2$ for $r = 3$ ; if $r > 3$
we have $\Ker d_2 = K_1 \oplus K_2$ where $K_2$ is
the free $\Z$-module with basis the $[s_k,t_i]$ for
$k \geq 4$ and, if $r \geq 5$, the $[s_l,s_k]$ for $l \geq k+2$, $k \geq 3$.

Now decompose $\Z \mathcal{X}_3 = C_1 \oplus C_2$
where $C_1$ has for basis the $[s_3,t_0,t_i]$ and
$C_2$ has for basis the other 3-cells. By the above computations
we get $d_3(C_1) \subset K_1$,
and  $d_3(C_2) \subset K_2$. Thus
$H_2(B,\Z) = (K_1/d_3(C_1)) \oplus (K_2 / d_3(C_2))$.
We first compute $K_1/d_3(C_1)$. We have
$$
\begin{array}{lclr}
d_3[s_3,t_0,t_j] & = & v_j - v_{j+1} + v_1 & \mbox{ if } 0<j<e-1 \\
d_3[s_3,t_0,t_{e-1}] & = & v_{e-1} + v_{1} + v_1 
\end{array}
$$
We denote $u_{i} = [s_3,t_0,t_i]$  for $1 \leq i \leq e-1$,
and let $w_i = u_i + u_{i+1} + \dots + u_{e-1}$. Then
$d_3 w_i = v_i + (e-i) v_1$. Written on the $\Z$-basis $(w_i)$ and $(v_i)$,
$d_3$ is in triangular form, and the only diagonal coefficient
that differs from $1$ is 
$e$, since $d_3 w_1 = e v_1$. It follows that $K_1/d_3(C_1) \simeq \Z_e$.
Since $H_2(B,\Z) = K_1/d_3(C_1)$ for $r = 3$, we can now assume $r \geq 4$.

First assume $r=4$. In $K_2/d_3(C_2)$ we have $2[s_4,t_i] \equiv 0$,
because $d_3[s_4,s_3,t_i] = -2[s_4,t_i]$. Since
$d_3[s_4,t_0,t_i] = -[s_4,t_i]+[s_4,t_0]-[s_4,t_{i+1}]+[s_4,t_1]$.
we get $[s_4,t_i] + [s_4,t_{i+1}] \equiv [s_4,t_0]+[s_4,t_1]$ when $i>0$.
In particular, $[s_4,t_i] + [s_4,t_{i+1}] \equiv [s_4,t_{i+1}] + [s_4,t_{i+2}]$
that is $[s_4,t_i] \equiv [s_4,t_{i+2}]$, at least if $0<i<e-1$. From $d_3[s_4,t_0,t_1]\equiv 0$
we deduce $[s_4,t_2] \equiv [s_4,t_0]$, and from $d_3[s_4,t_0,t_{e-1}] \equiv 0$
we deduce $[s_4,t_{e-1}] \equiv [s_4,t_1]$. Thus $[s_4,t_i] \equiv [s_4,t_{i+2}]$
for every $i$. When $e$ is odd, $K_2/d_3(C_3)$ is then spanned by the
class of $[s_4,t_0]$. From the other relations one easily gets
that this class is nonzero, and since $2[s_4,t_i] \equiv 0$ we get
$K_2/d_3(C_2) \simeq \Z_2$. When $e$ is even, this quotient is spanned by the classes
of  $[s_4,t_0]$ and $[s_4,t_1]$, and we get similarly $K_2/d_3(C_2) \simeq \Z_2^2$.

We now assume $r \geq 5$. Then $d_3([s_5,s_3,t_i]) = [s_5,s_3]-[s_5,t_i]$
whence $a := [s_5,t_0] \equiv [s_5,t_i]$ for all $i$, regardless
whether $e$ is even or odd.
From $d_3[s_5,s_4,t_i] = [s_4,t_i] - [s_5,t_i]$ we get $[s_4,t_i] \equiv a$
and from $[s_{k+1},s_k,t_i] = [s_k,t_i] - [s_{k+1},t_i]$ we deduce
by induction $[s_k,t_i] \equiv a$. The only remaining relation involving $a$
is then as before $2a \equiv 0$.

On the other hand, we have $[s_5,s_3] \equiv a$. Assume we have $[s_l,s_k] \equiv a$
for some $l,k$ with $l \geq k+2$. From $d_3[s_l,s_{k+1},s_k] = [s_l,s_{k+1}] - [s_l,s_k]$ for $l \geq k+3$
we get $[s_l,s_{k'}] \equiv a$ for all $l-2 \geq k' \geq 3$, and
then that $[s_{l'},s_{k'}] \equiv a$ for all $l'-2 \geq k' \geq 3$.
We thus get $K_2/d_3 C_2 \simeq \Z/2\Z$.

As a consequence, we get the following result.

\begin{theo} \label{H2BEER} Let $B = \B(e,e,r)$ with $r \geq 3$ and $e \geq 2$.
\begin{itemize}
\item When $r = 3$, $H_2(B,\Z) \simeq \Z_e$
\item When $r = 4$ and $e$ is odd, $H_2(B,\Z) \simeq \Z_e \times \Z_2 \simeq \Z_{2e}$
\item When $r = 4$ and $e$ is even, $H_2(B,\Z) \simeq \Z_e \times \Z_2^2$
\item When $r \geq 5$, $H_2(B,\Z) \simeq \Z_e \times \Z_2$
\end{itemize}
\end{theo}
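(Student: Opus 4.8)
The statement is obtained by assembling the computations carried out in the preceding pages, so the plan is essentially bookkeeping in the Dehornoy--Lafont complex for the Corran--Picantin monoid $M(e,e,r)$, with the ordering $s_r < \dots < s_3 < t_0 < \dots < t_{e-1}$ of the atoms fixed above.

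First I would record the global shape of $H_2$. From the computed differentials $d_2$ of the $2$-cells one has $\Ker d_2 = K_1$ when $r = 3$, and $\Ker d_2 = K_1 \oplus K_2$ when $r \geq 4$, where $K_1$ is free on the cycles $v_i = [t_0,t_i] + [s_3,t_0] + [s_3,t_1] - [s_3,t_i] - [s_3,t_{i+1}]$ for $1 \leq i \leq e-1$, and $K_2$ is free on the $[s_k,t_i]$ with $k \geq 4$ together with the $[s_l,s_k]$ with $l \geq k+2$. Since the $3$-cells split as $C_1 \oplus C_2$ with $d_3(C_1) \subseteq K_1$ and $d_3(C_2) \subseteq K_2$, one gets $H_2(B,\Z) \simeq (K_1/d_3 C_1) \oplus (K_2/d_3 C_2)$, the second summand being absent when $r = 3$.

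Next I would compute $K_1/d_3 C_1$. Writing $u_i = [s_3,t_0,t_i]$ and $w_i = u_i + u_{i+1} + \dots + u_{e-1}$ gives a $\Z$-basis of $C_1$; the computed formulas for $d_3[s_3,t_0,t_j]$ make $d_3|_{C_1}$ upper triangular in the bases $(w_i)$ and $(v_i)$, with all diagonal entries equal to $1$ except $d_3 w_1 = e\, v_1$, so that $K_1/d_3 C_1 \simeq \Z_e$. This already settles the case $r = 3$. For $K_2/d_3 C_2$ I would run through the relations coming from the mixed $3$-cells: for $r = 4$, the cells $[s_4,s_3,t_i]$ (type $A_3$) give $2[s_4,t_i] \equiv 0$ and the cells $[s_4,t_0,t_i]$ (type $A_2 \times A_1$) give $[s_4,t_i] \equiv [s_4,t_{i+2}]$ for all $i$, so the quotient is generated by $[s_4,t_0]$ alone when $e$ is odd ($\simeq \Z_2$) and by $[s_4,t_0]$ and $[s_4,t_1]$ when $e$ is even ($\simeq \Z_2^2$); for $r \geq 5$ the additional cells $[s_5,s_3,t_i]$, $[s_{k+1},s_k,t_i]$ and $[s_l,s_{k+1},s_k]$ identify all these classes with a single class $a$ subject only to $2a \equiv 0$, so $K_2/d_3 C_2 \simeq \Z_2$. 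Combining the two summands, and using $\Z_e \times \Z_2 \simeq \Z_{2e}$ when $e$ is odd, produces the four cases of the theorem.

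The only genuinely hard step is not this final assembly but the inputs it uses: the explicit evaluation of the Dehornoy--Lafont differentials $\partial_3$ on the $3$-cells $[s_3,t_0,t_j]$ and on the mixed cells, which requires unwinding the recursively defined auxiliary maps $s_n, r_n$. This is where the embedding results of section \ref{sectembed} are indispensable: they guarantee that lcm's, hence differentials, computed inside the Artin parabolic submonoids (of types $A_2$, $A_1 \times A_1$, $A_3$, $A_2 \times A_1$, $A_1 \times A_2$, $A_1^3$, and $M(e,e,2) \times A_1$) agree with those computed in $M(e,e,r)$, which reduces everything to the top-cell formulas tabulated in Table \ref{typesA}. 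Granting those, the theorem is a finite amount of linear algebra over $\Z$.
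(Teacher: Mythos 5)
Your proposal follows the paper's own proof essentially verbatim: the same ordering of atoms, the same splitting $\Ker d_2 = K_1 \oplus K_2$ with the cycles $v_i$, the same decomposition $C_1 \oplus C_2$ of the $3$-chains, the triangularization via $w_i = u_i + \dots + u_{e-1}$ giving $K_1/d_3C_1 \simeq \Z_e$, and the same case analysis on $K_2/d_3C_2$ for $r=4$ and $r \geq 5$. It is correct, and you are right that the real content lies in the parabolic-submonoid embedding lemmas that justify computing the Dehornoy--Lafont differentials inside the small Artin submonoids.
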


The case $r=2$ is when $W$ is a dihedral group,
and this case is known by \cite{SALVETTI} : we have $H_2(B,\Z) = 0$
if $e$ is odd, $H_2(B,\Z) = \Z$ if $e$ is even.

\section{Low-dimensional homology} \label{s:low}
\subsection{The second homology group} \label{ss:H2}

The computations above provide the second integral homology
group $H_2(B,\Z)$. In the case of the finite group
$W$, the group $H_2(W,\Z)$ can be identified with
the Schur multiplier $H^2(W,\C^{\times})$, which is relevant
for dealing with projective representations. We use the determination
of the $H_2(B,\Z)$ to show a direct connection between the
two groups $H^2(B,\C^{\times})$ and $H^2(W,\C^{\times})$.
We first start with a lemma.

\begin{lemma} Let $W$ be an irreducible finite
complex 2-reflection group, and $B$ the associated braid group.
The inflation morphism $H^2(W,\C^{\times}) \to H^2(B,\C^{\times})$
is into.
\end{lemma}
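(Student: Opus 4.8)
The plan is to exploit the fact that $W$ is a quotient of $B$ via the natural projection $\pi : B \onto W$ with kernel $P = \pi_1(X)$, so that the inflation (i.e.\ pullback) map on cohomology with coefficients in the trivial module $\C^\times$ is exactly $\pi^*$. The standard way to show such a map is injective is to produce a splitting, or — since $B$ does not split over $P$ in general — to produce enough of a splitting at the level of degree-$2$ cohomology. The key observation is that a 2-reflection group $W$ is generated by involutions, and every braided reflection $s \in B$ around a hyperplane projects to an honest reflection of order $2$ in $W$; moreover the abelianization $W^{ab}$ is an elementary abelian $2$-group (since $W$ is generated by conjugate involutions and, when $|\mathcal{A}/W|=1$, $W^{ab}\simeq\Z_2$, while in general $W^{ab}\simeq \Z_2^{|\mathcal{A}/W|}$), and $H_1(B,\Z)$ is free abelian of rank $|\mathcal{A}/W|$ by the Proposition in the introduction. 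Thus the map $H_1(B,\Z)\to H_1(W,\Z)$ is reduction modulo $2$ of a free module onto $\Z_2^{|\mathcal{A}/W|}$, hence surjective.

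The first step is to interpret $H^2(W,\C^\times)$ and $H^2(B,\C^\times)$ via the universal coefficients / Künneth exact sequence: since $\C^\times$ is divisible, $H^2(G,\C^\times)\simeq \Hom(H_2(G,\Z),\C^\times)$ for any group $G$, and the inflation map becomes $\Hom$ applied to the induced map $H_2(B,\Z)\to H_2(W,\Z)$. So it suffices to show that $H_2(B,\Z)\to H_2(W,\Z)$ is \emph{surjective}. Here I would use the five-term exact sequence of the extension $1\to P\to B\to W\to 1$:
\[
H_2(B,\Z)\too H_2(W,\Z)\too P_{W}:=(P^{ab})_W\too H_1(B,\Z)\too H_1(W,\Z)\too 0,
\]
where $(P^{ab})_W$ denotes the $W$-coinvariants of $H_1(P,\Z)=P^{ab}$. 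So $H_2(B,\Z)\to H_2(W,\Z)$ is surjective precisely when $H_2(W,\Z)\to (P^{ab})_W$ is the zero map, equivalently when $(P^{ab})_W \to H_1(B,\Z)$ is injective.

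The second step is therefore to analyze $(P^{ab})_W$ and its map to $H_1(B,\Z)$. Now $P^{ab}=H_1(X,\Z)$ is free abelian with basis indexed by the hyperplanes in $\mathcal{A}$ (each meridian), and $W$ permutes this basis according to its action on $\mathcal{A}$; hence $(P^{ab})_W$ is free abelian of rank $|\mathcal{A}/W|$, with one generator per $W$-orbit of hyperplanes, and these generators map to (twice) the corresponding generators of $H_1(B,\Z)\simeq \Z^{|\mathcal{A}/W|}$ — more precisely, a meridian in $P$ maps in $B$ to the square of a braided reflection, and the $W$-orbit sum maps to a nonzero integer multiple of the generator of $H_1(B,\Z)$ attached to that orbit. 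In particular the map $(P^{ab})_W\to H_1(B,\Z)$ is injective because it is, up to a diagonal integer matrix with nonzero entries, the identity on $\Z^{|\mathcal{A}/W|}$. This forces $H_2(W,\Z)\to (P^{ab})_W$ to vanish, hence $H_2(B,\Z)\onto H_2(W,\Z)$, hence $\Hom(H_2(W,\Z),\C^\times)\into \Hom(H_2(B,\Z),\C^\times)$, which is the assertion.

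\medskip
\noindent\textbf{Main obstacle.} The delicate point is the precise identification, inside the five-term sequence, of the image in $H_1(B,\Z)$ of the $W$-coinvariants of $P^{ab}$ — i.e.\ checking that an orbit sum of meridians really maps to a \emph{nonzero} class in the free group $H_1(B,\Z)$, and not merely to something torsion (which $H_1(B,\Z)$ does not even have). This hinges on knowing that braided reflections map to generators of $H_1(B,\Z)$, one per hyperplane orbit, which is exactly the content of the second part of the Proposition quoted from \cite{BMR} in the introduction, combined with the fact that a meridian of $P$ equals the square of a braided reflection in $B$; that squaring introduces a factor $2$ but no loss of injectivity over $\Z$. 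The irreducibility and 2-reflection hypotheses are what guarantee that every reflection has order $2$ (so meridians are genuinely \emph{squares}, not higher powers) and keep $W^{ab}$ elementary abelian, but the core of the argument is just this rank count in the five-term sequence.
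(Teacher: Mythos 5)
Your argument is correct and is essentially the paper's proof run in homology rather than cohomology: the paper applies the five-term Hochschild--Serre sequence for $1 \to P \to B \to W \to 1$ with $\C^{\times}$ coefficients and checks that $H^1(B,\C^{\times}) \to H^1(P,\C^{\times})^W$ is onto (because $(P^{ab})_W \into H_1(B,\Z)$ is the inclusion $(2\Z)^r \subset \Z^r$ and $\C^{\times}$ is divisible), which is exactly the dual of your step that $(P^{ab})_W \to H_1(B,\Z)$ is injective. The key inputs — meridians are squares of braided reflections, $H_1(P,\Z)$ is free on the hyperplane meridians permuted by $W$, and $B^{ab} \simeq \Z^{|\mathcal{A}/W|}$ from \cite{BMR} — are the same in both versions.
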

\begin{proof}
The Hochschild-Serre exact sequence associated to $1 \to P \to B \to W \to 1$
is
$$
0 \to H^1(W,\C^{\times}) \to H^1(B,\C^{\times}) \to
H^1(P,\C^{\times})^W \to H^2(W,\C^{\times}) \to H^2(B,\C^{\times}).
$$
Now $H^1(P,\C^{\times}) = \Hom(P^{ab},\C^{\times})^W
= \Hom((P^{ab})^W ,\C^{\times})$ and $H^1(B,\C^{\times})
= \Hom(B^{ab},\C^{\times})$. Now $P^{ab} = H_1(P,\Z)$ and $B^{ab}$
(see \cite{BMR} thm. 2.17) are torsion-free, with $B^{ab}
\simeq \Z^r$ where $r$ denotes the number of hyperplane orbits,
and $(P^{ab})^W$ can be identified with $(2 \Z)^r$. The
induced map $\Hom(\Z^r,\C^{\times})\to \Hom((2 \Z)^r,\C^{\times}) $
is then onto, since $\C$ is algebraically closed. By the
Hochschild-Serre exact sequence above the conclusion follows.
\end{proof}

{\bf Remark.} Another proof of the lemma can be given
using projective representations instead of the Hochschild-Serre
exact sequence. Let $\alpha \in Z^2(W,\C^{\times})$
with zero image in $H^2(B,\C^{\times})$, choose some
projective representation $R$ of $W$ with 2-cocycle $\alpha$,
and consider its lift $\tilde{R}$ to $B$. By assumption,
it is linearizable into some linear representation $\tilde{S}$. Choosing one
generator of the monodromy $\sigma_i$ in $X/W$ for each hyperplane orbit
(see \cite{BMR} appendix A) we find that $\tilde{S}(\sigma_i^2) = \la_i \in
\C^{\times}$. By \cite{BMR} Theorem 2.17 there exists
a morphism $\varphi : B \to \C^{\times}$ with $\varphi(\sigma_i) = 1/\la_i$,
and then $\tilde{T} = \tilde{S} \circ \varphi$ is a linear representation
of $B$ that factors through $W$ and linearizes $R$, thus proving
that $\alpha$ has zero image in $H^2(W,\C^{\times})$.

\bigskip

It is known by work of Read \cite{READ} and van der Hout \cite{HOUT}
that $H^2(W,\C^{\times}) \simeq H_2(W,\Z)$
is a free $\Z_2$-module in all cases. A nice property
that follows from our computation is that
the part of $H^2(B,\C^{\times})$ that comes from $H^2(W,\C^{\times})$
is exactly the 2-torsion (except for 2 exceptional cases).
Indeed, since $H_1(B,\Z)$ is torsion-free
and $\C^{\times}$ is divisible, by the Universal Coefficients
Theorem we get $H^2(B,\C^{\times}) \simeq \Hom(H_2 B, \C^{\times})$
and the proposition below is a consequence of our computation
of $H_2(B,\Z)$ (see Table \ref{tableH2} for the exceptional groups,
Theorems \ref{H2B2EER} and \ref{H2BEER} for the $G(2e,e,r)$ and the $G(e,e,r)$)
and of the works of Read and van der Hout on $W$. We recall their computation
of $H_2(W,\Z)$ in Table \ref{tableH2} for the exceptional groups, and
the rank over $\Z_2$ for the other ones in Table \ref{schurgen}.
\begin{table}

$$
\begin{array}{|c|l|c|c|}
\hline
r & e& G(e,e,r) & G(2e,e,r) \\
\hline
2 & \mbox{odd} & 0 & 1 \\
 & \mbox{even} & 1 & 2 \\
\hline
3 & \mbox{odd} & 0  & 2 \\
 & \mbox{even} & 1 & 2 \\
\hline
4 & \mbox{odd} & 1 & 3 \\
 & \mbox{even} & 3 & 4 \\
\hline
5 & \mbox{odd} & 1 & 3 \\
 & \mbox{even} & 2 & 3 \\
\hline
\end{array}
$$
\caption{Rank of $H_2(W,\Z)$ as a $\Z_2$-module (after Reid)}  
\label{schurgen}
\end{table}

\begin{prop} Except for $W = G_{33}$ or $W = G_{34}$,
$H^2(W,\C^{\times})$ coincides with the 2-torsion of $H^2(B,\C^{\times})$.
\end{prop}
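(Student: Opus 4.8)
The plan is to reduce the claim to an equality of $\F_2$-dimensions and then verify it case by case. Since $H_1(B,\Z)$ is a free $\Z$-module, the Universal Coefficients Theorem gives $H^2(B,\C^{\times}) \simeq \Hom(H_2(B,\Z),\C^{\times})$, as already noted; moreover $H_2(B,\Z)$ is finitely generated, so this group is the product of a torus by a finite abelian group. The subgroup of $\C^{\times}$ annihilated by $2$ is $\{ \pm 1 \}$, hence the $2$-torsion subgroup of $H^2(B,\C^{\times})$ is $\Hom(H_2(B,\Z), \{ \pm 1 \})$, a finite $\F_2$-vector space of dimension
$$
\dim_{\F_2} \bigl( H_2(B,\Z) \otimes \F_2 \bigr),
$$
that is, the free rank of $H_2(B,\Z)$ plus the number of its cyclic summands of even order. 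On the other side, $H^2(W,\C^{\times}) \simeq H_2(W,\Z)$ is an elementary abelian $2$-group, by the results of Read \cite{READ} and van der Hout \cite{HOUT}, of rank recorded (for the infinite series) in Table \ref{schurgen} and (for the exceptional groups) in Table \ref{tableH2}.

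Next I would invoke the Lemma above: the inflation morphism $H^2(W,\C^{\times}) \to H^2(B,\C^{\times})$ is injective. Its image is contained in the $2$-torsion subgroup, because $H^2(W,\C^{\times})$ is itself killed by $2$. An injection between finite abelian groups of the same order is an isomorphism, so it remains only to check, for each irreducible $2$-reflection group $W$, the numerical identity
$$
\dim_{\F_2} H_2(W,\Z) \ = \ \dim_{\F_2} \bigl( H_2(B,\Z) \otimes \F_2 \bigr),
$$
where the left-hand side makes sense because $H_2(W,\Z)$ is already an $\F_2$-vector space.

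Finally I would carry out this comparison. For $W$ of type $G(e,e,r)$ the group $H_2(B,\Z)$ is given by Theorem \ref{H2BEER} (and by \cite{SALVETTI} when $r = 2$), and for $W$ of type $G(2e,e,r)$ by Theorem \ref{H2B2EER} (again using \cite{SALVETTI} for $r=2$); in each case one reads off $\dim_{\F_2}(H_2(B,\Z)\otimes\F_2)$ and compares it with the rank of $H_2(W,\Z)$ listed in Table \ref{schurgen}, and the two numbers agree — for instance $H_2(\B(2e,e,r),\Z) = \Z^2 \times \Z_2$ for $r \geq 5$ gives dimension $3$, matching the rank $3$ of $H_2(G(2e,e,r),\Z)$, while $H_2(\B(e,e,r),\Z) = \Z_e \times \Z_2$ for $r \geq 5$ gives dimension $2$ when $e$ is even and $1$ when $e$ is odd, again matching the corresponding entries. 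For the exceptional groups the same comparison is made from Tables \ref{tableexc} and \ref{tableH2}, and it succeeds except precisely for $W = G_{33}$ and $W = G_{34}$, which are therefore excluded. There is no conceptual obstacle in this argument: the only delicate point is that the coincidence of the two dimensions is not explained by a general principle but must be checked entry by entry, and that for $G_{33}$ and $G_{34}$ — where $H_2(B,\Z)$ is known but its invariants do not match the rank of the Schur multiplier of $W$ — the equality genuinely fails.
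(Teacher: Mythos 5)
Your argument is correct and is essentially the paper's own: the paper likewise combines the injectivity of inflation (the preceding Lemma), the identification $H^2(B,\C^{\times}) \simeq \Hom(H_2(B,\Z),\C^{\times})$ via Universal Coefficients, the Read--van der Hout computation of $H_2(W,\Z)$ as an elementary abelian $2$-group, and a case-by-case comparison with the computed $H_2(B,\Z)$. Your explicit dimension count $\dim_{\F_2} H_2(W,\Z) = \dim_{\F_2}(H_2(B,\Z)\otimes\F_2)$ just makes precise what the paper leaves as "a consequence of our computation," and it does check out in every case except $G_{33}$ and $G_{34}$.
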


\begin{table}
$$\begin{array}{|c|c|c||c|c|c|}
\hline
W & H_2 W & H_2 B & W & H_2 W & H_2 B \\
\hline
G_{12} & 0 & 0 & G_{30} & \Z_2 & \Z_2 \\
G_{13} & \Z_2 & \Z & G_{31} & \Z_2 & \Z_6 \\
G_{22} & 0 & 0 & G_{33} & 0 & \Z_6 \\
G_{23} & \Z_2 & \Z & G_{34} & 0 & \Z_6 \\
G_{24} & \Z_2 & \Z & G_{35} & \Z_2 & \Z_2 \\
G_{27} & \Z_2 & \Z_3 \times \Z & G_{36} & \Z_2 & \Z_2 \\
G_{28} & (\Z_2)^2 & \Z^2 & G_{37} & \Z_2 & \Z_2 \\
G_{29} & (\Z_2)^2 & \Z_2 \times \Z_4 & & & \\
\hline
\end{array}
$$
\caption{The second integral homology groups}\label{tableH2}
\end{table}

\subsection{First homology in the sign representation}\label{ss:sign}

If $r = |\mathcal{A}/W|$ denotes the number of hyperplane
classes, the abelianization $B_{ab}$ is isomorphic to $\Z^r$.
There are thus $2^r-1$ nonzero morphisms $B \onto \Z_2$,
which define $2^r-1$ subgroups of even braids. When $r = 1$,
there is only one such morphism $\eps : B \to \Z_2$ and group $B^{(2)} = \Ker \eps$.
We investigate here two abelian invariants of $B$ which are
naturally attached to this group : the abelianization $B^{(2)}_{ab}$
of $B^{(2)}$ and $H_1(B,\Z_{\eps})$.

\begin{lemma} Let $u \in B \setminus B^{(2)}$.
The group $H_1(B,\Z_{\eps})$ is isomorphic to the quotient of $B^{(2)}_{ab}$
by the relations $[u^2] \equiv 0$ and $[h^u] \equiv
-[h]$ for $h \in B^{(2)}_{ab}$, where $h^u = u^{-1} h u$.
\end{lemma}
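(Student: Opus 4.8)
The plan is to present $\Z_\eps$ as a quotient of an induced module and to exploit the resulting long exact sequence in the homology of $B$. Write $B^{(2)} = \Ker \eps$, and note that $\Z[B/B^{(2)}]$, with $B$ acting through $\eps$ on the two cosets, is canonically $\mathrm{Ind}_{B^{(2)}}^{B}\Z$. Sending $1$ to the sum of the two cosets gives a $B$-equivariant injection of the trivial module $\Z$ into $\Z[B/B^{(2)}]$, whose cokernel is exactly $\Z_\eps$ (the nontrivial coset acts by $-1$ on the quotient). This produces a short exact sequence of $B$-modules
$$0 \too \Z \too \Z[B/B^{(2)}] \too \Z_\eps \too 0 .$$

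Applying $H_*(B,-)$ and invoking Shapiro's lemma (\cite{brown}) to identify $H_*(B,\Z[B/B^{(2)}])$ with $H_*(B^{(2)},\Z)$, the map induced by $\Z \to \Z[B/B^{(2)}]$ becomes the transfer $\mathrm{tr}\colon H_*(B,\Z)\to H_*(B^{(2)},\Z)$. In degree $0$ this transfer is multiplication by $[B:B^{(2)}]=2$ on $\Z = H_0(B,\Z)\to H_0(B^{(2)},\Z)=\Z$, hence injective, so the connecting homomorphism $H_1(B,\Z_\eps)\to H_0(B,\Z)$ vanishes. The long exact sequence therefore collapses to
$$H_1(B,\Z) \stackrel{\mathrm{tr}}{\too} H_1(B^{(2)},\Z) \too H_1(B,\Z_\eps) \too 0 ,$$
so that $H_1(B,\Z_\eps) \simeq B^{(2)}_{ab}/\mathrm{im}(\mathrm{tr})$, and it remains to identify the image of the transfer.

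For this I would use the explicit formula for the transfer with the coset representatives $\{1,u\}$. The group $B_{ab}$ is generated by the classes $[h]$ for $h\in B^{(2)}$ together with $[u]$, so $\mathrm{im}(\mathrm{tr})$ is generated by $\mathrm{tr}[u]$ and the $\mathrm{tr}[h]$ for $h\in B^{(2)}$. Evaluating the formula gives $\mathrm{tr}[u]=[u^2]$ (using $u^2\in B^{(2)}$) and $\mathrm{tr}[h]=[h]+[h^u]$ for $h\in B^{(2)}$; as $h$ ranges over $B^{(2)}$ these span the same subgroup of $B^{(2)}_{ab}$ whichever of the two conjugation conventions one adopts, since $h\mapsto h^u$ is a bijection of $B^{(2)}$. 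Consequently $\mathrm{im}(\mathrm{tr})$ is precisely the subgroup generated by $[u^2]$ and by the elements $[h^u]+[h]$, $h\in B^{(2)}_{ab}$, and quotienting $B^{(2)}_{ab}$ by it yields the presentation of $H_1(B,\Z_\eps)$ in the statement.

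The argument is essentially formal, valid for any index-$2$ subgroup. The points that need care are pure bookkeeping: checking that Shapiro's identification genuinely turns the map $\Z \to \Z[B/B^{(2)}]$ into the transfer (and not the corestriction), handling the conjugation conventions in the transfer formula so that the relations come out as $[u^2]\equiv 0$ and $[h^u]\equiv -[h]$ rather than a variant, and noting that the vanishing of the degree-$1$ connecting map is exactly where the index being $2$ is used. Beyond this I anticipate no real difficulty.
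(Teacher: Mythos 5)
Your argument is correct, and it takes a genuinely different route from the paper's. The paper works directly with the bar resolution: it identifies $\Ker d_1^{\eps}$ explicitly, lists four families of elements of $\mathrm{Im}\, d_2^{\eps}$, uses one of them to reduce the generators to the classes $[h]$, $h \in B^{(2)}$, and then asserts (as an "easy check") that every relation $d_2^{\eps}([g_1,g_2]) \equiv 0$ is a consequence of the four listed families. You instead package the whole computation into the short exact sequence of $B$-modules $0 \to \Z \to \Z[B/B^{(2)}] \to \Z_{\eps} \to 0$, apply Shapiro's lemma to the middle term, and identify the resulting map $H_*(B,\Z) \to H_*(B^{(2)},\Z)$ with the transfer; the injectivity of the degree-$0$ transfer (multiplication by the index) kills the connecting map, and the lemma becomes the statement $H_1(B,\Z_{\eps}) \simeq \coker\bigl(\tr : B_{ab} \to B^{(2)}_{ab}\bigr)$, with the two defining relations being exactly the values of the Verlagerung on the generators $[u]$ and $[h]$ of $B_{ab}$. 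All the steps you flag as needing care do go through: the coset-sum map $\Z \to \Z[B/B^{(2)}]$ does induce the transfer under Shapiro's identification (this is the standard description of the transfer via induced modules), and the formula with transversal $\{1,u\}$ gives $\tr[u]=[u^2]$ and $\tr[h]=[h]+[h^u]$ with $h^u=u^{-1}hu$, matching the paper's convention. Your approach buys a conceptual explanation of where the relations come from and sidesteps the fiddly verification that the listed boundaries generate all of $\mathrm{Im}\, d_2^{\eps}$ (at the cost of invoking Shapiro and the transfer), and as you note it is uniformly valid for any index-$2$ subgroup; the paper's computation is more elementary and self-contained.
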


\begin{proof}
We start from the bar resolution $C_2 \to C_1 \to C_0$,
where $C_i$ is a free $\Z B$-module with basis the $[g_1,\dots,g_i]$
for $g_i \in B$, we have $d_1([g]) = (g-1)[\emptyset]$,
$d_2([g_1,g_2]) = g_1 [g_2] - [g_1 g_2] + [g_1]$. Denoting
$d_i^{\eps}$ the differential with coefficients in $\Z_{\eps}$ and
$C_i^{\eps} = C_i \otimes_{\Z B} \Z_{\eps}$ with $\Z$-basis
the $[g_1,\dots,g_i]$, we get that $\Ker d_1^{\eps}$ is the
direct sum $\Z B^{(2)} \oplus I$ where
$I = \{ \sum_{g \not\in B^{(2)}} x_g [g] \ | \ x_g \in \Z, \sum x_g = 0 \}$.
Choose some $u \in B \setminus B^{(2)}$.
The image of $d_2^{\eps}$ is spanned by the $[g_1g_2] - \eps(g_1)
[g_2] - [g_1]$. Among them we find
\begin{enumerate}
\item $[u^2] + [u] - [u] = [u^2]$
\item $[h_1 h_2] - [h_1] - [h_2]$, for $h_1,h_2$ in $B^{(2)}$
\item $[uh] - [u] + [h]$ for $h \in B^{(2)}$
\item $[h^u] + [h]$ for $h \in B^{(2)}$.
\end{enumerate}
Indeed, the element (4) is the difference of two elements clearly
in $\mathrm{Im} d_2^{\eps}$, $[hu] - [u] - [h]$
and $[u h^u] + [h^u] - [u]$, where $h^u = u^{-1} h u$,
since $u h^u = hu$. By (3), and since $I$ is spanned
by the $[hu] - [u]$ for $h \in B^{(2)}$, we see that
$H_1(B,\Z_{\eps})$ is generated by the images
of the $[h]$ for $h \in B^{(2)}$.
It is easy to check that the relations of the form
$d_2^{\eps}([g_1,g_2]) \equiv 0$ are consequences of (1-4),
hence $H_1(B,\Z_{\eps})$ is the quotient of $ B^{(2)}_{ab}$ by the
relations $(1)$ and $(4)$.
\end{proof}

The computation of $B^{(2)}$ can be done for exceptional groups by
using the Reidemeister-Schreier method (see \cite{MKS})
and the presentations of \cite{BMR} and \cite{bessismichel}. Note
that they are known to provide presentations of $B$ for all groups
but $G_{31}$, for which our results as well will be conjectural. 
We start from one of these standard presentation of $B$ by
braided reflections $\sigma_1,\dots,\sigma_{n}$
and use $\{ 1, \sigma_1\}$ for Schreier transversal. Then
generators for $B^{(2)}$ are given by $\sigma_1^2,\sigma_1 \sigma_2,
\sigma_1 \sigma_3, \dots,\sigma_1 \sigma_n$ and $\sigma_2 \sigma_1^{-1}$,
$\sigma_3 \sigma_1^{-1},\dots ,\sigma_n \sigma_1^{-1}$.
We then apply the Reidemeister-Schreier process and find
a presentation of $B^{(2)}$ from the relations $R$, $\sigma_1 R \sigma_1^{-1}$
where $R$ runs among the relations for $B$. The presentations obtained
for exceptional groups are tabulated in figure \ref{figpreseven} (the column `ST' refers
to the Shephard-Todd number of the group). It is then easy
to abelianize these relations. We choose $u = \sigma_1$.

In order to get $H_1(B,\Z_{\eps})$ from $B^{(2)}_{ab}$
we start by adding the relation $[\sigma_1^2] \equiv 0$.
Note that $\sigma_1 (\sigma_i \sigma_1^{-1})  \sigma_1^{-1}
= (\sigma_1 \sigma_i) (\sigma_1^{-2})$ hence $-[\sigma_i \sigma_1^{-1}]
 \equiv [\sigma_1 \sigma_i] - [\sigma_1^2] \equiv [\sigma_1 \sigma_i]$,
and that $\sigma_1 (\sigma_1 \sigma_i)\sigma_1^{-1} = \sigma_1^2.
\sigma_i \sigma_1^{-1}$ hence $-[\sigma_1 \sigma_i] \equiv [\sigma_i \sigma_1^{-1}]$.
The relations defining $H_1(B,\Z_{\eps})$ from $B^{(2)}_{ab}$
thus boil down to $-[\sigma_1 \sigma_i] \equiv [\sigma_i \sigma_1^{-1}]$
and $[\sigma_1^2] \equiv 0$.

\def\ss{\sigma}

In order to get $H_1(B,\Z_{\eps})$ for the groups $G(*e,e,r)$,
instead of using the complicated presentations of $B$ afforded
by \cite{BMR}, we use the semidirect product decomposition
described in section \ref{s:iso}. Recall that $B = \Z \ltimes \tilde{A}$
where we denote by $A$ the affine Artin group of type $\tilde{A}_{r-1}$.
Then $A$ has Artin generators $\ss_1,\dots,\ss_r$ and the semidirect
product is defined by $\tau \ss_i \tau^{-1} = \ss_{i+e}$ where
addition is considered modulo $r$. From the split
exact sequence $1 \to A \to B \to \Z \to 1$ we get the
Hochschild-Serre short exact sequence
$$
0 = H_2(\Z,H_0(A,\Z_{\eps})) \to H_0(\Z,H_1(A,\Z_{\eps})) \to H_1(B,\Z_{\eps}) \to H_1(\Z,H_0(A,\Z_{\eps})) \to 0 
$$
with $H_2(\Z,H_0(A,\Z_{\eps})) = 0$ since $\Z$ has homological dimension 1. Since $A$ acts on $\Z_{\eps}$
through $\ss_i \mapsto -1$ we have $H_0(A,\Z_{\eps}) = \Z/2\Z = \Z_2$ ; since
$\tau$ acts trivially on $H_0(A,\Z_{\eps})$ we thus get
$H_1(\Z,H_0(A,\Z_{\eps})) \simeq H_1(\Z,\Z_2) \simeq \Z_2$. The short
exact sequence thus boils down to $0 \to H_0(\Z,H_1(A,\Z_{\eps})) \to
H_1(B,\Z_{\eps}) \to \Z_2 \to 0$ and our task is reduced to computing
$H_1(A,\Z_{\eps})$ while keeping track of the action of $\tau$.

In order to compute $H_1(A,\Z_{\eps})$ we apply the above process. Generators
for $A^{(2)}$ are given by $u = \ss_1^2$, $x_i = \ss_1 \ss_i$ and
$y_i = \ss_i \ss_1^{-1}$ for $2 \leq i \leq r$, and relations
are given by rewriting $R$ and $\ss_1 R \ss_1^{-1}$ with $R$ running
along the braid relations for $A$. These braid relations are the
following (where $|j-i| \geq 2$ actually mean that $j,i$ are not
connected in the braid diagram)
$$
\begin{array}{lllcl}
(R) & 1 \not\in \{ i, i+1\} & \ss_i \ss_{i+1} \ss_i \ss_{i+1}^{-1} \ss_i^{-1} \ss_{i+1}^{-1} & \rightsquigarrow & 
y_i  x_{i+1}  y_i  y_{i+1}^{-1}  x_i^{-1} y_{i+1}^{-1}
\\
& |j-i| \geq 2, 1 \not\in \{ i,j \}  & \ss_i \ss_j \ss_i^{-1} \ss_j^{-1} & \rightsquigarrow & 
y_i x_j x_i^{-1}y_j^{-1}\\
& & \ss_1 \ss_2 \ss_1 \ss_2^{-1} \ss_1^{-1} \ss_2^{-1} & \rightsquigarrow &
x_2 y_2^{-1}u^{-1} y_2^{-1} \\
& & \ss_1 \ss_r \ss_1 \ss_r^{-1} \ss_1^{-1} \ss_r^{-1} & \rightsquigarrow & 
x_r  y_r^{-1} u^{-1}y_r^{-1}\\
& i \not\in \{2,r \} & \ss_1 \ss_i \ss_1^{-1} \ss_i^{-1} & \rightsquigarrow &  x_i  u^{-1} y_i^{-1}\\
(\ss_1 R \ss_1^{-1} ) & 1 \not\in \{ i, i+1\} & \ss_1\ss_i \ss_{i+1} \ss_i \ss_{i+1}^{-1} \ss_i^{-1} \ss_{i+1}^{-1}\ss_1^{-1} & \rightsquigarrow & 
x_i y_{i+1}x_i  x_{i+1}^{-1} y_i^{-1} x_{i+1}^{-1} \\
& |j-i| \geq 2, 1 \not\in \{ i,j \}  & \ss_1\ss_i \ss_j \ss_i^{-1} \ss_j^{-1} \ss_1^{-1}& \rightsquigarrow & 
x_i y_j y_i^{-1}x_j^{-1} 
\\
& & \ss_1\ss_1 \ss_2 \ss_1 \ss_2^{-1} \ss_1^{-1} \ss_2^{-1}\ss_1^{-1} & \rightsquigarrow &
u y_2 u x_2^{-1} x_2^{-1}
 \\
& & \ss_1\ss_1 \ss_r \ss_1 \ss_r^{-1} \ss_1^{-1} \ss_r^{-1}\ss_1^{-1} & \rightsquigarrow & 
u y_ru  x_r^{-1} x_r^{-1}
\\
& i \not\in \{2,r \} &\ss_1 \ss_1 \ss_i \ss_1^{-1} \ss_i^{-1}\ss_1^{-1} & \rightsquigarrow & 
u  y_i   x_i^{-1}
\\ 
\end{array}
$$
Abelianizing and dividing out by the relations $y_i = -x_i$ yields an
abelian presentation for $H_1(A,\Z_{\eps})$ by generators $u, x_i$ for
$2 \leq i \leq r$ and relations
$$
\begin{array}{lll}
 1 \not\in \{ i, i+1\} &  3 x_{i+1} =3 x_i
\\
|j-i| \geq 2, 1 \not\in \{ i,j \}  &  
 2 x_j =2 x_i \\
 & 
3x_2 =0 \\
 & 
3x_r =0\\
 i \not\in \{2,r \} & 2 x_i = 0\\
\end{array}
$$
Thus, for $r = 3$, $H_1(A,\Z_{\eps}) = <x_2,x_3 | 3x_2 = 3x_3 = 0> = \Z_3 x_2 \oplus \Z_3 x_3 \simeq \Z_3^2$,
for $r = 4$,
$$
H_1(A,\Z_{\eps}) =  < x_2,x_3,x_4 | 3x_2 = 3x_4 = 0, 2x_3 = 0 , 2x_2 =2x_4, 3x_3 = 3x_2=3x_4 >
$$
hence $H_1(A,\Z_{\eps} ) =  < x_2, x_4 | 3x_2 = 3 x_4 = 0 , x_2 = x_4 >  = \Z_3 x_2 \simeq \Z_3$.
When $r \geq 5$ , $H_1(A,\Z_{\eps})$ is generated by $x_2,\dots,x_r$, and we have $3 x_2 = 3 x_r= 0$.
We have $2 < 3 < r-1 < r$. Then $2x_3 = 2x_{r-1} = 0$ but $0= 3x_2 = 3x_3$ and $0 = 3x_r = 3x_{r-1}$.
It follows that $x_3 = 0$ and $x_{r-1} = 0$. Since $2x_3 = 2x_r$ and $2x_2 = 2x_{r-1}$ we get
$x_2 = x_{r-1}$ and $x_3 = x_{r}$ hence $x_i = 0$ for all $i$ and $H_1(A,\Z_{\eps}) = 0$.

For $r \in \{ 3 , 4 \}$ it remains to compute the action of $\tau$ on  $H_1(A,\Z_{\eps})$.
We have $\tau.\ss_i = \ss_{i+e}$ hence $\tau.(\ss_1 \ss_i) = \ss_{1+e} \ss_{i+e}
= \ss_{1+e} \ss_1^{-1} \ss_1 \ss_{i+e}$. For $e \equiv  0 \mod r$ we have
$\tau.x_i = x_i$ and $H_0(\Z,H_1(A,\Z_{\eps})) \simeq H_1(A,\Z_{\eps})$.
For $r = 3$, $e \equiv 1 \mod 3$, $\tau.x_2 = \ss_2 \ss_3 
\equiv  y_2 + x_3 \equiv -x_2 + x_3  $ and $\tau.x_3 = \ss_2\ss_1 
= \ss_2 \ss_1^{-1} \ss_1^2 \equiv -x_2$. It follows that $H_0(\Z,H_1(A,\Z_{\eps})) \simeq \Z_3$.
For $r=4$, $e \equiv 1 \mod 4$, $\tau.x_2 = \ss_2 \ss_3 = \ss_2 \ss_1^{-1} \ss_1 \ss_3
\equiv y_2 + x_3 \equiv -x_2 + x_3 \equiv -x_2$ hence $H_0(\Z,H_1(A,\Z_{\eps}))=0$.
For $r=4$, $e \equiv 2 \mod 4$, $\tau.x_2 = \ss_3 \ss_4 = \ss_3 \ss_1^{-1} \ss_1 \ss_4 \equiv  x_2$
hence $H_0(\Z,H_1(A,\Z_{\eps}))= H_1(A,\Z_{\eps})$.
Altogether, this yields
\begin{prop} For $B = \B(*e,e,r)$, and $r \geq 3$,
$$
\begin{array}{ll}
H_1(B,\Z_{\eps}) \simeq \Z_2 & \mbox{ for } r \geq 5 \\
H_1(B,\Z_{\eps}) \simeq  \Z_6 & \mbox{ for } r=4, e \equiv 0,2 \mod 4 \\
H_1(B,\Z_{\eps}) \simeq  \Z_2 &  \mbox{ for } r=4, e \equiv 1 \mod 4 \\
H_1(B,\Z_{\eps}) \simeq  \Z_3 \oplus \Z_3 \oplus \Z_2 & \mbox{ for } r=3, e \equiv 0 \mod 4 \\
H_1(B,\Z_{\eps}) \simeq \Z_6 & \mbox{ for } r=3, e \equiv 1 \mod 4  \\
\end{array}
$$
\end{prop}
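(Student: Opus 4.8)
The plan is to assemble the ingredients prepared above. From Section~\ref{s:iso} recall the semidirect product $B=\B(*e,e,r)=\Z_\tau\ltimes A$, where $A=\Art_{\widetilde{A}_{r-1}}$ is the affine Artin group of type $\widetilde{A}_{r-1}$ with $\tau\sigma_i\tau^{-1}=\sigma_{i+e}$ (indices mod $r$), and note that $\eps$ restricts on $A$ to the sign morphism $\sigma_i\mapsto-1$. First I would feed the split extension $1\to A\to B\to\Z\to1$ into the Hochschild--Serre spectral sequence with coefficients in $\Z_\eps$. Since $\Z$ has homological dimension $1$ this degenerates into the short exact sequence
$$
0\to H_0(\Z,H_1(A,\Z_\eps))\to H_1(B,\Z_\eps)\to H_1(\Z,H_0(A,\Z_\eps))\to0,
$$
and because every $\sigma_i$ acts by $-1$ we get $H_0(A,\Z_\eps)=\Z_2$ with trivial $\tau$-action, hence $H_1(\Z,H_0(A,\Z_\eps))\simeq\Z_2$. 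Everything thus reduces to the computation of $H_1(A,\Z_\eps)$ together with the automorphism induced by $\tau$.

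The second step is to compute $H_1(A,\Z_\eps)$ by Reidemeister--Schreier applied to the index-two subgroup $A^{(2)}=\Ker(\eps|_A)$, using $\{1,\sigma_1\}$ as Schreier transversal. This produces the generators $u=\sigma_1^2$, $x_i=\sigma_1\sigma_i$, $y_i=\sigma_i\sigma_1^{-1}$ for $2\le i\le r$, and a presentation obtained by rewriting each braid relation $R$ and its conjugate $\sigma_1R\sigma_1^{-1}$. Abelianizing and imposing the relations $[u]=0$ and $y_i=-x_i$ furnished by the Lemma collapses this to the small abelian presentation displayed above; a case analysis on the shape of the affine diagram then gives $H_1(A,\Z_\eps)\simeq\Z_3^2$ for $r=3$, $H_1(A,\Z_\eps)\simeq\Z_3$ for $r=4$, and $H_1(A,\Z_\eps)=0$ for $r\ge5$ (in the latter range the mixed $2$- and $3$-torsion relations among the $x_i$ force all generators to vanish). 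In particular, for $r\ge5$ the short exact sequence already yields $H_1(B,\Z_\eps)\simeq\Z_2$.

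For $r\in\{3,4\}$ it remains to compute the $\tau$-action and reassemble. From $\tau.\sigma_i=\sigma_{i+e}$ one writes $\tau.x_i=\sigma_{1+e}\sigma_{i+e}$, inserts $\sigma_1^{-1}\sigma_1$ to re-express this in terms of $u$, the $x_j$ and the $y_j$, abelianizes, and reads off the coinvariants $H_0(\Z,H_1(A,\Z_\eps))$; by the symmetry $e\mapsto-e$ of Section~\ref{s:iso} (the inner action of $\tau^e$ depends only on $e\bmod r$, and there is a diagram automorphism $\sigma_i\mapsto\sigma_{r+1-i}$) only a few residues of $e$ modulo $r$ need be treated. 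One finds $H_0(\Z,H_1(A,\Z_\eps))$ equal to $H_1(A,\Z_\eps)$ when $\tau$ acts trivially, equal to $\Z_3$ when $r=3$ and $r\nmid e$, and equal to $0$ when $r=4$ and $e$ is odd. Feeding these into the short exact sequence gives the listed groups; the extensions cause no trouble, since $H_0(\Z,H_1(A,\Z_\eps))$ is always a $3$-group, so $\mathrm{Ext}^1(\Z_2,-)$ of it vanishes and the sequence splits.

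The only real difficulty is bookkeeping: carrying out the Reidemeister--Schreier rewriting of all the relations $R$ and $\sigma_1R\sigma_1^{-1}$ without error, and, more delicately, correctly expanding the non-linear conjugation $\tau.x_i=\sigma_{1+e}\sigma_{i+e}$ in the generators $u,x_j,y_j$ before abelianizing, since a sign slip there changes the coinvariants and hence the final answer. The reduction modulo $r$ up to $e\mapsto-e$ is what keeps the case analysis finite.
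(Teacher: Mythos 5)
Your proposal follows essentially the same route as the paper: the semidirect product $B=\Z_\tau\ltimes\Art_{\widetilde{A}_{r-1}}$, the degenerate Hochschild--Serre sequence $0\to H_0(\Z,H_1(A,\Z_{\eps}))\to H_1(B,\Z_{\eps})\to\Z_2\to0$, the Reidemeister--Schreier computation of $H_1(A,\Z_{\eps})$ with transversal $\{1,\sigma_1\}$ and generators $u,x_i,y_i$, and the final determination of the $\tau$-coinvariants case by case. The computations you outline (including the values $\Z_3^2$, $\Z_3$, $0$ for $r=3,4,\geq5$ and the splitting of the extension since the kernel is a $3$-group) agree with the paper's.
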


Finally, for groups of type $G(e,e,r)$, we use the Dehornoy-Lafont complex
associated to the Corran-Picantin monoid. The 1-cells $[s]$ are mapped to $(\eps(s) - 1)[\emptyset]
= -2 [\emptyset]$, hence the 1-cycles are spanned by the $[s]-[t]$ for $s,t$
two atoms. We have $d_{\eps}[s_j,s_i] = 2(s_j-s_i)$ when $|j-i| \geq 2$,
$d_{\eps}[t_0,t_i] = - t_i + t_{i+1} + t_0 - t_1$,
$d_{\eps}[s_3,t_i] = 3t_i - 3 s_3$, $d_{\eps}[s_i,t_0] = 2(s_i-t_0)$ for $i \geq 4$, and $d_{\eps} [s_{i+1},s_i] = 3 (s_{i+1}-s_i)$.
Since a basis of the 1-cycles is given by the $t_i - t_0$, $t_0-s_3$, $s_3-s_4$,
\dots, $s_{r-1}-s_r$, $H_1(B,\Z_{\eps})$ is spanned by $t_1-t_0, t_0-s_3,
\dots,s_{r-1}-s_r$, each of these elements being annihilated by 3. From
$d_{\eps}[s_i,t_0] = 2(s_i-t_0)$ for $i \geq 4$ we get that $s_4 - s_3 \equiv
s_3 - t_0$, from $d_{\eps}[s_5,s_3] = 2(s_5-s_3)$ we get
$s_5 - s_4 \equiv s_4 - s_3$, and so on. Finally, from
$$d_{\eps}[t_1,s_4] = 2(t_1-s_4) = 2(t_1 - t_0) + 2(t_0-s_3) + 2(s_3-s_4)
\equiv 2(t-1-t_0) +(t_0-s_3)$$
we get that $t_1 - t_0 \equiv t_0-s_3$.
It follows that, for $r \geq 4$, $H_1(B,\Z_{\eps})$
is generated by $t_1-t_0$ hence $H_1(B,\Z_{\eps}) \simeq \Z_3$ ; for $r = 3$, it is generated
by $t_1 - t_0$ and $t_0 - s_3$ and $H_1(B,\Z_{\eps}) \simeq \Z_3\oplus \Z_3$ ; it is generated
by $t_1-t_0$ for $r =2$.

The case $e=1$ (that is, of the usual braid group) follows the same
pattern. On the whole, we get the following.

\begin{prop} For the groups $\B(e,e,r)$ with $e \geq 2$, $H_1(B,\Z_{\eps}) \simeq \Z_3$ if $r \geq 4$.
If $r = 3$ then $H_1(B,\Z_{\eps}) \simeq \Z_3 \oplus \Z_3$.
If $r=2$ then $H_1(B,\Z_{\eps}) \simeq \Z$. When $e=1$, we have
$H_1(B,\Z_{\eps}) = 0$ for $r = 2$ or $r \geq 5$, and $H_1(B,\Z_{\eps})=
\Z_3$ if $r=3$ or $r = 4$.
\end{prop}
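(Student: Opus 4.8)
The statement for $e \ge 2$ is precisely what the computation in the preceding paragraphs establishes, so the plan is to record that argument in clean form, carry out the entirely analogous computation for $e = 1$, and then collect cases. For $e \ge 2$ one works with the Dehornoy--Lafont complex of the Corran--Picantin monoid $M(e,e,r)$ and the sign local system $\Z_\eps$, for the atom ordering $s_r < \dots < s_3 < t_0 < \dots < t_{e-1}$ and with the differentials read off from Table~\ref{typesA} and from the second homology computation of section~\ref{s:beer}. Since $\eps$ kills every atom, each $1$-cell $[x]$ has $d_1^\eps[x] = (\eps(x)-1)[\emptyset] = -2[\emptyset]$, so $\Ker d_1^\eps$ is free on the differences of atoms, with basis $t_1-t_0,\dots,t_{e-1}-t_0,\ t_0-s_3,\ s_3-s_4,\dots,s_{r-1}-s_r$. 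Specialising the $2$-cell boundaries to $\Z_\eps$ produces relations whose coefficients are only $2$ or $3$: $d_\eps[s_j,s_i]=2(s_j-s_i)$ for non-adjacent $s$'s, $d_\eps[s_{i+1},s_i]=3(s_{i+1}-s_i)$, $d_\eps[s_3,t_i]=3(t_i-s_3)$, $d_\eps[s_i,t_0]=2(s_i-t_0)$ for $i\ge 4$, and $d_\eps[t_0,t_i]=(t_{i+1}-t_i)+(t_0-t_1)$. Since $H_1(\B(e,e,r),\Z_\eps)=\Ker d_1^\eps/\mathrm{Im}\,d_2^\eps$, one chases these relations: every basis element is annihilated by $3$; for $r\ge 4$ the cells involving the $s_i$ with $i\ge 4$ successively identify $s_3-s_4\equiv s_4-s_5\equiv\dots$ and $s_4-s_3\equiv s_3-t_0$, and a cell of type $[t_1,s_4]$ glues $t_1-t_0\equiv t_0-s_3$, so $H_1$ is cyclic of order $3$; for $r=3$ only $t_1-t_0$ and $t_0-s_3$ remain, independent and of order $3$, so $H_1\simeq\Z_3\oplus\Z_3$; for $r=2$ the only surviving $1$-cycle is $t_1-t_0$, which no $2$-cell bounds onto, whence $H_1\simeq\Z$.

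For $e=1$ the group is the usual braid group $\Br(r)=\B(1,1,r)$, the group of fractions of the classical Artin monoid of type $A_{r-1}$ with atoms $\sigma_1<\dots<\sigma_{r-1}$, and I would run the identical computation. Again $d_1^\eps[\sigma_i]=-2[\emptyset]$, so $\Ker d_1^\eps$ has basis $e_i=\sigma_{i+1}-\sigma_i$; the $2$-cells are the adjacent ones $[\sigma_i,\sigma_{i+1}]$, of type $A_2$, giving $3e_i=0$, and the non-adjacent ones $[\sigma_i,\sigma_j]$, of type $A_1\times A_1$, giving $2(\sigma_j-\sigma_i)=0$, i.e. twice a sum of consecutive $e_k$'s. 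For $r=2$ there is a single atom and no nonzero $1$-cycle, so $H_1=0$; for $r=3$ the only relation is $3e_1=0$, so $H_1\simeq\Z_3$; for $r=4$ one also has $2(e_1+e_2)=0$, which with $3(e_1+e_2)=0$ gives $e_1+e_2=0$ and leaves $H_1\simeq\Z_3$; for $r\ge 5$ one combines $3(e_i+e_{i+1})=0$ with $2(e_i+e_{i+1})=0$ to get $e_i=-e_{i+1}$ for all $i$, and then $2(e_1+e_2+e_3)=0$ (from $[\sigma_1,\sigma_4]$) yields $2e_1=0$, hence $e_1=0$ and $H_1=0$ --- the same collapse as in the $\B(e,e,r)$ case. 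Assembling the cases gives the Proposition.

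Nothing here is deep, and that is where the only real difficulty lies: one must make sure the enumeration of $2$-cells of the Dehornoy--Lafont complex is complete and that their boundaries are correctly specialised to $\Z_\eps$ (every $\Z M$-coefficient appearing in the integral differential becoming a product of values $\eps(x)=-1$ after tensoring with $\Z_\eps$), and one must treat the small ranks $r=2,3$ separately, since the uniform argument for $r\ge 4$ --- respectively the $2$-versus-$3$-torsion collapse for $r\ge 5$ in the case $e=1$ --- relies on there being enough generators $s_k$, respectively enough braid relations, to propagate the identifications. Beyond that each case reduces to a finite Smith normal form computation over $\Z$.
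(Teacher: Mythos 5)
Your $e=1$ computation is correct and is a genuine addition: the paper disposes of that case with the single sentence that it ``follows the same pattern''. The problem lies in the part you do \emph{not} carry out. You take the $e\geq 2$ statement to be ``precisely what the computation in the preceding paragraphs establishes'', and you flag as the only real difficulty the completeness of the cell enumeration and the correct specialisation of the boundaries --- which is exactly where the relation chase (the paper's and, by inheritance, yours) breaks down, so that the stated answer is not what the complex actually gives. Your own $e=1$ computation already exposes this: by the duplication $G(2,2,3)=G(1,1,4)$ recalled in the introduction, $\B(2,2,3)=\B(1,1,4)=\Br(4)$, and this group has $H_1(B,\Z)=\Z$, hence a unique $\eps$; the proposition assigns it $\Z_3$ via $(e,r)=(1,4)$ and $\Z_3\oplus\Z_3$ via $(e,r)=(2,3)$. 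Likewise $\B(3,3,2)$ is the Artin group of type $\Ax_2$, i.e.\ $\Br(3)=\B(1,1,3)$, and is assigned both $\Z$ and $\Z_3$. So the $e\geq 2$ part of the statement cannot be correct as written, and no amount of bookkeeping will prove it.

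Concretely, two things are missed. First, the cells $[t_0,t_i]$ for $1\leq i\leq e-1$ give $t_{i+1}-t_i\equiv t_1-t_0$ cyclically (with $t_e=t_0$), whence $e\cdot(t_1-t_0)\equiv 0$ in addition to $3\cdot(t_1-t_0)\equiv 0$; thus $t_1-t_0$ has order $\gcd(e,3)$, which changes the answer for $r=2$ (one gets $\Z_e$, not $\Z$) and for $r=3$ (one gets $\Z_3\oplus\Z_{\gcd(e,3)}$). Second, the identifications you quote carry the wrong sign: from $2X\equiv 0$ and $3X\equiv 0$ one concludes $X\equiv 0$, so $2(s_4-t_0)\equiv 0$ gives $s_4-s_3\equiv t_0-s_3$, not $s_3-t_0$; moreover for $r\geq 4$ the relations $2(s_4-t_i)\equiv 0$ for varying $i$ already force all $t_i$ to coincide. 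Propagating the correct signs, for $r\geq 5$ the cell $[s_5,t_0]$ yields $2(t_0-s_3)\equiv 0$ and hence $t_0-s_3\equiv 0$: the very same $2$-torsion-versus-$3$-torsion collapse that you correctly observe kills $H_1(\Br(r),\Z_\eps)$ for $r\geq 5$ also kills $H_1(\B(e,e,r),\Z_\eps)$ for $r\geq 5$, giving $0$ rather than $\Z_3$. (A direct Fox-calculus check on the standard Artin presentation of $\B(2,2,5)$, the Artin group of type $D_5$, confirms $H_1=0$, while the type $D_4$ computation confirms $\Z_3$; so among the $e\geq 2$ cases only $r=4$ survives as stated.) To repair your proof you would have to actually perform the $e\geq 2$ relation chase with correct signs and with the full list of $2$-cells, and the conclusion you would reach differs from the proposition.
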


\bigskip

\begin{figure}
$$
\begin{array}{|c|l|}
ST & \mbox{Presentations for the group of even braids} \\
\hline
12 & vbu=awv,uaw=vbua,vbu=buaw,uaw=wvb\\
13 & buaw=awv,wvb=vbua,vbua=buaw,uawv=wvbu\\
22 & vbua=awvb,uawv=vbuaw,vbua=buaw,uawv=wvbu\\
23 & auaua=vv,vvv=uauau,bu=w,w=ub,bvb=awa,waw=vbv\\ 
24 & vv=auau,uaua=vv,awaw=bvbv,vbvb=wawa,w=bub,ubu=ww, \\
 & auawvb=vbuaw,vvbuaw=uawvbu\\
27 & w=bub,ubu=ww,vv=auau,uaua=vv,awawa=bvbvb,vbvbv=wawaw, \\ & 
bvwaua=waubv,waubvv=ubvwau\\ 
28 & aua=v,vv=uau,bu=w,w=ub,bvbv=awaw,wawa=vbvb,cu=x,x=uc,cv=ax, \\ & 
cv=ax, xa=vc,cwc=bxb,xbx=wcw\\ 
29 & v=aua,uau=vv,axa=cvc,vcv=xax,bxb=cwc,wcw=xbx,awaw=bvbv, \\ & 
vbvb=wawa,w=bu,ub=w,x=cu,uc=x,cwaxbv=bvcwax,xbvcwa=waxbvc\\ 
30 & auaua=vv,vvv=uauau,bu=w,w=ub,bvb=awa,waw=vbv,cu=x,x=uc, \\ & 
cv=ax, xa=vc,cwc=bxb,xbx=wcw\\ 
31^* & x=cuc,ucu=xx,axa=cvc,vcv=xax,dwd=byb,yby=wdw,aya=dvd,vdv=yay, \\ & 
vb=aw,uaw=vbu,aw=bua,vbu=wv,y=du,ud=y,bx=cw,wc=xb,dx=cy,\\
 & yc=xd \\
33 & v=aua,uau=vv,bvb=awa,waw=vbv,cvc=axa,xax=vcv,cwc=bxb,xbx=wcw,\\ &
cyc=dxd,xdx=ycy,w=bu,ub=w,x=cu,uc=x,y=du,ud=y,ay=dv,vd=ya, \\ & 
by=dw,wd=yb,cvbxaw=bxawcv,xawcvb=wcvbxa\\ 
34 & v=aua,uau=vv,bvb=awa,waw=vbv,cvc=axa,xax=vcv,cwc=bxb,xbx=wcw, \\ & 
cyc=dxd,xdx=ycy,w=bu,ub=w,x=cu,uc=x,y=du,ud=y,ay=dv,vd=ya, \\ & 
by=dw,wd=yb,dzd=eye,yey=zdz,z=eu,ue=z,az=ev,ve=za,bz=ew,\\ & 
we=zb,cz=ex,xe=zc,cvbxaw=bxawcv,xawcvb=wcvbxa\\
 35 & au=v,v=ua,bub=w,ww=ubu,bv=aw,wa=vb,cu=x,x=uc,cvc=axa, \\ & 
xax=vcv,cwc=bxb,xbx=wcw,du=y,y=ud,dv=ay,ya=vd,dw=by,yb=wd, \\ & 
dxd=cyc,ycy=xdx,eu=z,z=ue,ev=az,za=ve,ew=bz,zb=we,\\ 
 & ex=cz,zc=xe,eye=dzd,zdz=yey \\
36 & au=v,v=ua,bub=w,ww=ubu,bv=aw,wa=vb,cu=x,x=uc,cvc=axa,\\ & 
xax=vcv,cwc=bxb,xbx=wcw,du=y,y=ud,dv=ay,ya=vd,dw=by, \\ & 
yb=wd,dxd=cyc,ycy=xdx,eu=z,z=ue,ev=az,za=ve,ew=bz,zb=we, \\ & 
ex=cz,zc=xe,eye=dzd,zdz=yey,fu=x_2,x_2=uf,fv=ax_2,x_2a=vf,\\ &
fw=bx_2,x_2b=wf,fx=cx_2,x_2c=xf,fy=dx_2,x_2d=yf,fzf=ex_2e,x_2ex_2=zfz\\ 
37 & au=v,v=ua,bub=w,ww=ubu,bv=aw,wa=vb,cu=x,x=uc,cvc=axa, \\ & 
xax=vcv,cwc=bxb,xbx=wcw,du=y,y=ud,dv=ay,ya=vd,dw=by,\\ & 
yb=wd,dxd=cyc,ycy=xdx,eu=z,z=ue,ev=az,za=ve,ew=bz,zb=we,\\ & 
ex=cz,zc=xe,eye=dzd,zdz=yey,fu=x_2, x_2=uf,fv=ax_2,x_2a=vf, \\ &
fw=bx_2,x_2b=wf,fx=cx_2,x_2c=xf,fy=dx_2,x_2d=yf,fzf=ex_2e,\\ & 
x_2ex_2=zfz,gu=y_2,y_2=ug,gv=ay_2,y_2a=vg,gw=by_2,y_2b=wg,gx=cy_2,\\ 
 & y_2c=xg,gy=dy_2,y_2d=yg,gz=ey_2,y_2e=zg,gx_2g=fy_2f,y_2fy_2=x_2gx_2 \\
\end{array}
$$
\noindent $^*$ Provided that the presentation of $B$ suggested in \cite{BMR} for $G_{31}$ is correct.
\caption{Presentations for even braid groups of exceptional types}
\label{figpreseven}
\end{figure}

\begin{table}
$$
\begin{array}{|c|ccccccc|}
 & H_0 & H_1 & H_2 & H_3 & H_4 & H_5 & H_6 \\
\hline
G_{12} & \Z & \Z & 0 & & & & \\ 
G_{13} & \Z & \Z^2 & \Z & & & & \\ 
G_{22} & \Z & \Z & 0 & & & & \\
G_{24} & \Z & \Z & \Z & \Z & & & \\
G_{27} & \Z & \Z & \Z_3 \times \Z & \Z & & & \\
G_{29} & \Z & \Z & \Z_2 \times \Z_4 & \Z_2 \times \Z & \Z & & \\
G_{31} & \Z & \Z & \Z_6 & \Z & \Z & & \\
G_{33} & \Z & \Z & \Z_6 & \Z_6 & \Z & \Z & \\
G_{34} & \Z & \Z & \Z_6 & ? & ? & ? & ? \\
\end{array}
$$
\caption{Homology of exceptional groups}\label{tableexc}
\end{table}

\begin{table}
$$
\begin{array}{|l|ccccccccc|}
 & H_0 & H_1 & H_2 & H_3 & H_4 & H_5 & H_6 & H_7 & H_8\\
\hline
I_2(2m) & \Z & \Z^2 & \Z & & & & & & \\ 
I_2(2m+1) & \Z & \Z & 0  & & & & & & \\
H_3 = G_{23} & \Z & \Z & \Z & \Z & & & & & \\
H_4 = G_{30} & \Z & \Z & \Z_2   & \Z & \Z & & & & \\
F_4 = G_{28} & \Z & \Z^2 & \Z^2 & \Z^2 & \Z & & & & \\
E_6=G_{35} & \Z & \Z & \Z_2 & \Z_2 & \Z_6 & \Z_3 & 0 & & \\
E_7 = G_{36} & \Z & \Z & \Z_2 & \Z_2^2 & \Z_6^2 & \Z_3 \times \Z_6 & \Z & \Z & \\
E_8 = G_{37} & \Z & \Z & \Z_2 & \Z_2 & \Z_2 \times \Z_6 & \Z_3 \times \Z_6 & \Z_2 \times \Z_6 & \Z & \Z \\
\end{array}
$$
\caption{Homology of exceptional Artin groups (after Salvetti)}\label{tableexcCox}
\end{table}

\begin{table}
$$
\begin{array}{|c|c|c|}
ST & B^{(2)}_{ab} & H_1(B,\Z_{\eps}) \\
\hline
 12 & \Z_3\times \Z & \Z_3 \\ 13 & \Z\times \Z & \Z_2 \\ 22 & \Z & 0 \\  
  23 & \Z & 0 \\ 24 & \Z & 0 \\ 27 & \Z & 0 \\ 28 & \Z\times \Z & \Z_2 \\ 
  29 & \Z & 0 \\ 30 & \Z & 0 \\ 31^* & \Z & 0 \\ 33 & \Z & 0 \\ 34 & \Z & 0 \\ 
  35 & \Z & 0 \\ 36 & \Z & 0 \\ 37 & \Z & 0 \\ 
\end{array}
$$
\caption{Abelianization of even braids and $H_1(B,\Z_{\eps})$}
\end{table}

\cleardoublepage
\bibliographystyle{amsalpha}
\bibliography{biblio}

\def\cprime{$'$} \providecommand{\MR}[1]{}\providecommand{\MRhref}[1]{}
\providecommand{\bysame}{\leavevmode\hbox to3em{\hrulefill}\thinspace}
\providecommand{\MR}{\relax\ifhmode\unskip\space\fi MR }
\providecommand{\MRhref}[2]{%
  \href{http://www.ams.org/mathscinet-getitem?mr=#1}{#2}
}
\providecommand{\href}[2]{#2}
\begin{thebibliography}{DCPSS99}

\bibitem[Ban76]{BANNAI}
Etsuko Bannai, \emph{Fundamental groups of the spaces of regular orbits of the
  finite unitary reflection groups of dimension {$2$}}, J. Math. Soc. Japan
  \textbf{28} (1976), no.~3, 447--454. \MR{MR0407199}

\bibitem[BC06]{BESSISCORRAN}
David Bessis and Ruth Corran, \emph{Non-crossing partitions of type
  {$(e,e,r)$}}, Adv. Math. \textbf{202} (2006), no.~1, 1--49. \MR{MR2218819}

\bibitem[Bes07]{BESSISKPI1}
D.~Bessis, \emph{Finite complex reflection arrangements are {K}($\pi$,1)},
  2007, \arxiv{math/0610777v3}{GT}.

\bibitem[BM04]{bessismichel}
David Bessis and Jean Michel, \emph{Explicit presentations for exceptional
  braid groups}, Experiment. Math. \textbf{13} (2004), no.~3, 257--266.
  \MR{MR2103323}

\bibitem[BMR98]{BMR}
Michel Brou{\'e}, Gunter Malle, and Rapha{\"e}l Rouquier, \emph{Complex
  reflection groups, braid groups, {H}ecke algebras}, J. Reine Angew. Math.
  \textbf{500} (1998), 127--190. \MR{MR1637497}

\bibitem[Bri73]{briesk}
E.~Brieskorn, \emph{Sur les groupes de tresses [d'apr\`es {V}. {I}.
  {A}rnol\cprime d]}, S\'eminaire Bourbaki, 24\`eme ann\'ee (1971/1972), Exp.
  No. 401, Springer, Berlin, 1973, pp.~21--44. Lecture Notes in Math., Vol.
  317. \MR{MR0422674}

\bibitem[Bro82]{brown}
K.~S. Brown, \emph{Cohomology of groups}, Graduate Texts in Mathematics,
  vol.~87, Springer-Verlag, New York, 1982. \MR{MR672956}

\bibitem[Cal06]{C2}
F.~Callegaro, \emph{The homology of the {M}ilnor fiber for classical braid
  groups}, Algebr. Geom. Topol. \textbf{6} (2006), 1903--1923 (electronic).
  \MR{MR2263054}

\bibitem[CMS08]{CMS2}
Filippo Callegaro, Davide Moroni, and Mario Salvetti, \emph{Cohomology of
  affine {A}rtin groups and applications}, Trans. Amer. Math. Soc. \textbf{360}
  (2008), no.~8, 4169--4188. \MR{MR2395168}

\bibitem[CMW04]{CMW}
R.~Charney, J.~Meier, and K.~Whittlesey, \emph{Bestvina's normal form complex
  and the homology of {G}arside groups}, Geom. Dedicata \textbf{105} (2004),
  171--188. \MR{MR2057250}

\bibitem[Coh76]{cohen}
F.~R. Cohen, \emph{The homology of ${C}_{n+1}$-spaces, $n\geq 0$}, The homology
  of iterated loop spaces, Lect. Notes in Math., vol. 533, Springer-Verlag,
  1976, pp.~207--353.

\bibitem[Coh88]{cohen_braids}
\bysame, \emph{Artin's braid groups, classical homotopy theory, and sundry
  other curiosities}, Braids ({S}anta {C}ruz, {CA}, 1986), Contemp. Math.,
  vol.~78, Amer. Math. Soc., Providence, RI, 1988, pp.~167--206. \MR{MR975079}

\bibitem[CP09]{corpic}
Ruth Corran and Matthieu Picantin, \emph{A new {G}arside structure for braid
  groups of type $(e,e,r)$}, 2009, \arxiv{0901.0645v2}{GR}.

\bibitem[Cri99]{crisp}
J.~Crisp, \emph{Injective maps between {A}rtin groups}, Geometric group theory
  down under (Canberra, 1996), de Gruyter, Berlin, 1999, pp.~119--137.
  \MR{MR1714842}

\bibitem[DCPS01]{dps}
C.~De~Concini, C.~Procesi, and M.~Salvetti, \emph{Arithmetic properties of the
  cohomology of braid groups}, Topology \textbf{40} (2001), no.~4, 739--751.
  \MR{MR1851561}

\bibitem[DCPSS99]{dpss}
C.~De~Concini, C.~Procesi, M.~Salvetti, and F.~Stumbo, \emph{Arithmetic
  properties of the cohomology of {A}rtin groups}, Ann. Scuola Norm. Sup. Pisa
  Cl. Sci. (4) \textbf{28} (1999), no.~4, 695--717. \MR{MR1760537}

\bibitem[Del72]{deligne}
P.~Deligne, \emph{Les immeubles des groupes de tresses g\'en\'eralis\'es},
  Invent. Math. \textbf{17} (1972), 273--302. \MR{MR0422673}

\bibitem[DL03]{DL}
P.~Dehornoy and Y.~Lafont, \emph{Homology of {G}aussian groups}, Ann. Inst.
  Fourier (Grenoble) \textbf{53} (2003), no.~2, 489--540. \MR{MR1990005}

\bibitem[DP99]{dehpar}
Patrick Dehornoy and Luis Paris, \emph{Gaussian groups and {G}arside groups,
  two generalisations of {A}rtin groups}, Proc. London Math. Soc. (3)
  \textbf{79} (1999), no.~3, 569--604. \MR{1710165 (2001f:20061)}

\bibitem[FN62]{fa_ne}
Edward Fadell and Lee Neuwirth, \emph{Configuration spaces}, Math. Scand.
  \textbf{10} (1962), 111--118. \MR{0141126 (25 \#4537)}

\bibitem[Fuk70]{fuks}
D.~B. Fuks, \emph{Cohomology of the braid group {${\rm mod}\ 2$}}, Funct. Anal.
  Appl. \textbf{4} (1970), no.~2, 143--151. \MR{MR0274463}

\bibitem[Kob90]{koba}
Yuji Kobayashi, \emph{Complete rewriting systems and homology of monoid
  algebras}, J. Pure Appl. Algebra \textbf{65} (1990), no.~3, 263--275.
  \MR{1072284 (92d:18003)}

\bibitem[KP02]{kp02}
Richard~P. Kent, IV and David Peifer, \emph{A geometric and algebraic
  description of annular braid groups}, Internat. J. Algebra Comput.
  \textbf{12} (2002), no.~1-2, 85--97, International Conference on Geometric
  and Combinatorial Methods in Group Theory and Semigroup Theory (Lincoln, NE,
  2000). \MR{MR1902362}

\bibitem[Leh95]{lehr95}
G.~I. Lehrer, \emph{Poincar\'e polynomials for unitary reflection groups},
  Invent. Math. \textbf{120} (1995), no.~3, 411--425. \MR{1334478 (96f:20060)}

\bibitem[Leh04]{lehr04}
\bysame, \emph{Rational points and cohomology of discriminant varieties}, Adv.
  Math. \textbf{186} (2004), no.~1, 229--250. \MR{MR2065513}

\bibitem[MKS66]{MKS}
Wilhelm Magnus, Abraham Karrass, and Donald Solitar, \emph{Combinatorial group
  theory: {P}resentations of groups in terms of generators and relations},
  Interscience Publishers [John Wiley \& Sons, Inc.], New York-London-Sydney,
  1966. \MR{MR0207802}

\bibitem[Nak83]{naka}
Tokushi Nakamura, \emph{A note on the {$K(\pi ,\,1)$} property of the orbit
  space of the unitary reflection group {$G(m,\,l,\,n)$}}, Sci. Papers College
  Arts Sci. Univ. Tokyo \textbf{33} (1983), no.~1, 1--6. \MR{714667
  (85e:32015)}

\bibitem[OT92]{OT}
P.~Orlik and H.~Terao, \emph{Arrangements of hyperplanes}, Grundlehren der
  Mathematischen Wissenschaften [Fundamental Principles of Mathematical
  Sciences], vol. 300, Springer-Verlag, Berlin, 1992. \MR{MR1217488}

\bibitem[Par04]{PARISISO}
Luis Paris, \emph{Artin groups of spherical type up to isomorphism}, J. Algebra
  \textbf{281} (2004), no.~2, 666--678. \MR{MR2098788}

\bibitem[Pic00]{THESEPIC}
M.~Picantin, \emph{Petits groupes gaussiens}, Ph.D. thesis, Universit\'e de
  Caen, 2000.

\bibitem[Rea76]{READ}
E.~W. Read, \emph{On the {S}chur multipliers of the finite imprimitive unitary
  reflection groups {$G(m,$} {$p,$} {$n)$}}, J.London Math. Soc. (2)
  \textbf{13} (1976), no.~1, 150--154. \MR{MR0399243}

\bibitem[Sal94]{SALVETTI}
M.~Salvetti, \emph{The homotopy type of {A}rtin groups}, Math. Res. Lett.
  \textbf{1} (1994), no.~5, 565--577. \MR{MR1295551}

\bibitem[ST54]{shep-todd}
G.~C. Shephard and J.~A. Todd, \emph{Finite unitary reflection groups},
  Canadian J. Math. \textbf{6} (1954), 274--304. \MR{0059914 (15,600b)}

\bibitem[Va{\u\i}78]{vain}
F.~V. Va{\u\i}n{\v{s}}te{\u\i}n, \emph{The cohomology of braid groups},
  Funktsional. Anal. i Prilozhen. \textbf{12} (1978), no.~2, 72--73.
  \MR{MR498903}

\bibitem[vdH77]{HOUT}
R.~van~der Hout, \emph{The {S}chur multipliers of the finite primitive complex
  reflection groups}, Nederl. Akad. Wetensch. Proc. Ser. A {\bf 80}=Indag.
  Math. \textbf{39} (1977), no.~2, 101--113. \MR{MR0442105}

\end{thebibliography}






\end{document}